\newtheorem{theorem}{Theorem}[section]
\newtheorem{proposition}[theorem]{Proposition}
\newtheorem{lemma}[theorem]{Lemma}
\newtheorem{corollary}[theorem]{Corollary}
\theoremstyle{definition}
\newtheorem{remark}[theorem]{Remark}
\newtheorem{example}[theorem]{Example}
\newcommand{\wbar}[1]{\overline{#1}}
\newcommand{\what}[1]{\widehat{#1}}
\newcommand{\til}[1]{\tilde{#1}}
\newcommand{\id}{\operatorname{id}}
\newcommand{\spn}{\operatorname{span}}
\newcommand{\fA}{\mathcal{A}}
\newcommand{\fB}{\mathcal{B}}
\newcommand{\fC}{\mathcal{C}}
\newcommand{\fD}{\mathcal{D}}
\newcommand{\fH}{\mathcal{H}}
\newcommand{\fK}{\mathcal{K}}
\newcommand{\fL}{\mathcal{L}}
\newcommand{\fM}{\mathcal{M}}
\newcommand{\fV}{\mathcal{V}}
\newcommand{\fW}{\mathcal{W}}
\newcommand{\fX}{\mathcal{X}}
\newcommand{\Cee}{\mathbb{C}}
\newcommand{\Ree}{\mathbb{R}}
\newcommand{\Tee}{\mathbb{T}}
\newcommand{\Zee}{\mathbb{Z}}
\newcommand{\En}{\mathbb{N}}
\newcommand{\alp}{\alpha}
\newcommand{\del}{\delta}
\newcommand{\Del}{\Delta}
\newcommand{\eps}{\varepsilon}
\newcommand{\gam}{\gamma}
\newcommand{\Gam}{\Gamma}
\newcommand{\lam}{\lambda}
\newcommand{\Lam}{\Lambda}
\newcommand{\ome}{\omega}
\newcommand{\Ome}{\Omega}
\newcommand{\sig}{\sigma}
\newcommand{\Sig}{\Sigma}
\newcommand{\norm}[1]{\left\Vert#1\right\Vert}
\newcommand{\cent}{\mathrm{Z}}
\newcommand{\col}{\mathrm{C}}
\newcommand{\fal}{\mathrm{A}}
\newcommand{\pfal}{\mathrm{A}^p}
\newcommand{\ideal}{\mathrm{I}}
\newcommand{\mat}{\mathrm{M}}
\newcommand{\row}{\mathrm{R}}
\newcommand{\smat}{\mathrm{S}}
\newcommand{\trig}{\mathrm{Trig}}
\newcommand{\su}{\mathrm{SU}}
\newcommand{\un}{\mathrm{U}}
\newcommand{\ext}{\mathrm{ext}}
\newcommand{\trace}{\mathrm{Tr}}
\begin{document}

\title[$p$-Fourier algebras]
{$p$-Fourier algebras on compact groups}

\author{Hun Hee Lee, Ebrahim Samei and Nico Spronk}

\begin{abstract}
Let $G$ be a compact group.  For $1\leq p\leq\infty$ we introduce a class of Banach function algebras
$\pfal(G)$ on $G$ which are the Fourier algebras in the case $p=1$, and for $p=2$ are certain algebras
discovered in \cite{forrestss1}.  In the case $p\not=2$ we find that $\pfal(G)\cong\pfal(H)$ if and only if $G$ 
and $H$ are isomorphic compact groups.  These algebras admit natural operator space structures,
and also weighted versions, which we call $p$-Beurling-Fourier algebras.
We study various amenability and operator amenability properties, Arens regularity and
representability as operator algebras.  For a connected Lie $G$ and $p>1$, our techniques
of estimation of when certain $p$-Beurling-Fourier algebras are operator algebras rely
more on the fine structure of $G$, than in the case $p=1$.  We also study restrictions
to subgroups.  In the case that $G=\su(2)$, restrict to a torus and obtain some exotic algebras
of Laurent series.  We study amenability properties of these new algebras, as well.
\end{abstract}

\maketitle

\footnote{{\it Date}: \today.

2000 {\it Mathematics Subject Classification.} Primary 43A30;
Secondary 43A75, 43A77, 46B70, 46J10, 46L07, 47L25, 47L30.
{\it Key words and phrases.} compact group, Fourier series, operator weak amenability, operator  amenability, Arens regularity.

The first named author would like to thank the Basic Science Research Program through the National Research Foundation of Korea (NRF), grant NRF-2012R1A1A2005963.
The second named author would like to thank NSERC Grant 366066-2009.
The third named author would
like thank NSERC Grant 312515-2010, and the Korean Brain Pool Program 2014 supported by MSIP(Ministry of Science, ICT \& Future 
Planning).}


\subsection{Introduction and plan}
For a compact group $G$, the Fourier algebra $\fal(G)$
goes back to Kre\u{\i}n (see \cite[\S 34]{hewittrII}, for example).  For a continuous function $u$ on $G$
\[
u\in\fal(G)\quad\text{if and only if}\quad\norm{u}_\fal=\sum_{\pi\in\what{G}}d_\pi
\norm{\hat{u}(\pi)}_{\smat^1_{d_\pi}}<\infty.
\]
Here, each $\hat{u}(\pi)$ is the matricial Fourier coefficient, and each $\norm{\cdot}_{\smat^1_d}$ denotes the 
trace norm.  This is a special version of the Fourier algebra as subsequently defined by Stinespring 
(\cite{stinespring}) and Eymard (\cite{eymard}).  
In his investigation of non-amenability of $\fal(\mathrm{SO}(3))$, Johnson (\cite{johnson}) created a new
algebra $\fal_\gam(G)$ by averaging over the diagonal subgroup. More precisely, he considered the map
from the projective tensor product of Fourier algebras into continuous functions on a homogeneous space
modulo the diagonal subgroup $\Del = \{(s,s): s\in G\}$:
\begin{gather*}
\Gam: \fal(G) \otimes_\gam \fal(G) \to \fC(G\times G/\Del)\cong\fC(G) \\
\Gam(u\otimes v)((s,t)\Del)= \int_G u(sr)v(tr)\,dr=u\ast\check{v}(st^{-1})
\end{gather*}
where $\check{v}(r)=v(r^{-1})$.  
Letting $\fal_\gam(G)=\Gam(\fal(G)\otimes_\gam\fal(G))$, we have
\[
u\in\fal_\gam(G)\quad\text{if and only if}\quad\norm{u}_{\fal_\gam}=\sum_{\pi\in\what{G}}d_\pi^2
\norm{\hat{u}(\pi)}_{\smat^1_{d_\pi}}<\infty
\]
and $\norm{\cdot}_\gam$ is exactly that norm which makes $\Gam$ a quotient map.
In \cite{forrestss1}. Forrest and two of the present authors followed the same philosophy to consider the operator space version of Johnson's algebra, $\fal_\Del(G) = \Gam(\fal(G\times G))$, where we recall
that $\fal(G\times G)=\fal(G)\hat{\otimes}\fal(G)$ (operator projective tensor product).
A characterization of the algebra in terms of its natural quotient norm is given by 
\[
u\in\fal_\Del(G)\quad\text{if and only if}\quad\norm{u}_{\fal_\Del}=\sum_{\pi\in\what{G}}d_\pi^{3/2}
\norm{\hat{u}(\pi)}_{\smat^2_{d_\pi}}<\infty
\]
where each $\norm{\cdot}_{\smat^2_d}$ denotes the Hilbert-Schmidt norm. This result was mysterious to the authors at the time, and there were questions about the underlying meaning of the formula.  We  remark that the space $\fal_\Del(G)$ seems to have appeared before.  It arises as a host space for sufficiently differentiable functions on compact Lie groups; see \cite{sugiura} or \cite[12.2.2]{faraut}, for example.

We provide an answer to this question of understanding $\fal_\gam(G)$ and $\fal_\Del(G)$, 
in a uniform context with $\fal(G)$,
by developing a family of algebras $\pfal(G, \ome)$, $1\leq p\leq\infty$, where $\ome : \what{G} \to (0,\infty)$ is a weight in the sense of \cite{lees} or \cite{ludwigst}.
We recover the Fourier algebra $\fal(G)$ when $p=1$ and $\ome\equiv 1$, the algebra $\fal_\gam(G)$ when $p=1$ and $\ome(\pi) = d_\pi$, and the algebra $\fal_\Del(G)$ when $p=2$ and $\ome\equiv 1$. We may simply refer to each of these as the {\it $p$-Beurling--Fourier algebra} of $G$ and the {\it $p$-Fourier algebra} of $G$ for the case of $\ome \equiv 1$, which we simply write $\pfal(G)$.  We immediately warn the reader that the algebras $\pfal(G)$ are distinct from, and not to be confused with, the Fig\`{a}-Talamanca--Herz algebras $\fal_p(G)$, $1<p<\infty$.  For example, we observe for virtually abelian groups that $\pfal(G)=\fal(G)$ isomorphically (Theorem \ref{prop:openabel}), while this does not hold for $\fal_p(G)$, even for general compact abelian groups.  In particular, our class of algebras is new only for sufficiently non-abelian compact groups.

Having constructed this class of algebras $\pfal(G,\ome)$, we embark on a systematic program of understanding how its members behave as Banach algebras. In general, the algebras tend to reflect the structure of $G$ in a similar way as $\fal(G)$ did. For example, they still determine the underlying compact group $G$ for most $p$ and for some natural weights $\ome$ (Section \ref{ssec:isomorphism}). The Gelfand spectrum of $\pfal(G,\ome)$ still identical to $G$ for sufficiently slowly growing weights, and for dimension weights these algebras are regular (Section \ref{ssec:spectrum}).  As is noted in Section \ref{ssec:amenprop},
the algebras $\pfal(G)$ enjoys exactly the same amenability properties of the Fourier algebras.

However, the algebras $\pfal(G,\ome)$ show significant differences from Fourier algebras, in particualr when we look at reults in the context of operator spaces.  Each $\pfal(G, \ome)$ comes equipped with a natural operator space structure, generalizing the usual ``predual" operator space structure of $\fal(G)$, via the complex interpolation.  For $p=2$ other operator space structures are available, which arise naturally in some contexts.  As has been amply demonstrated in the case of Fourier algebras the use of operator space structure makes analysis on non-abelian groups more tractable: compare the results of \cite{johnson} and \cite{forrestr} with those of  \cite{ruan} and \cite{spronk,samei}. We examine operator weak amenability and operator amenability in Sections \ref{ssec:opamenprop}, where we can observe that the corresponding properties usually become worse than the the case of Fourier algebra. More intriguing difference is that the functoriality of restriction to subgroups. It turns out that the functoriality holds in a very restricted case, namely, the case where the subgroup is more or less a direct summand of the supergroup (Section \ref{ssec:directproduct}). This limited functoriality already gives us additional results on operator weak amenability and operator amenability of $\pfal(G,\ome)$.  On the other hand the restriction procedure to subgroups is not functorial in general. We demonstrate its failure focusing on the case of $2\times 2$ special unitary group, $\su(2)$ in Section \ref{ssec:tori}, since its representation theory allows more direct computations. Interestingly, we gain an exotic class of Banach algebras of Laurent series as a result of restriction to the maximal torus. We characterize various amenabilities of these new algebras, which shows unexpected coincidence of amenabilities in Banach space category and operator space category.

Finally in Section \ref{ssec:arens} we conduct a study of Arens regularity of algebras $\pfal(G,\ome)$ and in Section \ref{ssec:opalg} we study, in the case of connected Lie groups, some conditions which allow $\pfal(G,\ome)$ to be represented as operator algebras. The latter property in particular shows some similarity and difference at the same time from the case of $p=1$ in \cite{ghandeharilss}. It is similar in the sense that we still can make the algebras $\pfal(G,\ome)$ to be represented as operator algebras for the weights growing fast enough. It is also quite different from the case of \cite{ghandeharilss}, where our estimates for the growth rate of the corresponding weight relied only on the dimension $d(G)$ of the connected Lie group $G$, but the present work shows the connection to a finer structure information of $G$ such as semisimple rank $s(G)$ of $G$ and the dimension $z(G)$ of the center of $G$.

Since this paper features a wide variety of different results, many of them quantitative, we include two tables in the last section which summarize most of the results of this paper in the context of known results.  We also point out some questions which arise naturally form the present investigation.

\subsection{Fourier series on compact groups}
For a compact group $G$, we let $\what{G}$ denote its dual object, the set of unitary equivalence
classes of all irreducible unitary representations.  By standard abuse of notation, we shall
identify elements of $\what{G}$ with representatives of the equivalence classes.
Each $\pi$ in $\what{G}$ acts on a space $\fH_\pi$ of dimension $d_\pi$.
Let $\mat_d$ denote the space of $d\times d$ complex matrices.
Given $\pi$ in $\what{G}$ we let $\trig_\pi=\{\trace(A\pi(\cdot)):A\in\mat_{d_\pi}\}$
and $\trig(G)=\bigoplus_{\pi\in\what{G}}\trig_\pi$, where the sum
may be regarded as an internal direct sum in the space of continuous functions
on $G$.  Given $u=\sum_{\pi\in\what{G}}d_\pi\trace(A_\pi\pi(\cdot))$ (finite sum) in
$\trig(G)$, Schur's orthogonality formula provides that $A_\pi=\int_G u(s)\pi(s^{-1})\,ds$
(normalized Haar integration).  We denote this matricial coefficient $A_\pi=\hat{u}(\pi)$.
The linear space $\trig(G)$ admits algebraic dual space 
$\trig(G)^\dagger=\prod_{\pi\in\what{G}}\mat_{d_\pi}$ via the duality
\begin{equation}\label{eq:dualpairing}
\langle u,(T_\pi)_{\pi\in\what{G}}\rangle =\sum_{\pi\in\what{G}}d_\pi\trace(\hat{u}(\pi)T_\pi).
\end{equation}

This duality is constructed specifically to recognize the functionals of evaluation at points:
for $s$ in $G$, let $\lam(s)=(\pi(s))_{\pi\in\what{G}}$, and we have $u(s)=\langle u,\lam(s)\rangle$
for each $u$ in $\trig(G)$.  Furthermore, a consequence of Schur's lemma is that
$\spn\lam(G)$ is weak$^\dagger$ dense in $\trig(G)^\dagger$, i.e.\ dense with respect to
the initial topology $\sig(\trig(G)^\dagger,\trig(G))$ with respect to the 
dual pairing (\ref{eq:dualpairing}).  We let 
$m:\trig(G)\otimes\trig(G)\cong\trig(G\times G)\to\trig(G)$ denote
the pointwise product, and define the {\it coproduct} $M:\trig(G)^\dagger\to\trig(G\times G)^\dagger$  
to be the adjoint of $m$.  Hence $M$ is defined by the relation
\[
M\lam(s)=\lam(s)\otimes\lam(s)\text{ for }s\text{ in }G
\] 
which is the familiar co-commuative coproduct from the theory of (compact) quantum groups.
We shall require a form of this which is more suitable for certain norm computations, however.
If $\sig$ is equivalent to a subrepresentation of $\pi\otimes\pi'$ -- we write
$\sig\subset\pi\otimes\pi'$ in this case --  we let
$U_{\sig,\pi\otimes\pi'}^{(i)}:\fH_\sig\to\fH_\pi\otimes^2\fH_{\pi'}$, $i=1,\dots,m(\sig,\pi\otimes\pi')$,
denote a maximal family of  isometric intertwiners with pairwise disjoint ranges.
Then for suitably sized matrices $A_\pi$ and $A_{\pi'}$ we see that
\[
m(\trace(A_\pi\pi(\cdot))\otimes\trace(A_{\pi'}\pi'(\cdot)))=\sum_{\sig\subset\pi\otimes\pi'}\!\!
\sum_{i=1}^{m(\sig,\pi\otimes\pi')}\trace(A_\pi\otimes A_{\pi'}U_{\sig,\pi\otimes\pi'}^{(i)}\sig(\cdot)
U_{\sig,\pi\otimes\pi'}^{(i)*}).
\]
We note that there is a natural linear isomorphism $\trig(G\times G)=\trig(G)\otimes\trig(G)$,
and that $\trig(G\times G)^\dagger=\prod_{\pi,\pi'\in\what{G}}\mat_{d_\pi}\otimes\mat_{d_{\pi'}}$.
Hence by the weak$^\dagger$ density of $\spn\lam(G)$ in $\trig(G)^\dagger$, and the
weak$^\dagger$ density of $\bigoplus_{\pi,\pi'\in\what{G}}\mat_{d_\pi}\otimes\mat_{d_{\pi'}}$
in $\trig(G\times G)^\dagger$, we have that
\begin{equation}\label{eq:coprod}
M W_{\pi,\pi'}=\bigoplus_{\sig\subset\pi\otimes\pi'}U_{\sig,\pi\otimes\pi'}^*W_\sig
U_{\sig,\pi\otimes\pi'}
\end{equation}
where the sum now counts multiplicity, i.e.\ we absorb indices $i=1,\dots,m(\sig,\pi\otimes\pi')$
so as to simplify the notation in our future computations.  It is well-known, see, for example,
\cite{stinespring} -- that $M(\fal(G)^*)\subset\fal(G)^*\bar{\otimes}\fal(G)^*$ where
$\bar{\otimes}$ denotes the normal spatial, or von Neumann, tensor product.

We note that for any norm $\norm{\cdot}_\fA$ on $\trig(G)$, the completion $\fA(G)$
of $\trig(G)$ with respect to that norm may be understood to have its continuous dual space
$\fA(G)^*$ as a subspace of $\trig(G)^\dagger$.  For example, the Fourier algebra, mentioned
earlier, satisfies
\[
\fal(G)\cong\ell^1\text{-}\bigoplus_{\pi\in\what{G}}d_\pi\smat^1_{d_\pi}\quad\text{and}\quad
\fal(G)^*\cong\ell^\infty\text{-}\bigoplus_{\pi\in\what{G}}\smat^\infty_{d_\pi}
\]
i.e.\ Banach space direct sum of weighted trace-class matrix spaces and direct product
of operators on $d_\pi$-dimensional Hilbert spaces, respectively.

\subsection{Operator spaces}
Our standard references for the theory of operator spaces are \cite{effrosrB} and \cite{pisier1}.
An operator space is a complex vector space $\fV$ admitting a family of norms
$\norm{\cdot}_{\mat_n(\fV)}$, one on each of the spaces of $n\times n$ matrices with entries
in $\fV$, satisfying certain compatibility conditions of Ruan (which we shall not explicitly use), 
and for which each $\mat_n(\fV)$ is complete.  For us, the space $\smat_d^\infty$ 
will always have the canonical operator space structure it admits by virtue
of being a C*-algebra.  The space $\smat^1_d$ will come by its operator space structure
as the dual space of $\smat_d^\infty$.   We shall make use of direct products and
direct sums (the latter of which is described nicely in \cite{pisier1}), denoted
$\oplus^\infty$ or $\ell^\infty\text{-}\bigoplus$, and 
$\oplus^1$ or $\ell^1\text{-}\bigoplus$, respectively; as well as operator projective,
operator injective (or spatial), and Haagerup tensor products, denoted $\hat{\otimes},\check{\otimes}$ and
$\otimes^h$, respectively.  We shall also often use the normal spatial tensor
product $\bar{\otimes}$ of operator dual spaces.  In particular, there are distributive laws over
infinite families of operator space
of operator direct sums and the operator projective tensor product, and 
over operator dual spaces of operator direct products
and the normal spatial tensor product.

The spaces $\smat_d^p$, with Schatten $p$-norms,
will have their operator space structures realized through Pisier's complex interpolation
theory (\cite{pisier}):  $\smat_d^p=[\smat_d^\infty,\smat^1_d]_{1/p}$.  Hence the Hilbert-Schmidt
space $\smat^2_d$ will be understood with the operator Hilbert space structure.  However, we
will also have occasions to consider this space as a column Hilbert space, $\smat^2_{d,C}$,
or a row Hilbert space, $\smat^2_{d,R}$.  We shall do the same with all of the $d$-dimensional
spaces $\ell^p_d$.  In this case  $\ell^\infty_d$, being
a commutative C*-algebra, is a minimal operator space, while $\ell^1_d$, being its dual, is maximal.
In the case $p=2$,
we let $\col_d=\ell^2_{d,C}$ and $\row_d=\ell^2_{d,R}$ denote the $d$-dimensional
column and row spaces.  We shall use the bilinear identifications $\col_d^*\cong\row_d$
and $\row_d^*\cong\col_d$, given by $(\xi,\eta)\mapsto \eta\xi, \xi\eta$, respectively.  
Thus we obtain completely isometric identifications
\[
\smat^\infty_d=\col_d\otimes^h\row_d\quad\text{and}\quad\smat^1_d=\row_d\otimes^h\col_d.
\]
Then, keeping with standard notation of \cite{pisier1}, we obtain interpolated Hilbertian operator
spaces
\[
\col_d^p=[\col_d,\row_d]_{1/p}\quad\text{and}\quad\row_d^p=[\row_d,\col_d]_{1/p}.
\]
In this notation we have $\col^\infty_d=\col_d$ and $\col^1_d=\row_d$, for example.
This notation proves itself in the remarkable tensorial factorizations
\[
\smat^p_d=\col_d^p\otimes^h\row^p_d
\] 
which hold thanks to the stability of Haagerup tensor
product under complex interpolation (\cite{pisier}).

It is well-known that the space of completely bounded maps $\fC\fB(\col_d,\row_d)$ is isometrically
isomorphic to $\smat^2_d$.  Hence the identity $\id:\col_d\to\row_d$ has completely bounded
norm $d^{1/2}$.
We abbreviate this by writing that $\id:d^{1/2}\col_d\to\row_d$ is a complete contraction.

We realize $p$-direct sums through interpolation:  $\fV\oplus^p\fW=[\fV\oplus^\infty\fW,
\fV\oplus^1\fW]_{1/p}$, completely isometrically.

\section{The algebras $\pfal(G)$}

Let $G$ be a compact group.

\subsection{Definition and elementary properties of $\pfal(G)$}
Let $1\leq p\leq\infty$ and $p'$ denote the conjugate index so $\frac{1}{p}+\frac{1}{p'}=1$.
We consider on $\trig(G)$ the norm
\[
\norm{u}_{\pfal}=\sum_{\pi\in\what{G}}d_\pi^{1+\frac{1}{p'}}\norm{\hat{u}(\pi)}_p.
\]
Let $\pfal(G)$ denote the completion of $\trig(G)$ with respect to this norm, hence
we have isometric identification
\begin{equation}\label{eq:pfal}
\pfal(G)\cong\ell^1\text{-}\bigoplus_{\pi\in\what{G}}d_\pi^{1+\frac{1}{p'}}\smat_{d_\pi}^p.
\end{equation}
We may further put the canonical operator space structures on the individual spaces
$\smat_{d_\pi}^p$ and use operator space direct sum, thus 
making (\ref{eq:pfal}) a completely isometric identification.
We note that for any positive integer $d$, $(d\smat_d^\infty,\smat_d^1)$ forms a
natural compatible couple of operator spaces with interpolated spaces 
$[d\smat_d^\infty,\smat_d^1]_{1/p}=d^{1/p'}\smat_d^p$.  Thus since
the formal identity $d\smat_d^\infty\cong d^{1/2}\col_d\otimes^h d^{1/2}\row_d\hookrightarrow
\row_d\otimes^h\col_d\cong\smat_d^1$ is a complete contraction,
it follows standard operator interpolation theory
we get for $1\leq p\leq q\leq \infty$ completely contractive identity maps $d\smat_d^\infty
\hookrightarrow d^{1/q'}\smat_d^q\hookrightarrow d^{1/p'}\smat_d^p \hookrightarrow \smat_d^1$.
These give rise to completely contractive inclusions
\begin{equation}\label{eq:ccinclusions}
\fal^\infty(G)\subseteq\fal^q(G)\subseteq\pfal(G)\subseteq\fal(G).
\end{equation}
Hence each space is a space of continuous functions on $G$.

Let us see that each $\pfal(G)$ is a completely contractive Banach algebra. 
Hence we are justified in calling it the {\it $p$-Fourier algebra} of $G$.  We first observe
that the dual space is the direct product space
\begin{equation}\label{eq:pfaldual}
\pfal(G)^*\cong\ell^\infty\text{-}\bigoplus_{\pi\in\what{G}}d_\pi^{-\frac{1}{p'}}\smat_{d_\pi}^{p'}.
\end{equation}
Also, since the operator projective tensor product commutes with operator space direct sums
we have that
\begin{equation}\label{eq:pfaldualt}
\pfal(G)\hat{\otimes}\pfal(G)\cong\underset{\pi,\pi'\in\what{G}\times\what{G}}{\ell^1\text{-}\bigoplus}
(d_\pi d_{\pi'})^{1+\frac{1}{p'}}\smat_{d_\pi}^p\hat{\otimes}\smat_{d_{\pi'}}^p
\end{equation}
and hence we obtain
\begin{equation}\label{eq:pfaltpdual}  
(\pfal(G)\hat{\otimes}\pfal(G))^*\cong
\underset{\pi,\pi'\in\what{G}\times\what{G}}{\ell^\infty\text{-}\bigoplus}
(d_\pi d_{\pi'})^{-1/p'}\smat_{d_\pi}^{p'}\check{\otimes}\smat_{d_{\pi'}}^{p'}.
\end{equation}

\begin{theorem}\label{theo:ccBa}
The space $\pfal(G)$ is a completely contractive Banach algebra under pointwise multiplication.
\end{theorem}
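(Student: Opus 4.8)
The strategy is to show that pointwise multiplication extends to a complete contraction $m\colon\pfal(G)\hat{\otimes}\pfal(G)\to\pfal(G)$; this is exactly the assertion of the theorem. By operator-space duality — a weak$^*$-continuous map between dual operator spaces is a complete contraction precisely when its pre-adjoint is — it is enough, and far more convenient, to prove that the coproduct $M$ restricts to a complete contraction $\pfal(G)^*\to(\pfal(G)\hat{\otimes}\pfal(G))^*$. Once that is known, its pre-adjoint $M_*$ is a complete contraction $\pfal(G)\hat{\otimes}\pfal(G)\to\pfal(G)$ which, by the definition of $M$ on $\trig(G)$ and the density of $\trig(G)$ in $\pfal(G)$, agrees with pointwise multiplication; hence $\pfal(G)$ is a completely contractive Banach algebra. (That the completely contractive $M$ really is a pre-adjoint here, i.e.\ weak$^*$-continuous, follows from the density of $\trig(G)$ together with the Krein--Smulian theorem.)

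By (\ref{eq:pfaldual}) and (\ref{eq:pfaltpdual}) both $\pfal(G)^*$ and $(\pfal(G)\hat{\otimes}\pfal(G))^*$ are operator $\ell^\infty$-direct sums. Since $\mat_N(\,\cdot\,)$ commutes with operator $\ell^\infty$-direct sums, and a linear map into such a sum is completely contractive if and only if each of its coordinate components is, it suffices to control, for each fixed $(\pi,\pi')\in\what{G}\times\what{G}$, the $(\pi,\pi')$-component of $M$. By the coproduct formula (\ref{eq:coprod}) this component carries $(W_\sigma)_\sigma\in\ell^\infty\text{-}\bigoplus_{\sigma\subset\pi\otimes\pi'}d_\sigma^{-1/p'}\smat^{p'}_{d_\sigma}$ to $\bigoplus_{\sigma\subset\pi\otimes\pi'}U^*_{\sigma,\pi\otimes\pi'}W_\sigma U_{\sigma,\pi\otimes\pi'}$ (summed with multiplicity), the block-diagonal operator on $\fH_\pi\otimes^2\fH_{\pi'}$ acting as $W_\sigma$ on each $\sigma$-isotypic summand, which must lie completely contractively in $(d_\pi d_{\pi'})^{-1/p'}\smat^{p'}_{d_\pi}\check{\otimes}\smat^{p'}_{d_{\pi'}}$. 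Thus the whole theorem reduces to the family of inequalities
\[
\Bigl\|\bigoplus_{\sigma\subset\pi\otimes\pi'}U^*_{\sigma,\pi\otimes\pi'}W_\sigma U_{\sigma,\pi\otimes\pi'}\Bigr\|_{\mat_N(\smat^{p'}_{d_\pi}\check{\otimes}\,\smat^{p'}_{d_{\pi'}})}\ \leq\ (d_\pi d_{\pi'})^{1/p'}\max_{\sigma\subset\pi\otimes\pi'}d_\sigma^{-1/p'}\norm{W_\sigma}_{\mat_N(\smat^{p'}_{d_\sigma})},\qquad N\in\En.
\]

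I would establish these by complex interpolation in $p'$. Using $\smat^{p'}_d=[\smat^\infty_d,\smat^1_d]_{1/p'}$ together with the behaviour of weight-scalings and of operator $\ell^\infty$-direct sums under complex interpolation, it is enough to treat the two endpoints $p'=\infty$ (that is, $p=1$) and $p'=1$ (that is, $p=\infty$). The endpoint $p'=\infty$ is free: there $\smat^\infty_{d_\pi}\check{\otimes}\smat^\infty_{d_{\pi'}}$ is just $\fB(\fH_\pi\otimes^2\fH_{\pi'})$ with its C*-structure, all weights equal $1$, and $(W_\sigma)_\sigma\mapsto\bigoplus_\sigma U^*_\sigma W_\sigma U_\sigma$ is nothing but the normal unital $*$-homomorphism $\ell^\infty\text{-}\bigoplus_{\sigma\subset\pi\otimes\pi'}\fB(\fH_\sigma)\to\fB(\fH_\pi\otimes^2\fH_{\pi'})$ implementing the decomposition of $\pi\otimes\pi'$, hence completely contractive.

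The real content — and the step I expect to be the main obstacle — is the remaining endpoint $p'=1$, equivalently the assertion that $\fal^\infty(G)\cong\ell^1\text{-}\bigoplus_\pi d_\pi^2\smat^\infty_{d_\pi}$ is a completely contractive Banach algebra. Here $\smat^1_{d_\pi}\check{\otimes}\smat^1_{d_{\pi'}}$ is no longer a C*-algebra, the $*$-homomorphism argument fails, and one must bound its matrix norms on the above block-diagonal operator while producing exactly the dimensional factor $(d_\pi d_{\pi'})^{-1}$. The tool for this is the Haagerup-tensor machinery of the preliminaries: the factorizations $\smat^\infty_d=\col_d\otimes^h\row_d$ and $\smat^1_d=\row_d\otimes^h\col_d$, the interpolation-stability and functoriality of $\otimes^h$, and above all the fundamental complete contraction $\id\colon d^{1/2}\col_d\to\row_d$, which is precisely the device converting the isometric intertwiners $U_{\sigma,\pi\otimes\pi'}$ into the compensating factors $d_\sigma^{1/2},\,d_\pi^{-1/2},\,d_{\pi'}^{-1/2}$. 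As a guide to the mechanism — and as the Banach-space skeleton of the estimate — one first checks the bare inequality $\norm{uv}_{\pfal}\leq\norm{u}_{\pfal}\norm{v}_{\pfal}$ directly: expand $\what{uv}(\sigma)$ through the intertwiners, use that compression onto a block-diagonal is contractive on every $\smat^p$, apply H\"older with exponents $p$ and $p'$, and close with the dimension identity $\sum_{\sigma\subset\pi\otimes\pi'}d_\sigma=d_\pi d_{\pi'}$ (the sum counting multiplicities); the operator-space inequality displayed above is then the amplification of this.
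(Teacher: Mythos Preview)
Your overall framework coincides with the paper's: one dualizes and shows that the coproduct $M$ is a complete contraction $\pfal(G)^*\to(\pfal(G)\hat{\otimes}\pfal(G))^*$, which by (\ref{eq:pfaldual}) and (\ref{eq:pfaltpdual}) reduces to the family of blockwise inequalities you display. That much is exactly right.

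Where you diverge is in proposing to obtain those inequalities by interpolating between $p'=\infty$ and $p'=1$. This is a detour rather than a simplification, and as written it has a genuine gap. You correctly identify $p'=1$ as the substantive endpoint, but you do not actually prove it: you sketch the scalar H\"older argument and then assert that ``the operator-space inequality displayed above is then the amplification of this''. That is precisely the step that does not come for free --- the target is $\mat_N(\smat^1_{d_\pi}\check{\otimes}\,\smat^1_{d_{\pi'}})$, not a C*-algebra, and bounding matrix norms there is the whole point. Moreover, your interpolation requires knowing that the interpolated target $[\smat^\infty_d\check{\otimes}\smat^\infty_{d'},\,\smat^1_d\check{\otimes}\smat^1_{d'}]_{1/p'}$ embeds completely contractively into $\smat^{p'}_d\check{\otimes}\smat^{p'}_{d'}$; this is true (via bilinear interpolation for $\hat{\otimes}$ and duality), but you have not said so, and the injective tensor product is not in general interpolation-stable.

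The paper does not interpolate the map at all; it proves the displayed inequality directly for every $p$ in one pass. The three ingredients are:
\begin{itemize}
\item[(1)] the formal identity $\smat^{p'}_{d_\pi d_{\pi'}}\hookrightarrow\smat^{p'}_{d_\pi}\check{\otimes}\smat^{p'}_{d_{\pi'}}$ is a complete contraction --- equivalently $\smat^p_{d_\pi}\hat{\otimes}\smat^p_{d_{\pi'}}\hookrightarrow\smat^p_{d_\pi d_{\pi'}}$ is, which follows from the Haagerup factorisation $\smat^p_d=\col^p_d\otimes^h\row^p_d$ and the Effros--Ruan shuffle \cite[Thm.\ 6.1]{effrosr1};
\item[(2)] the block-diagonal embedding $\ell^{p'}\text{-}\bigoplus_{\sig\subset\pi\otimes\pi'}\smat^{p'}_{d_\sig}\hookrightarrow\smat^{p'}_{d_\pi d_{\pi'}}$ is a complete isometry (clear at $p'=1,\infty$, then interpolate);
\item[(3)] the formal identity $\mat_N(E)\oplus^{p'}\mat_N(F)\hookrightarrow\mat_N(E\oplus^{p'}F)$ is a contraction.
\end{itemize}
Chaining (1)--(3) and applying the elementary H\"older estimate $\sum_\sig\|\cdot\|^{p'}\le\bigl(\sum_\sig d_\sig\bigr)\sup_\sig d_\sig^{-1}\|\cdot\|^{p'}$ together with $\sum_{\sig\subset\pi\otimes\pi'}d_\sig=d_\pi d_{\pi'}$ gives the inequality at once. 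Notice that step~(1) already uses, at general $p$, exactly the Haagerup-tensor machinery you were reserving for the endpoint $p'=1$; once you have it there is no saving in passing through interpolation, and the direct argument is both shorter and complete.
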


\begin{proof}
It suffices to show that the coproduct $M$ of (\ref{eq:coprod}) 
takes $\fal(G)^*$ into $(\pfal(G)\hat{\otimes}\pfal(G))^*$, 
completely contractivley.  Now if $[W_{ij}]\in\mat_n(\pfal(G)^*)$ then we wish to estimate
\begin{equation}\label{eq:normgamw}
\norm{[M W_{ij}]}=\sup_{\pi,\pi'\in\what{G}\times\what{G}}(d_\pi d_{\pi'})^{-1/p'}
\norm{\left[\bigoplus_{\sig\subset\pi\otimes\pi'}U_{\sig,\pi\otimes\pi'}^*W_{ij,\sig}
U_{\sig,\pi\otimes\pi'}\right]}_{\mat_n(\smat_{d_\pi}^{p'}\check{\otimes}\smat_{d_{\pi'}}^{p'})}.
\end{equation}
Let us make three general observations.  

First, if $d$ and $d'$ be two positive integers,
the formal identity $\smat_{dd'}^{p'}\hookrightarrow\smat_d^{p'}\check{\otimes}\smat_{d'}^{p'}$ 
is a complete contraction.  Indeed, the desired map is the adjoint of the map 
$\smat_d^p\hat{\otimes}\smat_{d'}^p\hookrightarrow \smat_{d d'}^p$, which is completely 
contractive since we may recognize it as
\begin{align*}
\smat_d^p\hat{\otimes}\smat_{d'}^p
&\cong (\col_d^p\otimes^h\row_d^p)\hat{\otimes}(\col_{d'}^p\otimes^h\row_{d'}^p) \\
&\hookrightarrow (\col_d^p\hat{\otimes}\col_{d'}^p)\otimes^h(\row_d^p\hat{\otimes}\row_{d'}^p) 
\tag{$\dagger$} \\
&\hookrightarrow (\col_d^p\otimes^h\col_{d'}^p)\otimes^h(\row_d^p\otimes^h\row_{d'}^p) \\
&\cong \col_{dd'}^p\otimes^h\row_{dd'}^p\cong \smat_{dd'}^p
\end{align*}
where the complete contraction $c_1\otimes r_1\otimes c_2\otimes r_2\mapsto
c_1\otimes c_2\otimes r_1\otimes r_2$
at $(\dagger)$ is provided by \cite[Thm.\ 6.1]{effrosr1}. 

Second, if $d_1,\dots,d_m$ are positive integers and
$d=d_1+\dots+d_m$, then  the block-diagonal embedding of the operator space 
$\ell^{p'}\text{-}\bigoplus_{j=1}^m\smat_{d_j}^{p'}$ into $\smat_d^{p'}$ is a complete isometry.
Indeed when $p'=\infty$, this is a completely contractively complemented subspace, whence, by
duality, the same holds when $p'=1$.  The case for general $p$ then follows from the generalized 
Riesz-Thorin Theorem, using the facts that $\ell^{p'}\text{-}\bigoplus_{j=1}^m\smat_{d_j}^{p'}=
[\ell^\infty\text{-}\bigoplus_{j=1}^m\smat_{d_j}^\infty,\ell^1\text{-}\bigoplus_{j=1}^m\smat_{d_j}^1
]_{1/p'}$.  

Third, if $E$ and $F$ are operator spaces then the formal identity $\mat_n(E)\oplus^{p'}\mat_n(F)
\hookrightarrow \mat_n(E\oplus^{p'} F)$ is a contraction.  Indeed, this map is an isometry
if $p'=\infty$, and, thanks to the universal property of direct sums, is a contraction when $p'=1$.
The case for general $p'$ follows from the generalized Riesz-Thorin Theorem, using the facts that
$\mat_n(E)\oplus^{p'}\mat_n(F)=[\mat_n(E)\oplus^\infty\mat_n(F),\mat_n(E)\oplus^1\mat_n(F)
]_{1/p'}$ and $\mat_n(E\oplus^{p'} F)=[\mat_n(E\oplus^\infty F),\mat_n(E\oplus^1 F)]_{1/p'}$.

Using the three observations above, in order, and then a rudimentary H\"{o}lder estimate we obtain
\begin{align*}
&\norm{\left[\bigoplus_{\sig\subset\pi\otimes\pi'}U_{\sig,\pi\otimes\pi'}^*W_\sig
U_{\sig,\pi\otimes\pi'}\right]}_{\mat_n(\smat_{d_\pi}^{p'}\check{\otimes}\smat_{d_{\pi'}}^{p'})} \\
&\qquad \leq 
\norm{\left[\bigoplus_{\sig\subset\pi\otimes\pi'}^*U_{\sig,\pi\otimes\pi'}W_{ij,\sig}
U_{\sig,\pi\otimes\pi'}\right]}_{\mat_n(\smat_{d_\pi d_{\pi'}}^{p'})} \\
&\qquad =\norm{\bigoplus_{\sig\subset\pi\otimes\pi'}[W_{ij,\sig}]}_{\mat_n(\ell^{p'}\text{-}
\bigoplus_{\sig\subset\pi\otimes\pi'}\smat_{d_\sig}^{p'})} \\
&\qquad \leq \left(\sum_{\sig\subset\pi\otimes\pi'}\norm{[W_{ij,\sig}]
}_{\mat_n(\smat_{d_\sig}^{p'}}^{p'})\right)^{1/p'} \\
&\qquad \leq \left(\sum_{\sig\subset\pi\otimes\pi'}d_\sig\cdot\sup_{\tau\subset\pi\otimes\pi'}
\frac{1}{d_\tau}\norm{[W_{ij,\tau}]}_{\mat_n(\smat_{d_\tau}^{p'})}^{p'}\right)^{1/p'} \\
&\qquad = (d_\pi d_{\pi'})^{1/p'}
\sup_{\tau\subset\pi\otimes\pi'}d_\tau^{-1/p'}\norm{[W_{ij,\tau}]}_{\mat_n(\smat_{d_\tau}^{p'})}.
\end{align*}
It follows that the quantity of (\ref{eq:normgamw}) is no greater than
\begin{equation}\label{eq:normgamw1}
\sup_{\tau\in\what{G}}d_\tau^{-1/p'}\norm{[W_{ij,\tau}]}_{\mat_n(\smat_{d_\tau}^{p'})}
=\norm{[W_{ij}]}_{\mat_n(\pfal(G)^*)}.
\end{equation}
Hence $M$ satisfies the desired complete contractivity property.
\end{proof}

We shall also make use of the {\it $p$-Beurling-Fourier algebras} which we define below.
As defined in 
\cite{lees,ludwigst}, a {\it weight} is a function $\ome:\what{G}\to\Ree^{>0}$
which satisfies
\[
\ome(\sig)\leq\ome(\pi)\ome(\pi')\text{ whenever }\sig\subset\pi\otimes\pi'.
\]
We will always assume that $\ome$ is {\it bounded away from zero}:
\[
\inf_{\pi\in\what{G}}\ome(\pi)>0.
\]
Notice that this is automatic if $\ome$ is {\it symmetric}, i.e.\  $\ome(\bar{\pi})=\ome(\pi)$; indeed
$\pi\otimes\bar{\pi}\supseteq 1$ so $\ome(\pi)\geq\ome(1)^{1/2}$ in this case.  We let
\begin{equation}\label{eq:wpfal}
\pfal(G,\ome)=\ell^1\text{-}\bigoplus_{\pi\in\what{G}}\ome(\pi)d_\pi^{1+\frac{1}{p'}}\smat^p_{d_\pi}.
\end{equation}
As before we give this the weighted direct sum operator space structure, with
usual interpolated structure on each $\smat^p_{d_\pi}$.  Boundedness away from zero of $\ome$ ensures
that $\pfal(G,\ome)\hookrightarrow\pfal(G)$, completely boundedly -- completely contractively
provided $\inf_{\pi\in\what{G}}\ome(\pi)\geq 1$.

\begin{corollary}\label{cor:ccBaw}
For any weight $\ome$, 
the space $\pfal(G,\ome)$ is a completely contractive Banach algebra under pointwise multiplication.
\end{corollary}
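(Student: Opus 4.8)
The plan is to mimic the proof of Theorem \ref{theo:ccBa}, tracking the weight $\ome$ through every estimate. Concretely, I would show that the coproduct $M$ of (\ref{eq:coprod}) carries $\pfal(G,\ome)^*$ into $(\pfal(G,\ome)\hat{\otimes}\pfal(G,\ome))^*$ completely contractively. From (\ref{eq:wpfal}) one reads off the dual as $\pfal(G,\ome)^*\cong\ell^\infty\text{-}\bigoplus_{\pi\in\what{G}}\ome(\pi)^{-1}d_\pi^{-1/p'}\smat_{d_\pi}^{p'}$, and, since operator projective tensor products distribute over operator direct sums, the predual of the target is $\ell^1\text{-}\bigoplus_{\pi,\pi'}\ome(\pi)\ome(\pi')(d_\pi d_{\pi'})^{1+1/p'}\smat_{d_\pi}^p\hat{\otimes}\smat_{d_{\pi'}}^p$, with dual $\ell^\infty\text{-}\bigoplus_{\pi,\pi'}(\ome(\pi)\ome(\pi'))^{-1}(d_\pi d_{\pi'})^{-1/p'}\smat_{d_\pi}^{p'}\check{\otimes}\smat_{d_{\pi'}}^{p'}$. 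So for $[W_{ij}]\in\mat_n(\pfal(G,\ome)^*)$ the quantity to estimate is exactly (\ref{eq:normgamw}) with the extra scalar factor $(\ome(\pi)\ome(\pi'))^{-1}$ out front.

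The three operator-space observations in the proof of Theorem \ref{theo:ccBa} are purely about the spaces $\smat_d^{p'}$, $\check{\otimes}$, direct sums and matrix amplifications, and involve no weights, so I would invoke them verbatim. The only place the weight enters the chain of inequalities is the final Hölder-type step: there one bounds $\sum_{\sig\subset\pi\otimes\pi'}d_\sig\cdot\sup_{\tau\subset\pi\otimes\pi'}\frac{1}{d_\tau}\norm{[W_{ij,\tau}]}^{p'}_{\mat_n(\smat_{d_\tau}^{p'})}$ by factoring out the supremum and using $\sum_{\sig\subset\pi\otimes\pi'}d_\sig=d_\pi d_{\pi'}$. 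I would instead factor out $\sup_{\tau\subset\pi\otimes\pi'}\ome(\tau)^{-p'}d_\tau^{-1}\norm{[W_{ij,\tau}]}^{p'}_{\mat_n(\smat_{d_\tau}^{p'})}$, again bounded crudely by the global supremum defining $\norm{[W_{ij}]}_{\mat_n(\pfal(G,\ome)^*)}$, and use $\sum_{\sig\subset\pi\otimes\pi'}d_\sig\le\ome(\pi)^{p'}\ome(\pi')^{p'}\sum_{\sig\subset\pi\otimes\pi'}d_\sig\cdot\ome(\sig)^{-p'}\cdot\ome(\sig)^{p'}$ — more to the point, the submultiplicativity $\ome(\sig)\le\ome(\pi)\ome(\pi')$ for $\sig\subset\pi\otimes\pi'$ gives $\ome(\sig)^{-p'}\ge(\ome(\pi)\ome(\pi'))^{-p'}$ is the wrong direction, so instead I keep the weight attached: bound $\bigl(\sum_\sig\ome(\sig)^{-p'}d_\sig^{-1}\norm{[W_{ij,\sig}]}^{p'}d_\sig\bigr)^{1/p'}$ by $\bigl(\sum_\sig d_\sig\bigr)^{1/p'}$ times the sup, yielding $(d_\pi d_{\pi'})^{1/p'}\sup_{\tau}\ome(\tau)^{-1}d_\tau^{-1/p'}\norm{[W_{ij,\tau}]}$; then the leftover factor $(d_\pi d_{\pi'})^{1/p'}$ cancels against $(d_\pi d_{\pi'})^{-1/p'}$ and the factor $(\ome(\pi)\ome(\pi'))^{-1}$ only helps since $\ome$ is bounded away from zero is not even needed here — $(\ome(\pi)\ome(\pi'))^{-1}\le 1$ is false in general, but $\sup_\tau\ome(\tau)^{-1}(\cdots)$ already dominates after noting $\ome(\tau)^{-1}\le\ome(\pi)\ome(\pi')\cdot(\ome(\pi)\ome(\pi'))^{-1}\cdot\ome(\tau)^{-1}$; cleanest is to keep $\ome(\sig)^{-p'}$ inside the sum and pull out $\sup_\tau\ome(\tau)^{-p'}d_\tau^{-1}\norm{[W_{ij,\tau}]}^{p'}$, leaving $\sum_\sig d_\sig$, so the claim follows with the factor $(\ome(\pi)\ome(\pi'))^{-1}$ simply discarded at the end since it is at most $\ome(\sig)^{-1}$-comparable. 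I would write this step carefully.

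So the one genuine point requiring the weight axiom is pulling the supremum over $\tau\subset\pi\otimes\pi'$ out of the $\ell^{p'}$-sum while retaining the weight factor $\ome(\tau)^{-1}$ attached to each term, and matching it against the weight $\ome(\sig)$ sitting (via submultiplicativity) below $\ome(\pi)\ome(\pi')$. I expect this bookkeeping of weight exponents — keeping $\ome(\sig)^{-p'}$ inside the sum, bounding it below by nothing and instead absorbing it into the supremum — to be the only delicate part; everything else is a transcription of the unweighted argument. Alternatively, and more slickly, I could deduce the corollary from Theorem \ref{theo:ccBa} by interpolation/change-of-weight, but since the direct computation is so short I would just present it. Formally:

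\begin{proof}
We repeat the argument of Theorem \ref{theo:ccBa}, carrying the weight. From (\ref{eq:wpfal}) we have $\pfal(G,\ome)^*\cong\ell^\infty\text{-}\bigoplus_{\pi\in\what{G}}\ome(\pi)^{-1}d_\pi^{-1/p'}\smat_{d_\pi}^{p'}$, and, since $\hat{\otimes}$ distributes over operator direct sums, $(\pfal(G,\ome)\hat{\otimes}\pfal(G,\ome))^*\cong\ell^\infty\text{-}\bigoplus_{\pi,\pi'}(\ome(\pi)\ome(\pi'))^{-1}(d_\pi d_{\pi'})^{-1/p'}\smat_{d_\pi}^{p'}\check{\otimes}\smat_{d_{\pi'}}^{p'}$. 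It suffices to show $M$ of (\ref{eq:coprod}) maps $\pfal(G,\ome)^*$ into this target completely contractively. For $[W_{ij}]\in\mat_n(\pfal(G,\ome)^*)$, applying the three operator-space observations of Theorem \ref{theo:ccBa} verbatim gives, for each pair $\pi,\pi'$,
\[
\norm{\left[\bigoplus_{\sig\subset\pi\otimes\pi'}U_{\sig,\pi\otimes\pi'}^*W_{ij,\sig}
U_{\sig,\pi\otimes\pi'}\right]}_{\mat_n(\smat_{d_\pi}^{p'}\check{\otimes}\smat_{d_{\pi'}}^{p'})}
\leq\left(\sum_{\sig\subset\pi\otimes\pi'}\norm{[W_{ij,\sig}]}_{\mat_n(\smat_{d_\sig}^{p'})}^{p'}\right)^{1/p'}.
\]
Writing $\norm{[W_{ij,\sig}]}_{\mat_n(\smat_{d_\sig}^{p'})}^{p'}=d_\sig\cdot\ome(\sig)^{p'}\cdot\left(\ome(\sig)^{-1}d_\sig^{-1/p'}\norm{[W_{ij,\sig}]}_{\mat_n(\smat_{d_\sig}^{p'})}\right)^{p'}$ and using $\ome(\sig)\leq\ome(\pi)\ome(\pi')$ for $\sig\subset\pi\otimes\pi'$ together with $\sum_{\sig\subset\pi\otimes\pi'}d_\sig=d_\pi d_{\pi'}$, the right side is at most
\[
\ome(\pi)\ome(\pi')\left(\sum_{\sig\subset\pi\otimes\pi'}d_\sig\right)^{1/p'}
\sup_{\tau\in\what{G}}\ome(\tau)^{-1}d_\tau^{-1/p'}\norm{[W_{ij,\tau}]}_{\mat_n(\smat_{d_\tau}^{p'})}
=\ome(\pi)\ome(\pi')(d_\pi d_{\pi'})^{1/p'}\norm{[W_{ij}]}_{\mat_n(\pfal(G,\ome)^*)}.
\]
Multiplying by $(\ome(\pi)\ome(\pi'))^{-1}(d_\pi d_{\pi'})^{-1/p'}$ and taking the supremum over $\pi,\pi'$ shows $\norm{[MW_{ij}]}\leq\norm{[W_{ij}]}_{\mat_n(\pfal(G,\ome)^*)}$, as required.
\end{proof}
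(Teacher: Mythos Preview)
Your formal proof is correct and follows essentially the same route as the paper: repeat the three operator-space observations from Theorem~\ref{theo:ccBa} verbatim, then use the submultiplicativity $\ome(\sig)\le\ome(\pi)\ome(\pi')$ for $\sig\subset\pi\otimes\pi'$ to absorb the weight. The only cosmetic difference is placement: the paper first runs the whole unweighted chain through to the final supremum $\sup_{\tau\subset\pi\otimes\pi'}d_\tau^{-1/p'}\norm{[W_{ij,\tau}]}$ and then applies $\frac{1}{\ome(\pi)\ome(\pi')}\le\frac{1}{\ome(\tau)}$ at the very end, whereas you insert $\ome(\sig)^{p'}/\ome(\sig)^{p'}$ inside the $\ell^{p'}$-sum one step earlier; both arrive at the same bound.
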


\begin{proof}
We make the obvious changes to (\ref{eq:pfaltpdual}) and hence to (\ref{eq:normgamw}).  
Then the computation follows exactly as in the proof of the last theorem.   
In place of (\ref{eq:normgamw1}) we obtain
\begin{align*}
\sup_{\pi,\pi'\in\what{G}\times\what{G}}
&\frac{1}{\ome(\pi)\ome(\pi')}\sup_{\tau\subset\pi\otimes\pi'}d_\tau^{-1/p'}
\norm{[W_{ij,\tau}]}_{\smat_{d_\tau}^{p'}} \\
&\leq \sup_{\pi,\pi'\in\what{G}\times\what{G}}\sup_{\tau\subset\pi\otimes\pi'}
\frac{1}{\ome(\tau)d_\tau^{1/p'}}\norm{[W_{ij,\tau}]}_{\smat_{d_\tau}^{p'}} \\
&= \sup_{\tau\in\what{G}}\frac{1}{\ome(\tau)d_\tau^{1/p'}}\norm{[W_{ij,\tau}]}_{\smat_{d_\tau}^{p'}}
=\norm{[W_{ij}]}_{\mat_n(\pfal(G,\ome)^*)}.
\end{align*}
\end{proof}

Let us close this section by noting that the algebras $\fal(G)$ are an interpolation scale,
$\pfal(G)=[\fal^\infty(G),\fal(G)]_{1/p}$.  Let us explain this fact, briefly, and generalize it.
Let $\ome$ and $\til{\ome}$ each be weights on $\what{G}$.  We remark that complex interpolation
is isometrically stable for completely contractively complemented subspaces.  
See, for example, \cite[2.7.6]{pisier1}.  Hence,
using (\ref{eq:wpfal}) we see that
\begin{align}\label{eq:interpolatedwpfal}
[\fal^\infty(G,\ome),\fal(G,\til{\ome})]_{1/p}
&=\ell^1\text{-}\bigoplus_{\pi\in\what{G}}d_\pi\left[d_\pi\ome(\pi)\smat^\infty_{d_\pi},
\til{\ome}(\pi)\smat^1_{d_\pi}\right]_{1/p} \notag \\
&=\ell^1\text{-}\bigoplus_{\pi\in\what{G}}d_\pi^{1+\frac{1}{p'}}
\ome(\pi)^{1/p'}\til{\ome}(\pi)^{1/p}\smat^p_{d_\pi} \\
&=\pfal(G,\ome^{1/p'}\til{\ome}^{1/p}). \notag
\end{align}
It is evident that positive powers of single weights, and products of multiple weights, remain weights.

\subsection{Spectrum of $\pfal(G)$}\label{ssec:spectrum}
We first find it desirable to compute the spectra of a certain class of Beurling-Fourier algebras.
Let $\alp>0$ and $d^\alp:\what{G}\to\Cee$ be the $\alp$-power of the dimension
weight: $d^\alp(\pi)=d_\pi^\alp$.

We shall also make use of the basic polynomial weights for a Lie group $G$.  In this case $\what{G}$
is finitely generated, i.e.\ there is a finite $S\subset\what{G}$ for which $\bigoplus_{\pi\in S}\pi$
is faithful.  We may and shall suppose that $S$ is symmetric:  
$\pi\in S$ implies $\bar{\pi}\in S$.
Let $S^{\otimes n}=\{\sig\in\what{G}:\sig\subset\pi_1\otimes\dots\otimes\pi_n:\pi_1,\dots,\pi_n\in S\}$
and $S^{\otimes 0}=\{1\}$.
Then $S$ generates $\hat{G}$ in the sense that $\what{G}=\bigcup_{n=1}^\infty S^{\otimes n}$.
See \cite[\S 4.2]{ludwigst} for details.  It is easy to check that  $\tau_S(\sig)=\min\{n:\sig\in S^{\otimes n}\}$
is subadditive.   We let $\ome_S^\alp(\pi)=(1+\tau_S(\pi))^\alp$, for $\alp\geq 0$,
and call these the {\it polynomial weights}.
Given another symmetric generating set $S'$, it is each to see that for some constants $c,C$
that $c\ome_S\leq \ome_{S'}\leq C\ome_S$, so all polynomial weights are equivalent.

Suppose now that $G$ is a general compact group.  For any finite symmetric set $S\subset\what{G}$
we may consider $\tau_S:\langle S\rangle\to\Ree^{\geq 0}$
to be defined as above, and hence define a weight $\ome_S^\alp:\langle S\rangle\to\Ree^{>0}$, as above.
Notice that $\ome_S^\alp$ is really a weight on $\what{G/N_S}\circ q$ where 
$N_S=\bigcup_{\pi\in S}\ker\pi$ and $q:G\to G/N_S$ is the quotient map.
A weight $\ome$ on a general compact group $G$ is called
{\it weakly polynomial} if for any $S$ as above, there is a constant $C_S$ and $\alp_S\geq 0$ for which
\[
\ome|_{\langle S\rangle}\leq C_S\ome_S^{\alp_S}.
\]  
(For a connected group, this was termed a 
``polynomial weight" in \cite[Def.\ 5.1]{ludwigst}.  Compare the results \cite[(5.5) \& Thm.\ 5.4]{ludwigst}.)
Note that if $G$ is totally disconnected, then any weight is weakly polynomial.  Indeed, any
finite subset $S$ of $\what{G}$ has that $N_S$ is open, so $\langle S\rangle$ is finite.

\begin{proposition}\label{prop:specfalpol}
If $\ome$ is a symmetric weakly polynomial weight on $\what{G}$, then $\fal(G,\ome)$ has spectrum $G$.
\end{proposition}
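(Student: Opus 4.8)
The plan is to identify the Gelfand spectrum of $\fal(G,\ome)$ with $G$ by showing two things: every point evaluation $\lam(s)$, $s\in G$, is a nonzero multiplicative functional, and conversely every nonzero multiplicative functional on $\fal(G,\ome)$ is a point evaluation. The first direction is essentially formal: for $s$ in $G$, the functional $u\mapsto u(s)=\langle u,\lam(s)\rangle$ is multiplicative on $\trig(G)$ since $\lam(s)$ is group-like, i.e.\ $M\lam(s)=\lam(s)\otimes\lam(s)$; and it is bounded on $\fal(G,\ome)$ because $\pi(s)$ is unitary, so $\norm{\lam(s)}_{\fal(G,\ome)^*}=\sup_{\pi}\ome(\pi)^{-1}d_\pi^{-1/p'}\norm{\pi(s)}_{\smat^\infty_{d_\pi}}\le\sup_\pi\ome(\pi)^{-1}<\infty$ (using that $\ome$ is bounded away from zero; here $p=1$ so $p'=\infty$ and $d_\pi^{-1/p'}=1$). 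Distinct points of $G$ give distinct functionals since $\trig(G)$, hence $\fal(G)\supseteq\fal(G,\ome)$, separates points of $G$ (Peter--Weyl).

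The substantive direction is that there are no other characters. Let $\varphi$ be a nonzero multiplicative linear functional on $\fal(G,\ome)$; then $\varphi\in\fal(G,\ome)^*$, so $\varphi=(T_\pi)_{\pi\in\what{G}}\in\prod_\pi\mat_{d_\pi}$ with $\sup_\pi\ome(\pi)^{-1}d_\pi^{-1/p'}\norm{T_\pi}_{p'}<\infty$, and multiplicativity translates via the coproduct formula $(\ref{eq:coprod})$ into the system of relations
\[
U_{\sig,\pi\otimes\pi'}^*(T_\pi\otimes T_{\pi'})U_{\sig,\pi\otimes\pi'}=T_\sig\quad\text{whenever }\sig\subset\pi\otimes\pi',
\]
obtained by pairing $\varphi$ against $u\otimes v$ for $u\in\trig_\pi$, $v\in\trig_{\pi'}$. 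The standard argument (as for $\fal(G)$ itself, cf.\ \cite{stinespring}) is: first use these relations to show $\norm{T_\pi}_{\smat^\infty_{d_\pi}}\le 1$ for all $\pi$ --- this is the key self-improvement, deriving uniform boundedness from the a priori weighted bound together with the submultiplicativity encoded in the relations and the submultiplicativity of $\ome$; then, running the relations $\sig\subset\pi\otimes\bar\pi$ with $\sig=1$ shows $\operatorname{tr}(T_\pi T_{\bar\pi}^{\,t})$-type identities forcing each $T_\pi$ to be unitary (a contractive element whose "inverse" is also contractive), so that $(T_\pi)_\pi$ is a unitary representation of $\what{G}$ in the appropriate sense; finally, such a family is weak$^\dagger$-continuous and group-like, so by Tannaka--Kre\u{\i}n / the density of $\spn\lam(G)$ it must equal $\lam(s)$ for a unique $s\in G$.

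The main obstacle is precisely the passage from the weighted a priori estimate to the uniform bound $\norm{T_\pi}_\infty\le 1$: for the unweighted Fourier algebra one has $\norm{T_\pi}_\infty\le\norm{\varphi}=1$ directly, but here one only knows $\norm{T_\pi}_{p'}\le C\ome(\pi)d_\pi^{1/p'}$, and one must exploit the tensor relations to tame the growth. This is where the hypotheses that $\ome$ is \emph{symmetric} and \emph{weakly polynomial} enter: restricting to the subalgebra generated by a finite symmetric $S\subset\what{G}$, one has $\ome|_{\langle S\rangle}\le C_S\ome_S^{\alp_S}$ with at most polynomial growth in $\tau_S$, and iterating the relation $T_\sig=U^*(T_{\pi_1}\otimes\cdots\otimes T_{\pi_n})U$ for $\sig\in S^{\otimes n}$ gives $\norm{T_\sig}_\infty\le(\max_{\pi\in S}\norm{T_\pi}_\infty)^n$; combined with a polynomially-bounded $\ome$ and a spectral-radius/root-extraction argument (replacing $\varphi$ by evaluating on high tensor powers and taking $n$th roots), the polynomial factor washes out and forces $\max_{\pi\in S}\norm{T_\pi}_\infty\le 1$. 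Since every $\pi$ lies in some such $\langle S\rangle$, we conclude $\norm{T_\pi}_\infty\le1$ for all $\pi$, and the rest of the argument proceeds as above; letting $S$ vary and using that the resulting point $s\in G/N_S$ is consistent across $S$ pins down a single $s\in G$.
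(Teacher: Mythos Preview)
Your overall strategy---reduce to showing $\norm{T_\pi}_{\smat^\infty_{d_\pi}}\le 1$ for every $\pi$ via a root-extraction argument exploiting the weakly polynomial hypothesis, then appeal to Tannaka--Kre\u{\i}n (or simply to the known spectrum of the unweighted $\fal(G)$)---is exactly the idea behind the paper's proof. The paper packages the root-extraction step by citing \cite[Prop.\ 4.19]{ludwigst}, which asserts that $\fal(G,\ome)$ has spectrum $G$ whenever $\bigl(\sup_{\sig\subset\pi^{\otimes n}}\ome(\sig)\bigr)^{1/n}\to 1$ for each $\pi$; the paper then verifies this criterion in two lines using $S=\{\pi,\bar\pi\}$ and the weakly polynomial bound. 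So you are unpacking precisely what the paper delegates to a reference.

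There is, however, a concrete error in your write-up at the key step. You record the inequality
\[
\norm{T_\sig}_\infty\le\bigl(\max_{\pi\in S}\norm{T_\pi}_\infty\bigr)^n,\qquad \sig\in S^{\otimes n},
\]
and then claim that combining this with $\norm{T_\sig}_\infty\le\ome(\sig)\le C_S(1+n)^{\alp_S}$ yields $\max_{\pi\in S}\norm{T_\pi}_\infty\le 1$. But two upper bounds on the same quantity cannot be combined this way; the inequality you wrote goes in the wrong direction for the root-extraction argument. What you need is the \emph{reverse} estimate, which follows from the block-diagonal \emph{equality} encoded in multiplicativity and (\ref{eq:coprod}): for a character $T$ one has $MT=T\otimes T$, hence
\[
T_\pi^{\otimes n}=\bigoplus_{\sig\subset\pi^{\otimes n}} U_{\sig,\pi^{\otimes n}}^*\, T_\sig\, U_{\sig,\pi^{\otimes n}}
\]
as a block-diagonal operator, so that
\[
\norm{T_\pi}_\infty^n=\norm{T_\pi^{\otimes n}}_\infty=\max_{\sig\subset\pi^{\otimes n}}\norm{T_\sig}_\infty\le\max_{\sig\subset\pi^{\otimes n}}\ome(\sig)\le C_S(1+n)^{\alp_S}.
\]
Taking $n$th roots and letting $n\to\infty$ now gives $\norm{T_\pi}_\infty\le 1$, which is the desired self-improvement. (Here the bound $\norm{T_\sig}_\infty\le\ome(\sig)$ uses that characters on unital Banach algebras have norm $1$.)

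Once $\sup_\pi\norm{T_\pi}_\infty\le 1$ is established, your remaining outline is fine; indeed you can shortcut the Tannaka--Kre\u{\i}n portion by observing that $\varphi$ then extends to a bounded character on the unweighted $\fal(G)$, whose spectrum is $G$ by \cite{eymard}. The projective-limit argument over $G/N_S$ is unnecessary here.
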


\begin{proof}
In \cite[Prop.\ 5.5]{ludwigst} this is shown for connected groups.  Let us adapt the proof for general $G$.
Fix $\pi$ in $\what{G}$, and let $S=\{\pi,\bar{\pi}\}$.  Then for any $\sig\subset\pi^{\otimes n}$,
the definition of $\tau_S$ provides that $\tau_S(\sig)\leq n$, hence $\ome_S(\sig)\leq
1+n$.   Hence using our assumption that $\ome$ is weakly polynomial we have
\[
\left(\sup_{\sig\subset\pi^{\otimes n}}\ome(\sig)\right)^{1/n}
\leq \left(\sup_{\sig\subset\pi^{\otimes n}}C_S\ome_S(\sig)^{\alp_S}\right)^{1/n}
\leq C_S^{1/n}(1+n)^{\alp_S/n}\overset{n\to\infty}{\longrightarrow} 1.
\]
The result now follows from \cite[Prop.\ 4.19]{ludwigst}.
\end{proof}

If $\ome$ is a weight on $\what{G}$ and $H$ is a closed subgroup, we follow
\cite[\S 4.1]{ludwigst} or \cite[Prop.\ 3.5]{lees} and define the {\it restricted weight} 
$\ome_G|_H$ on $\what{H}$ by
\begin{equation}\label{eq:restweight}
\ome_G|_H(\sig)=\inf\{\ome(\pi):\pi\in\what{G},\sig\subset\pi|_{H}\}
\end{equation}
where $\pi|_H$ refers to the restricted representation.

\begin{remark}\label{rem:regularity}
For a connected Lie group $G$, and a weakly polynomial weight,
$\fal(G,\ome^\alp_S)$ is shown to be regular in \cite[Thm.\ 5.11]{ludwigst}.  
We do not know how to extend this result to non-connected groups.  
For us to do this, it would be sufficient to see that for a Lie group $G$, with symmetric generating
set $S$ for $\what{G}$, that $(\ome_S)_G|_{G_e}$ is weakly polynomial on $\what{G_e}$.
We observe, however, that the easy estimate to show is in the wrong direction.

Let $S_e=\{\sig\in\what{G_e}:\sig\subset\pi|_{G_e}\text{ for some }\pi\text{ in }S\}$.
If $\sig\subset\pi|_{G_e}$, where $\tau_S(\pi)=n$, then there are $\pi_1,\dots,\pi_n$ in $S$
for which $\sig\subset\pi_1\otimes\dots\otimes\pi_n|_{G_e}=\pi_1|_{G_e}\otimes\dots\otimes\pi_n|_{G_e}$.
It follows that $\tau_{S_e}(\sig)\leq n=\tau_S(\pi)$.  Hence
\begin{equation}\label{eq:polyest}
\ome_{S_e}(\sig)\leq(\ome_S)_G|_{G_e}(\sig).
\end{equation}
\end{remark}

We shift our attention to dimension weights.  We can obtain regularity.

\begin{proposition}\label{prop:specfald}
{\bf (i)} The Gelfand spectrum of $\fal(G,d^\alp)$ is $G$.

{\bf (ii)} The algebra $\fal(G,d^\alp)$ is regular on $G$.
\end{proposition}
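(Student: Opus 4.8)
The plan is to treat the two statements by essentially independent arguments that share one ingredient from the representation theory of compact Lie groups.

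\textbf{Part (i).} First note that $d^\alp$ really is a symmetric weight: $d_{\bar\pi}=d_\pi$, and $d_\sig\leq d_\pi d_{\pi'}$ whenever $\sig\subset\pi\otimes\pi'$; in particular $\inf_\pi d^\alp(\pi)\geq 1$. I would then mimic the end of the proof of Proposition~\ref{prop:specfalpol} and verify the hypothesis of \cite[Prop.\ 4.19]{ludwigst}, namely that $\bigl(\sup_{\sig\subset\pi^{\otimes n}}d_\sig^\alp\bigr)^{1/n}\to 1$ for every $\pi\in\what G$. Since every constituent of $\pi^{\otimes n}$ is trivial on $\ker\pi$, and $\pi$ is faithful on $G/\ker\pi$ (so the latter is a compact Lie group), the task reduces to a polynomial bound $\sup_{\sig\subset\pi^{\otimes n}}d_\sig=O(n^{c})$: the highest weight of any such $\sig$ is a sum of $n$ weights of $\pi$, hence has norm at most $n$ times the norm of the highest weight of $\pi$, and the Weyl dimension formula (applied on the identity component of $G/\ker\pi$, with a finite-index correction for non-connectedness) then bounds $d_\sig$ polynomially in $n$. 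Thus $\bigl(\sup_{\sig\subset\pi^{\otimes n}}d_\sig^\alp\bigr)^{1/n}=O\!\bigl(n^{\alp c/n}\bigr)\to 1$, and \cite[Prop.\ 4.19]{ludwigst} gives spectrum $G$. Equivalently, the same estimate shows $d^\alp$ is a symmetric weakly polynomial weight, so Proposition~\ref{prop:specfalpol} applies directly.

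\textbf{Part (ii).} Observe that $\fal(G,d^\alp)$ is isometrically translation invariant, since translating $u$ merely multiplies each $\hat u(\pi)$ by a unitary and so does not change $\norm{\hat u(\pi)}_{\smat^1_{d_\pi}}$. By part (i), together with the inclusion $\fal(G,d^\alp)\hookrightarrow\fal(G)\subseteq\fC(G)$ noted after (\ref{eq:wpfal}), the algebra $\fal(G,d^\alp)$ is realized via its Gelfand transform as a subalgebra of $\fC(G)$ with spectrum $G$. Hence regularity reduces, after a translation, to the following: for every closed $F\subseteq G$ with $e\notin F$ there is $u\in\fal(G,d^\alp)$ with $u|_F=0$ and $u(e)\neq 0$.

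To produce such a $u$, choose a symmetric open neighbourhood $W$ of $e$ with $WW\cap F=\emptyset$. Since $G$ is the inverse limit of its compact Lie quotients, a standard compactness argument yields a closed normal subgroup $N\subseteq W$ with $G/N$ a compact Lie group; write $q\colon G\to G/N$. The crucial point is that $\fC^\infty(G/N)\subseteq\fal(G/N,d^\alp)$: for $v\in\fC^\infty(G/N)$, iterating the Laplace--Beltrami operator $\Del$ of a bi-invariant metric gives $\widehat{\Del^j v}(\pi)=(-\lam_\pi)^j\hat v(\pi)$, where $\lam_\pi\asymp\norm{\mu_\pi}^2$ is the Casimir eigenvalue (a scalar on $\trig_\pi$ by Schur's lemma), whence $\norm{\hat v(\pi)}_\infty\leq\norm{\Del^j v}_\infty\lam_\pi^{-j}$ for every $j$; combining this with $\norm{\hat v(\pi)}_{\smat^1_{d_\pi}}\leq d_\pi\norm{\hat v(\pi)}_\infty$, the Weyl-formula bound $d_\pi\leq C\norm{\mu_\pi}^{c}$, and the polynomial growth of $\#\{\pi:\lam_\pi\leq T\}$, one sees that for $j$ large $\sum_\pi d_\pi^{1+\frac1{p'}}\norm{\hat v(\pi)}_{\smat^1_{d_\pi}}<\infty$ (here $p'=\infty$). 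Now pick $v\in\fC^\infty(G/N)$ with $\supp v\subseteq q(W)$ and $v(eN)\neq 0$, and set $u=v\circ q$. Since the $\fal(G,d^\alp)$-norm of a function constant on $N$-cosets involves only $\pi\in\what{G/N}$, with the same dimensions and Fourier coefficients, the map $v\mapsto v\circ q$ is an isometric algebra embedding $\fal(G/N,d^\alp)\hookrightarrow\fal(G,d^\alp)$, so $u\in\fal(G,d^\alp)$; moreover $\supp u\subseteq q^{-1}(q(W))=WN\subseteq WW$ is disjoint from $F$, and $u(e)=v(eN)\neq 0$.

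\textbf{Main obstacle.} The substantive step is the inclusion $\fC^\infty(G/N)\subseteq\fal(G/N,d^\alp)$ — equivalently, the fact that $\fal(G,d^\alp)$ still contains enough rapidly decaying, hence locally supported, functions — and it rests precisely on the Weyl dimension formula and eigenvalue-counting estimates that already underlie part (i). The remaining ingredients (isometric translation invariance, the reduction to small Lie quotients by compactness, and the bookkeeping of supports under pull-back) are routine.
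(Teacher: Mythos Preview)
Your Part~(i) is essentially the paper's argument: both show the dimension weight is weakly polynomial by passing to Lie quotients and bounding dimensions polynomially. You invoke \cite[Prop.~4.19]{ludwigst} directly, whereas the paper checks the weakly-polynomial hypothesis layer by layer (connected Lie, then Lie via Frobenius reciprocity for the finite-index correction you allude to, then general compact), but the substance is the same.

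Your Part~(ii), however, takes a genuinely different route from the paper. The paper proceeds by an \emph{algebraic recursion}: it uses the averaging map $\check{\Gam}:\fal(G\times G)\to\fal(G,d)$ of \cite{forrestss}, which is surjective, to transport regularity from $\fal(G\times G)$ (where it is Eymard's); then the identity $\check{\Gam}\bigl(\fal(G\times G,(d\otimes d)^\alp)\bigr)=\fal(G,d^{2\alp+1})$ bootstraps this to $\fal(G,d^{2^n-1})$ by induction, and general $\alp$ follows by the monotone inclusion $\fal(G,d^{2^{\lceil\log_2(\alp+1)\rceil}-1})\hookrightarrow\fal(G,d^\alp)$. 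You instead build locally supported elements \emph{analytically}: pass to a Lie quotient $G/N$, show $\fC^\infty(G/N)\subset\fal(G/N,d^\alp)$ via rapid Fourier decay driven by Laplace--Beltrami eigenvalues, and pull back a smooth bump. Your method is the one implicit in \cite{sugiura} and \cite[12.2.2]{faraut} (cited in the introduction) and is arguably more direct for a reader with Lie-theoretic background; the paper's method avoids differential calculus entirely and stays within the operator-theoretic framework of the averaging map that the rest of the article develops anyway. Two small cleanups on your side: the sum you test for convergence should also carry the weight factor $d_\pi^\alp$ (harmless, since your estimate handles any fixed polynomial power of $d_\pi$), and the finitely many $\pi$ with $\lam_\pi=0$ --- those trivial on the identity component of $G/N$ --- need to be set aside separately before you divide by $\lam_\pi^j$.
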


\begin{proof}
(i) For $\alp=1$, this result is stated as \cite[Cor.\ 5.6]{ludwigst} and erroneously attributed to \cite{parks}.
Regrettably, the proof of \cite[Cor.\ 5.6]{ludwigst} only deals with the case of connected groups.
It is sufficient to see that that the dimension weight $d$ on $G$ is a weakly polynomial weight,
and then appeal to  Proposition \ref{prop:specfalpol}.

First, suppose that $G$ is a connected Lie group.  
Then \cite[Ex.\ 5.2]{ludwigst} shows that $d$ is a weakly polynomial weight.  (This uses
ideas we shall use in proving Theorem \ref{theo:opalg}.)

Now suppose that $G$ is a Lie group, so the connected component of the identity $G_e$
is open, hence the index $[G:G_e]$ is finite.  Then for $\pi$ in $\what{G}$ and $\sig$ in $\what{G_e}$
the Frobenius reciprocity formula of \cite[Thm.\ 8.2]{mackey} (see also
\cite[2.61]{kaniutht}) yields equality of multiplicities
\[
m(\sig:\pi|_{G_e})=m(\pi:\sig\!\uparrow^G)
\]
where  $\sig\!\uparrow^G$ denotes the induced
representation.  Thus if $\sig\subset\pi|_{G_e}$, then $\pi\subset\sig\!\uparrow^G$, so
\[
d_\pi\leq d_{\sig\uparrow^G}=[G:G_e]d_\sig.
\]
Fix any symmetric generating set $S$ of $\what{G}$.  Then for any $\pi$ in $\what{G}$ and
$\sig$ in $\what{G_e}$ with $\sig\subset\pi|_{G_e}$ we have
\[
d_\pi\leq [G:G_e] d_\sig \leq C \ome_{S_e}^{\alp'}(\sig)\leq C(\ome_S)^{\alp'}_G|_{G_e}(\sig)
\leq C\ome_S^{\alp'}(\pi)
\]
for some constant $C$ and $\alp'\geq 0$, where the second
inequality follows from the fact that the dimension
weight on $G_e$ is weakly polynomial, as noted in the prior paragraph, and the fact that for any two generating sets, the respective polynomial weights are equivalent; and the third inequality is provided by (\ref{eq:polyest}).

We finally consider the case of a general compact $G$.  For any
finite symmetric set $S$ of $\what{G}$, with $N_S=\bigcap_{\pi\in S}\ker\pi$, we see that
$G/N_S$, being isomorphic to a subgroup of $\bigoplus_{\pi\in S}\pi(G)$ is a Lie group,
with $\langle S\rangle=\what{G/N_S}\circ q$, where $q:G\to G/N_S$ is the quotient map.
Hence the result of the last paragraph shows that $d|_{\langle S\rangle}\leq C_S\ome_S^{\alp_S}$
on $\langle S\rangle$, for some $C_S$ and $\alp_S$, i.e.\ $d$ is weakly polynomial.



(ii) Let us now show the regularity.
First, let us fix $\alp=1$.
We see from \cite[Thm.\ 4.1]{forrestss} that the map
$\check{\Gam}:\fal(G\times G)\to\fal(G,d)$ given by $\check{\Gam}u(s)=
\int_Gu(st,t^{-1})\,dt$ is a surjection.  If $E$ and $F$ are non-empty disjoint closed subsets
of $G$ then $\check{E}$ and $\check{F}$ are non-empty disjoint closed subsets
of $G\times G$, where $\check{S}=\{(s,t):st\in S\}$.  Then \cite[(3.2)]{eymard}
provides $u$ in $\fal(G\times G)$ such that $u|_{\check{E}}=1$ and $u|_{\check{F}}=0$.
Then $\check{\Gam}(u)|_E=1$ and $ \check{\Gam}(u)|_F=0$.

It is now straightforward to verify the (completely) isometric identification
\[
\fal(G,d^\alp)\hat{\otimes}\fal(G,d^\alp)\cong\fal(G\times G,(d\otimes d)^\alp)
\]
where $d\otimes d(\pi,\sig)=d_\pi d_\sig$.  Hence the proof of \cite[Prop.\ 2.6]{rostamis}
can be followed to show that
\[
\check{\Gam}(\fal(G\times G,(d\otimes d)^\alp))=\fal(G,d^{2\alp+1}).
\]
The recursion $\alp_0=0,\; \alp_{n+1}=2\alp_n+1$ admits solution $\alp_n=2^n-1$.  See, also,
\cite[p.\ 189]{lees}.
Hence, applying induction to the paragraph above yields that each algebra $\fal(G,d^{2^n-1})$
is regular.  The contractive embedding $\fal(G,d^{2^{\lceil \log_2(\alp+1)\rceil}-1})\hookrightarrow
\fal(G,d^\alp)$ yields the regularity of the latter algebra.
\end{proof}

We move from $p=1$ to all $1\leq p\leq\infty$.

\begin{proposition}\label{prop:spectrum}
The Gelfand spectrum of $\pfal(G,d^\alp)$ is $G$, and $\pfal(G,d^\alp)$ is regular on $G$.
\end{proposition}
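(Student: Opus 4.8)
The plan is to deduce the $1 \le p \le \infty$ statement from the case $p = 1$, namely Proposition \ref{prop:specfald}, by exploiting the interpolation identity \eqref{eq:interpolatedwpfal} together with the chain of completely contractive inclusions \eqref{eq:ccinclusions} applied to weighted algebras. First I would record that, exactly as in \eqref{eq:ccinclusions} but with the dimension weight inserted, one has completely contractive inclusions $\fal^\infty(G,d^\alp) \subseteq \pfal(G,d^\alp) \subseteq \fal(G,d^\alp)$; all three are subalgebras of $\fC(G)$ by \eqref{eq:ccinclusions}. In particular each character of $\pfal(G,d^\alp)$ restricts to a nonzero character on the dense subalgebra $\fal^\infty(G,d^\alp)$ (density holds since $\trig(G)$ is dense in each), and conversely every point evaluation $\eps_s$, $s \in G$, is a character of $\pfal(G,d^\alp)$ because $\lam(s) \in \pfal(G,d^\alp)^*$ and is multiplicative. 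So the issue is only to rule out characters not of the form $\eps_s$.

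For the spectrum, the key observation is that $\fal(G,d^\alp)$ has spectrum $G$ for \emph{every} $\alp > 0$ by Proposition \ref{prop:specfald}(i), and via \eqref{eq:interpolatedwpfal} we may write $\pfal(G,d^\alp) = [\fal^\infty(G,d^\alp), \fal(G,d^\alp)]_{1/p} = [\fal^\infty(G), \fal(G)]_{1/p}$ with the dimension weight $d^\alp = (d^\alp)^{1/p'}(d^\alp)^{1/p}$ absorbed appropriately (more precisely, interpolating $\fal^\infty(G, d^{\alp})$ with $\fal(G, d^{\alp})$ gives $\pfal(G, d^{\alp p'/p' }\cdots)$; one checks the exponent bookkeeping yields $\pfal(G, d^{\alp})$, matching the exponent $1 + 1/p'$ in \eqref{eq:wpfal}). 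A cleaner route avoiding delicate interpolation of spectra: since $\pfal(G,d^\alp)$ sits between $\fal^\infty(G, d^{\alp'})$ and $\fal(G,d^{\alp'})$ for a suitable $\alp'$ comparable to $\alp$, and since $\trig(G)$ is dense, a character $\chi$ of $\pfal(G,d^\alp)$ extends by continuity of the inclusion to... no — the inclusion goes the wrong way. Instead I would argue directly: a character $\chi$ of $\pfal(G,d^\alp)$ is a bounded functional, hence lies in $\pfal(G,d^\alp)^* \cong \ell^\infty\text{-}\bigoplus_\pi d_\pi^{-1/p'} d_\pi^{-\alp}\smat^{p'}_{d_\pi}$, say $\chi = (T_\pi)_\pi$; multiplicativity against the coproduct \eqref{eq:coprod} forces, for each $\pi,\pi'$, that $U^*_{\sig,\pi\otimes\pi'} T_\sig U_{\sig,\pi\otimes\pi'}$ assembles to $T_\pi \otimes T_{\pi'}$, i.e. $\chi$ is a $*$-character-like point of the coalgebra. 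This is precisely the condition analyzed in \cite[Prop.\ 4.19]{ludwigst}: it forces each $T_\pi = \pi(s)$ for a common $s \in G$ provided $(\sup_{\sig \subset \pi^{\otimes n}} \ome(\sig))^{1/n} \to 1$. For $\ome(\pi) = d_\pi^{\alp + 1/p'}$ (the total weight appearing in \eqref{eq:wpfal}), one has $\sup_{\sig\subset\pi^{\otimes n}} d_\sig \le d_\pi^n$, so $(\sup_{\sig\subset\pi^{\otimes n}} d_\sig^{\alp+1/p'})^{1/n} \le d_\pi^{\alp + 1/p'}$ — which does \emph{not} tend to $1$. So this naive bound fails, and that is exactly where the main work lies.

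Thus the main obstacle is controlling the dimension growth along tensor powers: I would instead invoke the fact, established inside the proof of Proposition \ref{prop:specfald}(i), that the dimension weight $d$ is \emph{weakly polynomial} on $\what{G}$, hence so is $d^{\alp + 1/p'}$ (a positive power of a weakly polynomial weight is weakly polynomial). Then for $S = \{\pi,\bar\pi\}$ and $\sig \subset \pi^{\otimes n}$ one gets $d_\sig^{\alp+1/p'} \le C_S (1+n)^{\beta_S}$, so $(\sup_{\sig\subset\pi^{\otimes n}} d_\sig^{\alp+1/p'})^{1/n} \le C_S^{1/n}(1+n)^{\beta_S/n} \to 1$, exactly as in the proof of Proposition \ref{prop:specfalpol}. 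Applying \cite[Prop.\ 4.19]{ludwigst} then gives spectrum $G$. For regularity, I would mimic Proposition \ref{prop:specfald}(ii): using \eqref{eq:interpolatedwpfal} one obtains $\pfal(G,d^\alp)\hat\otimes\pfal(G,d^\alp) \cong \pfal(G\times G, (d\otimes d)^\alp)$ completely isometrically, and the averaging map $\check\Gam$ surjects $\pfal(G\times G, (d\otimes d)^\alp)$ onto $\pfal(G, d^{\beta})$ for some $\beta = \beta(\alp,p) > \alp$ (the exponent recursion is the same $\alp \mapsto 2\alp+1$ type as in Proposition \ref{prop:specfald}(ii), perhaps shifted by a $p$-dependent constant); since $\check\Gam$ transports Urysohn-type separating functions from $G\times G$ to $G$ and $\fal(G\times G) \subseteq \pfal(G\times G, (d\otimes d)^\alp)$ with $\fal(G\times G)$ already regular via \cite[(3.2)]{eymard}, one gets separating functions in $\pfal(G, d^{\beta})$, and the contractive embedding $\pfal(G,d^{\beta}) \hookrightarrow \pfal(G,d^\alp)$ for $\beta \ge \alp$ — valid since larger dimension-weight powers give smaller algebras — descends regularity to $\pfal(G,d^\alp)$. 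The one point needing care is verifying the operator-projective tensor identity $\pfal(G,d^\alp)\hat\otimes\pfal(G,d^\alp) \cong \pfal(G\times G, (d\otimes d)^\alp)$, which follows from \eqref{eq:pfaldualt} and the fact that $d_{\pi} d_{\pi'} = d_{(\pi,\pi')}$ on $\what{G\times G} = \what G \times \what G$ together with stability of Haagerup/operator-projective tensor products under interpolation, exactly as invoked in the proof of Theorem \ref{theo:ccBa}.
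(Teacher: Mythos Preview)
Your proposal is considerably more involved than necessary, and the regularity half has genuine gaps.

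The paper's proof is a two-line observation. Since $\norm{A}_p\le\norm{A}_1$ for any matrix $A$, the formal identity $d\,\smat^1_d\to d\,\smat^p_d\to d^{1/p'}\smat^p_d$ is a contraction; summing over $\pi\in\what{G}$ gives a contractive dense inclusion $\fal(G,d^{\alp+1})\hookrightarrow\pfal(G,d^\alp)$. Both conclusions then descend immediately from Proposition~\ref{prop:specfald}: any character of $\pfal(G,d^\alp)$ restricts to one on the dense subalgebra $\fal(G,d^{\alp+1})$, hence is a point evaluation; and any separating function in $\fal(G,d^{\alp+1})$ already lies in $\pfal(G,d^\alp)$.

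Your spectrum argument is morally this same idea but does not close the loop. You invoke \cite[Prop.~4.19]{ludwigst} with $\ome=d^{\alp+1/p'}$, but that result is stated for $\fal(G,\ome)$, not for $\pfal(G,d^\alp)$. The unstated bridge is that a character $(T_\pi)\in\pfal(G,d^\alp)^*$ satisfies $\norm{T_\pi}_\infty\le\norm{T_\pi}_{p'}\le C\,d_\pi^{\alp+1/p'}$, so it is automatically a character on $\fal(G,d^{\alp+1/p'})$. But this is exactly the dual of a contractive inclusion $\fal(G,d^{\alp+1/p'})\hookrightarrow\pfal(G,d^\alp)$, which is the paper's trick with a slightly different exponent. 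Once you see this, the detour through weak polynomiality of $d^{\alp+1/p'}$ is redundant: Proposition~\ref{prop:specfald} already gives the spectrum of $\fal(G,d^{\alp+1/p'})$ directly.

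Your regularity argument has two real errors. First, the asserted completely isometric identification $\pfal(G,d^\alp)\hat{\otimes}\pfal(G,d^\alp)\cong\pfal(G\times G,(d\otimes d)^\alp)$ fails for $p\neq 1$: componentwise it would require $\smat^p_{d_\pi}\hat{\otimes}\smat^p_{d_{\pi'}}\cong\smat^p_{d_\pi d_{\pi'}}$, but the chain of maps in the proof of Theorem~\ref{theo:ccBa} shows only that the formal identity is a complete \emph{contraction}, not a complete isometry (for $p=\infty$ this is the distinction between the operator projective and injective tensor norms on $\mat_m\otimes\mat_n$). So you cannot run the $\check\Gam$-recursion of Proposition~\ref{prop:specfald}(ii) inside $\pfal$. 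Second, your claimed inclusion $\fal(G\times G)\subseteq\pfal(G\times G,(d\otimes d)^\alp)$ is backwards: weighting and passing to larger $p$ both \emph{shrink} the algebra, as in \eqref{eq:ccinclusions}. Thus you have no supply of separating functions inside $\pfal(G\times G,(d\otimes d)^\alp)$ to feed into $\check\Gam$. The remedy is precisely the paper's inclusion $\fal(G,d^{\alp+1})\hookrightarrow\pfal(G,d^\alp)$, which imports the regularity of the former algebra directly, with no tensor-product machinery required.
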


\begin{proof}
For any positive integer $d$ the formal identity map $\smat^1_d\to\smat^p_d$ is a contraction, 
hence so too
is $d\smat^1_d\to d\smat^p_d\to d^{1/p'}\smat^p_d$.  Thus the inclusion map
$\fal(G,d^{\alp+1})\hookrightarrow\pfal(G,d^\alp)$ is a contraction.  The desired results are immediate
from Proposition \ref{prop:specfald}, above.
\end{proof}

We observe that for $s$ in $G$, the unitary $\pi(s)$ in $\smat_{d_\pi}^{p'}$ necessarily has norm
$d_\pi^{1/p'}$.  Hence by
by (\ref{eq:pfaldual}) and the result above, each element of the spectrum has norm $1$.  
Thus the choice of exponent $1+\frac{1}{p'}$ in (\ref{eq:pfal})
is minimal for allowing the space $\pfal(G)$ to be a Banach function algebra on $G$.  
Indeed, characters need necessarily be contractive.

Let $\theta:\Ree^{\geq 0}\to\Ree^{>0}$ be a non-decreasing weight 
(i.e.\ $\theta(s+t)\leq\theta(s)\theta(t)$) and let 
$\ome_\theta(\pi)=\theta(\log d_\pi)$.  Such weights are symmetric.

\begin{example}\label{ex:thetaweight}
The weight $\theta^\alp(t)=e^{\alp t}$, where $\alp>0$, gives
the dimension weight $\ome_{\theta^\alp}(\pi)=d_\pi^\alp$.  
The weight $w^\alp(t)=(1+t)^\alp$, on $\Ree^{\geq 0}$,
leads to $\ome_{w^\alp}(\pi)=(1+\log d_\pi)^\alp$.
\end{example}

We say that a weight $\ome$ on $\what{G}$ is {\it weakly dimension}
if there is $C$ and $\alp\geq 0$ for which $\ome(\pi)\leq Cd_\pi^\alp$.  Notice that
for $\theta$ as above, $\theta(s)\leq\theta(1)^{1+\lfloor s\rfloor}\leq \theta(1)e^{\log\theta(1)s}$,
from which it follows that $\ome_\theta$ is weakly dimension.

\begin{corollary}\label{cor:spectrum}
Let $\ome$ be any weight on $\what{G}$ which is weakly dimension. Then the
algebra $\pfal(G,\ome)$ has Gelfand spectrum $G$
and is regular on $G$.   In particular, this holds for $\ome_\theta$ where
$\theta$ is any non-decreasing weight on $\Ree^{\geq 0}$.
\end{corollary}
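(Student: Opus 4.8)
The plan is to reduce Corollary \ref{cor:spectrum} to Proposition \ref{prop:spectrum} by comparing a general weakly dimension weight $\ome$ with a power of the dimension weight, exactly as Proposition \ref{prop:spectrum} reduced to Proposition \ref{prop:specfald}. Concretely, by hypothesis there are $C>0$ and $\alp\geq 0$ with $\ome(\pi)\leq Cd_\pi^\alp$ for all $\pi\in\what{G}$. Inspecting the definition (\ref{eq:wpfal}), the factor-by-factor inequality $\ome(\pi)d_\pi^{1+1/p'}\leq C\, d_\pi^{\alp}\,d_\pi^{1+1/p'}$ gives a contractive (up to the constant $C$) inclusion $\pfal(G,d^\alp)\hookrightarrow\pfal(G,\ome)$, since both sides are $\ell^1$-direct sums over $\what{G}$ of the same operator spaces $\smat^p_{d_\pi}$ with differently weighted norms. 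This inclusion has dense range because it is the identity on $\trig(G)$, which is dense in both algebras. The dual inclusion $\pfal(G,\ome)^*\hookrightarrow\pfal(G,d^\alp)^*$ inside $\trig(G)^\dagger$ is then also injective with weak$^*$-dense range, and since characters of a commutative Banach algebra are exactly the nonzero multiplicative linear functionals, a dense-range algebra homomorphism induces an injection on spectra.

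The two remaining points are routine. First, spectrum: the inclusion $\pfal(G,d^\alp)\hookrightarrow\pfal(G,\ome)$ shows every character of $\pfal(G,\ome)$ restricts to a character of $\pfal(G,d^\alp)$, hence lies in $G$ by Proposition \ref{prop:spectrum}; conversely every point evaluation $\lam(s)$, $s\in G$, already lies in $\pfal(G,\ome)^*$ and is multiplicative, and distinct points give distinct evaluations since $\spn\lam(G)$ separates $\trig(G)$. So the Gelfand spectrum is exactly $G$. Second, regularity: since $\pfal(G,d^\alp)$ is regular on $G$ (Proposition \ref{prop:spectrum}), given disjoint closed $E,F\subseteq G$ we may pick $u\in\pfal(G,d^\alp)$ with $u|_E\equiv 1$, $u|_F\equiv 0$; then $u\in\pfal(G,\ome)$ via the inclusion, and $u$ witnesses regularity of $\pfal(G,\ome)$ at $(E,F)$ as well. (One should remark that regularity is inherited by any superalgebra with the same spectrum sitting above it contractively, provided the separating functions transfer — which is the case here.) The final sentence of the statement is then immediate: for a non-decreasing weight $\theta$ on $\Ree^{\geq 0}$ we have already observed $\theta(s)\leq\theta(1)e^{(\log\theta(1))s}$, hence $\ome_\theta(\pi)=\theta(\log d_\pi)\leq\theta(1)d_\pi^{\log\theta(1)}$, so $\ome_\theta$ is weakly dimension and the corollary applies.

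The only mildly delicate point — hardly an obstacle — is making sure the dense-range-homomorphism argument genuinely forces the spectra to coincide rather than merely giving an inclusion of one into the other. For the inclusion $G\subseteq\operatorname{Spec}\pfal(G,\ome)$ the point is that the exponent $1+\tfrac{1}{p'}$ is precisely calibrated (as noted right after Proposition \ref{prop:spectrum}, using that $\pi(s)$ has $\smat^{p'}_{d_\pi}$-norm $d_\pi^{1/p'}$) so that every $\lam(s)\in\pfal(G)^*\subseteq\pfal(G,\ome)^*$, and since $\ome$ is bounded away from zero this remains true after reweighting; multiplicativity is inherited from $\fal(G)$. For the reverse inclusion one uses that any character of $\pfal(G,\ome)$ is bounded, hence continuous on $\trig(G)$ for the (larger) $\pfal(G,d^\alp)$-topology only after rescaling — but this is exactly the content of the contractive inclusion above, so no extra work is needed. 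Thus the whole argument is a short application of Proposition \ref{prop:spectrum} together with the elementary weight comparison, and I would write it in three or four lines.
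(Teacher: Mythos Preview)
Your proposal is correct and follows essentially the same approach as the paper. The paper's proof is even terser: it simply records the chain of bounded inclusions $\pfal(G,d^\alp)\subseteq\pfal(G,\ome)\subseteq\pfal(G)$ (the second coming from $\ome$ being bounded away from zero) and lets Proposition~\ref{prop:spectrum} do the rest, whereas you spell out the two directions for the spectrum and the transfer of regularity explicitly---but the underlying mechanism is identical.
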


\begin{proof}
The bounded inclusions 
$\pfal(G,d^\alp)\subseteq \pfal(G,\ome)\subseteq
\pfal(G)$ give the first conclusion.  The second conclusion is immediate from the comments above.
\end{proof}

The following is a straightforward adaptation of \cite[Cor.\ 2.4]{forrestss}, 
which we leave to the reader. 

\begin{proposition}\label{prop:openabel}
{\bf (i)} Given a weight $\ome$ in $\what{G}$ and $1< p\leq \infty$, we have that
$\pfal(G,\ome)=\fal(G,\ome)$ isomorphically, if and only if $G$ admits an open abelian subgroup.

{\bf (ii)} For any weight $\ome$ which is  weakly dimension,
$\fal(G,\ome)=\fal(G)$ isomorphically, if and only if $G$ admits an open 
abelian subgroup.
\end{proposition}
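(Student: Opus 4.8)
The plan is to reduce everything to the cited result \cite[Cor.\ 2.4]{forrestss}, which handles the unweighted case $\fal(G)$ versus $\fal_\gam(G)=\fal(G,d)$, and to the structure theory of compact groups with an open abelian subgroup. For part (ii), note that when $\ome$ is weakly dimension we have $\ome(\pi)\leq Cd_\pi^\alp$, and combining this with the trivial lower bound $\ome(\pi)\geq\inf_\sig\ome(\sig)>0$ we get $1\lesssim\ome(\pi)\lesssim d_\pi^\alp$; this sandwiches $\fal(G,\ome)$ between $\fal(G)$ and $\fal(G,d^\alp)$ up to equivalence of norms, so $\fal(G,\ome)=\fal(G)$ isomorphically iff $\fal(G,d^\alp)=\fal(G)$ isomorphically. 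The latter equality says exactly that $\sup_{\pi\in\what G}d_\pi<\infty$ (since the two weighted $\ell^1$-sums of trace-class matrix spaces coincide as Banach spaces precisely when the weights $d_\pi^{1+1/p'}$ and $d_\pi^{1+1/p'+\alp}$ are comparable, i.e.\ when $d_\pi$ is bounded), and by the Moore--Zak theorem a compact group has uniformly bounded irreducible dimensions if and only if it has an abelian subgroup of finite index, equivalently (for compact $G$) an open abelian subgroup. One direction is immediate from \cite[Cor.\ 2.4]{forrestss}; the other requires invoking this bounded-representation-dimension criterion.

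For part (i), the key observation is that for $1<p\leq\infty$ the algebra $\pfal(G,\ome)$ is, up to equivalence of norms, sandwiched between $\fal^\infty(G,\ome)$ and $\fal(G,\ome)$ via the completely contractive inclusions of (\ref{eq:ccinclusions}) (applied with the weight $\ome$ incorporated), so $\pfal(G,\ome)=\fal(G,\ome)$ isomorphically iff the formal identity $\smat^{p'}_{d_\pi}\to\smat^1_{d_\pi}$ is uniformly bounded in $\pi$ after accounting for the dimension factors. Writing out the norms: on the $\pi$-summand, $\fal(G,\ome)$ carries $\ome(\pi)d_\pi^2\smat^1_{d_\pi}$ while $\pfal(G,\ome)$ carries $\ome(\pi)d_\pi^{1+1/p'}\smat^p_{d_\pi}$; since for a $d\times d$ matrix $\norm{A}_{\smat^1}\leq d^{1-1/p}\norm{A}_{\smat^p}=d^{1/p'}\norm{A}_{\smat^p}$ with equality for scalar multiples of unitaries, the two norms are equivalent on $\trig(G)$ if and only if $\sup_{\pi\in\what G}d_\pi<\infty$. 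Thus again the condition is equivalent to $\what G$ having bounded dimensions, hence to the existence of an open abelian subgroup by Moore--Zak, exactly as in part (ii).

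The main obstacle is not any single estimate but making the ``only if'' directions rigorous: one must verify that the upper bound $\norm{A}_{\smat^1}\leq d^{1/p'}\norm{A}_{\smat^p}$ is actually \emph{attained} in the relevant sense, i.e.\ exhibit for each $\pi$ a test function in $\trig_\pi$ (e.g.\ a coefficient of $\pi$ itself, or a suitable rank-one-type perturbation) witnessing that the ratio of the two norms on $\trig_\pi$ is of order $d_\pi$ and not smaller; this shows that unboundedness of $\{d_\pi\}$ genuinely obstructs the isomorphism. Given that, the passage from ``bounded irreducible dimensions'' to ``open abelian subgroup'' is a citation (Moore's theorem on groups with bounded representation degrees, specialized to the compact case). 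Since \cite[Cor.\ 2.4]{forrestss} already packages the combination of these ingredients for the prototypical case, the proof is a routine adaptation, which is why we leave it to the reader.
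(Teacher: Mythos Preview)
Your overall strategy is exactly what the paper has in mind: reduce the question to whether $\sup_{\pi\in\what G}d_\pi<\infty$, then quote Moore's theorem (as packaged in \cite[Cor.\ 2.4]{forrestss}). Since the paper gives no proof beyond ``straightforward adaptation'', there is nothing to compare against at the level of method. There are, however, two slips worth correcting.

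\textbf{Part (i).} The $\pi$-summand of $\fal(G,\ome)=\fal^1(G,\ome)$ carries weight $\ome(\pi)d_\pi$ (since $1/p'=0$ when $p=1$), not $\ome(\pi)d_\pi^2$. With the correct exponents the ratio on $\trig_\pi$ is
\[
\frac{\norm{u}_{\pfal(\ome)}}{\norm{u}_{\fal(\ome)}}=d_\pi^{1/p'}\frac{\norm{A}_{\smat^p}}{\norm{A}_{\smat^1}},
\]
which equals $1$ for $A$ a scalar multiple of a unitary (your H\"older equality case) but equals $d_\pi^{1/p'}$ for $A$ \emph{rank one}. Thus the witness to non-equivalence is a single matrix coefficient of $\pi$, not a unitary; your parenthetical ``a coefficient of $\pi$ itself'' is the right test, but the sentence about unitaries points in the wrong direction.

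\textbf{Part (ii).} The sandwich $\fal(G,d^\alp)\subseteq\fal(G,\ome)\subseteq\fal(G)$ only gives one implication of your claimed ``iff''. In fact the converse fails: for the trivially weakly-dimension weight $\ome\equiv 1$ one has $\fal(G,\ome)=\fal(G)$ for \emph{every} compact $G$. So either the proposition is to be read as ``[for every weakly-dimension $\ome$, $\fal(G,\ome)=\fal(G)$] iff $G$ is virtually abelian'' (in which case the ``only if'' direction follows by specializing to $\ome=d$), or one must tacitly restrict to unbounded $\ome$. Either way your reduction to $\fal(G,d^\alp)=\fal(G)$ for a \emph{fixed} $\ome$ is not valid as written; the clean fix is simply to test the hypothesis against $\ome=d$ and invoke \cite[Cor.\ 2.4]{forrestss} directly. (The stray $1/p'$ in your weight comparison for (ii) is another artifact of the same slip as in (i); here $p=1$ throughout.)
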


\subsection{Isometric isomorphisms}\label{ssec:isomorphism}
The main theorem of \cite{walter} tells us that any isometric isomorphism
$\Phi:\fal(G)\to\fal(H)$ is of the form $\Phi u=u(s_0\varphi(\cdot))$ where $\varphi:H\to G$
is a homeomorphism which is either an isomorphism or anti-isomorphism of the groups.
In particular, $\fal(G)\cong\fal(H)$ isometrically, only if $G\cong H$  as topological groups.

With compact groups, the addition of certain weights does not change this result.
Furthermore, we can obtain this result for most indices $p$.  
We retain our convention that $G$ and $H$ denote compact groups.
The weights $\ome_\theta$ are defined at the end of the last section.

\begin{theorem}\label{theo:walter2}
Let $1\leq p\leq\infty$ with $p\not=2$.
Fix a non-decreasing weight $\theta:\Ree^{\geq 0}\to\Ree^{>0}$.
If $\pfal(G,\ome_\theta)\cong\pfal(H,\ome_\theta)$ isometrically isomorphically
then $G\cong H$ as compact groups.
\end{theorem}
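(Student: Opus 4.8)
The plan is to reduce the statement to Walter's theorem on isometric isomorphisms of Fourier algebras, by showing that an isometric isomorphism $\Phi:\pfal(G,\ome_\theta)\to\pfal(H,\ome_\theta)$ is automatically of the form $\Phi u = u\circ\varphi$ for a group (anti-)isomorphism $\varphi:H\to G$, and then extracting a topological group isomorphism. The first step is to use the Gelfand theory developed above. By Corollary \ref{cor:spectrum}, since $\ome_\theta$ is weakly dimension, both $\pfal(G,\ome_\theta)$ and $\pfal(H,\ome_\theta)$ are regular Banach function algebras with Gelfand spectra $G$ and $H$ respectively. Thus any algebra isomorphism $\Phi$ induces a homeomorphism $\varphi:H\to G$ of the spectra with $\Phi u = u\circ\varphi$. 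It remains to show that, when $\Phi$ is moreover isometric, $\varphi$ must be a group isomorphism or anti-isomorphism composed with a translation, and in fact (by compactness and the structure of the weight) a genuine group (anti-)isomorphism so that $G\cong H$.

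Next I would transport the isometry to the level of the group algebra side. The idea is to look at the extreme points of the unit ball, or rather at the functions of the form $u(s)=\overline{\langle u,\lam(s)\rangle}$-type "characters"; more concretely, one knows that the points $s\in G$ sit inside $\pfal(G,\ome_\theta)^*$ as the evaluation functionals $\lam(s)$, each of norm $1$ (as observed in the paragraph after Proposition \ref{prop:spectrum}), and that the adjoint $\Phi^*:\pfal(H,\ome_\theta)^*\to\pfal(G,\ome_\theta)^*$ is an isometric isomorphism which must carry the weak$^*$-continuous multiplicative functionals to weak$^*$-continuous multiplicative functionals, i.e.\ $\Phi^*\lam_H(t)=\lam_G(\varphi(t))$. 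The key point is to show $\varphi$ is multiplicative up to translation. Here I would argue that $\Phi^*$ restricted to $\operatorname{span}\lam_H(H)$ must be an isometry for the norm induced from $\pfal(H,\ome_\theta)^*$, and since $\ome_\theta$ depends only on $d_\pi$ (so the weight is constant on each "dimension slice" and in particular $\ome_\theta(1)=\theta(0)$), the $\pi=1$ block contributes the $\ell^\infty$-direct summand $\smat^{p'}_1=\Cee$ with weight $\theta(0)d_1^{-1/p'}=\theta(0)$, on which $\lam(s)$ restricts to the scalar $1$ for all $s$. The condition that $\Phi^*$ intertwines the coproducts $M$ (dual to pointwise multiplication) then forces $\varphi$ to respect the group operation up to a translation, exactly as in the proof of \cite{walter} / \cite[Cor.\ 2.4]{forrestss}.

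The main obstacle — and the place where the hypothesis $p\neq 2$ is essential — is ruling out "exotic" isometries that permute the matrix blocks $\smat^{p'}_{d_\pi}$ in a way not coming from a group automorphism. For $p'\neq 2$ (equivalently $p\neq2$) the Banach space $\smat^{p'}_d$ has isometry group generated by $A\mapsto UAV$ and $A\mapsto UA^{\tr}V$ with $U,V$ unitary, so the only linear isometries of each weighted block are the expected ones; this is the Banach–Mazur–type rigidity that fails precisely at $p'=2$ where $\smat^2_d$ is a Hilbert space with a huge isometry group. So the plan is: (1) use that $\Phi$ is isometric to conclude $\Phi$ maps each minimal "dimension-$d$ isotypic" ideal-like piece onto the corresponding piece for $H$, matching dimensions $d_\pi\leftrightarrow d_\sigma$ and weights $\ome_\theta(\pi)\leftrightarrow\ome_\theta(\sigma)$ (here $p\neq2$ is used to get the block-diagonal rigidity from Schatten-norm isometry theory, citing e.g.\ the classical description of isometries of Schatten classes); (2) deduce that the induced homeomorphism $\varphi$ preserves the bi-invariant structure, hence by \cite{walter} is a translate of a topological group (anti-)isomorphism; (3) since $\varphi$ must also fix the images of the identity block appropriately, conclude it is an honest (anti-)isomorphism $H\to G$, so $G\cong H$ as compact groups. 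The details of step (1) — promoting a norm-isometry of a weighted $\ell^1$-sum of Schatten classes to a block-respecting map — are where the real work lies, and I would handle it by examining $L$-summand / $M$-ideal structure together with the Schatten isometry classification, invoking $p\neq2$ exactly once, at that point.
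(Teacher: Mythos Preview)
Your plan has the right core ingredients --- block rigidity of the weighted $\ell^1$-sum, the classification of isometries of $\smat^p_d$ for $p\neq 2$, and a reduction to Walter's theorem --- and these are exactly what the paper uses. But the logical chain you describe does not close, and the detours you take (Gelfand spectrum, coproducts, evaluation functionals) are not doing the work you think they are.

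The paper's argument is cleaner and more direct. It never invokes the Gelfand spectrum or the coproduct. Instead it identifies the extreme points of the unit ball of $\pfal(G,\ome_\theta)$ as $\bigcup_{\pi\in\what{G}}\ext B(d_\pi^{1+1/p'}\ome_\theta(\pi)\smat^p_{d_\pi})$, observes that each piece $\ext B(\smat^p_d)$ is \emph{connected} (rank-one contractions for $p=1$, the whole sphere for $1<p<\infty$ by uniform convexity, the unitary group for $p=\infty$), and uses connectedness to force $\Phi$ to map each block $\trig_\pi$ onto a single block $\trig_{\hat\Phi(\pi)}$. This step works for \emph{all} $p$, including $p=2$; your $L$-summand idea is a valid alternative here, but note that block rigidity is not where $p\neq 2$ enters, contrary to what you wrote.

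The hypothesis $p\neq 2$ is used only at the next step: by Kadison/Marcus (for $p=1,\infty$) and Arazy (for $1<p<\infty$, $p\neq 2$), each block isometry has the form $a\mapsto uav$ or $a\mapsto ua^Tv$ with $u,v$ unitary. The crucial observation you are missing is that \emph{such maps preserve the trace norm} $\norm{\cdot}_{\smat^1_d}$ on each block. Hence $\Phi$ is automatically an isometry for the $\fal$-norm, i.e.\ it extends to an isometric isomorphism $\fal(G)\to\fal(H)$, and Walter's theorem applies directly. Your step (2), ``deduce that $\varphi$ preserves the bi-invariant structure, hence by Walter\ldots'', is not a substitute for this: Walter's theorem is a statement about isometric isomorphisms of Fourier algebras, not about homeomorphisms with some invariance property, and you have not explained how to produce an $\fal$-isometry from your block analysis. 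Once you insert the ``Schatten isometries preserve $\smat^1$-norm'' step, the argument closes immediately and your first two paragraphs become unnecessary.
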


\begin{proof}
Given any Banach space $E$, we let $B(E)$ denote the closed unit ball, and 
$S(E)$ the unit sphere.
We identify the spaces $\trig_\pi=d_\pi^{1+\frac{1}{p'}}\ome_\theta(\pi)\smat^p_{d_\pi}$, 
for $\pi$ in $\what{G}$,
as subspaces of $\pfal(G,\ome_\theta)$.  Then we have the following routine identification of 
sets of extreme points:
 \[
 \ext B(\pfal(G,\ome_\theta))
 =\ext B\left(\ell^1\text{-}\bigoplus_{\pi\in\what{G}}d_\pi^{1+\frac{1}{p'}}
 \ome_\theta(\pi)\smat^p_{d_\pi}\right)
 =\bigcup_{\pi\in\what{G}}\ext B\left(d_\pi^{1+\frac{1}{p'}}\ome_\theta(\pi)\smat^p_{d_\pi}\right).
 \]
We remark that we have for any $d$ in $\En$, $1<p<\infty$
\begin{gather*}
\ext B(\smat^1_d)=\{a\in S(\smat^1_d):\text{rank}a=1\},\;
\ext B(\smat^p_d)=S(\smat^p_d),\;
\text{and}\; \ext B(\smat^\infty_d)=\un(d)
\end{gather*}
where $\un(d)$ is the unitary group.
Indeed,  $\smat^1_d=\ell^2_d\otimes^\gamma\ell^2_d$ and the description of the projective
tensor product norm shows that $\ext B(\smat^1_d)$ must consist of rank one elements.
The fact that any two rank one elements $a$ and $b$ admit unitaries $u,v$ for which
$uav=b$ show that all such rank one elements are achieved.  For $1<p<\infty$, it is shown
by \cite{mccarthy} that $\smat^p_d$ is uniformly convex, hence strictly convex.  The description
of $\ext B(\smat^\infty_d)$ may be found in \cite{kadison} or \cite[Chap.\ 10]{flemingj}.
Hence we observe that each set $\ext B(\smat^p_d)$, $1\leq p\leq\infty$ is connected.  

Since $\Phi$ is an isometry, for each $u\in
\ext B\left(d_\pi^{1+\frac{1}{p'}}\ome_\theta(\pi)\smat^p_{d_\pi}\right)$ we have
$\Phi(u)\in\ext B\left(d_{\pi'}^{1+\frac{1}{p'}}\ome_\theta({\pi'})\smat^p_{d_{\pi'}}\right)$
for some $\pi'\in\what{H}$.  We observe that, given $\pi$ in $\what{G}$, the sets
$X_{\pi,\pi'}=\Phi^{-1}(\ext B(\smat^p_{d_{\pi'}}))\cap\ext B(\smat^p_{d_\pi})$, $\pi'$ in $\what{H}$,
comprise a cover of $\ext B(\smat^p_{d_\pi})$ by pairwise disjoint open sets.
Hence, by connectedness, we have that there is a unique $\pi'$ for which $X_{\pi,\pi'}\not=\varnothing$.
Thus we obtain a bijection $\hat{\Phi}:\what{G}\to\what{H}$, for which 
$\Phi(\trig_\pi)=\trig_{\hat{\Phi}(\pi)}$. Clearly $d_{\hat{\Phi}(\pi)}=d_\pi$ for each
$\pi$, so this map induces an isometry $\smat^p_{d_\pi}\to\smat^p_{d_{\hat{\Phi}(\pi)}}$.

Thanks to \cite[10.2.2 or 10.3.5]{flemingj} (based on results of \cite{kadison,marcus}) 
in the case $p=1,\infty$, and \cite{arazy} in the case $1<p<\infty$ but $p\not=2$, 
each isometry $\smat^p_{d_\pi}\to\smat^p_{d_{\hat{\Phi}(\pi)}}$
is of the form $a\mapsto uav$ or $a\mapsto ua^Tv$ where $u$ and $v$ are unitaries
and $a^T$ denotes the transpose with respect to some orthonormal basis.  Hence we see that
$\norm{\Phi u}_\fal=\norm{u}_\fal$ for $u$ in $\pfal(G,\ome_\theta)$ 
extends to an isometry $\fal(G)\to\fal(H)$ with dense range.  The structure
of $\Phi$ follows from \cite{walter}, accordingly.  \end{proof}


\subsection{Different operator space structures on $\fal^2(G)$}\label{ssec:hilbert}
The construction above gives
\[
\fal^2(G)=\ell^1\text{-}\bigoplus_{\pi\in\what{G}}d_\pi^{3/2}\smat_{d_\pi,OH}^2
\]
where the subscript $OH$ denotes the operator Hilbert space structure on each
space $\smat_{d_\pi}^2$.    However, we wish to observe that other choices
of operator space structure allow $\fal^2(G)$ to be a completely contractive Banach
algebra.

A {\it Hilbertian} operator space structure is an operator space structure
$\fH\mapsto \fH_E$ which may be assigned to any Hilbert space.
Such a structure
is called {\it homogeneous} if $\fB(\fH)=\fC\fB(\fH_E)$ isometrically for all $\fH$.
Furthermore, $\fH\mapsto \fH_E$ is {\it subquadratic} if for any projection
on $\fH$ we have $\norm{[x_{ij}]}_{\mat_m(\fH_E)}^2\leq \norm{[Px_{ij}]}_{\mat_m(\fH_E)}^2
+\norm{[(I-P)x_{ij}]}_{\mat_m(\fH_E)}^2$
for any $n$ and $[x_{ij}]$ in $\mat_n(\fH)$.  This is equivalent to saying that for (finite dimensional)
$\fH$ and $\fK$, the formal identity
\[
\mat_n(\fH_E)\oplus_2\mat_n(\fK_E)\hookrightarrow \mat_n((\fH\oplus^2\fK)_E)
\]
is a contraction for every $n$.  We note that the homogeneity assumption provides that
$(\fH\oplus^2\fK)_E=\fH_E\oplus^2\fK_E$, completely isometrically.
Finally, we will say that
$\fH\mapsto \fH_E$ is {\it subcross} if the identity map on the algebraic
tensor product of any two Hilbert spaces $\fH\otimes\fK$ extends to a complete contraction
$\fH_E\hat{\otimes}\fK_E\to(\fH\otimes_2\fK)_E$.   By duality, this is equivalent to having
the dual structure $\fH\mapsto \fH_{E^*}$ satisfy that the identity map on any $\fH\otimes\fK$
extend to a complete contraction $(\fH\otimes_2\fK)_{E^*}\to\fH_{E^*}\check{\otimes}\fK_{E^*}$.

We note that the standard homogeneous 
Hilbertian operator space structures $OH$, $C$ (column), $R$ (row), $R+C$, and $\max$ are subcross,
and that the dual structures $OH=OH^*$, $R=C^*$, $C=R^*$, $R\cap C=(R+C)^*$ and $\min=\max^*$
are subquadratic.  The structures $\max$ and $R+C$, themselves, however, are not subquadratic.
See the discussion in \cite[p.\ 81]{pisier}.  Let us also consider the interpolated structures,
$\fH_{C^p}=[\fH_C,\fH_R]_{1/p}$ for $1\leq p\leq\infty$, and likewise for row structure.  
We have that $C^2=OH=R^2$.
By interpolation these structures are each homogeneous.  Moreover, stability of the Haagerup tensor
products under interpolation gives us that 
\[
\fH_{C^p}\otimes^h\fK_{C^p}=[\fH_C\otimes^h\fK_C,\fH_R\otimes^h\fK_R]_{1/p}
=[(\fH\otimes^2\fK)_C,(\fH\otimes^2\fK)_R]_{1/p}=(\fH\otimes^2\fK)_{C^p}
\]
from which it is immediate that each $C^p$ is subcross.  We observe the duality $(C^p)^*=C^{p'}=R^p$.
Finally, subquadraticity of $C^p$ follows from that for $C,R$, and interpolation.

\begin{theorem}\label{theo:twoccBa}
Let $\fH\mapsto \fH_E$ be a subcross homogeneous operator space strucure whose dual 
structure is subquadratic.  Then the operator space 
\[
\fal^2_E(G)=\ell^1\text{-}\bigoplus_{\pi\in\what{G}}d_\pi^{3/2}\smat_{d_\pi,E}^2
\]
is a completely contractive Banach algebra under pointwise multiplication.
\end{theorem}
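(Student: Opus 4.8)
The plan is to adapt, almost verbatim, the proof of Theorem~\ref{theo:ccBa} with $p=p'=2$, replacing the three ``observations'' made there by the three hypotheses imposed on $E$. As in that proof, it suffices to show that the coproduct $M$ of (\ref{eq:coprod}) carries $\fal(G)^*$ into $(\fal^2_E(G)\hat{\otimes}\fal^2_E(G))^*$ completely contractively. Write $E^*$ for the dual Hilbertian structure; since $E$ is homogeneous so is $E^*$ (for finite-dimensional $\fH$ one has $\fC\fB(\fH_{E^*})\cong\fC\fB(\fH_E)$ isometrically), and $(\smat^2_{d,E})^*=\smat^2_{d,E^*}$. Hence, exactly as for (\ref{eq:pfaldual}) and (\ref{eq:pfaltpdual}),
\[
\fal^2_E(G)^*\cong\ell^\infty\text{-}\bigoplus_{\pi\in\what{G}}d_\pi^{-1/2}\smat^2_{d_\pi,E^*}
\quad\text{and}\quad
(\fal^2_E(G)\hat{\otimes}\fal^2_E(G))^*\cong\underset{\pi,\pi'}{\ell^\infty\text{-}\bigoplus}\,(d_\pi d_{\pi'})^{-1/2}
\smat^2_{d_\pi,E^*}\check{\otimes}\smat^2_{d_{\pi'},E^*},
\]
so that for $[W_{ij}]\in\mat_n(\fal^2_E(G)^*)$ the quantity to estimate is precisely that of (\ref{eq:normgamw}) with $p'=2$ and each $\smat^{p'}_d$ replaced by $\smat^2_{d,E^*}$.

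Then I would run the same chain of inequalities as in the displayed computation of Theorem~\ref{theo:ccBa}, justifying the three non-elementary steps as follows. First, the passage from $\smat^2_{d_\pi,E^*}\check{\otimes}\smat^2_{d_{\pi'},E^*}$ down to $\smat^2_{d_\pi d_{\pi'},E^*}$ is exactly the dual reformulation of the subcross hypothesis: identifying $\smat^2_{d_\pi}\otimes_2\smat^2_{d_{\pi'}}$ with $\smat^2_{d_\pi d_{\pi'}}$ as Hilbert spaces via the Kronecker product, the identity extends to a complete contraction $(\smat^2_{d_\pi}\otimes_2\smat^2_{d_{\pi'}})_{E^*}\to\smat^2_{d_\pi,E^*}\check{\otimes}\smat^2_{d_{\pi'},E^*}$. (There is no Haagerup-tensor factorization for a general $E$, which is why one cannot mimic the first observation of Theorem~\ref{theo:ccBa} and must take ``subcross'' as a hypothesis.) Second, the recognition of $\bigoplus_{\sig\subset\pi\otimes\pi'}U^*_{\sig,\pi\otimes\pi'}W_{ij,\sig}U_{\sig,\pi\otimes\pi'}$ as an element of a completely isometric block-diagonal copy of $\ell^2\text{-}\bigoplus_{\sig\subset\pi\otimes\pi'}\smat^2_{d_\sig,E^*}$ inside $\smat^2_{d_\pi d_{\pi'},E^*}$ uses only homogeneity of $E^*$: the ranges of the $U_{\sig,\pi\otimes\pi'}$ are pairwise orthogonal with span $\fH_\pi\otimes_2\fH_{\pi'}$, so this is an isometric identification of Hilbert spaces, hence completely isometric for the homogeneous $E^*$, and the finite $\ell^2$-direct sum matches the homogeneous structure on the Hilbert-space direct sum. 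Third, the bound
\[
\norm{\bigoplus_{\sig\subset\pi\otimes\pi'}[W_{ij,\sig}]}_{\mat_n\!\left(\ell^2\text{-}\bigoplus_{\sig\subset\pi\otimes\pi'}\smat^2_{d_\sig,E^*}\right)}
\le\left(\sum_{\sig\subset\pi\otimes\pi'}\norm{[W_{ij,\sig}]}^2_{\mat_n(\smat^2_{d_\sig,E^*})}\right)^{1/2}
\]
is precisely subquadraticity of $E^*$, iterated over the projections onto the summands. The remaining two steps are the crude estimate $\norm{[W_{ij,\sig}]}^2\le d_\sig\sup_{\tau\subset\pi\otimes\pi'}d_\tau^{-1}\norm{[W_{ij,\tau}]}^2$ and the identity $\sum_{\sig\subset\pi\otimes\pi'}d_\sig=d_\pi d_{\pi'}$, exactly as in Theorem~\ref{theo:ccBa}. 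Together they give
\[
(d_\pi d_{\pi'})^{-1/2}\norm{\Big[\bigoplus_{\sig\subset\pi\otimes\pi'}U^*_{\sig,\pi\otimes\pi'}W_{ij,\sig}U_{\sig,\pi\otimes\pi'}\Big]}_{\mat_n(\smat^2_{d_\pi,E^*}\check{\otimes}\smat^2_{d_{\pi'},E^*})}
\le\sup_{\tau\in\what{G}}d_\tau^{-1/2}\norm{[W_{ij,\tau}]}_{\mat_n(\smat^2_{d_\tau,E^*})},
\]
and taking the supremum over $\pi,\pi'$ yields $\norm{[MW_{ij}]}\le\norm{[W_{ij}]}_{\mat_n(\fal^2_E(G)^*)}$, the required complete contractivity of $M$.

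I do not anticipate a serious obstacle: the three hypotheses on $E$ are calibrated precisely so as to replace the three observations of the proof of Theorem~\ref{theo:ccBa} at the exponent $p=2$. The only points needing a little care are (a) that homogeneity of $E$ transfers to $E^*$ and that $(\fH\oplus^2\fK)_{E^*}=\fH_{E^*}\oplus^2\fK_{E^*}$ completely isometrically (stated in the excerpt for two summands; induction covers the finitely many summands occurring in $\bigoplus_{\sig\subset\pi\otimes\pi'}$); and (b) that the two Hilbert-space identifications used---$\smat^2_d\otimes_2\smat^2_{d'}\cong\smat^2_{dd'}$ via the Kronecker product, and the block-diagonal embedding $\ell^2\text{-}\bigoplus_j\smat^2_{d_j}\hookrightarrow\smat^2_d$ for $d=\sum_jd_j$---are genuine isometries of Hilbert spaces, so that the homogeneity of $E^*$ promotes them to complete isometries of operator spaces. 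Since every sum $\sum_{\sig\subset\pi\otimes\pi'}$ is finite, no issue concerning infinite direct sums of homogeneous structures arises.
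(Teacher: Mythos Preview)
Your proposal is correct and is exactly the approach the paper takes: the paper's own proof simply says that the argument of Theorem~\ref{theo:ccBa} goes through verbatim with the subcross, homogeneity, and subquadratic hypotheses on $E$ supplying, respectively, the first, second, and third observations of that proof. You have spelled out precisely this correspondence, with the same dualization to $E^*$ and the same final H\"older-type estimate.
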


\begin{proof}
The proof is essentially the same as that of Theorem \ref{theo:ccBa}.  The subcross, homogeneity
and subquadratic conditions of $E$, provide, respectively, the first, second and third observations
required in that proof.
\end{proof}

Of course $\fal_{\max}^2(G)$ is simply $\fal^2(G)$ with its maximal operator space structure,
hence it is no surprise based on Theorem \ref{theo:ccBa}, that it is a completely contractive
Banach algebra.  As in Corollary \ref{cor:ccBaw}, we can see that the $2$-Beurling-Fourier algebras
$\fal^2_E(G,\ome)$, for weights $\ome$, are completely contractive Banach algebras.

The completely contractive Banach algebra $\fal_R^2(G)$ has actually been observed in
\cite{forrestss1}.  Let us review this briefly.
Let $\Gam:\fal(G\times G)\to\fal(G)$ be given by 
\begin{equation}\label{eq:gammamap}
\Gam u(s)=\int_G u(st,t)\,dt 
\end{equation}
so $\Gam$ averages elements of $\fal(G\times G)$ over left cosets of the diagonal subgroup
$\Del=\{(s,s):s\in G\}$.  For an elementary tensor, $u\otimes v\in\fal(G)\otimes\fal(G)\subset
\fal(G\times G)$, $\Gam(u\otimes v)=u\ast\check{v}$ where $\check{v}(s)=v(s^{-1})$.

We record the following fact from \cite[Thm.\ 2.1]{rostamis}, which is essentially
generalized by Theorem \ref{theo:pfaldel}, below.  See Remark \ref{rem:whenci}.

\begin{proposition}\label{prop:faldel}
Let $\fal_\Del(G)=\Gam(\fal(G\times G))$.  If we assign $\fal_\Del(G)$ the operator space
structure which makes $\Gam$ a complete quotient map, then $\fal_\Del(G)=\fal^2_R(G)$,
completely isometrically.
\end{proposition}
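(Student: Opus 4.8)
The plan is to compute the quotient norm on $\fal_\Del(G)=\Gam(\fal(G\times G))$ explicitly at the level of Fourier coefficients, and recognize the result as the norm of $\fal^2_R(G)$. The map $\Gam$ of (\ref{eq:gammamap}) is, on the trigonometric level, the adjoint of the co-restriction of the coproduct $M$ to the diagonal; more precisely, for $u$ on $G\times G$ one has $\widehat{\Gam u}(\sig)$ expressed via the intertwiners $U_{\sig,\pi\otimes\pi'}$, since $\Gam(\trace(A_\pi\pi(\cdot))\otimes\trace(A_{\pi'}\pi'(\cdot)))$ picks out, for each $\sig\subset\pi\otimes\pi'$, the compression $U_{\sig,\pi\otimes\pi'}^*(A_\pi\otimes A_{\pi'})U_{\sig,\pi\otimes\pi'}$ (up to normalizing dimension factors coming from $\int_G$). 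Dually, $\Gam^*:\fal_\Del(G)^*\to\fal(G\times G)^*$ is the map $(T_\sig)\mapsto\bigl(\bigoplus_{\sig\subset\pi\otimes\pi'}U_{\sig,\pi\otimes\pi'}T_\sig U_{\sig,\pi\otimes\pi'}^*\bigr)$ into $\prod_{\pi,\pi'}\smat^\infty_{d_\pi}\bar\otimes\smat^\infty_{d_{\pi'}}$, since $\fal(G\times G)^*=\ell^\infty\text{-}\bigoplus_{\pi,\pi'}\smat^\infty_{d_{\pi}}\check\otimes\smat^\infty_{d_{\pi'}}$. The quotient operator space structure on $\fal_\Del(G)$ is by definition the one whose dual is the subspace $\Gam^*(\fal_\Del(G)^*)\subseteq\fal(G\times G)^*$ with the induced structure, so the whole problem is to compute the norm (and matrix norms) of $\Gam^*$.

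The key computation is then: for a fixed $\sig$, estimate $\sup_{\pi,\pi'}\norm{U_{\sig,\pi\otimes\pi'}T_\sig U_{\sig,\pi\otimes\pi'}^*}_{\smat^\infty_{d_\pi}\check\otimes\smat^\infty_{d_{\pi'}}}$ in terms of a norm of $T_\sig$, and match it with the dual norm of $\fal^2_R(G)$, namely $\sup_\sig d_\sig^{-1/2}\norm{T_\sig}_{\smat^2_{d_\sig,C}}$ (recall $\fal^2_R(G)^*=\ell^\infty\text{-}\bigoplus d_\sig^{-1/2}\smat^2_{d_\sig,R^*}$ and $R^*=C$, paralleling (\ref{eq:pfaldual})). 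The point is that the column/Hilbert-Schmidt norm appears precisely because compressing a column vector by an isometry into a tensor-product space is a complete isometry: $U^*\xi\mapsto\xi$ identifies $\col_{d_\sig}\hookrightarrow\col_{d_\pi}\otimes^h\col_{d_{\pi'}}\cong\col_{d_\pi d_{\pi'}}$ completely isometrically, while on the row side one loses a factor $d_\sig^{1/2}$ via $\id:d_\sig^{1/2}\col_{d_\sig}\to\row_{d_\sig}$ being a complete contraction (as stated in the Operator spaces section). Writing $\smat^\infty_d=\col_d\otimes^h\row_d$ and $\smat^2_{d,R}=\col_d\otimes^h\col_d$ (so its predual carries $\col$ on the left), the factor $d_\sig^{-1/2}$ is exactly what is forced. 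One also needs that the supremum over $\pi,\pi'$ with $\sig\subset\pi\otimes\pi'$ is attained (or approached) — here using $\sig\subset\sig\otimes 1$ or, more robustly, that $1\subset\bar\sig\otimes\sig$ so that restricting attention to such pairs already realizes the worst case, exactly as the weight inequality $\ome(\sig)\le\ome(\pi)\ome(\pi')$ was used in Corollary \ref{cor:ccBaw}.

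The main obstacle I anticipate is the bookkeeping of the multiplicities and the dimension normalizations in passing between $M$ (the coproduct, no $\int_G$) and $\Gam$ (which involves Haar integration over the diagonal), i.e.\ getting the power of $d_\sig$ exactly right; a clean way around this is to avoid recomputing $\Gam^*$ from scratch and instead invoke the already-proved machinery. Indeed, Theorem \ref{theo:twoccBa} shows $\fal^2_R(G)$ is a completely contractive Banach algebra precisely because the row structure is subcross, homogeneous, with subquadratic dual ($R^*=C$); tracing through that proof shows the coproduct $M$ maps $\fal^2_R(G)^*$ completely contractively into $(\fal^2_R(G)\hat\otimes\fal^2_R(G))^*$, which combined with the Fourier-coefficient description of $\Gam^*$ above gives one inequality for the quotient norm. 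For the reverse inequality one produces, for each $\sig$ and each $T_\sig$, a preimage in $\fal(G\times G)$ of controlled norm — concretely a single tensor supported at $(\sig,1)$ (or $(\bar\sig,\sig)$), whose $\fal(G\times G)$-norm is computed from $\smat^1_{d_\sig}\check\otimes\smat^1_1=\smat^1_{d_\sig}$ and which, after the $d_\sig^{1/2}$ loss on restricting to the diagonal, has exactly $\fal^2_R(G)$-norm equal to $\norm{T_\sig}$. Matching these two bounds, and checking they hold at every matrix level (which is automatic since every step is a complete contraction/isometry), yields the complete isometry $\fal_\Del(G)=\fal^2_R(G)$. Finally I would note, as the excerpt flags in Remark \ref{rem:whenci}, that this is the special case $p=2$, $\ome\equiv 1$ of the forthcoming Theorem \ref{theo:pfaldel}, so the argument here is really a warm-up for that more general statement.
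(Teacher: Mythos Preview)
Your proposal has a genuine gap: you have misidentified the adjoint $\Gam^*$. The formula you write,
\[
\Gam^*:(T_\sig)_{\sig}\longmapsto\Bigl(\bigoplus_{\sig\subset\pi\otimes\pi'}U_{\sig,\pi\otimes\pi'}T_\sig U_{\sig,\pi\otimes\pi'}^*\Bigr)_{\pi,\pi'},
\]
is (up to orientation of the intertwiners) exactly the formula (\ref{eq:coprod}) for the coproduct $M=m^*$, i.e.\ the adjoint of \emph{pointwise multiplication}. But $\Gam$ is not multiplication: $\Gam(u\otimes v)=u\ast\check v$, so $\Gam^*(\lam(s))=\int_G\lam(st)\otimes\lam(t)\,dt$. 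By Schur orthogonality $\int_G\pi(t)\otimes\pi'(t)\,dt=0$ unless $\pi'=\bar\pi$, hence $\Gam^*(T)$ is supported only on the $(\pi,\bar\pi)$ blocks, where it equals $(T_\pi\otimes I)\cdot\frac{1}{d_\pi}\sum_{i,j}e^\pi_{ij}\otimes e^{\bar\pi}_{ij}$. This is the computation actually carried out in the proof of Theorem \ref{theo:pfaldel}; your ``key computation'' of $\sup_{\pi,\pi'}\norm{U T_\sig U^*}$ over all $(\pi,\pi')$ with $\sig\subset\pi\otimes\pi'$ is estimating the wrong object entirely.

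Two further symptoms of the same confusion. First, your proposed preimage ``supported at $(\sig,1)$'' does not work: $\Gam(v\otimes 1)(s)=\int_G v(st)\,dt=\hat v(1)$ is constant, so it does not recover $v$; the correct preimage for a $\sig$-block lives at $(\sig,\bar\sig)$. Second, your closing remark that this is the case $p=2$ of Theorem \ref{theo:pfaldel} is wrong: $\fal_\Del(G)=\fal^1_\Del(G)$ is the case $p=1$ (giving $r(1)=2$, $\beta(1)=0$), exactly as Remark \ref{rem:whenci} says. The paper's route is to run the $\Gam^*$ computation of Theorem \ref{theo:pfaldel} at $p=1$, where the Haagerup factorizations $\smat^\infty_d=\col_d\otimes^h\row_d$ and the identification (\ref{eq:cbintcol}) specialize to $\col_d\check\otimes\col_d\cong\smat^2_{d,C}$ as honest complete isometries, yielding the column structure on $\fal^2_R(G)^*$.
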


\section{Amenability properties}

We shall purposely limit our definitions to a commutative unital Banach algebra $\fA$, even when they
may be more broadly made.  After \cite{johnson0}, we say that $\fA$ is {\it amenable}
if there is a net $(w_i)$ in the projective tensor product $\fA\otimes^\gam\fA$
which is bounded, satisfies that $(m(w_i))$ is an approximate identity on $\fA$, and
for each $u$ in $\fA$, $(u\otimes 1-1\otimes u)w_i\overset{i}{\longrightarrow}0$
in the norm of $\fA\otimes^\gam\fA$.  This is equivalent to having  any bounded derivation
$D:\fA\to\fX^*$, where $\fX$ is a Banach $\fA$-bimodule with dual module $\fX^*$,
be inner, i.e.\ $D(u)=u\cdot f-f\cdot u$ for some $f$ in $\fX^*$.  

We say that $\fA$ is {\it weakly amenable}, if for every symmetric $\fA$-module $\fX$ we have that
the only bounded derivation from $\fA$ into $\fX$ is zero.  This definition was given in \cite{badecd},
where it was also noted that it is equivalent to seeing that the only bounded derivation from $\fA$
into the symmetric dual bimodule $\fA^*$ is inner.  We note that weak amenability implies
that $\fA$ admits no bounded point derivations.  We recall that if $\fA$ is a function algebra
on a compact Hausdorff space $X$, then a point derivation at $x$ in $X$ is a linear functional 
$d:\fA\to\Cee$ which satisfies $d(uv)=d(u)v(x)+u(x)d(v)$.

Both definitions above admit obvious analogues in the setting of completely contractive Banach
algebras, where we substitute operator projective tensor product for projective, and  study only
completely bounded derivations.  See, for example, \cite{ruan}.  It is obvious that (weak) amenability
implies operator (weak) amenability.

Let us begin with a synthesis of well-known facts.  We adopt the perspective of
\cite{forrestss} and let $\fA(G)$ be a unital (regular) Banach algebra of functions on $G$,  
contains $\trig(G)$ as a dense subspace, and which 
admits an operator space structure with respect to which it is completely contractive, and 
for which translations are complete isometries and are continuous on $G$.  
We suppose that $\fA(G)\hat{\otimes}\fA(G)$ is a (regular) function algebra on $G\times G$.
We let $\fA_\Del(G)$ denotes the image of $\fA(G)\hat{\otimes}\fA(G)$
under the map $\Gam$ of (\ref{eq:gammamap}), and let its operator space structure is given to 
make $\Gam$ a complete quotient map.

\begin{proposition}\label{prop:wachar}
The algebra $\fA(G)$ is operator weakly amenable if and only if 
$\fA_\Del(G)$ admits a bounded point derivation at $e$. 
\end{proposition}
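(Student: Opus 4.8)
The plan is to reduce operator weak amenability of $\fA(G)$ to the vanishing of all completely bounded derivations into its symmetric dual module, and then to transport this condition across the complete quotient map $\Gam$ into a condition on point derivations at $e$ for $\fA_\Del(G)$. Since $\fA(G)$ is commutative, the only inner derivation into the symmetric dual bimodule $\fA(G)^*$ is zero; hence $\fA(G)$ is operator weakly amenable exactly when every completely bounded derivation $D:\fA(G)\to\fA(G)^*$ is zero. I encode such a $D$ by the completely bounded bilinear form $B_D(u,v)=\langle D(u),v\rangle$. Writing out the module action $\langle u\cdot\phi,w\rangle=\langle\phi,uw\rangle$ and the Leibniz rule $D(uv)=u\cdot D(v)+v\cdot D(u)$, one sees that $D$ is a derivation if and only if
\[
B_D(uv,w)=B_D(v,uw)+B_D(u,vw)\qquad\text{for all }u,v,w\in\trig(G).
\]

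The heart of the matter is a linear bijection between such forms and the bounded point derivations at $e$ on $\fA_\Del(G)$. Because $\Gam:\fA(G)\hat{\otimes}\fA(G)\to\fA_\Del(G)$ is a complete quotient map, its adjoint identifies $\fA_\Del(G)^*$, completely isometrically, with the annihilator $(\ker\Gam)^\perp$ inside $(\fA(G)\hat{\otimes}\fA(G))^*$; thus bounded point derivations at $e$ correspond precisely to completely bounded bilinear forms on $\fA(G)\times\fA(G)$ that vanish on $\ker\Gam$ and whose induced functional on $\fA_\Del(G)$ satisfies the point-derivation identity at $e$. Given a bounded point derivation $d$ at $e$, I define $D$ by $\langle D(u),v\rangle=d(\Gam(v\otimes u))=d(v\ast\check{u})$, and conversely pull back any derivation form along $\Gam$. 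The two computational inputs that drive the translation are the evaluation formula $\Gam(u\otimes v)(e)=\int_G u(t)v(t)\,dt$ and the compatibility of $\Gam$ with the pointwise product on $\fA_\Del(G)$ and with the bimodule action of $\fA(G)$ on $\fA(G)^*$; together with translation invariance of the norm of $\fA(G)$ these show that the point-derivation identity $d(FG)=d(F)G(e)+F(e)d(G)$ for $F,G\in\fA_\Del(G)$ is equivalent to the Leibniz identity displayed above, and that $\cbnorm{D}$ and $\norm{d}$ are comparable.

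Tracking the two assignments shows they are mutually inverse and linear, and plainly $D=0$ if and only if $d=0$. Consequently every completely bounded derivation $D:\fA(G)\to\fA(G)^*$ vanishes if and only if every bounded point derivation at $e$ on $\fA_\Del(G)$ vanishes; that is, $\fA(G)$ is operator weakly amenable if and only if $\fA_\Del(G)$ admits no non-zero bounded point derivation at $e$. The main obstacle is precisely the verification in the preceding paragraph that the Leibniz identity for $B_D$ corresponds to the point-derivation identity for $d$: this requires disentangling the interaction between the convolution-averaging map $\Gam$ and the two distinct pointwise structures at play, namely the product in $\fA_\Del(G)$ and the module action defining $B_D$, all while keeping every estimate at the level of matrix norms so that complete boundedness is preserved under the complete quotient property of $\Gam$.
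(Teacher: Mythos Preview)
Your proposal and the paper's proof reach the same (corrected) conclusion---$\fA(G)$ is operator weakly amenable if and only if $\fA_\Del(G)$ has \emph{no} bounded point derivation at $e$---but by genuinely different routes, and your route has a real gap.

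The paper's proof is a three-line citation argument: Gr{\o}nb{\ae}k's characterization (operator version in \cite{spronk}, see also \cite{runde}) says that $\fA(G)$ is operator weakly amenable if and only if the ideal $\ideal_{\fA\hat\otimes\fA}(\Del)=\ker m$ satisfies $\overline{\ideal_{\fA\hat\otimes\fA}(\Del)^2}=\ideal_{\fA\hat\otimes\fA}(\Del)$; then \cite[Cor.~1.5]{forrestss} transfers this essentiality to that of $\ideal_{\fA_\Del}(e)$; and finally Hahn--Banach identifies failure of essentiality of $\ideal_{\fA_\Del}(e)$ with the existence of a bounded point derivation at $e$. No bijection between individual derivations is ever constructed; only the vanishing/nonvanishing of two quotient spaces is matched.

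Your approach instead tries to build an explicit linear bijection $D\leftrightarrow d$ via the formula $\langle D(u),v\rangle=d(v\ast\check{u})$. The difficulty you label ``the main obstacle'' is genuine and is not resolved in your sketch. The map $\Gam$ is \emph{not} a homomorphism for pointwise multiplication: $\Gam(w_1w_2)\neq\Gam(w_1)\Gam(w_2)$ in general, so your appeal to ``compatibility of $\Gam$ with the pointwise product on $\fA_\Del(G)$'' is not available. Concretely, the Leibniz identity for $D$ reads
\[
d\bigl(v\ast\check{(u_1u_2)}\bigr)=d\bigl((u_1v)\ast\check{u}_2\bigr)+d\bigl((u_2v)\ast\check{u}_1\bigr),
\]
and there is no evident way to rewrite either side as $d(FG)$ for $F,G\in\fA_\Del(G)$ so that the point-derivation rule applies. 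In the converse direction, to ``pull back $B_D$ along $\Gam$'' you would need $B_D$ to vanish on $\ker\Gam$, and you give no argument for this. What actually makes the correspondence work is the conditional-expectation structure $N\circ\Gam$ onto the subalgebra $N(\fA_\Del(G))$ of $\Del$-invariant functions; this is precisely what \cite[Cor.~1.5]{forrestss} packages, and reproving it is the real content you are missing.

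In short: your outline is pointed in the right direction, but the claimed bijection via $d(v\ast\check u)$ does not go through as written, and the honest fix amounts to invoking (or reproving) the Gr{\o}nb{\ae}k characterization and the essentiality transfer lemma, which is exactly the paper's proof.
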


\begin{proof}
We first note that the unital algebra $\fA(G)$ is weakly amenable if and only if
the ideal $\ideal_{\fA\hat{\otimes}\fA}(\Del)=\{u\in\fA(G)\hat{\otimes}\fA(G):u|_\Del=0\}$ satisfies
that  it is {\it essential}:  $\overline{\ideal_{\fA\hat{\otimes}\fA}(\Del)^2}
=\ideal_{\fA\hat{\otimes}\fA}(\Del)$.  Indeed see
\cite[Thm.\ 3.2]{groenbaek} (which is shown to hold in the operator space setting in
\cite{spronk}).  A more illuminating proof may be found in \cite[Thm.\ 2.2]{runde}; being
mostly functorial, this proof is straightforward to modify into the operator space setting
with help of the infinite matrix techniques of \cite[10.2.1]{effrosrB}.  

By \cite[Cor.\ 1.5]{forrestss}
having essentiality of $\ideal_{\fA\hat{\otimes}\fA}(\Del)$ is equivalent to essentiality of
$\ideal_{\fA_\Del}(e)=\{u\in\fA_\Del(G):u(e)=0\}$.  Any bounded linear functional on $\fA_\Del(G)$
which vanishes on $\ideal_{\fA_\Del}(e)^2$, but not on $\ideal_{\fA_\Del}(e)$, is a bounded
point derivation.  Thus by the Hahn-Banach theorem,  $\ideal_{\fA_\Del}(e)$ is essential if and only if
$\fA_\Del(G)$ admits a bounded point derivation at $e$.
\end{proof}

Let us now consider the nature of point derivations at $e$ on $\fA(G)$.
We wish to use the perspective of \cite{cartwrightm}.
We observe that $\trig(G)^\dagger\cong\prod_{\pi\in\what{G}}\mat_{d_\pi}$ is an
involutive algebra with the usual conjugate transpose on each $\mat_{d_\pi}$
and the coproduct $M:\trig(G)^\dagger\to\trig(G\times G)^\dagger$ is a $*$-homomorphism.
We further assume that $\fA(G)^*\subset \trig(G)^\dagger$ is closed under this involution.
In the case that $\fA(G)=\pfal(G,\ome)$, this is straightforward to check.
A point derivation $D$ at $e$ on $\fA(G)$ clearly restricts to such on $\trig(G)$.
Moreover, we have the derivation law for the coproduct is $MD=1\otimes D+D\otimes 1$.
Hence $D^*$ is also a derivation.  
Hence we may write as a linear combination of two skew-hermitian derivations:
$D=\frac{1}{2}(D-D^*)+\frac{1}{2i}(iD+iD^*)$.  Then \cite[Thm.\ 1]{cartwrightm}
provides us with the following.

\begin{proposition}\label{prop:derchar}
Let $G$ be a connected Lie group and $\fA(G)$ satisfy all the conditions above.  
Then each if there exists any bounded point derivation on $\fA(G)$, then there is necessarily a 
bounded skew-symmetric point derivation.  Furthermore, each
bounded skew-symmetric point derivation $D$ on
$\fA(G)$ is a classical Lie derivative, i.e.\ of the form
$D(u)=\left.\frac{d}{d\theta}\right|_{\theta=0}u(t_\theta)$ where 
$\theta\mapsto t_\theta:\Ree\to G$ is a one-parameter subgroup.
\end{proposition}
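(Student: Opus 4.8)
The plan is to reduce the statement to the cited result of Cartwright--McMullen \cite{cartwrightm} and then unwind what it gives in our setting. First I would recall the setup established just before the statement: $\trig(G)^\dagger\cong\prod_{\pi\in\what{G}}\mat_{d_\pi}$ is a $*$-algebra, the coproduct $M$ is a $*$-homomorphism, and a bounded point derivation $D$ at $e$ on $\fA(G)$ restricts to a point derivation on the dense subalgebra $\trig(G)$ satisfying $MD = 1\otimes D + D\otimes 1$. The key algebraic observation is that the involution sends derivations to derivations: since $M$ is a $*$-homomorphism and $M D = D\otimes 1 + 1\otimes D$, taking adjoints gives $M D^* = D^*\otimes 1 + 1\otimes D^*$, so $D^*$ is again a point derivation (it is bounded because $\fA(G)^*$ is closed under the involution, which we have assumed and which holds for $\pfal(G,\ome)$). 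Hence, if any bounded point derivation exists, then $D - D^*$ and $i(D + D^*)$ are bounded skew-symmetric point derivations, and $D = \tfrac12(D - D^*) + \tfrac{1}{2i}\bigl(i(D+D^*)\bigr)$; not both summands can vanish, so a nonzero bounded skew-symmetric point derivation exists. This gives the first assertion.

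Next I would address the structure of skew-symmetric bounded point derivations. Here I invoke \cite[Thm.\ 1]{cartwrightm}: on a connected Lie group, any continuous skew-symmetric point derivation at the identity of an algebra of functions (closed under the relevant operations, with the coproduct as comultiplication) is a Lie derivative along a one-parameter subgroup. To apply this I need to check that $\fA(G)$ fits their hypotheses — that it is a suitable algebra of functions on $G$ with $\trig(G)$ dense, translations continuous, and the coproduct/involution compatibility as listed among ``all the conditions above.'' These are exactly the standing assumptions collected before Proposition \ref{prop:derchar}, so the verification is a matter of matching notation. The conclusion is then that $D(u) = \left.\frac{d}{d\theta}\right|_{\theta=0} u(t_\theta)$ for a one-parameter subgroup $\theta\mapsto t_\theta$.

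The main obstacle, as I see it, is not the algebra — the involution trick giving the skew-symmetric reduction is clean — but rather making precise the bridge between our abstract ``$\trig(G)^\dagger$ as a $*$-algebra with coproduct $M$'' picture and the hypotheses of \cite{cartwrightm}. That paper is phrased for a specific class of algebras, and one has to confirm that a bounded point derivation on $\fA(G)$, once restricted to $\trig(G)$ and viewed via the Fourier-side duality, is genuinely ``continuous'' in whatever sense they require, and that the skew-symmetry condition in their framework matches $D = -D^*$ here. I would handle this by noting that boundedness of $D$ on $\fA(G)$, together with density of $\trig(G)$ and the completely contractive inclusions $\pfal(G,\ome)\hookrightarrow\fal(G)\hookrightarrow\fC(G)$ from \eqref{eq:ccinclusions}, forces $D$ to be continuous for the relevant topologies, so their theorem applies verbatim. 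I expect this step to be short in writing but to require the most care in citation, and everything else (the linear-combination decomposition, the identification of the Lie derivative) to be routine.
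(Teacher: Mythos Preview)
Your proposal is correct and matches the paper's approach exactly: the paper's entire argument is the paragraph immediately preceding the proposition, which gives the same decomposition $D=\frac{1}{2}(D-D^*)+\frac{1}{2i}(iD+iD^*)$ into skew-hermitian pieces and then cites \cite[Thm.~1]{cartwrightm} without further verification. Your additional care about matching the hypotheses of \cite{cartwrightm} goes beyond what the paper provides, but is in the same spirit.
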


Let recall how $\fA_\Del(G)$ witnesses the operator amenability of $\fA(G)$.  
We let $\fA(G)$ satisfy all of the assumptions we have so far.  

\begin{proposition}\label{prop:johnson}
A completely contractive Banach algebra $\fA(G)$ is operator amenable
if and only if $\ideal_{\fA_\Del}(e)=\{u\in\fA_\Del(G):u(e)=0\}$ admits a bounded approximate
identity.
\end{proposition}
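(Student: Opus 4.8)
The plan is to prove Proposition~\ref{prop:johnson} by relating operator amenability of $\fA(G)$ to a standard ``diagonal'' criterion, and then transporting that criterion through the quotient map $\Gam$. First I would recall the classical characterization of amenability adapted to the operator space setting (see \cite{ruan}): a completely contractive Banach algebra $\fA$ is operator amenable if and only if it possesses a bounded approximate diagonal, equivalently a virtual diagonal, in $\fA\hat{\otimes}\fA$. For a \emph{unital} algebra $\fA$, a convenient reformulation is that $\fA$ is operator amenable if and only if the augmentation ideal associated to the multiplication map $m:\fA\hat{\otimes}\fA\to\fA$ — that is, $\ker m = \ideal_{\fA\hat{\otimes}\fA}(\Del)$ when $\fA = \fA(G)$ is a function algebra and $\Del$ is identified with the diagonal — admits a bounded approximate identity. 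This is the analogue in the operator category of the Helemskii–Curtis–Loy type characterization, and the operator space version is standard.

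Next I would invoke the functorial transfer already used in the proof of Proposition~\ref{prop:wachar}: by \cite[Cor.\ 1.5]{forrestss}, the ideal $\ideal_{\fA\hat{\otimes}\fA}(\Del) = \{u\in\fA(G)\hat{\otimes}\fA(G): u|_\Del = 0\}$ and the ideal $\ideal_{\fA_\Del}(e) = \{u\in\fA_\Del(G): u(e)=0\}$ correspond under the complete quotient map $\Gam$; indeed $\Gam$ maps $\ideal_{\fA\hat{\otimes}\fA}(\Del)$ onto $\ideal_{\fA_\Del}(e)$ as a complete quotient map, since $\Gam(u)(e) = \int_G u(t,t)\,dt$ reads off the restriction of $u$ to $\Del$, and the kernel of $\Gam$ lies inside $\ideal_{\fA\hat{\otimes}\fA}(\Del)$. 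I would then use the elementary fact that a bounded approximate identity passes through a continuous surjective algebra homomorphism (here $\Gam$), and conversely lifts along a complete quotient map in the appropriate bounded sense, so that $\ideal_{\fA\hat{\otimes}\fA}(\Del)$ has a bounded approximate identity precisely when $\ideal_{\fA_\Del}(e)$ does. Combining this equivalence with the diagonal characterization from the first paragraph yields the claim.

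The main obstacle I expect is the careful verification that the classical ``augmentation ideal has a bounded approximate identity'' criterion for amenability of a unital algebra survives intact in the completely contractive category — one must check that the standard construction of an approximate diagonal out of a bounded approximate identity in $\ker m$ (via $w_i = (e\otimes e) - v_i$ with $v_i$ a suitable bounded approximate identity for $\ker m$, suitably renormalized) is completely bounded and that the various tensor-product manipulations are complete contractions. As in the proof of Proposition~\ref{prop:wachar}, this is ``mostly functorial'' and can be pushed through with the infinite-matrix amplification techniques of \cite[10.2.1]{effrosrB}, together with the fact (used throughout this section) that $\hat\otimes$ is associative, commutative, and compatible with the relevant quotient maps. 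A secondary point requiring attention is that $\fA_\Del(G)$ is unital — this follows since $\fA(G)$ is unital and $\Gam$ is a surjective homomorphism, so $\Gam(1\otimes 1) = 1$ — which is what lets us speak of the point $e$ and the ideal $\ideal_{\fA_\Del}(e)$ unambiguously and apply the unital form of the criterion on the $\fA_\Del(G)$ side.
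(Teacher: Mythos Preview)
Your overall strategy matches the paper's: first use the Helemskii/Curtis--Loy characterization (operator amenability of the unital completely contractive algebra $\fA(G)$ is equivalent to $\ker m=\ideal_{\fA\hat\otimes\fA}(\Del)$ having a bounded approximate identity), then transfer this to $\ideal_{\fA_\Del}(e)$. The first step is fine, and your remarks about pushing it to the operator category are appropriate.

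The gap is in the transfer step. You justify it by ``the elementary fact that a bounded approximate identity passes through a continuous surjective algebra homomorphism (here $\Gam$), and conversely lifts along a complete quotient map.'' But $\Gam$ is \emph{not} an algebra homomorphism: $\Gam(uv)(s)=\int_G u(st,t)v(st,t)\,dt$ is not $\Gam(u)(s)\Gam(v)(s)$; equivalently, on elementary tensors $\Gam(u\otimes v)=u*\check v$, and convolution is not pointwise product. And the converse direction --- lifting a bounded approximate identity back through a quotient map --- is simply false in general, so neither half of your argument stands. Note also that \cite[Cor.\ 1.5]{forrestss} (the Indiana paper) concerns essentiality, not bounded approximate identities; the relevant reference is \cite[Sec.\ 1.2 \& Cor.\ 1.5]{forrestss1} (the Studia paper), together with the hands-on argument of \cite[Thm.\ 3.2]{johnson}. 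The mechanism there is not multiplicativity of $\Gam$ but the existence of the isometric section $N:\fA_\Del(G)\to\fA(G)\hat\otimes\fA(G)$, $Nu(s,t)=u(st^{-1})$, whose range consists of the right-$\Del$-invariant functions; one checks $\Gam\bigl((Nv)w\bigr)=v\,\Gam(w)$, so $\Gam$ is an $\fA_\Del(G)$-module map, and this, together with the assumed translation-invariance of $\fA(G)$, is what makes the bounded approximate identity transfer in both directions.
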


\begin{proof}
A splitting result of \cite{helemskii} (see \cite{curtisl}) shows that $\fA(G)$ is
operator amenable exactly when $\ideal_{\fA\hat{\otimes}\fA}(\Del)=\{w\in
\fA(G)\hat{\otimes}\fA(G):w(s,s)=0\text{ for all }s\text{ in }G\}$ admits a bounded
approximate identity.  While this result
is stated in the Banach space category, its proof moves easily to operator spaces.
Then \cite[Sec.\ 1.2 \& Cor.\ 1.5]{forrestss1} shows that the latter statement is equivalent
to $\ideal_{\fA_\Del}(e)$ admitting a bounded approximate identity.  There is also
a beautifully ``hands-on" proof of this in \cite[Thm.\ 3.2]{johnson}, which can be easily modified to
operator spaces and to this general setting.
\end{proof}

\subsection{Operator weak amenability and operator amenability}
\label{ssec:opamenprop}
Motivated by the considerations above, we may immediately 
launch into the first major result of this section.  As above we let
$\pfal_\Del(G,\ome)=\Gam(\pfal(G,\ome)\hat{\otimes}\pfal(G,\ome))$.

\begin{theorem}\label{theo:pfaldel}
The space $\pfal_\Del(G,\ome)$, qua quotient of $\pfal(G,\ome)\hat{\otimes}\pfal(G,\ome)$ 
via $\Gam$, is given by
\begin{gather*}
\pfal_\Del(G,\ome)= \fal^{r(p)}(G,d^{\beta(p)}\Omega) \\
\text{where }\frac{1}{r(p)}+\frac{|p-2|}{2p}=1,\;
\beta(p)=\begin{cases}4-\frac{4}{p} & \text{if }1\leq p<2 \\
 2 &\text{if }p\geq 2\end{cases}\quad\text{and }\Omega(\pi)=\ome(\pi)\ome(\bar{\pi}).
\notag
\end{gather*}
\end{theorem}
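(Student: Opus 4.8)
\emph{Step 1: block decomposition.} Since $\Gam(u\otimes v)=u\ast\check v$, the convolution theorem together with Schur orthogonality shows that $\Gam$ annihilates the $(\pi,\pi')$-summand of $\pfal(G,\ome)\hat{\otimes}\pfal(G,\ome)$ unless $\pi'\cong\bar\pi$. On the $(\pi,\bar\pi)$-summand — which as an operator space is $\Ome(\pi)d_\pi^{2+1/p'}\bigl(\smat^p_{d_\pi}\hat{\otimes}\smat^p_{d_\pi}\bigr)$, wait more precisely $\Ome(\pi)d_\pi^{\,2+2/p'}\bigl(\smat^p_{d_\pi}\hat{\otimes}\smat^p_{d_\pi}\bigr)$, with $w\leftrightarrow\hat w=\hat u(\pi)\otimes\hat v(\bar\pi)$ — a short Schur computation gives $\widehat{\Gam w}(\pi)=\hat v(\bar\pi)^{\tr}\hat u(\pi)$. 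Thus on that summand $\Gam$ is the scalar $\Ome(\pi)d_\pi^{\,2+2/p'}$ times the twisted multiplication $\mu_d\colon\smat^p_d\hat{\otimes}\smat^p_d\to\mat_d$, $A\otimes B\mapsto B^{\tr}A$, and since $\pfal_\Del(G,\ome)$ carries the quotient operator space structure we obtain
\[
\pfal_\Del(G,\ome)=\ell^1\text{-}\!\!\bigoplus_{\pi\in\what G}\!\Ome(\pi)\,d_\pi^{\,2+2/p'}\,Q^p_{d_\pi},\qquad Q^p_d:=\bigl(\smat^p_d\hat{\otimes}\smat^p_d\bigr)/\ker\mu_d.
\]
It is essential here that one cannot remove the transpose by an isometry: the partial transpose is far from isometric on $\smat^p_d\hat{\otimes}\smat^p_d$, and this is exactly what produces the nontrivial Hilbert--Schmidt/Schatten exponents below.

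\emph{Step 2: dualize.} We have $(Q^p_d)^*=\ker\mu_d^{\perp}=\mathrm{ran}(\mu_d^*)$ inside $(\smat^p_d\hat{\otimes}\smat^p_d)^*=\smat^{p'}_d\check{\otimes}\smat^{p'}_d$ (cf.\ (\ref{eq:pfaltpdual})). A direct computation of the adjoint yields, for $T\in\mat_d$,
\[
\mu_d^*(T)=|\chi_T\rangle\langle\varpi_d|,\qquad \chi_T=\sum_{i,j}T_{ij}\,e_i\otimes e_j,\quad \varpi_d=\sum_j e_j\otimes e_j,
\]
a rank-one operator on $\Cee^d\otimes\Cee^d$ built from the (unnormalised) maximally entangled vector $\varpi_d$. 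Hence everything reduces to the single quantity $N_p(T):=\bigl\||\chi_T\rangle\langle\varpi_d|\bigr\|_{\smat^{p'}_d\check{\otimes}\smat^{p'}_d}$: once this is known, $(Q^p_d)^*$ is pinned down and so is $Q^p_d$.

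\emph{Step 3: the key norm, with the kink at $p=2$.} Using the factorisation $\smat^{p'}_d=\col^{p'}_d\otimes^h\row^{p'}_d$ and stability of $\otimes^h$ under interpolation, one has $\smat^{p'}_d\otimes^h\smat^{p'}_d\cong\col^{p'}_d\otimes^h\smat^p_d\otimes^h\row^{p'}_d$ (the inner $\row^{p'}_d\otimes^h\col^{p'}_d$ being $[\smat^1_d,\smat^\infty_d]_{1/p'}=\smat^p_d$). Squeezing $\smat^{p'}_d\check{\otimes}\smat^{p'}_d$ between $\smat^{p'}_{d^2}$ (the complete contraction $\smat^{p'}_{d^2}\hookrightarrow\smat^{p'}_d\check{\otimes}\smat^{p'}_d$, the first observation in the proof of Theorem \ref{theo:ccBa}) and $\smat^\infty_d\check{\otimes}\smat^\infty_d=\smat^\infty_{d^2}$, and evaluating both bounds on the rank-one operator $|\chi_T\rangle\langle\varpi_d|$, one is reduced to the completely bounded norm of the formal identity between the interpolated Hilbertian spaces $\col^{p'}_d$ and $\row^{p'}_d$, which is the power $d^{\,|1/p-1/p'|/2}=d^{\,|p-2|/(2p)}$ (in the spirit of $\id\colon d^{1/2}\col_d\to\row_d$ a complete contraction). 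Which end of the squeeze is attained depends on the sign of $p-2$ — for $p\ge 2$ the ``column part'' of $\smat^{p'}_d$ wins, for $p\le 2$ the ``row part'' — and this is the source of the case distinction. Carrying this through gives $N_p(T)=d^{\,|p-2|/(2p)}\|T\|_{\smat^{s(p)}_d}$ with $\tfrac1{s(p)}=\tfrac{|p-2|}{2p}$, hence $(Q^p_d)^*=d^{\,|p-2|/(2p)}\smat^{s(p)}_d$ and $Q^p_d=d^{-|p-2|/(2p)}\smat^{r(p)}_d$ where $r(p)=s(p)'$, i.e.\ $\tfrac1{r(p)}+\tfrac{|p-2|}{2p}=1$.

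\emph{Step 4: reassemble.} Substituting $Q^p_{d}=d^{-|p-2|/(2p)}\smat^{r(p)}_{d}$ into Step 1 and using $\tfrac1{r(p)'}=\tfrac{|p-2|}{2p}$,
\[
\pfal_\Del(G,\ome)=\ell^1\text{-}\!\!\bigoplus_{\pi}\Ome(\pi)\,d_\pi^{\,2+2/p'-|p-2|/(2p)}\,\smat^{r(p)}_{d_\pi}
=\ell^1\text{-}\!\!\bigoplus_{\pi}\bigl(d_\pi^{\beta(p)}\Ome(\pi)\bigr)\,d_\pi^{\,1+1/r(p)'}\,\smat^{r(p)}_{d_\pi}=\fal^{r(p)}\!\bigl(G,d^{\beta(p)}\Ome\bigr),
\]
where $\beta(p)=1+2/p'-|p-2|/p$, which equals $4-\tfrac4p$ for $p<2$ and $2$ for $p\ge 2$ — the same kink. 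Finally Remark \ref{rem:whenci} records the operator-space fine print: the identification is isometric as stated, and becomes completely isometric once the target is given the correct Hilbertian/interpolated operator space structure (for instance at $p=1$ this recovers $\fal^2_R(G)$ of Proposition \ref{prop:faldel} rather than the operator-Hilbert version). The corollary for weighted algebras is identical since the weight enters only through the scalar $\Ome(\pi)=\ome(\pi)\ome(\bar\pi)$.

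\emph{Expected main obstacle.} Steps 1, 2 and 4 are essentially bookkeeping (Schur orthogonality, an adjoint computation, exponent arithmetic); the real work is Step 3 — the exact evaluation of $N_p(T)$, i.e.\ understanding precisely how the maximally entangled rank-one operator $|\chi_T\rangle\langle\varpi_d|$ sits inside $\smat^{p'}_d\check{\otimes}\smat^{p'}_d$, and in particular making the squeeze argument tight on the correct side of $p=2$ so as to obtain the exact power of $d$ and the exact Schatten index.
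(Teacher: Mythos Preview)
Your Steps 1, 2 and 4 are correct and match the paper's strategy exactly: dualize via $\Gam^*$, use Schur orthogonality to reduce to the $(\pi,\bar\pi)$-blocks, identify the resulting element in $\smat^{p'}_d\check{\otimes}\smat^{p'}_d$ as (essentially) the rank-one $|\chi_T\rangle\langle\varpi_d|$, and reassemble. The exponent arithmetic in Step 4 is right.

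The gap is, as you suspect, Step 3 --- but it is more serious than you indicate. Your squeeze cannot give the exact answer. On the rank-one operator $|\chi_T\rangle\langle\varpi_d|$, the upper bound $\|\cdot\|_{\smat^{p'}_{d^2}}$ evaluates to $\|\chi_T\|_2\|\varpi_d\|_2=\sqrt{d}\,\|T\|_{\smat^2_d}$, \emph{independently of $p'$}, since a rank-one operator has a single singular value. This is the correct value of $N_p(T)$ only at $p=1,\infty$; at $p=2$ the true answer is $\|T\|_{\smat^\infty_d}$. So the squeeze is slack in the interior, and no amount of ``which side is tight'' case analysis can repair it. (The lower bound you propose, via a putative complete contraction $\smat^{p'}_d\check{\otimes}\smat^{p'}_d\to\smat^\infty_{d^2}$, is also delicate: the formal identity $\smat^{p'}_d\to\smat^\infty_d$ is not obviously completely contractive for $p'<\infty$, and in any case would give the same value on a rank-one, forcing the wrong answer.)

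What the paper does instead avoids squeezing. It runs the specific element through a \emph{cycle} of maps
\[
\smat^{p'}_d\check{\otimes}\smat^{p'}_d \;\to\; \col^{p'}_d\check{\otimes}\row^{p'}_d\check{\otimes}\col^{p'}_d\check{\otimes}\row^{p'}_d \;\to\; (\col^{p'}_d\check{\otimes}\col^{p'}_d)\otimes^h(\row^{p'}_d\check{\otimes}\row^{p'}_d)\;\to\;\smat^{p'}_d\check{\otimes}\smat^{p'}_d,
\]
each arrow a contraction on this particular element (the middle step because after the shuffle the element is an \emph{elementary} tensor, the last by the Effros--Ruan shuffle), so equality holds throughout. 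The norm is then read off at the middle stage via the isometric identification
\[
\col^{p'}_d\check{\otimes}\col^{p'}_d\cong\fC\fB(\col^p_d,\col^{p'}_d)=\smat^{2p/|p-2|}_d
\]
from \cite[Lem.\ 5.9]{xu}: this is the key lemma you are missing, and it simultaneously produces the Schatten index $r(p)'=2p/|p-2|$ and the power $d^{|p-2|/(2p)}=\|I\|_{\smat^{r(p)'}_d}$. The factorisation $\smat^{p'}_d=\col^{p'}_d\otimes^h\row^{p'}_d$ that you mention in passing is indeed the starting point, but the crucial move is to shuffle so that the element becomes elementary in $(\col^{p'}\check{\otimes}\col^{p'})\otimes(\row^{p'}\check{\otimes}\row^{p'})$, not to bound $\smat^{p'}_d\check{\otimes}\smat^{p'}_d$ between two Schatten classes on $\Cee^{d^2}$.
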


We call $\Ome$ the {\it symmetrization} of $\ome$.

\begin{remark} \label{rem:whenci}
We remark that the identification is not, in general, a complete isometry.  
Indeed the second identification in (\ref{eq:cbintcol}), below, 
is not generally a complete isometry.  However if $p=1$, then we obtain the 
$\smat^2_{d,C}$ in (\ref{eq:cbintcol}), from which we can deduce Proposition \ref{prop:faldel}, above.
We leave the details to the reader; or see \cite[Thm.\ 2.1]{rostamis}
\end{remark}

\begin{proof}
We first note that 
\[
\Gam^*(\lam(s))=\int_G\lam(st)\otimes\lam(t)\,dt=(\lam(s)\otimes I) \int_G\lam(t)\otimes\lam(t)\,dt
\]
where the integral is a weak* integral in $(\pfal(G)\hat{\otimes}\pfal(G))^*$.
Since $\trig(G)$ is dense in $\pfal(G,\ome)$, and $\spn\lam(G)$ is weak$^\dagger$-dense
in $\trig(G)^\dagger$, we have for $T$ in $\trig(G)^\dagger$ that
\begin{equation}\label{eq:gamadj}
\Gam^*(T)=(T\otimes I )\int_G\lam(t)\otimes\lam(t)\,dt.
\end{equation}
Using the decomposition $\lam=\bigoplus_{\pi\in\what{G}}\pi$,  the Schur orthogonality 
relations tell us that $\int_G \lam(s)\otimes\lam(s)\,ds\cong\bigoplus_{\pi,\pi'\in \what{G}\times\what{G}}
\int_G \pi(s)\otimes\pi'(s)\,ds$, where each integral is zero unless $\pi'=\bar{\pi}$.
Thus for $T=(T_\pi)_{\pi\in\what{G}}$ we obtain
\[
\Gam^*(T)=\bigoplus_{\pi\in\what{G}}
(T_\pi\otimes I) \int_G\pi(t)\otimes\bar{\pi}(t)\,dt.
\]
Further, select for each $\pi$ a basis $\{e^\pi_1,\dots,e^\pi_{d_\pi}\}$ for $\fH_\pi$,
and let $e_{ij}^\pi
=e^\pi_i\otimes e^\pi_j$ in $\smat^{p'}_{d_\pi}\cong\col^{p'}_{d_{\pi}}\otimes^h\row^{p'}_{d_{\pi}}$.
For each $\pi$, we consider both $\pi$ and $\bar{\pi}$ as acting on one and the same Hilbert space,
and we depict their matrices with the same basis.
Then a more refined use of the Schur orthogonality relations shows that 
\[
\int_G\pi(t)\otimes\bar{\pi}(t)\,dt=\frac{1}{d_\pi}\sum_{i,j=1}^{d_\pi}e_{ij}^\pi\otimes e_{ij}^{\bar{\pi}}.
\]
We write each $T_\pi=\sum_{k,l=1}^{d_\pi}T_{\pi,kl}e_{kl}^\pi$ to obtain
\begin{align*}
(T_\pi\otimes I) \int_G\pi(t)\otimes\bar{\pi}(t)\,dt
&=\frac{1}{d_\pi}\sum_{i,j,k=1}^{d_\pi}T_{\pi,ki}e^\pi_{kj}\otimes e_{ij}^{\bar{\pi}} \\
&\cong\frac{1}{d_\pi}\sum_{i,j,k=1}^{d_\pi}T_{\pi,ki}e^\pi_k\otimes e^\pi_j\otimes e_i^\pi\otimes e^\pi_j.
\end{align*}
Consider the sequence of maps applied to each of the elements above:

\begin{tabular}{ll}
$\displaystyle\sum_{i,j,k=1}^{d_\pi}T_{\pi,ki}e^\pi_k\otimes e^\pi_j\otimes e_i^\pi\otimes e^\pi_j$
&$\displaystyle\in (\col^{p'}_{d_\pi}\otimes^h\row^{p'}_{d_\pi})\check{\otimes}
(\col^{p'}_{d_\pi}\otimes^h\row^{p'}_{d_\pi})$ \\
$\displaystyle\quad\mapsto 
\sum_{i,j,k=1}^{d_\pi}T_{\pi,ki}e^\pi_k\otimes e^\pi_j\otimes e_i^\pi\otimes e^\pi_j$
&$\displaystyle\in \col^{p'}_{d_\pi}\check{\otimes}\row^{p'}_{d_\pi}\check{\otimes}
\col^{p'}_{d_\pi}\check{\otimes}\row^{p'}_{d_\pi}$ \\
$\displaystyle\quad\mapsto
\sum_{k,i=1}^{d_\pi}T_{\pi,ki}e^\pi_k\otimes e_i^\pi\otimes
\sum_{j=1}^{d_\pi} e^\pi_j\otimes e^\pi_j $
&$\displaystyle\in \col^{p'}_{d_\pi}\check{\otimes}\col^{p'}_{d_\pi}\check{\otimes}
\row^{p'}_{d_\pi}\check{\otimes}\row^{p'}_{d_\pi}$ \\
$\displaystyle\quad\mapsto
\sum_{k,i=1}^{d_\pi}T_{\pi,ki}e^\pi_k\otimes e_i^\pi\otimes
\sum_{j=1}^{d_\pi} e^\pi_j\otimes e^\pi_j $
&$\displaystyle\in (\col^{p'}_{d_\pi}\check{\otimes}\col^{p'}_{d_\pi})\otimes^h
(\row^{p'}_{d_\pi}\check{\otimes}\row^{p'}_{d_\pi})$ \\
$\displaystyle\quad\mapsto
\sum_{i,j,k=1}^{d_\pi}T_{\pi,ki}e^\pi_k\otimes e^\pi_j\otimes e_i^\pi\otimes e^\pi_j$
&$\displaystyle\in (\col^{p'}_{d_\pi}\otimes^h\row^{p'}_{d_\pi})\check{\otimes}
(\col^{p'}_{d_\pi}\otimes^h\row^{p'}_{d_\pi}).$
\end{tabular}
\newline Each map is a contraction on the  elements to which it is applied.
The map at the second from last line is contractive by virtue of the fact that it is
being applied to an elementary tensor and all tensor norms are cross-norms. The map in the last line
is a contraction thanks to the shuffle theorem of \cite{effrosr1}.  We now observe that
we have isometric identifications
\begin{equation}\label{eq:cbintcol}
\col^{p'}_d\check{\otimes}\col^{p'}_d\cong\fC\fB(\col^p_d,\col^{p'}_d)=
\smat^{\frac{2pp'}{|p-p'|}}_d
\end{equation}
where the first identification is standard, and the second is observed in \cite[Lem.\ 5.9]{xu}.
We further note that $\frac{2pp'}{|p-p'|}=\frac{2p}{|p-2|}$.  Since $(\row_d^{p'})^*\cong\col_d^p$, we 
likewise obtain
$\row^{p'}_d\check{\otimes}\row^{p'}_d\cong\smat^{\frac{2p}{|p-2|}}_d$, as well.
Collecting together the results of the prior three observations we obtain
\begin{align*}
\frac{1}{d_\pi}\norm{\sum_{i,j,k=1}^{d_\pi}T_{\pi,ki}e^\pi_{kj}\otimes e_{ij}^\pi 
}_{\smat^{p'}_{d_\pi}\check{\otimes}\smat^{p'}_{d_\pi}}
&=\frac{1}{d_\pi}\norm{T_\pi\otimes I}_{\smat_{d_\pi}^{\frac{2p}{|p-2|}}\otimes^h
\smat_{d_\pi}^{\frac{2p}{|p-2|}}} \\
&=d_\pi^{\frac{|p-2|}{2p}-1}\norm{T_\pi}_{\smat_{d_\pi}^{\frac{2p}{|p-2|}}}.
\end{align*}
We then observe, in analogy to (\ref{eq:pfaldualt}), that
\[
(\pfal(G,d^\alp)\hat{\otimes}\pfal(G,d^\alp))^*\cong
\underset{\pi,\pi'\in\what{G}}{\ell^\infty\text{-}\bigoplus}
\frac{(d_\pi d_{\pi'})^{-\frac{1}{p'}}}{\ome(\pi)\ome(\pi')}
\smat^{p'}_{d_\pi}\check{\otimes}\smat^{p'}_{d_{\pi'}}.
\]
In summary, for $T$ in $\trig(G)^\dagger$ we find that 
\[
\Gam^*(T)\in(\pfal(G,\ome)\hat{\otimes}\pfal(G,\ome))^*
\quad\Leftrightarrow\quad
\sup_{\pi\in\what{G}}
\frac{d_\pi^{-\frac{2}{p'}+\frac{|p-2|}{2p}-1}}{\ome(\pi)\ome(\bar{\pi})}
\norm{T_\pi}_{\smat_{d_\pi}^{\frac{2p}{|p-2|}}}<\infty.
\]
Hence we obtain
\begin{equation}\label{eq:pfaldeldual}
\pfal_\Del(G,\ome)^*\cong\ell^\infty\text{-}\bigoplus_{\pi\in\what{G}} 
\frac{d_\pi^{-\frac{2}{p'}+\frac{|p-2|}{2p}-1}}{\ome(\pi)\ome(\bar{\pi})}\smat_{d_\pi}^{\frac{2p}{|p-2|}}
=\ell^\infty\text{-}\bigoplus_{\pi\in\what{G}} 
\frac{d_\pi^{-\beta(p)-\frac{1}{r(p)'}}}{\Omega(\pi)}\smat_{d_\pi}^{r'(p)}
\end{equation}
where $r(p)'=\frac{2p}{|p-2|}$ and $\beta(p)=1+\frac{2}{p'}-\frac{|2-p|}{p}$. 
The desired result follows by duality.
\end{proof}

In particular, we obtain an isometric identification
\[
\fal^2_\Del(G)=\fal(G,d^2).
\]
This stands in contrast to \cite[Thm.\ 2.6]{forrestss} where the isometric identification
\[
\fal^2_{R,\Del}(G)=\fal^2(G,d)
\]
is obtained.  Let us compare these two further.  In the notation of Section \ref{ssec:hilbert}
we let for $1\leq q\leq\infty$,
$\fal^2_{R^q,\Del}(G,\ome)=\Gam(\fal^2_{R^q}(G,\ome)\hat{\otimes}\fal^2_{R^q}(G,\ome))$.


\begin{theorem}\label{theo:twofaldel}
The space $\fal^2_{R^q,\Del}(G,\ome)$, qua quotient of 
$\fal^2_{R^q}(G,\ome)\hat{\otimes}\fal^2_{R^q}(G,\ome)$ via $\Gam$, is given by
\begin{gather*}
\fal^2_{R^q,\Del}(G,\ome)= \fal^{r(q)}(G,d^{\gam(q)}\Omega) \\
\text{where }\frac{1}{r(q)}+\frac{|q-2|}{2q}=1,\;
\gam(q)=\begin{cases}3-\frac{2}{q} & \text{if }1\leq q<2 \\
 1+\frac{2}{q} &\text{if }q\geq 2\end{cases}\quad\text{and }\Omega(\pi)=\ome(\pi)\ome(\bar{\pi}).
\notag
\end{gather*}
\end{theorem}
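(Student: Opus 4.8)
The plan is to follow the proof of Theorem \ref{theo:pfaldel} essentially verbatim, changing only the ingredients that depend on the operator space structure.  The computation of $\Gam^*$ needs nothing about that structure, so it is literally the same:  from (\ref{eq:gamadj}) and the Schur orthogonality relations, for $T=(T_\pi)_{\pi\in\what{G}}$ in $\trig(G)^\dagger$,
\[
\Gam^*(T)=\bigoplus_{\pi\in\what{G}}(T_\pi\otimes I)\int_G\pi(t)\otimes\bar{\pi}(t)\,dt
=\bigoplus_{\pi\in\what{G}}\frac{1}{d_\pi}\sum_{i,j,k=1}^{d_\pi}T_{\pi,ki}\,e^\pi_{kj}\otimes e^{\bar{\pi}}_{ij},
\]
the $\pi$-th summand lying in the $(\pi,\bar{\pi})$-component of $\trig(G\times G)^\dagger$.

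The argument diverges at the dual space.  Since $(R^q)^*=C^q$, one has $\fal^2_{R^q}(G,\ome)^*\cong\ell^\infty\text{-}\bigoplus_{\pi\in\what{G}}d_\pi^{-1/2}\ome(\pi)^{-1}\smat^2_{d_\pi,C^q}$, and hence
\[
(\fal^2_{R^q}(G,\ome)\hat{\otimes}\fal^2_{R^q}(G,\ome))^*\cong
\underset{\pi,\pi'\in\what{G}}{\ell^\infty\text{-}\bigoplus}\,
\frac{(d_\pi d_{\pi'})^{-1/2}}{\ome(\pi)\ome(\pi')}\,\smat^2_{d_\pi,C^q}\check{\otimes}\smat^2_{d_{\pi'},C^q}.
\]
The prefactor $(d_\pi d_{\pi'})^{-1/2}$, in place of the $(d_\pi d_{\pi'})^{-1/p'}$ of Theorem \ref{theo:pfaldel}, is exactly what will account for the difference between $\gam(q)$ and $\beta(p)$.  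The operator space input I would use is the completely isometric factorization $\smat^2_{d,C^q}\cong\col^q_d\otimes^h\col^q_d$, valid by stability of the Haagerup tensor product under complex interpolation together with $\col_d\otimes^h\col_d=\col_{d^2}$ and $\row_d\otimes^h\row_d=\row_{d^2}$; this plays the role that $\smat^{p'}_d=\col^{p'}_d\otimes^h\row^{p'}_d$ played there, the only novelty being that both tensor legs are now copies of $\col^q_d$ rather than a column-row pair.

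With this factorization I would run, essentially verbatim, the chain of contractions in the table of the proof of Theorem \ref{theo:pfaldel}:  forget the Haagerup structure to land in the four-fold injective tensor product $\col^q_d\check{\otimes}\col^q_d\check{\otimes}\col^q_d\check{\otimes}\col^q_d$; permute the legs so that the coefficients collect into $\sum_{k,i}T_{\pi,ki}\,e^\pi_k\otimes e^\pi_i$ and the remaining legs into $\sum_j e^\pi_j\otimes e^\pi_j$; regroup as an elementary tensor in $(\col^q_d\check{\otimes}\col^q_d)\otimes^h(\col^q_d\check{\otimes}\col^q_d)$; and shuffle back to $\smat^2_{d,C^q}\check{\otimes}\smat^2_{d,C^q}$ via the shuffle theorem of \cite{effrosr1}.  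Since the composite of these contractions is the identity on the elements in question, all the intermediate norms coincide.  Using $\col^q_d\check{\otimes}\col^q_d\cong\fC\fB(\col^{q'}_d,\col^q_d)\cong\smat^{2q/|q-2|}_d$ --- the second identification from \cite[Lem.\ 5.9]{xu} --- and that $\sum_j e^\pi_j\otimes e^\pi_j$ corresponds to the identity matrix, of $\smat^{2q/|q-2|}_{d_\pi}$-norm $d_\pi^{|q-2|/(2q)}$, the $\pi$-th block of $\Gam^*(T)$ has norm $d_\pi^{|q-2|/(2q)-1}\norm{T_\pi}_{\smat^{2q/|q-2|}_{d_\pi}}$, formally the same expression as in Theorem \ref{theo:pfaldel} with $q$ in place of $p$.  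Combining with the prefactor above and using $d_{\bar{\pi}}=d_\pi$ and $\ome(\pi)\ome(\bar{\pi})=\Omega(\pi)$, I would obtain
\[
\fal^2_{R^q,\Del}(G,\ome)^*\cong\ell^\infty\text{-}\bigoplus_{\pi\in\what{G}}
\frac{d_\pi^{\frac{|q-2|}{2q}-2}}{\Omega(\pi)}\,\smat^{2q/|q-2|}_{d_\pi},
\]
and then dualize:  matching Schatten indices gives $\frac{1}{r(q)}+\frac{|q-2|}{2q}=1$, and matching powers of $d_\pi$ gives the weight $d^{\gam(q)}\Omega$ with $\gam(q)=2-\frac{|q-2|}{q}$, which is $3-\frac2q$ for $1\leq q<2$ and $1+\frac2q$ for $q\geq2$, as claimed.

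I do not expect any genuine obstacle, since the argument runs parallel to Theorem \ref{theo:pfaldel}.  The two points needing care are:  keeping the powers of $d_\pi$ straight --- in particular that $\fal^2_{R^q}(G,\ome)^*$ carries the factor $d_\pi^{-1/2}$, coming from the $d_\pi^{3/2}$-weighting in the definition of $\fal^2_{R^q}(G,\ome)$, where $\pfal(G,\ome)^*$ carried $d_\pi^{-1/p'}$; and checking that the shuffle theorem of \cite{effrosr1} still applies when all four tensor legs equal $\col^q_d$, which it does since it is insensitive to whether the legs coincide.
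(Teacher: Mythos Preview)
Your proposal is correct and follows essentially the same approach as the paper's own proof; the only cosmetic difference is that you write the dual structure as $C^q$ while the paper writes it as $R^{q'}$, but since $\col^q_d=\row^{q'}_d$ (from $C^{p'}=R^p$) these are the same operator space, and your factorization $\smat^2_{d,C^q}\cong\col^q_d\otimes^h\col^q_d$ is identical to the paper's $\smat^2_{d,R^{q'}}=\row^{q'}_d\otimes^h\row^{q'}_d$.
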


\begin{remark}
As before, we do not generally obtain a complete isometry.  However, when $q=\infty$ we will, in fact,
get $\fal^2_{R,\Del}(G,\ome)=\fal^2_R(G,d\Ome)$ completely isometrically, as we comment in the proof,
below.  In particular, we generalize and refine \cite[Thm.\ 2.1]{forrestss1}:  we see that
$\fal^2_{R,\Del}(G,d^{2^n-1})=\fal^2_R(G,d^{2^{n+1}-1})$, completely isometrically.
\end{remark}

\begin{proof}
We use the template of the proof of Theorem \ref{theo:pfaldel}.  First recall that we get isometric
identifications 
\[
\smat^2_{d,R^{q'}}=\row_d^{q'}\otimes^h \row_d^{q'}\quad\text{and}\quad
\row_d^{q'}\check{\otimes} \row_d^{q'}\cong\fC\fB(\row_d^q,\row_d^{q'})
=\smat_d^{\frac{2q}{|q-2|}}
\]
which are both completely isometric if $q=1,\infty$ and we let $\smat_d^2=\smat_{d,R^{q'}}^2$.
We may then compute, as in the last proof, that for $T$ in $\trig(G)^\dagger$ and each $\pi$ in $\what{G}$
we have 
\begin{align*}
&\norm{\Gam^*(T)_{\pi,\bar{\pi}}}_{\smat^2_{d_\pi,R^{q'}}\check{\otimes}\smat^2_{d_\pi,R^{q'}}}
=\frac{1}{d_\pi}\norm{\sum_{i,j,k=1}^{d_\pi}T_{\pi,ki}e^\pi_{kj}\otimes e_{ij}^\pi
}_{\smat^2_{d_\pi,R^{q'}}\check{\otimes}\smat^2_{d_\pi,R^{q'}}} \\
&\phantom{mm}=\frac{1}{d_\pi}
\norm{\sum_{k,i=1}^{d_\pi}T_{\pi,ki}e^\pi_k\otimes e_i^\pi\otimes
\sum_{j=1}^{d_\pi} e^\pi_j\otimes e^\pi_j}_{(\row_{d_\pi}^{q'}\check{\otimes} \row_{d_\pi}^{q'})\otimes^h
(\row_{d_\pi}^{q'}\check{\otimes} \row_{d_\pi}^{q'})} \\
&\phantom{mm}=\frac{1}{d_\pi}\norm{T_\pi\otimes I}_{\smat_{d_\pi}^{\frac{2q}{|q-2|}}\otimes^h
\smat_{d_\pi}^{\frac{2q}{|q-2|}}}
=d_\pi^{\frac{|q-2|}{2q}-1}\norm{T_\pi}_{\smat_{d_\pi}^{\frac{2q}{|q-2|}}}.
\end{align*}
Hence we see for such $T$ that
\[
\Gam^*(T)\in(\fal^2_{R^q}(G,\ome)\hat{\otimes}\fal^2_{R^q}(G,\ome))^*
\quad\Leftrightarrow\quad
\sup_{\pi\in\what{G}}
\frac{d_\pi^{-1+\frac{|q-2|}{2q}-1}}{\ome(\pi)\ome(\bar{\pi})}
\norm{T_\pi}_{\smat_{d_\pi}^{\frac{2q}{|q-2|}}}<\infty.
\]
Hence we obtain
\[
\fal^2_{R^q,\Del}(G,\ome)^*\cong\ell^\infty\text{-}\bigoplus_{\pi\in\what{G}} 
\frac{d_\pi^{-2+\frac{|q-2|}{2q}}}{\ome(\pi)\ome(\bar{\pi})}\smat_{d_\pi}^{\frac{2p}{|p-2|}}
=\ell^\infty\text{-}\bigoplus_{\pi\in\what{G}} 
\frac{d_\pi^{-\gam(p)-\frac{1}{r'}}}{\Omega(\pi)}\smat_{d_\pi}^{r'}
\]
where $r(q)'=\frac{2q}{|q-2|}$ and $\gam(q)=2-\frac{|2-q|}{q}$. 
If $q=1,\infty$, then $r(q)=r(q)'=2$ and the description above is completely isometric provided
we use structure $R^{q'}$ on each $\smat_{d_\pi}^2$.
\end{proof}

Observe that if $\ome$ is a weakly dimension weight then it follows from
Corollary \ref{cor:spectrum} and
\cite{tomiyama}, $\pfal(G,\ome)\hat{\otimes}\pfal(G,\ome)$ has spectrum $G\times G$, and is regular on 
its spectrum.  Hence the analysis of the beginning of the section applies.

We can now apply our results to the $2\times 2$ special unitary group $\su(2)$.
Observe that in the case $p=1$, we improve a result of \cite[Thm.\ 4.2 (v)]{lees}.

\begin{theorem}\label{theo:sutwo}
For $p\geq 1$, the completely contractive Banach algebra $\pfal(\su(2),d^\alp)$ admits a bounded 
point derivation if and only if $\alp\geq 1$.  The algebra $\pfal(\su(2),d^\alp)$ is operator weakly amenable
if and only if 
\[
1\leq p<\frac{4}{3+2\alp}.
\]
In particular, if $p=1$, $\fal(\su(2),d^\alp)$ is operator weakly amenable
if and only if $\alp<\frac{1}{2}$.
\end{theorem}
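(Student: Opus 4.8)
The plan is to combine the general framework of Section 3 with the explicit description of $\pfal_\Del(G,\ome)$ from Theorem \ref{theo:pfaldel} and the classical fact (Proposition \ref{prop:derchar}) that on a connected Lie group bounded point derivations at $e$ are Lie derivatives. Since $\su(2)$ is a connected compact Lie group, its dual $\what{\su(2)}$ is indexed by $\{0,\tfrac12,1,\tfrac32,\dots\}$, with $d_n = n+1$ for the $(n+1)$-dimensional irreducible $\pi_n$, and every $\pi_n$ is self-conjugate, so $\Ome = d^{2\alp}$ for the weight $\ome = d^\alp$. By Theorem \ref{theo:pfaldel}, $\pfal_\Del(\su(2),d^\alp) = \fal^{r(p)}(\su(2), d^{\beta(p)+2\alp})$ where $\tfrac1{r(p)} + \tfrac{|p-2|}{2p} = 1$ and $\beta(p) = 4 - \tfrac4p$ for $1\le p<2$, $\beta(p)=2$ for $p\ge 2$. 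By Proposition \ref{prop:wachar}, operator weak amenability of $\pfal(\su(2),d^\alp)$ is equivalent to the \emph{absence} of a bounded point derivation at $e$ on this $\fal^{r(p)}(\su(2), d^{\beta(p)+2\alp})$.

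First I would settle, for a general exponent $r$ and weight $d^\mu$, exactly when $\fal^r(\su(2), d^\mu)$ admits a bounded point derivation at $e$. By Proposition \ref{prop:derchar} it suffices to test a single skew-symmetric Lie derivative $D(u) = \tfrac{d}{d\theta}\big|_{\theta=0} u(t_\theta)$, where $t_\theta$ runs through a fixed one-parameter subgroup (all are conjugate in $\su(2)$, so the choice is immaterial); concretely one may take $t_\theta = \mathrm{diag}(e^{i\theta}, e^{-i\theta})$. One computes $D$ on the matrix coefficients of $\pi_n$: differentiating $\pi_n(t_\theta)$, which is diagonal with entries $e^{ik\theta}$, $k = -n, -n+2,\dots,n$, one gets $\hat{D}(\pi_n) = \tfrac{d}{d\theta}\big|_0 \pi_n(t_\theta) = i\,\mathrm{diag}(n, n-2,\dots,-n)$, a matrix of operator norm $n = d_n - 1$ and of rank one in the relevant quotient sense after subtracting the scalar part. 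The functional $D$ is bounded on $\fal^r(\su(2),d^\mu)$ iff its representing element lies in the dual $\fal^r(\su(2),d^\mu)^* \cong \ell^\infty\text{-}\bigoplus_n d_n^{-\mu - 1/r'} \smat^{r'}_{d_n}$, i.e.\ iff $\sup_n d_n^{-\mu - 1/r'} \norm{\hat D(\pi_n)}_{r'} < \infty$. Since $\norm{\hat D(\pi_n)}_{r'} \asymp d_n^{1/r'}\cdot d_n = d_n^{1+1/r'}$ (the entries grow linearly so the $\smat^{r'}$-norm is of order $d_n \cdot d_n^{1/r'}$; for $r'=\infty$ it is exactly $d_n - 1$, of order $d_n$), this supremum is finite iff $d_n^{-\mu - 1/r' + 1 + 1/r'} = d_n^{1-\mu}$ is bounded, i.e.\ iff $\mu \ge 1$. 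Applying this with $r = r(p)$, $r' = \tfrac{2p}{|p-2|}$ and $\mu = \beta(p) + 2\alp$: there is a bounded point derivation iff $\beta(p) + 2\alp \ge 1$. The same computation applied directly to $\pfal(\su(2),d^\alp)$ itself (which is $\fal^p(\su(2),d^\alp)$, dual $\ell^\infty\text{-}\bigoplus d_n^{-\alp - 1/p'}\smat^{p'}_{d_n}$) shows $\pfal(\su(2),d^\alp)$ admits a bounded point derivation iff $\alp \ge 1$, giving the first assertion of the theorem.

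For the operator weak amenability claim I would then just unwind the inequality $\beta(p) + 2\alp < 1$. When $p \ge 2$ we have $\beta(p) = 2$, so $\beta(p)+2\alp \ge 2 > 1$ always: no operator weak amenability for $p\ge2$ (consistent with $\tfrac{4}{3+2\alp} \le \tfrac43 < 2$, so the claimed range forces $p<2$). When $1 \le p < 2$, $\beta(p) = 4 - \tfrac4p$, and $\beta(p) + 2\alp < 1$ reads $4 - \tfrac4p + 2\alp < 1$, i.e.\ $\tfrac4p > 3 + 2\alp$, i.e.\ $p < \tfrac{4}{3+2\alp}$; combined with $p \ge 1$ this is precisely the stated range $1 \le p < \tfrac{4}{3+2\alp}$ (note this is nonempty only when $3 + 2\alp < 4$, i.e.\ $\alp < \tfrac12$, and in that case it lies inside $[1,2)$ as required). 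Specializing $p = 1$ gives $1 < \tfrac{4}{3+2\alp}$, i.e.\ $\alp < \tfrac12$, the last assertion. The one genuine point requiring care — and the step I expect to be the main obstacle — is justifying that a bounded point derivation, if it exists, \emph{must} be (a linear combination of) Lie derivatives so that testing the single element $\hat D(\pi_n) = i\,\mathrm{diag}(n,\dots,-n)$ is decisive: this is exactly Proposition \ref{prop:derchar}, which requires checking that $\fal^r(\su(2),d^\mu)^*$ is closed under the conjugate-transpose involution on $\prod_n \mat_{d_n}$ and that the coproduct is a $*$-homomorphism with the derivation law $MD = 1\otimes D + D\otimes 1$; these are routine for the weighted $p$-Fourier algebras, as remarked before Proposition \ref{prop:derchar}, but need to be invoked explicitly. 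With that in hand the norm estimate $\norm{\hat D(\pi_n)}_{r'} \asymp d_n^{1+1/r'}$ is elementary and the rest is the arithmetic above.
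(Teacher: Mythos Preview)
Your proposal is correct and follows essentially the same approach as the paper: reduce via Proposition~\ref{prop:wachar} and Proposition~\ref{prop:derchar} to testing the single Lie derivative $D$ associated to the diagonal one-parameter subgroup, compute $D_{\pi_n}=i\,\mathrm{diag}(n,n-2,\dots,-n)$, estimate its Schatten norm as $\asymp d_n^{1+1/s}$, and read off the conditions $\alp\geq 1$ (for a bounded point derivation on $\pfal(\su(2),d^\alp)$) and $\beta(p)+2\alp\geq 1$ (for one on $\pfal_\Del(\su(2),d^\alp)$) from the dual description. The only cosmetic difference is that you package the computation as a single lemma ``$\fal^r(\su(2),d^\mu)$ has a bounded point derivation iff $\mu\geq 1$'' and apply it twice, whereas the paper runs the two computations in parallel; also, your indexing of $\what{\su(2)}$ by half-integers is momentarily inconsistent with your writing $d_n=n+1$, and the aside about ``rank one in the relevant quotient sense'' is unnecessary.
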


\begin{proof}
Up to conjugation, the only one-parameter subgroups in $G=\su(2)$ are 
$\theta\mapsto t_\theta=\mathrm{diag}(e^{i\theta},e^{-i\theta})$. 
By Proposition \ref{prop:wachar},
$D(u)=\left.\frac{d}{d\theta}\right|_{\theta=0}u(t_\theta)$ is, up to conjugacy and scalar,
the only candidate bounded skew-symmetric point derivation at $e$ on either of $\pfal(G,d^\alp)$
or on $\pfal_\Del(G)$ and hence its boundedness determines the only occasions in which such
a derivation may exist.  

We recall that $\what{\su}(2)=\{\pi_n:n=0,1,2,\dots\}$ and $d_{\pi_n}=n+1$,
and $\pi_n(t_\theta)=\mathrm{diag}(e^{in\theta},e^{i(n-2)\theta},\dots,e^{-i(n-2)\theta},e^{-in\theta})$.
Thus in the dual pairing (\ref{eq:dualpairing}) we have for $u$ in $\trig(\su(2))$ that
\begin{align*}
\langle u,D\rangle&=\sum_{n=0}^\infty(n+1)\trace
\left(\hat{u}(\pi_n)\left.\frac{d}{d\theta}\right|_{\theta=0}
\mathrm{diag}(e^{in\theta},e^{i(n-2)\theta},\dots,e^{-i(n-2)\theta},e^{-in\theta})\right) \\
&=\sum_{n=0}^\infty (n+1)\trace\bigl(\hat{u}(\pi_n)[i\mathrm{diag}(n,n-2,\dots,-(n-2),-n)]\bigr)
\end{align*}
so $D_{\pi_n}=i\mathrm{diag}(n,n-2,\dots,2-n,-n)$.
Hence for any $s\geq 1$ we find
\[
\norm{D_{\pi_n}}_{\smat^s_{n+1}}=\left(\sum_{j=0}^n|n-2j|^s\right)^{1/s}.
\]
Elementary integral estimates yield constants $c, C$ (depending only on $r'$) for which
\begin{equation}\label{eq:polysum}
c(n+1)^{1+\frac{1}{s}}\leq \left(\sum_{j=0}^n|n-2j|^s\right)^{1/s}\leq C(n+1)^{1+\frac{1}{s}}.
\end{equation}

Thus setting $s=p'$, we see from the weighted analogue of (\ref{eq:pfaldual})
that $D\in\pfal(G,d^\alp)^*$ if and only if 
\[
\sup_{n=0,1,2,\dots}(n+1)^{-(\alp+\frac{1}{p'})+(1+\frac{1}{p'})}<\infty
\]
i.e.\ $-\alp+1\leq 0$, so $\alp\geq 1$.  This characterizes when we get a bounded point derivation 
at $e$.  Since $\pfal(G,d^\alp)$ is evidently isometrically translation invariant, we get a bounded point
derivation at any point if and only if we get one at $e$.

We now wish to  determine operator weak amenability.
Hence we need to establish those $p$ which allow $D\in(\pfal_\Del(G))^*$.  
We use (\ref{eq:pfaldeldual}), with $\Omega(\pi_n)=d_{\pi_n}^{2\alp}=(n+1)^{2\alp}$,
and $s=r(p)'=\frac{2p}{|p-2|}$.  We see that $D\in(\pfal_\Del(G))^*$ if and only if 
\[
\sup_{n=0,1,2,\dots}(n+1)^{-(\beta(p)+2\alp+\frac{1}{r(p)'})+(1+\frac{1}{r(p)'})}<\infty
\]
i.e.\ exactly when $1-\beta(p)-2\alp\leq 0$.
For $p\geq 2$ this always holds; for $1\leq p< 2$ we obtain 
$-3+\frac{4}{p}\leq 2\alp$, which gives the desired result.
\end{proof}

With less precise data we can gain a more general result, but with weaker control.

\begin{theorem}\label{theo:liewa} Let $\alp\geq 0$.
Let $G$ be a non-commutative connected Lie group.  Then $\pfal(G,d^\alp)$ admits
a bounded point derivation if $\alp\geq 1$ and  is not operator weakly amenable if 
$p\geq\frac{4}{3+2\alp}$.  Moreover, $\fal^2_{R^q}(G)$ is never operator weakly amenable.
\end{theorem}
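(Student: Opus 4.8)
The plan is to follow the template of the $\su(2)$ argument in Theorem \ref{theo:sutwo}, replacing the two-sided estimate (\ref{eq:polysum}) by a crude one-sided bound valid for an arbitrary $G$.  Since $G$ is compact, connected and non-abelian, the commutator subalgebra $[\mathfrak g,\mathfrak g]$ is a non-zero (compact) semisimple Lie algebra; I would fix a non-zero $X$ in a maximal torus of $[\mathfrak g,\mathfrak g]$, put $t_\theta=\exp(\theta X)$, and take $D(u)=\left.\frac{d}{d\theta}\right|_{\theta=0}u(t_\theta)$.  Unwinding the pairing (\ref{eq:dualpairing}) exactly as in the $\su(2)$ computation shows that, as an element of $\trig(G)^\dagger$, $\hat D(\pi)=\left.\frac{d}{d\theta}\right|_{\theta=0}\pi(t_\theta)$, so that the eigenvalues of $\hat D(\pi)$ are, up to the factor $i$, the values $\mu(X)$ with $\mu$ a weight of $\pi$.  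As the irreducible representations of $G$ are jointly faithful on $\mathfrak g$, this $D$ is a non-zero point derivation at $e$ on the dense subalgebra $\trig(G)$.

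The crux — and the step I expect to be the only real obstacle — is the uniform estimate $\norm{\hat D(\pi)}_\infty\leq C\,d_\pi$ for all $\pi\in\what G$, with $C$ depending only on $G$ and $X$.  The reason for taking $X$ inside $[\mathfrak g,\mathfrak g]$ is that every weight of $\pi$ agrees with the highest weight $\lambda_\pi$ on the centre of $\mathfrak g$, which $X$ annihilates; hence the numbers $\mu(X)$ above are the values at $X$ of the weights of the irreducible $[\mathfrak g,\mathfrak g]$-module of highest weight $\lambda_\pi$, and these lie in the convex hull of the Weyl orbit of $\lambda_\pi$, so $|\mu(X)|\leq\norm{\lambda_\pi}\norm X$ with respect to a fixed invariant inner product.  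On the other hand, since each factor in the Weyl dimension formula is $\geq 1$ and $\rho$ lies in the interior of the dominant chamber, one gets $d_\pi\geq 1+c\norm{\lambda_\pi}$ for some $c=c(G)>0$; combining the two estimates gives the claimed bound.  Because $\norm{\hat D(\pi)}_s\leq d_\pi^{1/s}\norm{\hat D(\pi)}_\infty$, it follows that $\norm{\hat D(\pi)}_s\leq C\,d_\pi^{1+1/s}$ for every $1\leq s\leq\infty$.

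Granting this, the first assertion is immediate.  By the weighted form of (\ref{eq:pfaldual}), $D\in\pfal(G,d^\alp)^*$ if and only if $\sup_{\pi\in\what G}d_\pi^{-\alp-1/p'}\norm{\hat D(\pi)}_{p'}<\infty$, and the bound above makes this supremum at most $C\sup_\pi d_\pi^{1-\alp}$, which is finite precisely when $\alp\geq 1$.  Continuity of multiplication and of evaluation at $e$ then promotes $D$ to a non-zero bounded point derivation on $\pfal(G,d^\alp)$.

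For the remaining assertions I would invoke Proposition \ref{prop:wachar} (applicable since $d^\alp$, respectively the trivial weight, is weakly dimension, so that the spectra of $\pfal(G,d^\alp)$, $\fal^2_{R^q}(G)$ and of their tensor squares are $G$ and $G\times G$ by Corollary \ref{cor:spectrum} and the remarks after Theorem \ref{theo:twofaldel}), reducing the failure of operator weak amenability to the existence of a non-zero bounded point derivation at $e$ on $\pfal_\Del(G,d^\alp)$, respectively $\fal^2_{R^q,\Del}(G)$.  By Theorem \ref{theo:pfaldel} the first of these equals $\fal^{r(p)}(G,d^{\beta(p)+2\alp})$, whose dual is $\ell^\infty\text{-}\bigoplus_{\pi}d_\pi^{-\beta(p)-2\alp-1/r(p)'}\smat_{d_\pi}^{r(p)'}$ by (\ref{eq:pfaldeldual}); plugging in the estimate, $D$ lies in this dual as soon as $d_\pi^{1-\beta(p)-2\alp}$ remains bounded, that is $\beta(p)+2\alp\geq 1$, which is automatic for $p\geq 2$ and amounts to $p\geq\frac{4}{3+2\alp}$ for $1\leq p<2$; hence $\pfal(G,d^\alp)$ is not operator weakly amenable when $p\geq\frac{4}{3+2\alp}$.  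In the same way Theorem \ref{theo:twofaldel} gives $\fal^2_{R^q,\Del}(G)=\fal^{r(q)}(G,d^{\gam(q)})$, and $D$ lies in its dual whenever $\gam(q)\geq 1$; since $\gam(q)=3-\frac2q\geq 1$ for $1\leq q<2$ and $\gam(q)=1+\frac2q>1$ for $q\geq 2$, this always holds, so $\fal^2_{R^q}(G)$ is never operator weakly amenable.  The single structural difference from Theorem \ref{theo:sutwo} is that one-sided control of $\norm{\hat D(\pi)}_s$ yields implications rather than equivalences.
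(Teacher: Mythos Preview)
Your proposal is correct and follows the same template as the paper's proof.  The only substantive difference is the source of the key estimate $\norm{\hat D(\pi)}_\infty\leq C\,d_\pi$: the paper simply cites Plymen's short note \cite{plymen}, which provides a one-parameter subgroup for which the eigenvalues $n_j$ of $\hat D(\pi)$ satisfy $|n_j|<d_\pi$ (so $C=1$), whereas you rederive this bound --- with a $G$-dependent constant --- via the convex-hull property of weights combined with a lower bound $d_\pi\geq 1+c\norm{\lambda_\pi}$ extracted from the Weyl dimension formula.  Your route is self-contained and makes explicit why one should take $X$ in the semisimple part (to kill the central contribution to the weights, which is unconstrained by $d_\pi$); the paper's route is shorter but opaque unless one consults \cite{plymen}, which in fact carries out essentially your computation.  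Once the estimate $\norm{\hat D(\pi)}_s\leq C\,d_\pi^{1+1/s}$ is in hand, both arguments proceed identically through (\ref{eq:pfaldeldual}) and Theorem~\ref{theo:twofaldel}.
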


\begin{proof}
According to \cite{plymen}, there is a one-parameter subgroup which, for each $\pi$ in $\what{G}$ 
satisfies $\theta\mapsto \pi(t_\theta)=\mathrm{diag}(e^{in_1\theta},\dots,e^{in_{d_\pi}\theta})$
(up to conjugacy) where each $|n_j|<d_\pi$.  Thus we obtain the following estimates
for the point derivation given by $D(u)=\left.\frac{d}{d\theta}\right|_{\theta=0}u(t_\theta)$:
\[
\norm{D_\pi}_{\smat_{d_\pi}^s}=\left(\sum_{j=1}^{d_\pi} |n_j|^s\right)^{1/s}\leq (d_\pi d_\pi^s)^{1/s}=
d_\pi^{1+\frac{1}{s}}.
\]
The same estimates as above show that $D\in\pfal(G,d^\alp)$ for $\alp\geq 1$ and
$D\in(\pfal_\Del(G,d^\alp))^*$ for $p\geq\frac{4}{3+2\alp}$.

Now we set $p=2$, $\alp=0$, but deform the constituent Hilbert-Schmidt matrices.  Thanks to
Theorem \ref{theo:twofaldel} we have that the derivation above defines an element of 
$\fal^2_{R^q,\Del}(G)^*$
if the quantities $d_\pi^{-\gam(q)-\frac{1}{r(q)'}}d_\pi^{1+\frac{1}{r(q)'}}$ are uniformly bounded in $\pi$.  
Since the dimensions are unbounded -- see, e.g.\ \cite{moore} -- this requires  that $\gam(q)\geq 1$
which is always satisfied.
\end{proof}

\begin{remark}
Given the weight $\ome_{\alp,\beta}(\pi)=d_\pi^\alp(1+\log d_\pi)^\beta$ on $\what{G}$, where
$\alp,\beta\geq 0$.  It is easy to check that the addition of the weight $\ome_{w^\beta}(\pi)=
(1+\log\pi)^\beta$, as defined in Example \ref{ex:thetaweight},
affects neither of the proofs of the prior two theorems, hence 
affects neither of their outcomes.
\end{remark}

We close by addressing a result on operator amenability.  
Using terminology of \cite{mcmullenp}, we say that $G$ is {\it tall}
of for each fixed $d$, $\{\pi\in\what{G}:d_\pi=d\}$ is finite.  For an infinite group this means
$\lim_{\pi\to\infty}d_\pi=\infty$.  According to \cite[Thm.\ 3.2]{hutchinson}, a semisimple
compact Lie group is tall.  
However, there exist tall totally disconnected groups \cite{hutchinson1}.

\begin{theorem}\label{theo:tallnoa}
Suppose that $G$ is infinite and tall.
Then $\pfal(G)$ is not operator amenable for any $p>1$,
and $\fal^2_{R^q}(G)$ is not operator amenable for any $1\leq q\leq\infty$.
\end{theorem}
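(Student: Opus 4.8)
The plan is to deduce non-operator-amenability from the criterion established in Proposition~\ref{prop:johnson}: the algebra $\fal^p(G)$ is operator amenable if and only if the augmentation ideal $\ideal_{\fal^p_\Del}(e)$ admits a bounded approximate identity. Since a bounded approximate identity in a (commutative) Banach algebra forces, for each maximal ideal $\fM$, that $\fM = \overline{\fM^2}$, in particular it precludes the existence of any nonzero bounded point derivation at $e$ and, more importantly here, any \emph{nonzero bounded functional on $\ideal_{\fal^p_\Del}(e)$ that vanishes on $\ideal_{\fal^p_\Del}(e)^2$}. So it suffices to produce, for each $p>1$ (respectively for $\fal^2_{R^q}(G)$, each $1\le q\le\infty$), a point $e$-derivation-like obstruction; but since point derivations may not exist in general, the cleaner route is to exhibit directly that $\ideal_{\fal^p_\Del}(e)$ cannot have a bounded approximate identity because $\fal^p_\Del(G)$ itself is, under the tallness hypothesis, ``too large'' at infinity — concretely, I would show that the multiplication operator on $\fal^p_\Del(G)$ fails to be onto, or that $\ell^1$-summability is lost, by using the explicit description of $\fal^p_\Del(G) = \fal^{r(p)}(G, d^{\beta(p)})$ from Theorem~\ref{theo:pfaldel}.

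The key computational input is that, by Theorem~\ref{theo:pfaldel} with $\ome\equiv 1$, one has isometrically $\fal^p_\Del(G) = \fal^{r(p)}(G, d^{\beta(p)})$ where $\beta(p) = 4 - 4/p$ for $1\le p<2$ and $\beta(p)=2$ for $p\ge 2$, and $\tfrac1{r(p)} + \tfrac{|p-2|}{2p} = 1$. For $p>1$ we have $\beta(p) > 0$ strictly (at $p=1$, $\beta=0$, which is exactly the borderline Fourier-algebra case where operator amenability \emph{does} hold for amenable $G$). The strategy is then to show that $\fal^{r}(G,d^{\beta})$ with $\beta>0$ is a \emph{weighted} Fourier-type algebra whose weight grows, and to invoke the standard principle (as used in \cite{ludwigst}, and implicit in the amenability results cited) that a commutative semisimple Banach algebra whose spectrum is $G$ but whose norm strictly dominates $\norm{\cdot}_\fal$ via a genuinely growing dimension weight cannot have its augmentation ideal boundedly approximately unital when $G$ is infinite and tall. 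More precisely, tallness guarantees $d_\pi\to\infty$, so the weight $d_\pi^{\beta(p)}$ is unbounded; one then tests a bounded approximate identity $(e_\alpha)$ for $\ideal_{\fal^p_\Del}(e)$ against a carefully chosen sequence of coefficient functionals concentrated on representations of growing dimension, showing that $m(e_\alpha \otimes u) \to u$ cannot hold uniformly — the dimension weight defeats the $\ell^1$-summation needed.

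Concretely I would proceed as follows. First, record from Theorem~\ref{theo:pfaldel} the identification $\fal^p_\Del(G) = \fal^{r(p)}(G, d^{\beta(p)})$ and note $\beta(p)>0$ for $p>1$; similarly from Theorem~\ref{theo:twofaldel}, $\fal^2_{R^q,\Del}(G) = \fal^{r(q)}(G, d^{\gam(q)})$ with $\gam(q) = 2 - |2-q|/q > 0$ always. Second, observe that a bounded approximate identity for $\ideal_{\fal^p_\Del}(e)$ would, via Cohen factorization, give $\ideal_{\fal^p_\Del}(e) = \ideal_{\fal^p_\Del}(e)^2$; combined with the regularity (Proposition~\ref{prop:spectrum}, via the weakly-dimension criterion of Corollary~\ref{cor:spectrum}) and spectrum $= G$, this says $e$ is not a ``weak spectral synthesis obstruction'' — but for a dimension-weighted algebra on an infinite tall group, that conclusion contradicts the growth of the weight. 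Third, make this contradiction quantitative: choose $\pi_k\in\what{G}$ with $d_{\pi_k}\to\infty$ (possible by tallness), and use the dual description $\fal^p_\Del(G)^* = \ell^\infty\text{-}\bigoplus_\pi d_\pi^{-\beta(p) - 1/r(p)'}\smat_{d_\pi}^{r(p)'}$ from (\ref{eq:pfaldeldual}) to build functionals whose pairing with any putative approximate-identity-built element grows like a positive power of $d_{\pi_k}$, contradicting boundedness. The main obstacle will be step three: one must make the combinatorial/analytic estimate showing the dimension weight genuinely obstructs a bounded approximate identity on the augmentation ideal — essentially an averaging argument on the matrix blocks $\hat{e_\alpha}(\pi_k)$ showing they cannot simultaneously be near-identities on $d_{\pi_k}$-dimensional spaces and have controlled weighted $\smat^{r(p)}$-norm as $k\to\infty$; for $\fal^2_{R^q}(G)$ one additionally needs that the $R^q$ operator-space structure does not rescue this, which follows since the underlying Banach-space norms already coincide with $\fal^{r(q)}(G,d^{\gam(q)})$ and $\gam(q)>0$.
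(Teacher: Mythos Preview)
Your setup through step two is correct and matches the paper exactly: you invoke Proposition~\ref{prop:johnson}, identify $\pfal_\Del(G)=\fal^{r(p)}(G,d^{\beta(p)})$ via Theorem~\ref{theo:pfaldel} (and $\fal^2_{R^q,\Del}(G)=\fal^{r(q)}(G,d^{\gam(q)})$ via Theorem~\ref{theo:twofaldel}), and observe the crucial strict positivity $\beta(p)>0$ for $p>1$, $\gam(q)\geq 1>0$ always.

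The gap is in your step three. You propose a quantitative obstruction---building functionals on growing-dimension blocks and showing an approximate identity cannot simultaneously act as a near-identity there while keeping bounded weighted $\smat^{r(p)}$-norm---but you yourself flag this as ``the main obstacle'' and leave it as a sketch. That estimate is not obviously straightforward: an approximate identity for $\ideal_{\pfal_\Del}(e)$ need not have Fourier coefficients close to $I_{d_\pi}$ on individual large blocks, and the averaging argument you gesture at is not supplied.

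The paper's finish is qualitatively different and much cleaner. The point is that $\pfal_\Del(G)$ is a \emph{dual space}: from (\ref{eq:pfaldeldual}) one reads off the predual
\[
c_0\text{-}\bigoplus_{\pi\in\what{G}} d_\pi^{-\frac{1}{r(p)'}-\beta(p)}\smat_{d_\pi}^{r(p)'}.
\]
Tallness and $\beta(p)>0$ together give $d_\pi^{-\beta(p)}\to 0$, so each evaluation element $\lam(s)=(\pi(s))_{\pi\in\what{G}}$ actually lies in this $c_0$-predual (its $\pi$-block has norm $d_\pi^{-\beta(p)}$). Hence point evaluations are weak*-continuous on $\pfal_\Del(G)$. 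Now if $\ideal_{\pfal_\Del}(e)$ had a bounded approximate identity, any weak*-cluster point would, by regularity (Proposition~\ref{prop:spectrum}) and weak*-continuity of evaluations, be the indicator function $1_{G\setminus\{e\}}$---forcing $\{e\}$ open, i.e.\ $G$ finite. The same argument runs for $\fal^2_{R^q,\Del}(G)$ using $\gam(q)>0$. No quantitative block-by-block estimate is needed; the entire weight of the tallness hypothesis is spent on placing $\lam(s)$ in the predual.
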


\begin{proof}
This uses the idea of \cite[Thm.\ 3.10]{lees}.
Appealing to  Theorem \ref{theo:pfaldel} and \ref{theo:twofaldel}, respectively,
we obtain  duality relations 
\begin{align*}
\pfal_\Del(G)&\cong\left(c_0\text{-}\bigoplus_{\pi\in\what{G}}
d_\pi^{-\frac{1}{r(p)'}-\beta(p)}\smat^{r(p)'}_{d_\pi}
\right)^*,\text{ and} \\
\fal^2_{R^q,\Del}(G)&\cong\left(c_0\text{-}\bigoplus_{\pi\in\what{G}}
d_\pi^{-\frac{1}{r(q)'}-\gam(q)}\smat^{r(q)'}_{d_\pi}
\right)^*.
\end{align*}
We observe, moreover, that $\beta(p)>0$ if $p>1$, and $\gam(q)\geq 1>0$ for $1\leq q\leq\infty$.
Hence, if $G$ is tall, each $\lam(s)=(\pi(s))_{s\in G}$ is an element of this predual of 
$\pfal_\Del(G)$, respectively $\fal^2_{R^p,\Del}(G)$.  

Now suppose $\pfal(G)$ is operator amenable.
Then the bounded approximate identity for $\ideal_{\pfal_\Del}(e)$ (respectively for
$\ideal_{\fal^2_{R^p,\Del}}(e)$)
promised by Proposition \ref{prop:johnson}, admits the indicator function $1_{G\setminus\{e\}}$
as a weak*-cluster point, thanks to weak*-continuity of evaluation characters, and
to the regularity of $\pfal_\Del(G)$ (respectively of  
$\fal^2_{R^p,\Del}(G)$).  This forces $G$ to be discrete, hence finite.
\end{proof}

It is hence obvious that for tall $G$ and any weight $\ome$ on $\what{G}$,
no algebra $\pfal(G,\ome)$ is operator amenable for $p>1$.
We suspect that for $p>1$, $\pfal(G)$ is only operator amenable if and only if
$G$ admits an open abelian subgroup.     We will be able to transport Theorem \ref{theo:tallnoa}
to  connected non-abelian groups.  See Section \ref{ssec:directproduct}.  

For the case of $\fal^2_R(G)=\fal_\Del(G)$, we have that this algebra is
operator amenable exactly when $G$ is virtually abelian, and operator weakly amenable
exactly when the connected component of the identity, $G_e$, is abelian.
See \cite[Thm.\ 4.1]{forrestss1}.

\subsection{Weak amenability and amenability}  \label{ssec:amenprop}
We observe that it is easy to characterize amenability of $\pfal(G,d^\alp)$.

\begin{proposition}\label{prop:amenable}
For any $1\leq p\leq \infty$ and weakly dimension weight $\ome$, 
$\pfal(G,\ome)$ is amenable if and only if $G$ admits an open abelian 
subgroup.
\end{proposition}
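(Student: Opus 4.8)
The plan is to reduce the statement to the known case $p=1$, $\ome\equiv 1$, namely the amenability characterization for the classical Fourier algebra $\fal(G)=\fal^1(G)$. By a result of Forrest--Runde (and, for compact $G$, ultimately going back to the structure of $\fal(G)$ as an $\ell^1$-sum of matrix algebras), $\fal(G)$ is amenable precisely when $G$ has an open abelian subgroup; this is the template we want to transport. So the first step is to observe that one direction is easy: if $G$ admits an open abelian subgroup $H$, then $\what{G}$ contains only finitely many distinct dimensions $d_\pi$ (indeed $d_\pi\leq[G:H]$ for all $\pi$), so by Proposition \ref{prop:openabel} we have $\pfal(G,\ome)=\fal(G)$ isomorphically as Banach algebras for any weakly dimension weight $\ome$, and amenability is an isomorphism invariant. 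Hence $\pfal(G,\ome)$ is amenable.

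For the converse, suppose $\pfal(G,\ome)$ is amenable; we must show $G$ is virtually abelian. The key structural input is that amenability passes to quotients and to closed subalgebras that are complemented (more precisely, amenability is inherited by quotient algebras, and is preserved under continuous surjective homomorphisms). First I would pass to a closed subgroup: if $H\leq G$ is closed, restriction of functions does \emph{not} obviously give a homomorphism $\pfal(G,\ome)\to\pfal(H,\ome_G|_H)$ in general (this is exactly the functoriality failure discussed in Section \ref{ssec:directproduct}), so instead I would work with quotient groups. For a closed \emph{normal} subgroup $N$, inflation $\pfal(G/N,\ome)\hookrightarrow\pfal(G,\ome)$ identifies $\pfal(G/N)$ with a complemented subalgebra (supported on $\what{G/N}\subseteq\what{G}$), and more usefully the averaging/quotient construction gives a surjective complete contraction $\pfal(G,\ome)\twoheadrightarrow\pfal(G/N,\ome)$; hence amenability of $\pfal(G,\ome)$ forces amenability of $\pfal(G/N,\ome)$. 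This lets us replace $G$ by any Lie quotient $G/N_S$ ($N_S=\bigcap_{\pi\in S}\ker\pi$) and reduce to the case where $G$ is a compact Lie group.

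Now for a compact Lie group $G$, if $G$ is \emph{not} virtually abelian then its identity component $G_e$ is a connected compact Lie group with non-abelian semisimple part, and in particular $G$ surjects onto a simple compact Lie group — in the simplest obstruction, onto $\mathrm{SO}(3)$ or its cover $\su(2)$. Passing to that quotient, it suffices to derive a contradiction from amenability of $\pfal(\su(2),d^\alp)$ (after absorbing the weakly dimension weight $\ome$ into a power of the dimension weight, which only helps). But amenability implies operator weak amenability, and by Theorem \ref{theo:sutwo} the algebra $\pfal(\su(2),d^\alp)$ fails to be operator weakly amenable for $p>1$ in most of the parameter range, and more to the point, one can run the bounded-point-derivation argument: amenability of $\pfal(\su(2),d^\alp)$ would in particular force weak amenability, hence the absence of bounded point derivations, contradicting the existence of the Lie-derivative point derivation $D$ exhibited in the proof of Theorem \ref{theo:sutwo} whenever $\alp\geq 1$ — and for $\alp<1$ one instead invokes the known non-amenability of $\fal(\su(2))=\fal^1(\su(2))$ together with the isomorphism-invariance and the fact that amenability is strictly stronger than weak amenability. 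The cleanest route is: amenable $\Rightarrow$ weakly amenable $\Rightarrow$ operator weakly amenable, and then quote Theorem \ref{theo:tallnoa}/\ref{theo:liewa} to rule out operator amenability on non-virtually-abelian connected groups, combined with the fact that a finitely generated non-virtually-abelian compact Lie group has a non-abelian connected semisimple quotient to which these theorems apply.

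The main obstacle I anticipate is the reduction step from general compact $G$ to Lie groups and then to the simple case, done purely through \emph{quotients} (since subgroup restriction is not functorial): one must check that the inflation/averaging maps are genuinely surjective algebra homomorphisms compatible with the weight (the restricted/symmetrized weight bookkeeping of equations (\ref{eq:restweight}) and Theorem \ref{theo:pfaldel} must be tracked), and that weak amenability — which is what we actually contradict — is inherited by the relevant quotient algebra. Once that functorial scaffolding is in place, the contradiction at the level of $\su(2)$ is immediate from the point-derivation computation already carried out in Theorem \ref{theo:sutwo}, since any amenable Banach algebra is weakly amenable and hence has no nonzero bounded point derivations.
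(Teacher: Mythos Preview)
Your ``if'' direction is fine and matches the paper: Proposition \ref{prop:openabel} gives $\pfal(G,\ome)\cong\fal(G)$, and the latter is amenable.

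Your ``only if'' direction, however, is far more complicated than necessary and has genuine gaps. The paper's argument is a single line: the inclusion $\pfal(G,\ome)\hookrightarrow\fal(G)$ is a continuous algebra homomorphism with dense range (both contain $\trig(G)$ densely), and amenability passes along such maps. Hence amenability of $\pfal(G,\ome)$ forces amenability of $\fal(G)$, and one quotes Forrest--Runde directly. No reduction to Lie groups, no quotient bookkeeping, no point-derivation computations are needed.

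By contrast, your proposed reduction has two concrete problems. First, the claim that a non-virtually-abelian compact Lie group ``surjects onto $\su(2)$ or $\mathrm{SO}(3)$'' is false: a simple compact Lie group such as $\su(3)$ has no nontrivial proper quotient at all, so you cannot reach $\su(2)$ that way. You would need to invoke the general connected case (Theorem \ref{theo:liewa}) directly, not a reduction to $\su(2)$. Second, even granting that, Theorems \ref{theo:sutwo} and \ref{theo:liewa} only obstruct operator weak amenability in the range $p\geq\frac{4}{3+2\alp}$, and give a bounded point derivation only for $\alp\geq 1$; for small $p$ and small $\alp$ (in particular $p=1$, $\alp=0$, which is the classical $\fal(G)$ case you are trying to reduce \emph{to}) these theorems yield no contradiction. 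Your fallback sentence for $\alp<1$ amounts to invoking non-amenability of $\fal(\su(2))$ --- but to transfer that back to $\pfal(G,\ome)$ you would need precisely the dense-range argument that the paper uses from the start, at which point the entire detour through quotients and point derivations is superfluous.
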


\begin{proof}
If $\pfal(G,\ome)$ --- which is dense within $\fal(G)$ --- is amenable, then so too must be $\fal(G)$.
Hence $G$ contains an open abelian subgroup by \cite[Thm.\ 2.3]{forrestr} (or see \cite{runde1}).  
The converse is immediate from Proposition \ref{prop:openabel} and the fact that $\fal(G)$
is amenable in this case (see \cite{laulw}).
\end{proof}

The weak amenability of the Banach algebras $\pfal(G,d^\alp)$
is now straightforward to establish.  
It is interesting in its own right to observe that our algebras respect  quotient subgroups. 

Let $\ome_N=\ome|_{\what{G/N}\circ q}$ where $q:G\to G/N$ is the quotient map.
Notice that $\ome_N$ retains all of our assumptions of being bounded away from zero
and weakly dimension.

\begin{lemma}\label{lem:quot}
Let $N$ be a closed normal subgroup of $G$.  Then
the map $u\mapsto T_N(u)=\int_N u(\cdot n)\, dn$ is a completely contractive
projection for which we have $T_N(\pfal(G,\ome))\cong\pfal(G/N,\ome_N)$ completely isometrically.
Moreover, if $(N_\iota)$ is a decreasing net of subgroups converging to $e$
(i.e.\ any open neighborhood of the identity contains some $N_\iota$), then 
$u=\lim_{\iota}T_{N_\iota}u$ for each $u$ in $\pfal(G,\ome_N)$.
\end{lemma}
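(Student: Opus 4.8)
The plan is to work at the level of matricial Fourier coefficients, using the isometric identification (\ref{eq:wpfal}). For $u=\sum_{\pi}d_\pi\trace(\hat u(\pi)\pi(\cdot))$ in $\trig(G)$, I first compute $\widehat{T_N(u)}(\pi)$ directly from the definition: since $T_N(u)(s)=\int_N u(sn)\,dn$, Schur orthogonality gives $\widehat{T_N(u)}(\pi)=\hat u(\pi)\,P_\pi$ where $P_\pi=\int_N\pi(n)\,dn$ is the orthogonal projection of $\fH_\pi$ onto the $N$-fixed vectors (it is selfadjoint and idempotent because $N$ is a group, and it lands in the commutant of $\pi|_N$). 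For $\pi$ factoring through $G/N$ one has $P_\pi=I$, and more generally the range of $P_\pi$ carries a representation of $G/N$ (here normality of $N$ is used: $\pi(s)$ commutes with $P_\pi$ for all $s\in G$, so the fixed-vector space is $\pi(G)$-invariant). Decomposing that subrepresentation into irreducibles of $G/N$ — pulled back to $G$, these are exactly the $\sigma\in\what{G/N}\circ q$ — one sees that $u\mapsto T_N(u)$ sends $\trig(G)$ onto $\trig(G/N)$ (viewed inside $\trig(G)$) and acts as the identity there; hence $T_N$ is an idempotent.

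Next I would establish the complete contractivity and the identification of the range. Because compressing a matrix $A$ to $PAP$ with $P$ a projection is a complete contraction on each $\smat^p_d$ (it is obviously so for $p=\infty$ and, by duality on $\smat^1_d$, for $p=1$; the general case is Pisier interpolation, exactly as in the three ``observations'' of the proof of Theorem \ref{theo:ccBa}), and because the weights only shrink ($\ome_N(\sigma)=\ome$ restricted to the relevant $\pi$'s, which is $\leq$ the infimum appearing, etc.), the map $T_N$ is completely contractive on each block and hence, after summing, on $\pfal(G,\ome)$; it extends to the completion. On the range, only coefficients indexed by $\pi\in\what{G/N}\circ q$ survive and there $P_\pi=I$, so the norm on $T_N(\pfal(G,\ome))$ is exactly the $\pfal(G/N,\ome_N)$ norm — this uses that $\ome_N$ is by definition the restriction of $\ome$ to those $\pi$, and the completely isometric identity $\ell^1$-$\bigoplus$ of the corresponding blocks. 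So $T_N(\pfal(G,\ome))\cong\pfal(G/N,\ome_N)$ completely isometrically, and the completely contractive projection claim follows.

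For the approximation statement with a decreasing net $(N_\iota)\downarrow\{e\}$: on the dense subspace $\trig(G)$ each $u$ involves only finitely many $\pi$, and for each such $\pi$ the projection $P_\pi^{(\iota)}=\int_{N_\iota}\pi(n)\,dn$ converges strongly (indeed in norm, the space being finite-dimensional) to $\int_{\{e\}}\pi=I$ — because $N_\iota$ eventually sits inside any neighborhood of $e$ on which $\pi$ is close to $I$, so $\|P_\pi^{(\iota)}-I\|\to 0$. Hence $\|T_{N_\iota}u-u\|_{\pfal(G,\ome)}=\sum_\pi \ome(\pi)d_\pi^{1+1/p'}\|\hat u(\pi)(P_\pi^{(\iota)}-I)\|_p\to 0$ for $u\in\trig(G)$. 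A standard $\eps/3$ argument, using that $\sup_\iota\|T_{N_\iota}\|_{\mathrm{cb}}\leq 1$ and density of $\trig(G)$, upgrades this to all $u\in\pfal(G,\ome)$ (the statement's ``$\pfal(G,\ome_N)$'' should read $\pfal(G,\ome)$, or is meant after the identification).

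The main obstacle is the first algebraic step: verifying cleanly that $\widehat{T_N(u)}(\pi)=\hat u(\pi)P_\pi$ with $P_\pi$ the $N$-fixed-vector projection, and that — thanks to normality of $N$ — the range of $T_N$ is precisely the span of coefficients of representations trivial on $N$, i.e. $\trig(G/N)$. Once that structural fact is in hand, the operator-space estimates are routine repetitions of the interpolation/compression arguments already used for Theorem \ref{theo:ccBa}, and the net convergence is a soft density argument.
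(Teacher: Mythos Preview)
Your argument is correct, but you are working harder than necessary in the first part.  Since $N$ is normal, the projection $P_\pi=\int_N\pi(n)\,dn$ commutes with all of $\pi(G)$ (as you observe), and since $\pi$ is \emph{irreducible}, Schur's lemma forces $P_\pi\in\{0,I\}$: we have $P_\pi=I$ exactly when $\pi\in\what{G/N}\circ q$ and $P_\pi=0$ otherwise.  Hence $T_N$ is simply the canonical coordinate projection of the $\ell^1$-direct sum $\pfal(G,\ome)$ onto its sub-sum indexed by $\what{G/N}\circ q$; complete contractivity and the completely isometric identification of the range with $\pfal(G/N,\ome_N)$ are then immediate from the definition (\ref{eq:wpfal}), with no need for the $PAP$-compression interpolation argument or any decomposition of subrepresentations.  (A minor slip: the Fourier computation actually gives $\widehat{T_N(u)}(\pi)=P_\pi\hat u(\pi)$, not $\hat u(\pi)P_\pi$, but this is harmless since $P_\pi$ is scalar.)  This is precisely the content of the paper's proof, which defers to \cite[Prop.\ 4.14(i)]{ludwigst}.

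For the convergence statement you take a slightly different route from the paper.  The paper argues on the group side: continuity of translations gives $\sup_{n\in N_\iota}\norm{u-u(\cdot n)}_{\pfal(G,\ome)}\to 0$, and averaging over $N_\iota$ does not increase this.  You instead argue on the Fourier side via $P_\pi^{(\iota)}\to I$ on each finite-dimensional block and an $\eps/3$ density argument.  Both are standard and correct; the paper's version is marginally shorter since it avoids passing through $\trig(G)$, while yours makes the mechanism on coefficients explicit.  Your observation that the statement should read $\pfal(G,\ome)$ rather than $\pfal(G,\ome_N)$ is also correct.
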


\begin{proof}
The proof of the first statement may be adapted from that of \cite[Prop.\ 4.14 (i)]{ludwigst},
with obvious changes.  In particular we see that
\[
\pfal(G/N,\ome_N)=T_N(\pfal(G,\ome))
\cong\ell^1\text{-}\bigoplus_{\pi\in\what{G/N}\circ q}d_\pi^{1+\frac{1}{p'}}\ome(\pi)
\smat^p_{d_\pi}.
\]
The second statement is well-known, see for example the proof of
\cite[Thm.\ 3.3]{forrestr}. Briefly, given $u$ in $\pfal(G,\ome)$, by continuity of translations we 
can arrange $N_\iota$ so small that $\norm{u-u(\cdot n)}_{\pfal(G,\ome)}$ is uniformly small for $n$ in 
$N_\iota$.  Averaging over $N_\iota$ does not increase this norm.
\end{proof}

We shall make repeated use the observation  that if a commutative Banach algebra has within itself
a dense image of a weakly amenable commutative algebra, then it is weakly amenable;
see \cite[Def.\ 1.1 \& Thm.\ 1.5]{badecd}.  The result (ii), below, improves upon
\cite[Thm.\ 3.14]{lees}, where operator weak amenability is established.

\begin{proposition}
{\bf (i)}  For any $1\leq p\leq \infty$ and weight $\ome$ weakly dominated by $d^\alp$, 
the algebra $\pfal(G,\ome)$ is weakly amenable if and only if $G_e$ is abelian.

{\bf (ii)}  For any $1\leq p\leq \infty$ and weight $\ome$, if $G$ is totally disconnected, then
$\pfal(G,\ome)$ is weakly amenable.
\end{proposition}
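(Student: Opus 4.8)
The plan is to treat the two parts by exhibiting, in each case, a dense weakly amenable subalgebra and invoking the transference principle of Bade--Curtis--Dales (\cite[Def.\ 1.1 \& Thm.\ 1.5]{badecd}), which says a commutative Banach algebra containing a dense image of a weakly amenable commutative algebra is itself weakly amenable. For part (i), one direction is immediate: since $\pfal(G,\ome)$ is dense in $\fal(G)$ (via the completely contractive inclusions of (\ref{eq:ccinclusions}) together with boundedness away from zero of $\ome$), if $\pfal(G,\ome)$ is weakly amenable then so is $\fal(G)$, and by \cite[Thm.\ 2.3]{forrestr} this forces $G_e$ to be abelian. For the converse, assume $G_e$ is abelian. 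The idea is to run the net of quotients from Lemma \ref{lem:quot}: choose a decreasing net $(N_\iota)$ of closed normal subgroups shrinking to $e$ such that each $G/N_\iota$ is a Lie group (possible since $G$, being compact, is a projective limit of compact Lie groups, and these $N_\iota$ may be taken normal). By Lemma \ref{lem:quot}, $\bigcup_\iota T_{N_\iota}(\pfal(G,\ome)) = \bigcup_\iota \pfal(G/N_\iota,\ome_{N_\iota})$ is dense in $\pfal(G,\ome)$. Since $G_e$ is abelian, each Lie quotient $G/N_\iota$ is virtually abelian (its identity component is a quotient of $G_e$, hence abelian, and has finite index). By Proposition \ref{prop:openabel}(i), $\pfal(G/N_\iota,\ome_{N_\iota}) = \fal(G/N_\iota,\ome_{N_\iota})$ isomorphically, and since $G/N_\iota$ is virtually abelian this Beurling--Fourier algebra is weakly amenable (indeed amenable, by \cite{laulw} together with Proposition \ref{prop:amenable}; or one may cite directly the weak amenability of Beurling--Fourier algebras of virtually abelian groups). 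Thus $\pfal(G,\ome)$ contains a dense weakly amenable subalgebra and we conclude by \cite[Thm.\ 1.5]{badecd}.

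For part (ii), suppose $G$ is totally disconnected. Then every neighborhood of $e$ contains an open normal subgroup, so we may take $(N_\iota)$ to be a decreasing net of \emph{open} normal subgroups with trivial intersection. By Lemma \ref{lem:quot}, $\bigcup_\iota \pfal(G/N_\iota,\ome_{N_\iota})$ is dense in $\pfal(G,\ome)$. Each $G/N_\iota$ is a finite group, so $\pfal(G/N_\iota,\ome_{N_\iota})$ is finite-dimensional and semisimple (it is isomorphic as an algebra to $\fal(G/N_\iota) \cong \mathbb{C}(G/N_\iota)$, which is just a finite-dimensional commutative C*-algebra), hence weakly amenable. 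Again \cite[Thm.\ 1.5]{badecd} gives weak amenability of $\pfal(G,\ome)$. Note that here no hypothesis on $\ome$ beyond our standing assumption (bounded away from zero) is needed, since for finite groups all weights are comparable to $1$.

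The main obstacle, such as it is, lies in part (i) in verifying that the virtually abelian compact Lie quotients genuinely give weakly amenable Beurling--Fourier algebras: one needs that for a virtually abelian compact Lie group $L$ and a weakly-dimension weight, $\fal(L,\ome_N)$ is weakly amenable. This follows because on such $L$ the dimensions $d_\pi$ are uniformly bounded (by the index of the maximal abelian normal subgroup), so $\ome_N$ is comparable to a constant, reducing the question to weak amenability of $\fal(L)$ itself for virtually abelian $L$ --- which is classical (e.g.\ $\fal(L)$ is then amenable by \cite{laulw}). One should also take care that the transference in \cite[Thm.\ 1.5]{badecd} only requires a dense \emph{subalgebra} that is weakly amenable in its own norm, which is exactly what the isometric identifications in Lemma \ref{lem:quot} supply; the inclusion maps $\pfal(G/N_\iota,\ome_{N_\iota}) \hookrightarrow \pfal(G,\ome)$ are completely contractive, so no norm-control issues arise.
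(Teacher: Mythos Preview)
Your proof is correct and follows essentially the same approach as the paper's: both directions of (i) proceed by the density argument via $\fal(G)$ and the inductive limit of virtually abelian Lie quotients from Lemma \ref{lem:quot}, and (ii) uses open normal subgroups to get finite quotients. Two small points: the citation for ``weak amenability of $\fal(G)$ forces $G_e$ abelian'' should be \cite[Thm.\ 2.1]{forrestss} rather than \cite[Thm.\ 2.3]{forrestr} (the latter concerns amenability); and your invocation of \cite[Thm.\ 1.5]{badecd} on the union $\bigcup_\iota \pfal(G/N_\iota,\ome_{N_\iota})$ is slightly loose since that union is not itself a Banach algebra --- the clean way (which the paper uses) is to note that any bounded derivation $D:\pfal(G,\ome)\to\pfal(G,\ome)^*$ restricts to a bounded derivation on each closed weakly amenable subalgebra $\pfal(G/N_\iota,\ome_{N_\iota})$ into a symmetric module, hence vanishes there, and so vanishes on the dense union.
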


\begin{proof}
(i) If $\pfal(G,\ome)$ --- which is dense within $\fal(G)$ --- 
is weakly amenable, then so too must be $\fal(G)$.
Hence $G_e$ is abelian by \cite[Thm.\ 2.1]{forrestss}.   To see the converse we summarize
the proof of  \cite[Thm.\ 3.3]{forrestr}, with adaptations to our particular setting.
Consider a decreasing net of subgroups $(N_\iota)$
converging to $e$ for which each $G/N_\iota$ is a Lie group with open abelian connected component
of the identity.  By Proposition \ref{prop:openabel} we obtain that each 
$\pfal(G/N_\iota,\ome_{N_\iota})=\fal(G/N_\iota)$ isomorphically and, as observed above, is amenable.  
As in Lemma \ref{lem:quot}, $\pfal(G,\ome)$ is an an inductive limit of the algebras 
$T_N(\pfal(G,\ome))\cong\pfal(G/N_\iota,\ome_{N_\iota})$.   Any derivation 
$D:\pfal(G,\ome)\to\fal(G,\ome)^*$ must
vanish on each $T_N(\pfal(G,\ome))$, whence $D=0$. 

(ii)  For totally disconnected $G$ we can choose $N_\iota$ open, so $G/N_\iota$ is finite.
\end{proof}

\section{Restriction to subgroups}\label{sec:restriction}

We do not get a general usable restriction theorem, unless the subgroup is a essentially factor
in a direct product.  This latter fact gives us enough technology to characterize operator amenability of
$\pfal(G)$ for connected  groups, or products of finite groups.
We will turn our focus to the example
of a torus on $\su(2)$, where the restriction algebra seems to be of a much different form.
In doing so we gain, to our knowledge, a new class of Banach algebras of Laurent series.

Let us first consider the general situation.  We fix a closed subgroup $H$ of $G$ and
let $R_H:\fal(G)\to\fal(H)$ denote the restriction map.  Let us consider this map
on $\trig(G)$.  In this context, its adjoint is given by 
\[
R_H^\dagger(T_\sig)_{\sig\in\what{H}}
=\left(\sum_{\sig\subset\pi|_H}\sum_{k=1}^{m(\sig,\pi)}
V_{\pi,\sig}^{(k)}T_\sig V_{\pi,\sig,k}^{(k)*}\right)_{\pi\in\what{G}}
\]
where each $V_{\pi,\sig}^{(k)}:\fH_\sig\to\fH_\pi$ is an isometry, and
$m(\sig,\pi)$ is the multiplicity of $\sig$ in $\pi|_H$.  Indeed, this is straightforward
to see if $T=\lam(s)=(\sig(s))_{\sig\in\what{H}}$, for $s$ in $H$, and follows from the weak$^\dagger$
density of $\spn\lam(G)$ in $\trig(G)^\dagger$, otherwise.
By picking a suitable basis for each
$\fH_\pi$, we may suppress explicit mention of the isometries $V_{\pi,\sig}^{(k)}$ and write
\begin{equation}\label{eq:restrictionadjoint}
R_H^\dagger(T_\sig)_{\sig\in\what{H}}
=\left((T_\sig^{m(\sig,\pi)})_{\sig\subset\pi|_H}\right)_{\pi\in\what{G}}.
\end{equation}

For a weight $\ome$ we let
\[
\pfal_{G,\ome}(H)=R_H(\pfal(G,\ome))
\]
be endowed with the quotient operator
space structure which makes $R_H:\pfal(G,\ome)\to\pfal_{G,\ome}(H)$ a complete quotient map.

\begin{proposition}\label{prop:restrictoss}
The operator space structures on $\pfal_{G,\ome}(H)^*$
is determined by the completely isometric embedding
\[
T\mapsto  
\left((T_\sig)_{\sig\subset\pi|_H}\right)_{\pi\in\what{G}}
:\pfal_{G,d^\alp}(H)^*\to\ell^\infty\text{-}\bigoplus_{\pi\in\what{G}}\frac{d_\pi^{-1/p'}}{\ome(\pi)}
\left(\ell^{p'}\text{-}\bigoplus_{\sig\subset\pi|_H}m(\sig,\pi)^{1/p'}\smat^{p'}_{d_\sig}\right).
\]
\end{proposition}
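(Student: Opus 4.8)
The plan is to dualize the complete quotient map $R_H:\pfal(G,\ome)\to\pfal_{G,\ome}(H)$. Since $\pfal_{G,\ome}(H)$ carries by definition the quotient operator space structure, $\pfal_{G,\ome}(H)\cong\pfal(G,\ome)/\ker R_H$ completely isometrically, so its adjoint $R_H^*$ is a complete isometry from $\pfal_{G,\ome}(H)^*$ onto the annihilator of $\ker R_H$ inside $\pfal(G,\ome)^*$. Because $\trig(H)=R_H(\trig(G))$ is dense in $\pfal_{G,\ome}(H)$, the map $R_H^*$ agrees on the algebraic-dual level with $R_H^\dagger$, which is given by (\ref{eq:restrictionadjoint}). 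Writing the isotypic decomposition $\pi|_H\cong\bigoplus_{\sig\subset\pi|_H}\sig^{\oplus m(\sig,\pi)}$ (so $d_\pi=\sum_{\sig\subset\pi|_H}m(\sig,\pi)d_\sig$), this says that $T=(T_\sig)_{\sig\in\what H}$ in $\pfal_{G,\ome}(H)^*$ corresponds to the block-diagonal tuple $\big(\bigoplus_{\sig\subset\pi|_H}T_\sig\otimes I_{m(\sig,\pi)}\big)_{\pi\in\what G}$ sitting inside $\pfal(G,\ome)^*\cong\ell^\infty\text{-}\bigoplus_{\pi\in\what G}\frac{d_\pi^{-1/p'}}{\ome(\pi)}\smat^{p'}_{d_\pi}$ (the obvious weighted form of (\ref{eq:pfaldual})). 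The whole task is therefore to compute the operator space norm of such block-diagonal tuples.

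This decouples over $\what G$, so it suffices to prove the local claim that, for each fixed $\pi$, the map $(A_\sig)_{\sig\subset\pi|_H}\mapsto\bigoplus_{\sig\subset\pi|_H}A_\sig\otimes I_{m(\sig,\pi)}$ is a complete isometry of $\ell^{p'}\text{-}\bigoplus_{\sig\subset\pi|_H}m(\sig,\pi)^{1/p'}\smat^{p'}_{d_\sig}$ into $\smat^{p'}_{d_\pi}$. I would factor this through two steps. \emph{Step 1 (multiplicities).} The map $A\mapsto A\otimes I_m$ is a complete isometry of $m^{1/p'}\smat^{p'}_d$ into $\smat^{p'}_{dm}$: when $p'=\infty$ it is a unital injective $*$-homomorphism, hence completely isometric, and its range is completely contractively complemented by the unital completely positive conditional expectation $S\mapsto\frac1m(\id_d\otimes\trace_m)(S)\otimes I_m$; dualizing this complementation, and keeping careful track of the scalar $m$ produced by $\trace_m(I_m)=m$, yields the case $p'=1$ with the rescaled space $m\smat^1_d$; the general case follows by complex interpolation, using $[\smat^\infty_d,m\smat^1_d]_{1/p'}=m^{1/p'}\smat^{p'}_d$ together with the isometric stability of interpolation for completely contractively complemented subspaces, exactly as in the proof of Theorem \ref{theo:ccBa}. \emph{Step 2 (block-diagonal sum).} The block-diagonal embedding of an $\ell^{p'}$-direct sum of matrix blocks into the ambient $\smat^{p'}_{d_\pi}$ is a complete isometry --- this is the second of the three observations in the proof of Theorem \ref{theo:ccBa} --- and $\ell^{p'}$-direct sums are functorial, so applying Step 1 coordinatewise and then Step 2 gives the local claim.

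Finally, I would reassemble: forming the $\ell^\infty$-direct sum over $\pi\in\what G$ with the same weights $\frac{d_\pi^{-1/p'}}{\ome(\pi)}$ that occur in $\pfal(G,\ome)^*$, the local complete isometries combine to the asserted completely isometric embedding, and the norm identity it carries is precisely the statement that $R_H^*$ is a complete isometry. Injectivity of the map is automatic, since every $\sig\in\what H$ embeds in $\pi|_H$ for at least one $\pi\in\what G$ (for instance $\sig\subset(\sig\!\uparrow^G)|_H$ by Frobenius reciprocity).

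I expect the only genuinely delicate point to be Step 1 --- correctly dualizing the conditional-expectation complementation at the $p'=1$ endpoint and thereby pinning down the scaling factor $m^{1/p'}$; once that is in hand, the remainder is routine operator-space duality and direct appeals to the observations already proved in Theorem \ref{theo:ccBa}.
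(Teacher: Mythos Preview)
Your proposal is correct and follows the same route as the paper: dualize the complete quotient $R_H$ using the explicit formula (\ref{eq:restrictionadjoint}) for $R_H^\dagger$, and then read off the norm inside the weighted dual (\ref{eq:pfaldual}). The paper's own proof is a single sentence declaring this ``immediate'', whereas you have supplied the honest verification the paper omits --- namely that $A\mapsto A\otimes I_m$ is a complete isometry $m^{1/p'}\smat^{p'}_d\hookrightarrow\smat^{p'}_{dm}$ (your Step~1, via the UCP expectation at $p'=\infty$, duality at $p'=1$, and interpolation) and that the block-diagonal embedding respects the $\ell^{p'}$-structure (your Step~2, which is exactly the second observation from the proof of Theorem~\ref{theo:ccBa}). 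Nothing is missing; your caution about the $m^{1/p'}$ scaling is well placed and your argument handles it correctly.
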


\begin{proof}
The embedding result on $\pfal_{G,\ome}(H)^*$ is immediate from (\ref{eq:restrictionadjoint})
and the weighted analogue of (\ref{eq:pfaldual}).  
\end{proof}

We have not come up with an illuminating closed-form formula for the norm on $\pfal_{G,d^\alp}(H)$,
for general $p$.  For $p=1$ we obtain 
\[
\fal_{G,\ome}(H)=\fal(H,\ome_G|_H)
\] 
completely isometrically, where $\ome_G|_H$ is the restricted weight defined in (\ref{eq:restweight}).
See \cite[Prop.\ 3.5]{lees} or \cite[Prop.\ 4.12]{ludwigst}.  We shall see that even with trivial weights,
this does not hold generally for $p$-Fourier algebras.  

Let us begin by observing the case of restriction to a central subgroup.

\begin{proposition}\label{prop:resttocentral}
Let $Z$ be a closed central subgroup of $G$.  Then $\pfal_{G,\ome}(Z)=\fal(Z,\ome_G|_Z)$, completely isometrically.
\end{proposition}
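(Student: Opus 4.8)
The plan is to specialize the general restriction machinery to the case where $H=Z$ is a closed central subgroup and show that in this situation the $\ell^{p'}$-direct sum inside the formula of Proposition \ref{prop:restrictoss} collapses to a single summand, which forces $\pfal_{G,\ome}(Z)$ to coincide with $\fal(Z,\ome_G|_Z)$ exactly as in the $p=1$ case. The key point is that $Z$ is abelian, so each $\sig\in\what{Z}$ is one-dimensional, $d_\sig=1$, and $\smat^{p'}_{d_\sig}=\Cee$; moreover, since $Z$ is central in $G$, for each $\pi\in\what{G}$ Schur's lemma applied to $\pi|_Z$ shows that $\pi|_Z$ acts by scalars, hence $\pi|_Z=d_\pi\cdot\sig_\pi$ for a single character $\sig_\pi\in\what{Z}$, i.e.\ there is exactly one $\sig\subset\pi|_Z$ and its multiplicity is $m(\sig_\pi,\pi)=d_\pi$.

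First I would record this structural observation about $\pi|_Z$ and substitute $d_\sig=1$, $m(\sig_\pi,\pi)=d_\pi$ into the completely isometric embedding of Proposition \ref{prop:restrictoss}. The inner direct sum $\ell^{p'}\text{-}\bigoplus_{\sig\subset\pi|_Z}m(\sig,\pi)^{1/p'}\smat^{p'}_{d_\sig}$ then has the single term $d_\pi^{1/p'}\Cee$, so the whole right-hand side becomes $\ell^\infty\text{-}\bigoplus_{\pi\in\what{G}}\frac{d_\pi^{-1/p'}}{\ome(\pi)}\cdot d_\pi^{1/p'}\Cee=\ell^\infty\text{-}\bigoplus_{\pi\in\what{G}}\frac{1}{\ome(\pi)}\Cee$, where the $\pi$-th coordinate of the image of $T=(T_{\sig})_{\sig\in\what{Z}}$ is $T_{\sig_\pi}$. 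Next I would reorganize this direct product over $\what{G}$ by grouping the indices $\pi$ according to the value $\sig_\pi\in\what{Z}$: for a fixed $\tau\in\what{Z}$ the contribution of all $\pi$ with $\sig_\pi=\tau$ to the supremum norm is $\bigl(\sup_{\pi:\sig_\pi=\tau}\frac{1}{\ome(\pi)}\bigr)|T_\tau|$, and by the definition (\ref{eq:restweight}) of the restricted weight, $\inf_{\pi:\tau\subset\pi|_Z}\ome(\pi)=\ome_G|_Z(\tau)$, so $\sup_{\pi:\sig_\pi=\tau}\frac{1}{\ome(\pi)}=\frac{1}{\ome_G|_Z(\tau)}$. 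This identifies $\pfal_{G,\ome}(Z)^*$ completely isometrically with $\ell^\infty\text{-}\bigoplus_{\tau\in\what{Z}}\frac{1}{\ome_G|_Z(\tau)}\Cee$, which is exactly $\fal(Z,\ome_G|_Z)^*$ by the weighted analogue of (\ref{eq:pfaldual}) (recalling $d_\tau=1$ and $p'$ is irrelevant on a one-dimensional space). Dualizing, and checking that the predual identification is the obvious one at the level of $\trig$, gives $\pfal_{G,\ome}(Z)=\fal(Z,\ome_G|_Z)$ completely isometrically.

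I should be a little careful about two points of bookkeeping. One is to make sure that the surjectivity built into the quotient map $R_Z:\pfal(G,\ome)\to\pfal_{G,\ome}(Z)$ really does produce all of $\trig(Z)$, so that the dual computation above is computing the dual of the correct (dense) function algebra; this follows because every character $\tau\in\what{Z}$ extends (with multiplicity) to some $\pi\in\what{G}$ with $\tau\subset\pi|_Z$, so $R_Z(\trig(G))\supseteq\trig(Z)$. The other is the passage from an $\ell^\infty$-direct product indexed by $\what{G}$ to one indexed by $\what{Z}$: this is just the elementary fact that $\ell^\infty\text{-}\bigoplus$ over a disjoint union equals the iterated $\ell^\infty\text{-}\bigoplus$, carried out at the matrix level for complete isometry, and it is harmless here because the fibers are one-dimensional. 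The main obstacle — really the only nontrivial ingredient — is the centrality argument giving $|\{\sig:\sig\subset\pi|_Z\}|=1$ with multiplicity $d_\pi$; once that is in hand the rest is a direct, if slightly fussy, unwinding of Proposition \ref{prop:restrictoss} and the definition of the restricted weight.
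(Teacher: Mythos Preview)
Your proof is correct and follows essentially the same approach as the paper: both use Schur's lemma to see that $\pi|_Z$ is a single character with multiplicity $d_\pi$, substitute into Proposition \ref{prop:restrictoss} so that the inner $\ell^{p'}$-sum collapses to $d_\pi^{1/p'}\Cee$, and then identify the resulting $\ell^\infty$-product over $\what{G}$ with $\ell^\infty(\what{Z},(\ome_G|_Z)^{-1})$. You are simply more explicit than the paper about the regrouping step that produces the restricted weight $\ome_G|_Z$.
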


\begin{proof}
It is a consequence of Schur's lemma that for $\pi$ in $\what{G}$, 
there is a character $\chi$ in $\what{Z}$ for which $\pi|_Z=\chi(\cdot)I_\pi$.  In fact
$\chi=\frac{1}{d_\pi}\trace\circ\pi|_Z$.
Furthermore, since $R_Z(\trig(G))=\trig(Z)$, each character on $Z$ is attained thusly.
Hence Proposition \ref{prop:restrictoss} yields completely isometric embedding
\[
t\mapsto \left( t_{\frac{1}{d_\pi}\trace\circ\pi|_Z}\right)_{\pi\in\what{G}}
:\pfal_G(Z)\to\ell^\infty\text{-}\bigoplus_{\pi\in\what{G}}
\frac{d_\pi^{-1/p'}}{\ome(\pi)}\left(d_\pi^{1/p'}\smat^{p'}_1\right)\cong\ell^\infty(\what{G},1/\ome).
\]
Following through to the range of this map gives us the isometric identification $\pfal_{G,\ome}(Z)^*\cong
\ell^\infty(\what{Z},(\ome_G|_Z)^{-1})$.
\end{proof}

\subsection{Direct products}\label{ssec:directproduct}
The situation of direct product groups is very nice, in this setting.
The only weighted version we shall use in the sequel is with dimension weights which 
are easier to work with as they enjoy a certain
multiplicativity with Kroenecker products.
In a direct product group $H\times K$, we may identify $H=H\times\{e\}$ and $K=\{e\}\times K$.

\begin{theorem}\label{theo:directprod}
Let $G=(H\times K)/Z$ where $Z$ is a central subgroup of $H\times K$ which satisfies
$H\cap Z=\{e\}=Z\cap K$.  Then $\pfal_G(H)=\pfal(H)$, completely isometrically.
Furthermore, we have that $\pfal_G(H,d^\alp)=\pfal(H,d^\alp)$, completely isometrically
if $Z$ is trivial, or $K$ is abelian, and completely isomorphically if $Z$ is finite.
\end{theorem}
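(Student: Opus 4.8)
The plan is to analyze the restriction map $R_H\colon\pfal(G,d^\alp)\to\fal(H)$ via its adjoint, using Proposition~\ref{prop:restrictoss}, and to understand how irreducibles of $G=(H\times K)/Z$ restrict to $H$. First I would recall that $\what{G}$ is identified with the set of those $\pi\otimes\rho$, $\pi\in\what{H}$, $\rho\in\what{K}$, whose kernel contains $Z$, and that $d_{\pi\otimes\rho}=d_\pi d_\rho$. For such a $\pi\otimes\rho$, the restriction to $H=H\times\{e\}$ is simply $(\pi\otimes\rho)|_H = \pi^{\oplus d_\rho}$, so $\sig\subset(\pi\otimes\rho)|_H$ forces $\sig=\pi$ with multiplicity exactly $d_\rho$. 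Plugging this into the completely isometric embedding of Proposition~\ref{prop:restrictoss}, the predual norm of $\pfal_{G,d^\alp}(H)$ becomes, for each fixed $\pi\in\what{H}$, a supremum over all $\rho$ with $\ker(\pi\otimes\rho)\supseteq Z$ of the quantity
\[
(d_\pi d_\rho)^{-\alp-1/p'}\bigl(d_\rho\bigr)^{1/p'}\norm{T_\pi}_{p'}
= d_\pi^{-\alp-1/p'}\,d_\rho^{-\alp}\,\norm{T_\pi}_{p'}.
\]

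Now I would split into the three cases. If $Z$ is trivial, then $G=H\times K$ and one may always take $\rho=1$ (the trivial representation of $K$), which gives $d_\rho=1$; moreover every other $\rho$ has $d_\rho\geq 1$, so for $\alp\geq 0$ the supremum over $\rho$ is attained at $\rho=1$ and equals $d_\pi^{-\alp-1/p'}\norm{T_\pi}_{p'}$. This is exactly the predual norm coming from the weighted analogue of~(\ref{eq:pfaldual}) for $\pfal(H,d^\alp)$, and one must check the identification is completely isometric, which it is because the embedding in Proposition~\ref{prop:restrictoss} is completely isometric and the range, read off as above, is precisely $\ell^\infty\text{-}\bigoplus_\pi d_\pi^{-\alp-1/p'}\smat^{p'}_{d_\pi}$. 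If $K$ is abelian, then every $\rho\in\what{K}$ is one-dimensional, so again $d_\rho=1$ for all relevant $\rho$, and the same computation goes through verbatim, giving the complete isometry $\pfal_G(H,d^\alp)=\pfal(H,d^\alp)$ regardless of $Z$. The unweighted statement $\pfal_G(H)=\pfal(H)$ is the special case $\alp=0$, where the $d_\rho^{-\alp}$ factor is identically $1$ and no hypothesis on $Z$ or $K$ is needed at all.

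Finally, for the case $Z$ finite (with no abelianness assumed), the obstruction is that $d_\rho$ can genuinely exceed $1$, so the supremum over admissible $\rho$ need not be attained at a one-dimensional $\rho$; hence we only get a completely isomorphic, not isometric, identification. The key bound is that the admissible $\rho$ are exactly those for which $\pi\otimes\rho$ is trivial on $Z$; since $Z\cap K=\{e\}$, the quotient $K\to K/(\text{image of }Z)$ is a proper quotient only by a group of order dividing $|Z|$, and in particular every $\rho\in\what{K}$ that pairs with a given $\pi$ has dimension bounded in terms of data independent of $\pi$. More concretely, if $\pi\otimes\rho$ and $\pi\otimes\rho'$ are both trivial on $Z$, then $\rho\otimes\bar\rho'$ is trivial on the image of $Z$ in $K$, which is a finite central subgroup of order at most $|Z|$; a standard argument then shows $d_\rho$ and $d_{\rho'}$ differ by a bounded factor, and since $\rho=1$ is always admissible we get $1\leq d_\rho\leq C$ for a constant $C=C(|Z|)$ depending only on $Z$. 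Therefore $d_\rho^{-\alp}\in[C^{-\alp},1]$, the two predual norms are equivalent with constants depending only on $|Z|$ and $\alp$, and the identification $\pfal_G(H,d^\alp)=\pfal(H,d^\alp)$ holds completely isomorphically. The main obstacle here is precisely the uniform bound on $d_\rho$; everything else is bookkeeping with Proposition~\ref{prop:restrictoss}, and the complete-boundedness (as opposed to mere boundedness) of the isomorphism follows because the estimates are applied uniformly at every matrix level, the embedding of Proposition~\ref{prop:restrictoss} already being completely isometric.
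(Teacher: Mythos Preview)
Your overall approach---applying Proposition~\ref{prop:restrictoss} after identifying $\what{G}$ with those $(\pi,\rho)\in\what{H}\times\what{K}$ for which $Z\subset\ker(\pi\otimes\rho)$, and noting that $(\pi\otimes\rho)|_H=\pi^{\oplus d_\rho}$---is the same as the paper's, and the cases $\alp=0$, $Z$ trivial, and $K$ abelian are handled correctly.  One point you gloss over (except when $Z$ is trivial): for the range of the dual embedding to fill out all of $\pfal(H,d^\alp)^*$ you must know that \emph{every} $\pi\in\what{H}$ admits at least one admissible $\rho$.  The paper extracts this from the hypothesis $H\cap Z=\{e\}=Z\cap K$, which forces $p_H(Z)\cong Z\cong p_K(Z)$, so that any central character arising from $\pi$ on $p_H(Z)$ is matched by some $\tau\in\what{K}$ on $p_K(Z)$.

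The argument for $Z$ finite, however, has a genuine gap.  Your assertions that ``$d_\rho$ and $d_{\rho'}$ differ by a bounded factor'' whenever both are admissible for the same $\pi$, and that ``$\rho=1$ is always admissible,'' are both false.  Take $K=\su(2)$ with $p_K(Z)=\{\pm I\}$: if $\pi$ has trivial central character on $p_H(Z)$ then the admissible $\rho$ are exactly those of odd dimension, so the admissible dimensions are unbounded; and if $\pi$ has the nontrivial central character then $\rho=1$ is \emph{not} admissible at all.  What you actually need is a uniform upper bound on $\min\{d_\rho:\rho\text{ admissible for }\pi\}$, not on all admissible $d_\rho$.  The paper obtains this by observing (via Schur's lemma) that admissibility of $\rho$ for $\pi$ depends only on the central character $\chi_\pi\in\what{p_H(Z)}\cong\what{Z}$, so the minimum admissible dimension is exactly $d_K|_{p_K(Z)}(\bar\chi_\pi)$ in the notation of~(\ref{eq:restweight}); since $\what{Z}$ is finite this function of $\pi$ takes only finitely many values, whence the extra weight factor is bounded above and below and the identification is completely isomorphic.
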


\begin{proof}
We first recall that $\what{H\times K}=\what{H}\times\what{K}$ via Kroenecker products.
Then $\what{G}=\{\sig\times\tau:\sig\in\what{H},\tau\in\what{K}\text{ and }Z\subseteq\ker(\sig\times\tau)\}$.
Let $\sig\times\tau\in\what{G}$.
Let $p_J:H\times K\to J$ be the projection map, for $J=H,K$, and note that each $p_J(Z)$ is central
in $J$.  Hence an application of Schur's lemma tells us that there are characters $\chi,\chi'$ on
$Z$ for which $\sig\circ p_H|_Z=\chi(\cdot)I_{d_\sig}$ and $\tau\circ p_K|_Z=\chi'(\cdot)I_{d_\tau}$, while
\[
I_{d_\sig}\otimes I_{d_\tau}=\sig\times\tau|_Z=\sig\circ p_H|_Z\otimes \tau\circ p_K|_Z
=\chi\chi'(\cdot)I_{d_\sig}\otimes I_{d_\tau}
\]
which means that $\chi'=\bar{\chi}$.  

Thus we see for any $\sig$ in $\what{H}$, that there is $\tau$ in $\what{K}$ for which
$\sig\times\tau\in\what{G}$ only if 
\begin{equation}\label{eq:resttocentral}
\frac{1}{d_\sig}\trace\circ\bar{\sig}\circ p_H|_Z=\frac{1}{d_\tau}\trace\circ\tau\circ p_K|_Z.
\end{equation}
Furthermore, $\ker(p_H|_Z)=Z\cap K=\{e\}=H\cap Z=\ker(p_K|_Z)$, so $p_H(Z)\cong Z\cong p_K(Z)$.
Hence since $R_{p_J(Z)}(\trig(J))=\trig(p_J(Z))\cong\trig(Z)$ for $J=H,K$, we have
for any $\sig$ in $\what{H}$, that there exists $\tau$ in $\what{K}$ for which (\ref{eq:resttocentral})
holds.  Thus if we let $\what{K}_{Z,\sig}=\{\tau\in\what{K}:\text{(\ref{eq:resttocentral}) holds}\}$ then
we have shown that
\[
\what{G}=\{\sig\times\tau:\sig\in\what{H}\text{ and }\tau\in\what{K}_{Z,\sig}\}.
\]
It is easily checked that the condition above is symmetric in $H$ and $K$.

Proposition \ref{prop:restrictoss} now yields completely isometric embedding
\begin{align}\label{eq:dpdualembed}
T\mapsto(T_\sig^{d_\tau})_{\sig\times\tau\in\what{G}}:\pfal_{G,d^\alp}(H)^*\to
&\ell^\infty\text{-}\bigoplus_{\sig\times\tau\in\what{G}}(d_\sig d_\tau)^{-\frac{1}{p'}-\alp}
d_\tau^{1/p'}\smat^{p'}_{d_\sig} \notag \\
=& \ell^\infty\text{-}\bigoplus_{\sig\in\what{H}}d_\sig^{-\frac{1}{p'}-\alp}
\left(\ell^\infty\text{-}\bigoplus_{\tau\in\what{K}_{Z,\sig}}d_\tau^{-\alp}\smat^{p'}_{d_\sig}\right).
\end{align}
In the event that  $\sup_{\tau\in\what{K}_{Z,\sig}}d_\tau^{-\alp}=1$ (which happens
if $\alp=0$, or if $Z$ is trivial, or if $K$ is abelian),
we see that the range of the map (\ref{eq:dpdualembed}) lands in a completely isometric copy of 
$\ell^\infty\text{-}\bigoplus_{\sig\in\what{H}}d_\sig^{-\frac{1}{p'}-\alp}\smat^{p'}_{d_\sig}\cong
\pfal(H,d^\alp)^*$.  

Otherwise, letting $\chi_\sig=\frac{1}{d_\sig}\trace\circ\bar{\sig}\circ p_H|_Z$,
which is in $\what{Z}$, a simple examination of the definition of $\what{K}_{Z,\sig}$ shows that 
\[
\sup_{\tau\in\what{K}_{Z,\sig}}d_\tau^{-\alp}
=\bigl(\inf\{d^\alp_\tau:\tau\in\what{K},\chi_\sig\subset\tau\}\bigr)^{-1}
=\frac{1}{d^\alp_K|_{p_K(Z)}(\chi_\sig)}
\]
where we have a slight abuse of notation:  since $p_K(Z)\cong Z$, it would be more logical to write
$\chi_\sig\circ p_K^{-1}|_{p_K(Z)}$, instead of $\chi_\sig$, above.
The range of the map (\ref{eq:dpdualembed}) then lands in a completely isometric copy of
\[
\ell^\infty\text{-}\bigoplus_{\sig\in\what{H}}
d_\sig^{-\frac{1}{p'}-\alp}\left[d_K^\alp|_{p_K(Z)}(\chi_\sig)\right]^{-1}\smat^{p'}_{d_\sig}.
\]
Observe that $\ome^\alp(\sig)=d_K^\alp|_{p_K(Z)}(\chi_\sig)$ defines a weight on $\what{H}$
and the space above is completely isometrically isomorphic to $\pfal(H,d^\alp\ome^\alp)^*$.
In the event that $Z$ is finite, and hence so too is $\what{Z}$, there are only finitely many values 
of $\ome^\alp(\sig)=d_K^\alp|_{p_K(Z)}(\chi_\sig)$, and $\pfal(H,d^\alp\ome^\alp)$ is 
completely isomorphic to $\pfal(H,d^\alp)$.
\end{proof}

\begin{example}
Let us consider an example where $\pfal_{G,d^\alp}(H)=\pfal(H,d^\alp)$ completely
isomorphically, but not isometrically.  We have that $\un(2)=(\Tee \times\su(2))/Z$ where
$Z=\{\pm(1,I)\}$.  Then $\what{Z}=\{1,\mathrm{sgn}\}$.    Observe that in the notation above
$\ome^\alp(\mathrm{sgn})=d^\alp_{\su(2)}|_{p_{\su(2)}(Z)}(\mathrm{sgn})=2^\alp$, 
since the standard representation 
$\pi_1$ is the lowest dimension representation of $\su(2)$ which ``sees" $\mathrm{sgn}$.  
Further, letting for $k$ in $\Zee$, $\sig_k(z)=z^k$ on $\Tee$, we see, again in the notation above, that
\[
\chi_{\sig_k}=\begin{cases} 1 &\text{if }2\,|\,k \\ \mathrm{sgn} &\text{if }2\not|\; k,\end{cases}
\text{ hence }\ome^\alp(\sig_k)
=\begin{cases} 1 &\text{if }2\,|\,k \\ 2 ^\alp &\text{if }2\not|\; k.\end{cases}
\]
Thus $\pfal_{\un(2),d^\alp}(\Tee)=\pfal(\Tee,d^\alp\ome^\alp)=\fal(\Tee)$, completely isomorphically, 
though not isometrically.
\end{example}

\begin{remark}\label{rem:directprod}
Proposition \ref{prop:restrictoss} admits an obvious analogue when we replace $\pfal(G)$ by
$\fal^2_{R^q}(G,d^\alp)$; we get a completely isometric embedding
\[
\fal^2_{R^q}(G,d^\alp)^*\hookrightarrow
\ell^\infty\text{-}\bigoplus_{\pi\in\what{G}}d^{-\frac{1}{2}-\alp}\left(\ell^2\text{-}\bigoplus_{\sig\subset\pi|_H}
m(\sig,\pi)^{1/2}\smat^2_{d_\sig,R^{q'}}\right).
\]
Thus, as in Theorem \ref{theo:directprod}, in any of the situations
that $\alp=0$, $Z$ is trivial, or $K$ is abelian, we obtain that 
\[
\fal^2_{R^q,G,d^\alp}(H)=R_H(\fal^2_{R^q}(G,d^\alp))=\fal^2_{R^q}(H,d^\alp).
\]
\end{remark}

With our restriction formula in hand, we can now characterize operator amenability of our algebras for
connected Lie groups and of infinite products  of finite groups.  Let us first obtain a brief quantitative
result on finite groups.  Any finite-dimensional amenable algebra admits a cluster point of 
a bounded approximate diagonal, which we simple call a {\it diagonal}.  It is well-known
that if the algebra is commutative, then tho diagonal is unique; see, for example, 
\cite[Prop.\ 1.1]{ghandeharihs}.

\begin{proposition}\label{prop:quotbyfin}
If $G$ is finite, then the unique diagonal $w$ for $\pfal(G)=\fal^2_{R^q}(G)$ has
\begin{align*}
\norm{w}_{\pfal(G)\hat{\otimes}\pfal(G)}&=\frac{1}{|G|}\sum_{\pi\in\what{G}}d_\pi^{2+\beta(p)}
\text{ and} \\
\norm{w}_{\fal^2_{R^q}(G)\hat{\otimes}\fal^2_{R^q}(G)}&=\frac{1}{|G|}\sum_{\pi\in\what{G}}d_\pi^{
2+\gam(q)}.
\end{align*}
\end{proposition}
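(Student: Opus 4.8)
The plan is to compute the unique diagonal $w$ explicitly, using the fact that for a finite group $\fal(G)=\pfal(G)=\fal^2_{R^q}(G)$ as Banach algebras (only the operator space structures and norms differ), so the diagonal is the same underlying element of $\trig(G)\otimes\trig(G)$ and we only need to measure it in the two tensor norms. First I would recall that for the finite-dimensional commutative algebra $\fal(G)$ of functions on $G$, a bounded approximate diagonal is automatically an honest diagonal, and it is unique; concretely, identifying $\fal(G)\hat{\otimes}\fal(G)$ with functions on $G\times G$, the diagonal is $w(s,t)=|G|\cdot 1_{\Del}(s,t)$, i.e.\ the normalized indicator of the diagonal subgroup $\Del=\{(s,s):s\in G\}$ — this is the standard Johnson-type diagonal $|G|^{-1}\sum_{s\in G}\del_s\otimes\del_s$ written multiplicatively, and one checks it satisfies $m(w)=1$ and $(u\otimes 1 - 1\otimes u)w=0$ pointwise.

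Next I would compute the Fourier side of $w$. Under the identification $\trig(G\times G)^\dagger=\prod_{\pi,\pi'}\mat_{d_\pi}\otimes\mat_{d_{\pi'}}$, the element $w$ corresponds to $\Gam^*(\lam(e))$-type data; more precisely, using the same Schur-orthogonality computation as in the proof of Theorem \ref{theo:pfaldel} — namely $\int_G\pi(t)\otimes\bar\pi(t)\,dt=\frac1{d_\pi}\sum_{i,j}e^\pi_{ij}\otimes e^{\bar\pi}_{ij}$, replaced here by the finite average $\frac1{|G|}\sum_{s\in G}$ — one finds that $\hat w$ is supported on the "diagonal" pairs $(\pi,\bar\pi)$ and on each such block is the canonical maximally entangled tensor, scaled appropriately. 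I would then invoke the tensorial factorizations $\smat^p_d=\col^p_d\otimes^h\row^p_d$ and the identifications $\col^{p'}_d\check\otimes\col^{p'}_d\cong\smat^{2p/|p-2|}_d$ from the proof of Theorem \ref{theo:pfaldel} (and the $R^{q'}$ analogue from Theorem \ref{theo:twofaldel}) to evaluate the norm of each block of $w$ in $\pfal(G)\hat{\otimes}\pfal(G)$, respectively $\fal^2_{R^q}(G)\hat{\otimes}\fal^2_{R^q}(G)$. Because in a finite direct sum the $\ell^1$-direct sum norm is just the sum of the block norms, summing over $\pi\in\what G$ produces $\frac1{|G|}\sum_\pi d_\pi^{2+\beta(p)}$ (resp.\ $\frac1{|G|}\sum_\pi d_\pi^{2+\gam(q)}$), where the exponent bookkeeping is exactly the same arithmetic that yielded $\beta(p)$ in Theorem \ref{theo:pfaldel} and $\gam(q)$ in Theorem \ref{theo:twofaldel}, now with no supremum to take since there is a unique diagonal and the sum is finite.

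Alternatively — and this may be the cleaner route — I would bypass the block-by-block computation by observing that the diagonal of $\fal(G)$ for finite $G$ is, via the map $\Gam$ of (\ref{eq:gammamap}), dual/adjoint to the evaluation character $\del_e$ on $\pfal_\Del(G)$, so that $\norm{w}_{\pfal(G)\hat\otimes\pfal(G)}$ equals the norm of $\lam(e)$ as a functional paired against $\pfal_\Del(G)$, i.e.\ the norm of $\lam(e)$ in $\pfal_\Del(G)^*$. Since by Theorem \ref{theo:pfaldel} we have $\pfal_\Del(G)^*\cong\ell^\infty\text{-}\bigoplus_\pi d_\pi^{-\beta(p)-1/r(p)'}\Ome(\pi)^{-1}\smat^{r(p)'}_{d_\pi}$ with $\Ome\equiv 1$ here, and $\lam(e)=(I_{d_\pi})_\pi$ with $\norm{I_{d_\pi}}_{\smat^{r(p)'}_{d_\pi}}=d_\pi^{1/r(p)'}$, a direct evaluation of the dual pairing $\langle w,\del_e\rangle$ — which unwinds to $\frac1{|G|}\sum_\pi d_\pi\cdot d_\pi\cdot d_\pi^{\beta(p)}$ from the weight $d_\pi$ in (\ref{eq:dualpairing}), the dimension $d_\pi$ from the trace of $I_{d_\pi}$, and the factor $d_\pi^{\beta(p)}$ from the $\pfal_\Del$ normalization — gives the claimed formula, and identically with $\gam(q)$ using Theorem \ref{theo:twofaldel}. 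The main obstacle I anticipate is keeping the exponent arithmetic honest: one must carefully track the $d_\pi$-powers coming from the weight in the dual pairing, from the normalization of $\pfal(G)$, and from the Schur-orthogonality factor $\frac1{d_\pi}$, and confirm that they assemble to exactly $2+\beta(p)$ (resp.\ $2+\gam(q)$) rather than an off-by-one power; this is precisely the same delicate bookkeeping already carried out in the proofs of Theorems \ref{theo:pfaldel} and \ref{theo:twofaldel}, so it should go through, but it is where an error would most easily creep in.
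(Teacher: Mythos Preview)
Your second ``cleaner route'' contains a genuine error: the norm of $w$ in $\pfal(G)\hat\otimes\pfal(G)$ is \emph{not} the norm of $\lam(e)$ in $\pfal_\Del(G)^*$.  The evaluation functional $\del_e\leftrightarrow\lam(e)=(I_{d_\pi})_\pi$ has norm $\sup_\pi d_\pi^{-\beta(p)-1/r(p)'}\norm{I_{d_\pi}}_{\smat^{r(p)'}}=\sup_\pi d_\pi^{-\beta(p)}=1$, which is not the quantity claimed.  What you actually need is the norm of $1_e$ as an element of $\pfal_\Del(G)$ itself, not of its dual.  Knowing that $\Gam(w)=1_e$ only gives $\norm{w}\geq\norm{1_e}_{\pfal_\Del(G)}$, since $\Gam$ is a quotient map; the reverse inequality is the content.

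The missing ingredient --- and this is exactly what the paper supplies --- is the \emph{isometric section} $N:\fA_\Del(G)\to\fA(G)\hat\otimes\fA(G)$, $Nu(s,t)=u(st^{-1})$, established in \cite[(1.2)]{forrestss}.  One checks $N1_e=w$ and $\Gam\circ N=\id$, so $\norm{w}=\norm{1_e}_{\fA_\Del(G)}$ exactly.  Then Theorems~\ref{theo:pfaldel} and~\ref{theo:twofaldel} identify $\fA_\Del(G)$ as $\fal^{r(p)}(G,d^{\beta(p)})$ (resp.\ $\fal^{r(q)}(G,d^{\gam(q)})$), and since $\hat{1_e}(\pi)=\tfrac{1}{|G|}I_{d_\pi}$ with $\norm{I_{d_\pi}}_{\smat^{r}_{d_\pi}}=d_\pi^{1/r}$, the $\ell^1$-sum gives $\tfrac{1}{|G|}\sum_\pi d_\pi^{1+1/r'+\beta(p)+1/r}=\tfrac{1}{|G|}\sum_\pi d_\pi^{2+\beta(p)}$ immediately.

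Your first block-by-block approach is not wrong in spirit, but to carry it out you must compute $\norm{\tfrac{1}{d}\sum_{i,j}e_{ij}\otimes e_{ij}}_{\smat^p_d\hat\otimes\smat^p_d}$ directly.  This is an operator projective norm --- an infimum over representations --- and the factorizations $\smat^p_d=\col^p_d\otimes^h\row^p_d$ together with \cite[Lem.~5.9]{xu} that you cite were used in the paper on the \emph{dual} side (for $\smat^{p'}_d\check\otimes\smat^{p'}_d$), where the relevant norm is a supremum and hence tractable.  Translating that to a sharp predual estimate is precisely equivalent to producing an isometric section of $\Gam$, which brings you back to $N$.
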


\begin{proof} 
Let $\fA$ be either of $\pfal$ or $\fal^2_{R^q}$.
Let $N:\fA_\Del(G)\to
\fA(G)\hat{\otimes}\fA(G)$ be given by $Nu(s,t)=u(st^{-1})$, which is an isometry
by \cite[(1.2)]{forrestss}. 
We let $1_e$ denote the indicator
function of $\{e\}$.  Then $N1_e$ is the the unique diagonal for $\fA(G)\otimes\fA(G)$
and has norm $\norm{1_e}_{\fA_\Del(G)}$.  We have that $\what{1_e}(\pi)=\frac{1}{|G|}I_{d_\pi}$
for each $\pi$, and we note that $\norm{I_{d_\pi}}_{\smat^{r(p)}_{d_\pi}}=d_\pi^{1/r(p)}$.
Hence we use Theorems \ref{theo:pfaldel} and \ref{theo:twofaldel} to finish.
\end{proof}

\begin{theorem}\label{theo:connlie}
{\bf (i)} Let $G$ be connected  and $p>1$.  Then either of $\pfal(G)$ or $\fal^2_{R^q}(G)$ 
($1\leq q\leq\infty$) is operator amenable if and only if $G$ is abelian.

{\bf (ii)}  If $G$ is connected and non-abelian, then $\pfal(G,d^\alp)$ is not operator weakly amenable for
any $p\geq \frac{4}{3+2\alp}$, and no $\fal^2_{R^q}(G)$ is operator weakly amenable.

{\bf (iii)} Let $(G_n)_{n=1}^\infty$ be a sequence of finite groups, 
$G=\prod_{n=1}^\infty G_n$ and $p>1$.  Then either of
$\pfal(G)$ or $\fal^2_{R^q}(G)$
is operator amenable if and only if all but finitely $G_n$ are abelian.
\end{theorem}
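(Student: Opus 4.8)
The three parts all reduce to results already established, via restriction to subgroups that are (essentially) direct factors. Recall that $R_H$ is always an algebra homomorphism, so whenever Theorem~\ref{theo:directprod} or Remark~\ref{rem:directprod} identifies $\pfal_{G,\ome}(H)$ with $\pfal(H,\ome)$, or $\fal^2_{R^q,G,d^\alp}(H)$ with $\fal^2_{R^q}(H,d^\alp)$ (completely isometrically, or at least completely isomorphically when $Z$ is finite), the target is a quotient completely contractive Banach algebra of the algebra on $G$; and both operator amenability and operator weak amenability pass to such quotients. The ``if'' directions are quick: in (i), if the connected group $G$ is abelian then every $d_\pi=1$, so $\pfal(G)=\fal^2_{R^q}(G)=\fal(G)\cong\ell^1(\what{G})$, which is amenable, hence operator amenable; in (iii), if all but finitely many $G_n$ are abelian then the product of the abelian factors has finite index, hence is an open abelian subgroup, so $G$ is virtually abelian, and then by Proposition~\ref{prop:openabel} (together, for $\fal^2_{R^q}$, with the norm equivalence $\norm{\cdot}_1\asymp\norm{\cdot}_2$ in bounded dimension) $\pfal(G)\cong\fal^2_{R^q}(G)\cong\fal(G)$ isomorphically as Banach algebras, and $\fal(G)$ is amenable for virtually abelian $G$.

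For the ``only if'' direction of (i) and for all of (ii): by the structure theory of connected compact groups, a connected non-abelian compact group $G$ admits a presentation $G=(S\times K)/Z$ with $Z$ central, $S\cap Z=\{e\}=Z\cap K$, in which $S$ is a non-trivial connected semisimple compact \emph{Lie} group and $Z$ is finite --- one may take $S=\overline{[G,G]}$ when the closed commutator subgroup is already a Lie group (so $Z=\overline{[G,G]}\cap\cent(G)_0$ is finite), and in general a suitable simple Lie almost-direct-factor of $\overline{[G,G]}$. Such $S$ is non-commutative, and is infinite and tall by \cite[Thm.\ 3.2]{hutchinson}. Applying Theorem~\ref{theo:directprod} and Remark~\ref{rem:directprod} with the trivial weight, restriction realizes $\pfal(S)$ and $\fal^2_{R^q}(S)$ as quotients of $\pfal(G)$ and $\fal^2_{R^q}(G)$; hence Theorem~\ref{theo:tallnoa} forces the latter not to be operator amenable for $p>1$, respectively for any $1\leq q\leq\infty$, which is the ``only if'' half of (i). Similarly, the dimension weight $d^\alp$ on $\what{G}$ pulls back to the dimension weight on $\what{S}$, so finiteness of $Z$ gives $\pfal_{G,d^\alp}(S)\cong\pfal(S,d^\alp)$ completely isomorphically; Theorem~\ref{theo:liewa}, applied to the non-commutative connected Lie group $S$, then shows $\pfal(G,d^\alp)$ is not operator weakly amenable for $p\geq\frac{4}{3+2\alp}$ and that no $\fal^2_{R^q}(G)$ is operator weakly amenable --- this is (ii).

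For the ``only if'' direction of (iii): suppose infinitely many $G_n$ are non-abelian and choose indices $n_1<n_2<\dots$ with each $G_{n_k}$ non-abelian. For each $m$, the finite group $F_m=\prod_{k=1}^m G_{n_k}$ sits in $G$ as a literal direct factor, $G=F_m\times\prod_{n\notin\{n_1,\dots,n_m\}}G_n$, so Theorem~\ref{theo:directprod} gives $\pfal_G(F_m)=\pfal(F_m)$ completely isometrically, a quotient of $\pfal(G)$. Were $\pfal(G)$ operator amenable with a bounded approximate diagonal of norm $\leq C$, then so would $\pfal(F_m)$ be; being finite-dimensional and commutative it then has its unique diagonal $w_m$ (cf.\ \cite[Prop.\ 1.1]{ghandeharihs}) with $\norm{w_m}\leq C$. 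By Proposition~\ref{prop:quotbyfin} and the multiplicativity $\what{F_m}=\prod_{k=1}^m\what{G_{n_k}}$ (orders and dimensions both multiplying over factors),
\[
\norm{w_m}_{\pfal(F_m)\hat{\otimes}\pfal(F_m)}=\prod_{k=1}^m\frac{1}{|G_{n_k}|}\sum_{\pi\in\what{G_{n_k}}}d_\pi^{2+\beta(p)}.
\]
The point is a uniform lower bound: for any non-abelian finite group $H$ there are $|H|/|[H,H]|$ one-dimensional representations, $|[H,H]|\geq 2$, and each remaining $d_\pi\geq 2$ obeys $d_\pi^{2+\beta}\geq 2^\beta d_\pi^2$, whence
\[
\frac{1}{|H|}\sum_{\pi\in\what{H}}d_\pi^{2+\beta}\geq \frac{1}{|[H,H]|}+2^\beta\Bigl(1-\frac{1}{|[H,H]|}\Bigr)\geq\frac{1+2^\beta}{2}>1
\]
whenever $\beta>0$. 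Since $\beta(p)>0$ for $p>1$, $\norm{w_m}\geq\bigl(\tfrac{1+2^{\beta(p)}}{2}\bigr)^m\to\infty$, contradicting $\norm{w_m}\leq C$; the same argument with $\gam(q)\geq 1>0$ and the second formula of Proposition~\ref{prop:quotbyfin} handles $\fal^2_{R^q}(G)$ for all $q$.

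I expect two things to require the most care. First, the structure-theoretic input used for (i) and (ii): producing from an arbitrary connected non-abelian compact group an infinite tall (semisimple Lie) almost-direct-factor in the precise sense of Theorem~\ref{theo:directprod} --- routine when $G$ is a Lie group, but needing attention to central subgroups in the general connected case. Second, in (iii), the uniform estimate above, whose crux is the elementary fact that $|[H,H]|\geq 2$ keeps the ``defect'' $\frac{1}{|H|}\sum_\pi d_\pi^{2+\beta}$ of a non-abelian finite group bounded away from $1$ uniformly, so that the product over infinitely many such factors diverges.
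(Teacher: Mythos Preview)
Your proof is correct and follows essentially the same strategy as the paper's: for (i)--(ii), pass via Theorem~\ref{theo:directprod} (and Remark~\ref{rem:directprod}) to a simple compact Lie almost-direct-factor, then invoke Theorems~\ref{theo:tallnoa} and~\ref{theo:liewa}; for (iii), use Proposition~\ref{prop:quotbyfin} together with the multiplicativity of the diagonal norm over direct products. Two minor differences are worth noting. First, the paper carries out the structure-theoretic reduction more explicitly, writing $G\cong(T\times\prod_i S_i)/D$ and applying the third isomorphism theorem to put $G$ in the exact form $(H\times K)/Z$ required by Theorem~\ref{theo:directprod} (with $H$ abelian, so the $d^\alp$-weighted restriction to $K$ is even completely isometric), whereas your presentation asserts the decomposition with $Z$ finite somewhat more briefly; both are valid, and your observation that the projection of any central $D$ to a simple factor has finite image is exactly what forces $Z$ finite. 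Second, in (iii) your explicit uniform lower bound $\frac{1}{|H|}\sum_{\pi}d_\pi^{2+\beta}\geq\tfrac{1+2^\beta}{2}$ for non-abelian finite $H$ actually fills in a step the paper leaves implicit: the paper asserts the product diverges ``unless the groups $G_n$ are ultimately abelian'' without exhibiting a bound keeping each non-abelian factor bounded away from $1$.
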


\begin{proof} (i) 
If $G$ is abelian, then $\pfal(G)=\fal^2(G)$ is amenable by Proposition \ref{prop:amenable}.  
If $G$ is not abelian then there exists  a non-empty collection of simple, connected, compact, Lie groups 
$\{S_i\}_{i\in I}$, a connected abelian group $T$, and a closed central subgroup $D$ of
$T\times S$ where $S=\prod_{i\in I}S_i$, for  which $G\cong (T\times S)/D$. 
This is the Levi-Mal'cev Theorem;
see, for example, \cite[Thm.\ 9.24]{hofmannm}.  The third isomorphism theorem tells us that
$G=(T/T\cap D\times S/D\cap S)/(D/(T\cap D\times D\cap S))$.  Thus if we let
$H=T/T\cap D$, $K=S/D\cap S$ and $Z=D/(T\cap D\times D\cap S)$, then $G=(H\times K)/Z$
where $H\cap Z=\{e\}=Z\cap K$.  Furthermore, $K\cong\prod_{i\in I}S_i/Z_i$ where each
$Z_i$ is a central subgroup of $S_i$.  Fix $i_0$ in $I$.  Then,  $K_{i_0}=S_{i_0}/Z_{i_0}$ is a 
simple Lie group.
Also, $\pfal(K_{i_0})$ is a complete quotient of $\pfal(G)$ by Theorem \ref{theo:directprod},
while $\fal^2_{R^q}(K_{i_0})$ is one of $\fal^2_{R^q}(G)$, by Remark \ref{rem:directprod}.  However,
neither of $\pfal(K_{i_0})$ nor $\fal^2_{R^q}(K_{i_0})$ 
is operator amenable by Theorem \ref{theo:tallnoa}, and the fact 
that $K_{i_0}$ is tall (\cite[Thm.\ 3.2]{hutchinson}).

(ii)  If $p\geq \frac{4}{3+2\alp}$, then the complete quotient algebra $\pfal(K_{i_0})$, 
from the proof of (i), above,
is not operator weakly amenable, thanks to Theorem \ref{theo:liewa}.  Likewise for
$\fal^2_{R^q}(K_{i_0})$.

(iii)  If all but finitely may $G_n$ are abelian, then $\pfal(G)$ is amenable by Proposition 
\ref{prop:amenable}.  If $\pfal(G)$ is operator amenable, then $\pfal(G)\hat{\otimes}\pfal(G)$
admits a bounded approximate diagonal $(w_i)$.  Let $H_n=\prod_{k=1}^nG_k$.
Then by Theorem \ref{theo:directprod} each $(R_{H_n}\otimes R_{H_n}w_i)$
is a bounded approximate diagonal for $\pfal(H_n)$, and hence has limit point
the unique diagonal $w_n$ for $\pfal(H_n)$.  Hence we appeal to Proposition \ref{prop:quotbyfin}
to see that
\begin{align*}
\sup_i\norm{w_i}_{\pfal(G)\hat{\otimes}\pfal(G)}&
\geq \sup_i \norm{R_{H_n}\otimes R_{H_n}w_i}_{\pfal(H_n)\hat{\otimes}\pfal(H_n)} \\
&\geq \norm{w_n}_{\pfal(H_n)\hat{\otimes}\pfal(H_n)}
=\frac{1}{|H_n|}\sum_{\pi\in\what{H_n}}d_\pi^{2+\beta(p)} \\
&=\frac{1}{|G_1\times\dots\times G_n|}
\sum_{\sig_1\times\dots\times\sig_k\in\what{G_1}\times\dots\times\what{G_n}}
(d_{\sig_1}\dots d_{\sig_n})^{2+\beta(p)} \\
&=\prod_{k=1}^n\frac{1}{|G_k|}\sum_{\sig_k\in\what{G_k}}d_{\sig_k}^{2+\beta(p)}.
\end{align*}
We note that $\beta(p)>0$ proveded $p>1$.
Since each $|G_k|=\sum_{\sig_k\in\what{G_k}}d_{\sig_k}^2$,
the sequence above diverges as $n\to\infty$, unless the groups $G_n$ are ultimately abelian.

Obvious modifications show the same for $\fal^2_{R^q}(G)$.  
Again we note that $\gam(q)\geq 1>0$, for all $1\geq q\geq\infty$.  \end{proof}

If we could learn the structure of $\pfal_G(G_e)$, say, where $G_e$ is the connected component
of the identity, then we may be able to asses conditions of operator amenability more generally.
Also, the situation for general totally disconnected groups is unknown to us.

\subsection{The torus in $\su(2)$}\label{ssec:tori}
We now analyze the example of the (unique up to conjugacy) torus $\Tee$ in $\su(2)$.
We will not bother with general weights, but will consider dimension weights.
We recall again that $\what{\su}(2)=\{\pi_n:n=0,1,2,\dots\}$ with $d_{\pi_n}=n+1$, and the
character theory reveals
\[
\pi_n|_\Tee=\mathrm{diag}(\chi_n,\chi_{n-2},\dots,\chi_{2-n},\chi_{-n})
\]
where $\chi_n(z)=z^n$.  Proposition \ref{prop:restrictoss}  immediately yields the following.

\begin{proposition}\label{prop:sutworest}
The operator space structure on $\pfal_{\su(2),d^\alp}(\Tee)^*\subset\Cee^\Zee$ is given by the 
completely isometric embedding
\[
t\mapsto((t_{n-2j})_{j=0}^n)_{n=0}^\infty:
\pfal_{\su(2),d^\alp}(\Tee)^*\to\ell^\infty\text{-}\bigoplus_{n=0}^\infty(n+1)^{-\frac{1}{p'}-\alp}
\ell^{p'}_{n+1}=\fM^{\alp,p'}
\]
so
\[
\norm{t}_{\pfal_{\su(2),d^\alp}(\Tee)^*}
=\sup_{n=0,1,2,\dots}\frac{\left(\sum_{j=0}^n |t_{n-2j}|^{p'}\right)^{1/p'}}{(n+1)^{\frac{1}{p'}+\alp}}.
\]
\end{proposition}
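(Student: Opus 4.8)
The plan is to read this off directly from Proposition \ref{prop:restrictoss}, applied with $G=\su(2)$, $H=\Tee$ and $\ome=d^\alp$, feeding in the standard branching rule for $\su(2)\downarrow\Tee$. First I would recall the representation theory already quoted just above the statement: $\what{\su}(2)=\{\pi_n:n=0,1,2,\dots\}$ with $d_{\pi_n}=n+1$, and
\[
\pi_n|_\Tee=\mathrm{diag}(\chi_n,\chi_{n-2},\dots,\chi_{2-n},\chi_{-n}),\qquad \chi_k(z)=z^k .
\]
The point to extract is that the irreducible constituents of $\pi_n|_\Tee$ are exactly the characters $\chi_{n-2j}$, $j=0,\dots,n$; each occurs with multiplicity $m(\chi_{n-2j},\pi_n)=1$, and each is one-dimensional, so $\smat^{p'}_{d_{\chi_{n-2j}}}=\smat^{p'}_1=\Cee$. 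In particular $R_\Tee(\trig(\su(2)))=\trig(\Tee)$, so $\pfal_{\su(2),d^\alp}(\Tee)$ is the completion of $\trig(\Tee)$ under the quotient norm and its dual sits inside $\trig(\Tee)^\dagger=\Cee^\Zee$, where we write an element as $t=(t_k)_{k\in\Zee}$.

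Next I would specialize the completely isometric embedding of Proposition \ref{prop:restrictoss}. With $\ome(\pi_n)=d_{\pi_n}^\alp=(n+1)^\alp$, the outer weight there becomes $d_{\pi_n}^{-1/p'}\ome(\pi_n)^{-1}=(n+1)^{-1/p'-\alp}$; and since every multiplicity equals $1$ and every constituent is one-dimensional, the inner block $\ell^{p'}\text{-}\bigoplus_{\sig\subset\pi_n|_\Tee}m(\sig,\pi_n)^{1/p'}\smat^{p'}_{d_\sig}$ collapses to the plain $\ell^{p'}$-sum $\ell^{p'}_{n+1}$ of $n+1$ copies of $\Cee$. The abstract embedding $T\mapsto((T_\sig)_{\sig\subset\pi|_H})_\pi$ of that proposition therefore becomes precisely $t\mapsto((t_{n-2j})_{j=0}^n)_{n=0}^\infty$, and it maps $\pfal_{\su(2),d^\alp}(\Tee)^*$ completely isometrically into $\ell^\infty\text{-}\bigoplus_{n=0}^\infty(n+1)^{-1/p'-\alp}\ell^{p'}_{n+1}=\fM^{\alp,p'}$, as claimed.

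Finally, the norm formula is just the description of the norm on $\fM^{\alp,p'}$: for $t$ in the image, $\norm{t}$ is the supremum over $n\geq 0$ of $(n+1)^{-1/p'-\alp}$ times the $\ell^{p'}_{n+1}$-norm of $(t_n,t_{n-2},\dots,t_{-n})$, which is exactly
\[
\norm{t}_{\pfal_{\su(2),d^\alp}(\Tee)^*}=\sup_{n=0,1,2,\dots}\frac{\bigl(\sum_{j=0}^n|t_{n-2j}|^{p'}\bigr)^{1/p'}}{(n+1)^{1/p'+\alp}}
\]
(with the usual reading of the $\ell^{p'}$-norm as a maximum when $p'=\infty$, i.e.\ $p=1$). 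I expect no real obstacle here: all the content is in Proposition \ref{prop:restrictoss} and the branching rule, and the only point needing a moment's care is bookkeeping — that multiplicity-freeness of $\pi_n|_\Tee$ with one-dimensional constituents is exactly what makes every factor $m(\chi_{n-2j},\pi_n)^{1/p'}$ equal to $1$ and every inner summand equal to $\Cee$, and that the listing $\pi_n\mapsto(\chi_n,\chi_{n-2},\dots,\chi_{-n})$ is what turns the abstract embedding into $t\mapsto((t_{n-2j})_{j=0}^n)_n$.
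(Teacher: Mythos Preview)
Your proposal is correct and follows exactly the paper's approach: the paper simply states that the result is an immediate consequence of Proposition~\ref{prop:restrictoss} together with the branching rule $\pi_n|_\Tee=\mathrm{diag}(\chi_n,\chi_{n-2},\dots,\chi_{-n})$ recalled just before the statement. You have merely spelled out the bookkeeping (multiplicity-freeness, one-dimensional constituents, and the resulting collapse of the inner block to $\ell^{p'}_{n+1}$) that the paper leaves implicit.
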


\begin{remark}
Notice that when $\alp=0$, $\pfal_{\su(2)}(\Tee)^*$ admits the description
\[
\left\{t\in\Cee^\Zee:\sup_{n=0,1,2,\dots}\left(\frac{1}{n+1}\sum_{j=0}^n |t_{n-2j}|^{p'}\right)^{1/p'}<\infty
\right\}.
\]
Thus the space is defined in terms of Cesaro summing norm.  We suspect such spaces must
exist elsewhere in the literature.
\end{remark}

\begin{corollary}\label{cor:sutworest}
The Beurling algebra $\fal(\Tee,w^{1/p'+\alp})$, where $w^{1/p'+\alp}(n)
=(1+|n|)^{\frac{1}{p'}+\alp}$, embeds completely contractively into $\pfal_{\su(2),d^\alp}(\Tee)$.
\end{corollary}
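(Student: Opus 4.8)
The plan is to deduce the corollary from a one–line pointwise estimate, using duality to transport everything to the description of $\pfal_{\su(2),d^\alp}(\Tee)^*$ furnished by Proposition \ref{prop:sutworest}. Write $\beta=\tfrac{1}{p'}+\alp$, so that the weight is $w^\beta(n)=(1+|n|)^\beta$, and let $\iota$ denote the map extending the inclusion $\trig(\Tee)\hookrightarrow\pfal_{\su(2),d^\alp}(\Tee)$ (recall $R_\Tee(\trig(\su(2)))=\trig(\Tee)$). Its adjoint $\iota^*$ takes values in $\fal(\Tee,w^\beta)^*\cong\ell^\infty(\Zee,w^{-\beta})$, which is an $\ell^\infty$-direct sum of one–dimensional operator spaces, hence a minimal operator space; consequently a completely bounded map into it has $\mathrm{cb}$-norm equal to its ordinary norm. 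Since $\cbnorm{\iota}=\cbnorm{\iota^*}$, it suffices to prove that $\iota^*$ is contractive, i.e. that $\pfal_{\su(2),d^\alp}(\Tee)^*\subseteq\fal(\Tee,w^\beta)^*$ with $\norm{t}_{\fal(\Tee,w^\beta)^*}\leq\norm{t}_{\pfal_{\su(2),d^\alp}(\Tee)^*}$ for every sequence $t=(t_n)_{n\in\Zee}$. Equivalently, this makes $\iota$ a contraction of Banach spaces, hence a complete contraction; injectivity is automatic since both algebras consist of continuous functions on $\Tee$.

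By Proposition \ref{prop:sutworest} and the standard computation of the dual of the Beurling algebra $\fal(\Tee,w^\beta)=\ell^1(\Zee,w^\beta)$, we have
\[
\norm{t}_{\pfal_{\su(2),d^\alp}(\Tee)^*}=\sup_{m\geq0}\frac{\bigl(\sum_{j=0}^m|t_{m-2j}|^{p'}\bigr)^{1/p'}}{(m+1)^{1/p'+\alp}},\qquad
\norm{t}_{\fal(\Tee,w^\beta)^*}=\sup_{n\in\Zee}\frac{|t_n|}{(1+|n|)^{1/p'+\alp}}.
\]
Fix $n\in\Zee$ and put $m=|n|$. Then $n$ occurs among the indices $m-2j$, $0\leq j\leq m$ (take $j=0$ if $n\geq0$ and $j=m$ if $n<0$), so $|t_n|^{p'}\leq\sum_{j=0}^m|t_{m-2j}|^{p'}$; since $1+|n|=m+1$ this gives
\[
\frac{|t_n|}{(1+|n|)^{1/p'+\alp}}\leq\frac{\bigl(\sum_{j=0}^m|t_{m-2j}|^{p'}\bigr)^{1/p'}}{(m+1)^{1/p'+\alp}}\leq\norm{t}_{\pfal_{\su(2),d^\alp}(\Tee)^*}.
\]
Taking the supremum over $n\in\Zee$ yields the required inequality, and with it the corollary.

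There is no real obstacle once Proposition \ref{prop:sutworest} is available; the argument is essentially bookkeeping. The only mildly delicate points are identifying $\fal(\Tee,w^\beta)^*$ as $\ell^\infty(\Zee,w^{-\beta})$ with the displayed supremum norm, observing that this is a minimal operator space so that Banach–space contractivity already upgrades to complete contractivity, and noticing that the index $n$ of a given Laurent coefficient is always captured by the single block $m=|n|$ in the decomposition of Proposition \ref{prop:sutworest}. It is worth emphasizing what the argument does \emph{not} give: the embedding is neither isometric nor surjective, and $\pfal_{\su(2),d^\alp}(\Tee)$ is strictly larger than $\fal(\Tee,w^\beta)$ for $p>1$, since the $\pfal_{\su(2),d^\alp}(\Tee)^*$-norm is a Ces\`aro-type blockwise $\ell^{p'}$ average rather than a plain supremum — precisely the feature that makes these restriction algebras of Laurent series a genuinely new phenomenon.
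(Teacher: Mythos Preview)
Your proof is correct and follows essentially the same route as the paper: both establish the dual inequality $\norm{t}_{\fal(\Tee,w^{1/p'+\alp})^*}\leq\norm{t}_{\pfal_{\su(2),d^\alp}(\Tee)^*}$ by reading off a single term from the $\ell^{p'}$-block at $m=|n|$, and then upgrade contractivity to complete contractivity via the $\max$/$\min$ structure. The only cosmetic difference is that you phrase the last step as ``the target of $\iota^*$ is minimal,'' whereas the paper phrases it as ``the domain $\fal(\Tee,w^{1/p'+\alp})\cong\ell^1(\Zee,w^{1/p'+\alp})$ is maximal''; these are dual formulations of the same fact.
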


\begin{proof}
We simply observe that for $t$ in $\pfal_{\su(2),d^\alp}(\Tee)^*$, and each $n$ in $\Zee$, we have
\[
\norm{t}_{\fal(\Tee,w^{1/p'+\alp})^*}=\sup_{n\in\Zee}\frac{|t_n|}{(1+|n|)^{\frac{1}{p'}+\alp}}
\leq \norm{t}_{\pfal_{\su(2),d^\alp}(\Tee)^*}
\]
From which it follows that the adjoint of the map $\fal(\Tee,w^{1/p'+\alp})\to
\pfal_{\su(2),d^\alp}(\Tee)$, whence the map itself, is contractive.  Moreover,
$\fal(\Tee,w^{1/p'+\alp})\cong\ell^1(\Zee,w^{1/p'+\alp})$ is a maximal operator
space.
\end{proof}

\begin{remark}\label{rem:placeofresttoT}
Techniques of the proof of the corollary show that 
$\fal_{\su(2),d^\alp}(\Tee)=\fal(\Tee,w^\alp)$, completely isomorphically.  Hence
it follows from (\ref{eq:ccinclusions})
that for $p\geq 1$, $\pfal_{\su(2),d^\alp}(\Tee)$ embeds completely contractively
into $\fal(\Tee,w^\alp)$.  In particular, with completely contractive inclusions we have that
\[
\fal(\Tee,w^{1/p'+\alp})\subseteq \pfal_{\su(2),d^\alp}(\Tee)\subseteq\fal(\Tee,w^\alp).
\]
\end{remark}

We wish to study the operator weak amenability of $\pfal_{\su(2),d^\alp}(\Tee)$.
We first require an analogue of Theorem \ref{theo:pfaldel}.  In preparation we require the following,
surely well-known, estimate.

\begin{lemma}\label{lem:Tnormest}
Let $T:\Cee^n\to\Cee^m$ be linear.  Then
\[
\norm{T}_{\fC\fB(\ell^p_n,\ell^{p'}_m)}\leq 
\begin{cases}
(nm)^{\frac{1}{2p'}-\frac{1}{2p}}\norm{T}_{\fB(\ell^2_n,\ell^2_m)} & \text{if }p\geq 2 \\
\norm{T}_{\fB(\ell^2_n,\ell^2_m)} & \text{if }1\leq p <2\end{cases} 
\]
\end{lemma}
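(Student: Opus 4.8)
The plan is to reduce everything to two ingredients: the completely isometric identification $\fC\fB(\ell^p_n,\ell^{p'}_m)$ with a Schatten-type space — which we can access through the interpolation description of the operator space structures on $\ell^p_d$ — and the elementary fact that the operator (i.e.\ completely bounded) norm of a map between Hilbertian operator spaces equipped with the $\max$ or $\min$ structure, or with $OH$, is governed by the ordinary operator norm up to an explicit dimensional constant. Recall from the discussion of operator spaces in the introduction that $\ell^p_d=[\ell^\infty_d,\ell^1_d]_{1/p}$, that $\ell^\infty_d$ is minimal (so $\fC\fB(\ell^\infty_n,X)$ sees only the operator norm into $X$ when $X$ itself is minimal), and that $\ell^1_d$ is maximal. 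The case split at $p=2$ is forced by the fact that for $p\geq 2$ the space $\ell^p_d$ sits ``between'' $\ell^\infty_d$ and $\ell^2_d=\ell^2_{d,OH}$, while for $1\le p<2$ it sits between $\ell^2_d$ and $\ell^1_d$; the $OH$ structure is self-dual, which is what makes the $1\le p<2$ estimate constant-free.

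Concretely, first I would treat the endpoint $p=2$: here $\ell^2_n$ and $\ell^2_m$ both carry the operator Hilbert space structure, and it is standard (see \cite{pisier1} or \cite{pisier}) that $\fC\fB(\ell^2_{n,OH},\ell^2_{m,OH})=\mathrm{S}^2$ with the Hilbert--Schmidt norm dominated by — in fact here we only need it bounded by — the operator norm times a constant which for the $OH$--$OH$ pairing is exactly $1$; more precisely $\cbnorm{T}=\norm{T}_{\fB(\ell^2_n,\ell^2_m)}$ when both sides carry $OH$. Wait — the cleanest route is: for any Hilbertian homogeneous structure $E$ on the target which is subquadratic (e.g.\ $OH$, $\min$), and $\min$ on the source, $\cbnorm{T}$ equals $\norm{T}_{\fB}$. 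Then for the case $1\le p<2$, write $\ell^p_d=[\ell^2_{d,OH},\ell^1_d]_{\theta}$ with the appropriate $\theta$, so that the identity $\ell^p_d\to\ell^2_{d,OH}$ is a complete contraction on the source side and $\ell^2_{m,OH}\to\ell^{p'}_m$ is a complete contraction on the target side (since $2\le p'$, the identity $\ell^2_{m,OH}=[\ell^\infty_m,\ell^1_m]_{1/2}\to[\ell^\infty_m,\ell^1_m]_{1/p'}=\ell^{p'}_m$ is completely contractive by interpolation, as the relevant endpoint identity maps are complete contractions). Composing, $\cbnorm{T:\ell^p_n\to\ell^{p'}_m}\le\cbnorm{T:\ell^2_{n,OH}\to\ell^2_{m,OH}}=\norm{T}_{\fB(\ell^2_n,\ell^2_m)}$, which is the desired bound with constant $1$.

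For the case $p\ge2$, the same factorization trick costs a dimensional constant. Here I factor through $\ell^\infty_n$ and $\ell^1_m$ instead: the identity $\ell^p_n\to$ (a rescaled) $\ell^\infty_n$ and $\ell^1_m\to$ (a rescaled) $\ell^{p'}_m$ are complete contractions, but now the bridging map is $T:\ell^\infty_n\to\ell^1_m$, whose cb-norm — since $\ell^\infty_n$ is minimal and $\ell^1_m$ is maximal — equals the operator norm $\norm{T}_{\fB(\ell^1_n{}^{*}\!,\ldots)}$... more carefully, $\cbnorm{T:\min\to\max}=\norm{T}_{\fB(\ell^2,\ell^2)}$ fails; one must instead interpolate the bound $\cbnorm{T:\ell^2_{n,OH}\to\ell^2_{m,OH}}=\norm{T}_{\fB}$ against the trivial bound $\cbnorm{T:\ell^\infty_n\to\ell^1_m}\le\sqrt{nm}\,\norm{T}_{\fB}$ (which follows from $\norm{T}_{\ell^1\to\ell^\infty}\le\norm{T}_{\ell^2\to\ell^2}$ combined with the norm-$\sqrt{d}$ inclusions $\ell^\infty_d\hookrightarrow\ell^2_d\hookrightarrow\ell^1_d$, and the fact that for $\min$/$\max$ spaces cb-norms equal norms). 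Applying the generalized Riesz--Thorin theorem to the compatible couples $(\ell^\infty,\ell^2_{OH})$ on the source and $(\ell^1,\ell^2_{OH})$ on the target at interpolation parameter giving $\ell^p,\ell^{p'}$ produces $\cbnorm{T:\ell^p_n\to\ell^{p'}_m}\le(nm)^{(1/p'-1/p)/2}\,\norm{T}_{\fB(\ell^2_n,\ell^2_m)}$, matching the exponent $\tfrac1{2p'}-\tfrac1{2p}$ in the statement.

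\textbf{Main obstacle.} The delicate point is getting the operator-space (as opposed to merely Banach-space) interpolation bookkeeping exactly right: one must verify that the $\ell^2_{OH}$ endpoint really does contribute constant $1$ for the cb-norm — this is where the self-duality $OH=OH^*$ and homogeneity are used — and that the complex interpolation functor is compatible with the compatible couples $(\ell^\infty_d,\ell^2_{d,OH})$ and $(\ell^1_d,\ell^2_{d,OH})$ in the operator-space category, yielding $\ell^p_d$ (for $2\le p\le\infty$) and $\ell^{p'}_d$ (for $1\le p'\le2$) with their standard structures. The Banach-space computation of the dimensional constant $(nm)^{1/2p'-1/2p}$ is then a routine Riesz--Thorin exponent count and I would not belabor it.
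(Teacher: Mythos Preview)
Your treatment of the case $1\le p<2$ is essentially the paper's: factor $T$ as $\ell^p_n\to\ell^2_{n,OH}\to\ell^2_{m,OH}\to\ell^{p'}_m$, use homogeneity of $OH$ for the middle piece, and verify that the flanking identity maps are complete contractions by interpolation (the paper makes explicit that $\id:\ell^1_n\to\ell^2_{n,OH}$ is a complete contraction because $\ell^1_n$ is maximal, then interpolates).

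For $p\ge 2$ there is a genuine gap. You need the endpoint bound $\cbnorm{T:\ell^\infty_n\to\ell^1_m}\le\sqrt{nm}\,\norm{T}_{\fB(\ell^2,\ell^2)}$, and you justify it by asserting that ``for $\min$/$\max$ spaces cb-norms equal norms.'' That assertion is false in this direction: it is maps \emph{from} a maximal space, or \emph{into} a minimal space, whose cb-norms equal their norms; maps from $\min$ into $\max$ are the worst case, and the cb-norm there can far exceed the Banach norm (it is essentially the $(2,1)$-summing norm, governed by Grothendieck-type inequalities). The inequality you want is nevertheless true, but the natural way to prove it is to factor through $\ell^2_{OH}$ using $\cbnorm{\id:\ell^\infty_n\to\ell^2_{n,OH}}\le n^{1/2}$ and its dual $\cbnorm{\id:\ell^2_{m,OH}\to\ell^1_m}\le m^{1/2}$.

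Once you do that, your ``interpolate $T$'' strategy becomes redundant: the paper simply interpolates the identity map itself, obtaining $\cbnorm{\id:\ell^p_n\to\ell^2_{n,OH}}\le n^{\frac{1}{2p'}-\frac{1}{2p}}$ directly from $\ell^p_n=[\ell^\infty_n,\ell^2_n]_{2/p}$, and then factors once through $\ell^2_{OH}$. This is cleaner than interpolating $T$ between two couples and having to establish the $(\ell^\infty,\ell^1)$ endpoint separately --- an endpoint whose proof already contains the whole argument.
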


\begin{proof}
We recall the well-known equalities,
$\norm{\id}_{\fC\fB(\ell^\infty_n,\row_n)}=\norm{\id}_{\fC\fB(\ell^\infty_n,\col_n)}=n^{1/2}$.
Since $\ell^2_n=[\col_n,\row_n]_{1/2}$, we obtain that $\norm{\id}_{\fC\fB(\ell^\infty_n,\ell^2_n)}\leq
n^{1/2}$.  (Of course, equality holds.)  Now if $p\geq 2$ we have $\ell^p_n=[\ell^\infty_n,\ell^2_n]_{2/p}$,
so 
\[
\norm{\id}_{\fC\fB(\ell^p_n,\ell^2_n)}\leq \norm{\id}_{\fC\fB(\ell^\infty_n,\ell^2_n)}^{1-\frac{2}{p}}
\norm{\id}_{\fC\fB(\ell^2_n,\ell^2_n)}^{2/p}\leq n^{\frac{1}{2}(1-\frac{2}{p})}=n^{\frac{1}{2p'}-\frac{1}{2p}}.
\]
By duality $\norm{\id}_{\fC\fB(\ell^2_m,\ell^{p'}_m)}\leq m^{\frac{1}{2p'}-\frac{1}{2p}}$.
Finally we have
\[
\norm{T}_{\fC\fB(\ell^p_n,\ell^{p'}_m)}\leq \norm{\id}_{\fC\fB(\ell^2_m,\ell^{p'}_m)}
\norm{T}_{\fC\fB(\ell^2_n,\ell^2_m)}\norm{\id}_{\fC\fB(\ell^p_n,\ell^2_n)}
\leq (nm)^{\frac{1}{2p'}-\frac{1}{2p}}\norm{T}_{\fB(\ell^2_n,\ell^2_m)}
\]
where the last inequality is facilitated by the homogeneity of the operator Hilbert space structure.

If $p<2$, then since $\norm{\id}_{\fC\fB(\ell^1_n,\ell^2_n)}=1$ by virtue of $\ell^1_n$ being
a maximal space, we apply the reasoning above to $\ell^p_n=[\ell^1_n,\ell^2_n]_{2/p'}$, to see
that $\norm{\id}_{\fC\fB(\ell^p_n,\ell^2_n)}\leq 1$, likewise $\norm{\id}_{\fC\fB(\ell^2_m,\ell^{p'}_m)}\leq 1$,
and we finish accordingly.
\end{proof}

We let $\Gamma$ be as in (\ref{eq:gammamap}) with $G=\Tee$ and then
let 
\[
\pfal_{\su(2),d^\alp,\Del}(\Tee)
=\Gam\left(\pfal_{\su(2),d^\alp}(\Tee)\hat{\otimes}\pfal_{\su(2),d^\alp}(\Tee)\right).
\]

\begin{proposition}\label{prop:complicateddual}
For $t\in\Cee^\Zee$, we have 
\[
\norm{t}_{\pfal_{\su(2),d^\alp,\Del}(\Tee)^*}
\leq\begin{cases}
\underset{n=0,1,2,\dots}{\sup}(n+1)^{-1-2\alp}
\underset{k=0,\dots,n}{\max}|t_{-n+2k}| & \text{if }p\geq 2 \\
\underset{n=0,1,2,\dots}{\sup}(n+1)^{-\frac{2}{p'}-2\alp}
\underset{k=0,\dots,n}{\max}|t_{-n+2k}| & \text{if }1\leq p <2. \end{cases}
\]
\end{proposition}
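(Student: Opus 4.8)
The plan is to mimic the proof of Theorem~\ref{theo:pfaldel}, but now with a genuine quotient in play. The point of Proposition~\ref{prop:sutworest} is that it realises $\pfal_{\su(2),d^\alp}(\Tee)^*$ as a weak$^\dagger$-closed subspace of $\fN^*:=\ell^\infty\text{-}\bigoplus_{n=0}^\infty(n+1)^{-1/p'-\alp}\ell^{p'}_{n+1}$, i.e.\ dually it exhibits $\pfal_{\su(2),d^\alp}(\Tee)$ as a complete quotient of $\fN:=\ell^1\text{-}\bigoplus_{n=0}^\infty(n+1)^{1/p'+\alp}\ell^p_{n+1}$ via the complete quotient map $q$ carrying the $j$-th standard basis vector $f^n_j$ of the $n$-th summand of $\fN$ to the character $\chi_{n-2j}$ on $\Tee$. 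Since the operator projective tensor product preserves complete quotient maps, $\Psi:=\Gam\circ(q\hat{\otimes}q)$ is a complete quotient map of $\fN\hat{\otimes}\fN$ onto $\pfal_{\su(2),d^\alp,\Del}(\Tee)$, so $\Psi^*$ is a complete isometry of $\pfal_{\su(2),d^\alp,\Del}(\Tee)^*$ into the transparent dual space
\[
(\fN\hat{\otimes}\fN)^*\cong\ell^\infty\text{-}\bigoplus_{a,b=0}^\infty(a+1)^{-1/p'-\alp}(b+1)^{-1/p'-\alp}\,\ell^{p'}_{a+1}\check{\otimes}\ell^{p'}_{b+1},
\]
and it is enough to bound $\norm{\Psi^*(t)}$ there.

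Next I would compute $\Psi^*(t)$ for $t\in\Cee^\Zee=\trig(\Tee)^\dagger$. As in Theorem~\ref{theo:pfaldel} one has $\Gam^*(t)=(t\otimes I)(\int_\Tee\lam(w)\otimes\lam(w)\,dw)$, and Schur orthogonality on $\Tee$ shows that $\int_\Tee\lam(w)\otimes\lam(w)\,dw$ has $(k,l)$-entry $\delta_{k,-l}$ in $\trig(\Tee\times\Tee)^\dagger\cong\Cee^{\Zee\times\Zee}$; hence $\Gam^*(t)$ is supported on the anti-diagonal $\{(k,-k):k\in\Zee\}$ with entries $t_k$. Applying $(q\hat{\otimes}q)^*$ and bookkeeping with $q(f^n_j)=\chi_{n-2j}$, one sees that the $(a,b)$-block $\Psi^*(t)_{a,b}\in\fC\fB(\ell^p_{a+1},\ell^{p'}_{b+1})$ --- using the complete isometry $\ell^{p'}_{a+1}\check{\otimes}\ell^{p'}_{b+1}\cong\fC\fB(\ell^p_{a+1},\ell^{p'}_{b+1})$ for finite-dimensional spaces --- vanishes unless $a\equiv b\pmod 2$, and otherwise is a ``weighted partial matching'': it sends distinct standard basis vectors $f^a_j$ of $\ell^p_{a+1}$ to scalar multiples, by the numbers $t_{a-2j}$, of distinct standard basis vectors of $\ell^{p'}_{b+1}$, and to $0$ for those $j$ falling outside the available range. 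Since such a matrix has at most one nonzero entry per column, $\norm{\Psi^*(t)_{a,b}}_{\fB(\ell^2_{a+1},\ell^2_{b+1})}\leq\max_{0\leq j\leq a}|t_{a-2j}|$.

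Finally I would invoke Lemma~\ref{lem:Tnormest}: $\cbnorm{\Psi^*(t)_{a,b}}\leq((a+1)(b+1))^{\frac1{2p'}-\frac1{2p}}\norm{\Psi^*(t)_{a,b}}_{\fB(\ell^2_{a+1},\ell^2_{b+1})}$ for $p\geq2$, and $\cbnorm{\Psi^*(t)_{a,b}}\leq\norm{\Psi^*(t)_{a,b}}_{\fB(\ell^2_{a+1},\ell^2_{b+1})}$ for $1\leq p<2$; this is precisely the dichotomy in the statement. Multiplying by the block weights $(a+1)^{-1/p'-\alp}(b+1)^{-1/p'-\alp}$, using the identity $\frac1{2p'}-\frac1{2p}=\frac1{p'}-\frac12$ when $p\geq2$, and noting that every exponent that occurs is strictly negative, a short monotonicity argument --- separating $a\leq b$ from $a>b$, and observing that in the block $(a,b)$ the coefficients $t_{a-2j}$ that actually appear run over exactly the $t_m$ with $m\equiv a\pmod2$ and $|m|\leq\min(a,b)$ --- collapses the supremum over $(a,b)$ into a single supremum, yielding exponent $-1-2\alp$ when $p\geq2$ and $-\frac2{p'}-2\alp$ when $1\leq p<2$. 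The usual weak$^\dagger$-density extension from $\trig(\Tee)^\dagger$ to $\Cee^\Zee$, exactly as in Theorem~\ref{theo:pfaldel}, completes the argument.

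I expect the main obstacle to be the bookkeeping in the second step: pinning down, for each $(a,b)$, precisely which coefficients $t_{a-2j}$ land in $\Psi^*(t)_{a,b}$ and onto which basis vectors they are mapped (equivalently, which $j$ are ``in range''), so that the one-nonzero-entry-per-column bound for the $\fB(\ell^2,\ell^2)$-norm is legitimate and so that the range of the maximum in the final monotonicity step is correct. One must also confirm at the outset that $\ell^{p'}_{a+1}\check{\otimes}\ell^{p'}_{b+1}$ is completely isometrically $\fC\fB(\ell^p_{a+1},\ell^{p'}_{b+1})$, so that Lemma~\ref{lem:Tnormest} applies verbatim. The interpolation inputs to that lemma and the concluding monotonicity reduction are routine.
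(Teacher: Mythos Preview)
Your proposal is correct and follows essentially the same route as the paper: realise $\pfal_{\su(2),d^\alp,\Del}(\Tee)^*$ isometrically inside $(\fN\hat{\otimes}\fN)^*$ (the paper writes $\fM^{\alp,p'}\bar{\otimes}\fM^{\alp,p'}$ for your $\fN^*\bar{\otimes}\fN^*$), compute the image of $t$ block by block via the anti-diagonal formula for $\Gam^*$, identify each block with an element of $\fC\fB(\ell^p_{m+1},\ell^{p'}_{n+1})$, apply Lemma~\ref{lem:Tnormest}, and then observe that the supremum over $(m,n)$ is attained along the diagonal $m=n$. The paper records the exact range of indices $\{t_{-\min(m,n)+2k}:k=0,\dots,\min(m,n)\}$ that appear in block $(m,n)$, which is the precise form of the ``in range'' bookkeeping you flag as the main obstacle; your final remark that $\trig(\Tee)^\dagger$ already equals $\Cee^\Zee$ makes the closing density step unnecessary, but this is harmless.
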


\begin{proof}
We let  $\fM^{\alp,p'}$ be as in Proposition \ref{prop:sutworest}.
Since $\pfal_{\su(2),d^\alp,\Del}(\Tee)$ is a complete quotient of the predual
$\fM^{\alp,p'}_*$ which respects the dual pairing (\ref{eq:dualpairing}), and since
\[
\fM^{\alp,p'}_*\hat{\otimes}\fM^{\alp,p'}_*\cong\ell^1\text{-}\bigoplus_{m,n=0}^\infty
[(m+1)(n+1)]^{1+\frac{1}{p'}+\alp}\ell_{m+1}^p\hat{\otimes}\ell^p_{n+1}
\]
we see that the space
\[
(\fM^{\alp,p'}_*\hat{\otimes}\fM^{\alp,p'}_*)^*\cong \ell^\infty\text{-}\bigoplus_{m,n=0}^\infty
[(m+1)(n+1)]^{-1/p'-\alp}\ell^{p'}_{m+1}\check{\otimes}\ell^{p'}_{n+1}
\cong\fM^{\alp,p'}\bar{\otimes}\fM^{\alp,p'}
\]
will contain $\Gam(\pfal_{\su(2),d^\alp,\Del}(\Tee)^*)$ completely isometrically.
In particular, $t=(t_k)_{k\in\Zee}$ in $\Cee^\Zee$, we have
$\norm{t}_{\pfal_{\su(2),d^\alp,\Del}(\Tee)^*}=
\norm{\Gam^*(t)}_{\fM^{\alp,p'}\bar{\otimes}\fM^{\alp,p'}}<\infty$.

Since $\lam(s)=(s^k)_{k\in\Zee}$, for $s$ in $\Tee$, (\ref{eq:gamadj}) gives us
for $t$ in $\pfal_{\su(2),d^\alp,\Del}(\Tee)^*$ that
\[
\Gamma^*(t)=(t_kE_k\otimes E_{-k})_{k\in\Zee}\subset\Cee^{\Zee\times\Zee}
\]
where $(E_k)_{k\in\Zee}$ is the standard ``basis" in $\Cee^\Zee$.  The isometric embedding
of Proposition \ref{prop:sutworest} gives $E_k\mapsto\left(\sum_{j=0}^n\del_{k,n-2j}e_j\right)_{n=0}^\infty$,
where $(e_0,\dots,e_n)$ is the standard basis for $\ell^{p'}_{n+1}$.
Hence we find for $t$ in $\pfal_{\su(2),d^\alp,\Del}(\Tee)^*$ that
\begin{align}\label{eq:pfalsudelembed}
\Gamma^*(t)\mapsto &\left(t_k \Big(\sum_{j=0}^m\del_{k,n-2j}e_j\Big)\otimes
\Big(\sum_{i=0}^n\del_{-k,m-2i}e_i\Big)\right)_{n,m=0}^\infty \notag \\
&=\left(\sum_{k\in\Zee}t_k\eps_{m-k}\otimes\eps_{n+k}\right)_{m,n=0}^\infty
\in  \fM^{\alp,p'}\bar{\otimes}\fM^{\alp,p'}
\end{align}
where
\[
\eps_{n\pm k}=\begin{cases} e_{\frac{n\pm k}{2}} &\text{if } 2\,|\,n\pm k\text{ and } n\geq |k| \\
0 &\text{otherwise.}\end{cases}
\]
We know appeal to the fact that $\ell^{p'}_{m+1}\check{\otimes}\ell^{p'}_{n+1}\cong
\fC\fB(\ell^p_{m+1},\ell^{p'}_{n+1})$ and then to Lemma \ref{lem:Tnormest} to see that the quantity
$\norm{t}_{\pfal_{\su(2),d^\alp,\Del}(\Tee)^*}$ is dominated  in the case $p\geq 2$ by
\[
\sup_{\substack{ m,n=0,1,2,\dots \\ 2\,|\,m+n}}
[(m+1)(n+1)]^{(-\frac{1}{p'}-\alp)+(\frac{1}{2p'}-\frac{1}{2p})}
\max_{k=0,\dots,\min\{m,n\}}|t_{-\min\{m,n\}+2k}|
\]
and in the case $p<2$ by
\[
\sup_{\substack{ m,n=0,1,2,\dots \\ 2\,|\,m+n}}
[(m+1)(n+1)]^{-\frac{1}{p'}-\alp}
\max_{k=0,\dots,\min\{m,n\}}|t_{-\min\{m,n\}+2k}|.
\]
In either case the supremum is approximated by choices of $m=n$, giving the desired result.
\end{proof}

Our efforts in this section culminate in the following.

\begin{theorem}\label{theo:restpfalwa}
The algebra $\pfal_{\su(2),d^\alp}(\Tee)$ is operator weakly amenable if and only if 
$1\leq p<\frac{2}{1+2\alp}$.  Furthermore, it is weakly amenable for such $p$ and $\alp$.
\end{theorem}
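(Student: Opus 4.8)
The plan is to handle the two implications by complementary routes chosen so that no lower bound on the norm of $\pfal_{\su(2),d^\alp,\Del}(\Tee)$ is ever needed. For the implication ``$p\geq\frac{2}{1+2\alp}$ implies not operator weakly amenable'' I would produce a bounded point derivation at $e$ on $\pfal_{\su(2),d^\alp,\Del}(\Tee)$ and apply Proposition \ref{prop:wachar}. For the reverse implication, and simultaneously for the last sentence, I would instead show directly that $\pfal_{\su(2),d^\alp}(\Tee)$ is \emph{weakly} amenable when $1\leq p<\frac{2}{1+2\alp}$, by realising it as a dense continuous image of a weakly amenable Beurling algebra on $\Zee$ and invoking \cite[Def.\ 1.1 \& Thm.\ 1.5]{badecd}; since weak amenability implies operator weak amenability, this also yields the remaining half of the equivalence.

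First I would verify, routinely, that $\fA(\Tee):=\pfal_{\su(2),d^\alp}(\Tee)$ satisfies the standing hypotheses preceding Proposition \ref{prop:wachar}: as a complete quotient of $\pfal(\su(2),d^\alp)$ it is a unital function algebra on $\Tee$ containing $\trig(\Tee)=R_\Tee(\trig(\su(2)))$ densely, translations act completely isometrically and continuously, and, via Remark \ref{rem:placeofresttoT} together with \cite{tomiyama}, both $\fA(\Tee)$ and $\fA(\Tee)\hat{\otimes}\fA(\Tee)$ are regular on their spectra $\Tee$ and $\Tee\times\Tee$. Next I would note that any bounded point derivation at $e$ on $\fA(\Tee)$, or on $\fA_\Del(\Tee):=\pfal_{\su(2),d^\alp,\Del}(\Tee)$, restricts to a derivation on $\trig(\Tee)$, where the identities $\chi_m\chi_n=\chi_{m+n}$ and $\chi_k(e)=1$ force it to be a scalar multiple of the sequence $D=(in)_{n\in\Zee}$. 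Hence $\fA_\Del(\Tee)$ admits a bounded point derivation at $e$ exactly when $D\in\fA_\Del(\Tee)^*$, and Proposition \ref{prop:wachar} identifies operator weak amenability of $\fA(\Tee)$ with the failure of $D$ to lie in $\fA_\Del(\Tee)^*$.

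For the first implication I would estimate $\|D\|_{\fA_\Del(\Tee)^*}$ using Proposition \ref{prop:complicateddual}. Since $\max_{0\leq k\leq n}|{-n+2k}|=n$, that proposition yields $\|D\|_{\fA_\Del(\Tee)^*}\leq\sup_{n\geq0}(n+1)^{-1-2\alp}n$ for $p\geq2$ and $\|D\|_{\fA_\Del(\Tee)^*}\leq\sup_{n\geq0}(n+1)^{-2/p'-2\alp}n$ for $1\leq p<2$; the former supremum is finite since $\alp\geq0$, and, using $\tfrac1p+\tfrac1{p'}=1$, the latter is finite precisely when $\tfrac2{p'}+2\alp\geq1$, i.e.\ when $p\geq\frac{2}{1+2\alp}$. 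As $p\geq2$ already forces $p\geq\frac{2}{1+2\alp}$, it follows that $D\in\fA_\Del(\Tee)^*$ whenever $p\geq\frac{2}{1+2\alp}$, so $\fA(\Tee)$ is then not operator weakly amenable, hence not weakly amenable.

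For the reverse implication assume $1\leq p<\frac{2}{1+2\alp}$, equivalently $\beta:=\tfrac1{p'}+\alp<\tfrac12$. By Corollary \ref{cor:sutworest} the Beurling algebra $\fal(\Tee,w^{1/p'+\alp})\cong\ell^1(\Zee,(1+|n|)^\beta)$ maps completely contractively into $\fA(\Tee)$, and this homomorphism has dense range because its image contains $\trig(\Tee)$. When $\beta<\tfrac12$ the algebra $\ell^1(\Zee,(1+|n|)^\beta)$ is weakly amenable: a bounded derivation $\partial$ into its dual is determined by $\phi:=\partial(\delta_1)$ via $\partial(\delta_n)=n\,\delta_{n-1}\phi$, whence boundedness forces $|\phi(j)|\leq\|\partial\|\inf_{n\neq0}\tfrac{(1+|n|)^\beta(1+|j-n+1|)^\beta}{|n|}$ for every $j$, and letting $n\to\infty$ makes the infimum vanish when $\beta<\tfrac12$, so $\phi=0$. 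By \cite[Def.\ 1.1 \& Thm.\ 1.5]{badecd} the algebra $\fA(\Tee)$ is therefore weakly amenable, which delivers both the remaining implication (via weak amenability implying operator weak amenability) and the final sentence. The main obstacle is precisely that Proposition \ref{prop:complicateddual} supplies only an \emph{upper} bound on $\|\cdot\|_{\fA_\Del(\Tee)^*}$, so it cannot by itself rule out operator weak amenability in the range $1\leq p<\frac{2}{1+2\alp}$; this is why the reverse implication must be routed through the Bade--Curtis--Dales theorem rather than through a direct unboundedness argument for $D$.
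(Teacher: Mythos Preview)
Your proposal is correct and follows essentially the same route as the paper: use Proposition~\ref{prop:complicateddual} to show $D=(in)_{n\in\Zee}$ is a bounded point derivation on $\pfal_{\su(2),d^\alp,\Del}(\Tee)$ when $p\geq\frac{2}{1+2\alp}$, and for the converse invoke weak amenability of $\ell^1(\Zee,w^{1/p'+\alp})$ together with Corollary~\ref{cor:sutworest} and the Bade--Curtis--Dales permanence theorem. The only cosmetic differences are that you identify the unique point derivation on $\trig(\Tee)$ by a direct additivity argument rather than via Proposition~\ref{prop:derchar}, and you sketch the proof of weak amenability of $\ell^1(\Zee,w^\beta)$ for $\beta<\tfrac12$ rather than simply citing \cite{badecd}.
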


\begin{proof}
Thanks to Propositions \ref{prop:wachar} and \ref{prop:derchar}, we need only to check when 
$D(f)=\left.\frac{d}{d\theta}f(e^{i\theta})\right|_{\theta=0}$ defines
a bounded element of $\pfal_{\su(2),d^\alp,\Del}(\Tee)^*$.  The functional
$D$ corresponds to the sequence $d=(k)_{k\in\Zee}$ in $\Cee^\Zee\cong\trig(\Tee)^\dagger$.  
If $p\geq 2$, it is immediate from Proposition
\ref{prop:complicateddual} that $\norm{d}_{\pfal_{\su(2),d^\alp,\Del}(\Tee)^*}<\infty$.
If $p<2$, then $\norm{d}_{\pfal_{\su(2),d^\alp,\Del}(\Tee)^*}<\infty$ if 
$(n+1)^{-\frac{2}{p'}-2\alp}n\leq (n+1)^{\frac{2}{p}-1-2\alp}$ is uniformly bounded in $n\geq 0$, 
i.e.\ if $\frac{2}{p}-1-2\alp\leq 0$,
which means that $p\geq \frac{2}{1+2\alp}$.  In these cases $\pfal_{\su(2),d^\alp,\Del}(\Tee)$ is not
operator weakly amenable, hence not weakly amenable.

Thanks to \cite{badecd}, the algebra $\fal(\Tee,w^{1/p'+\alp})\cong\ell^1(\Zee,w^{1/p'+\alp})$ is 
weakly amenable when $\frac{1}{p'}+\alp<\frac{1}{2}$, i.e.\ when $p<\frac{2}{1+2\alp}$.
Hence it follows from Corollary \ref{cor:sutworest} that $\pfal_{\su(2),d^\alp,\Del}(\Tee)$ 
weakly amenable in this case, thus also operator weakly amenable.
\end{proof}

Let us observe the following trivial consequence of Proposition \ref{prop:sutworest}, which
uses the proof of Theorem \ref{theo:sutwo}.

\begin{corollary}
The algebra $\pfal_{\su(2)}(\Tee)$ admits a bounded point derivation if and only if
$\alp\geq 1$.
\end{corollary}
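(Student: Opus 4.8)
The plan is to reduce, exactly as in the proof of Theorem \ref{theo:sutwo}, to testing a single functional against the dual description of Proposition \ref{prop:sutworest}. First I would note that $\pfal_{\su(2),d^\alp}(\Tee)$ meets the standing hypotheses preceding Proposition \ref{prop:derchar}: it is a commutative completely contractive Banach algebra of continuous functions on $\Tee$ containing $\trig(\Tee)=R_\Tee(\trig(\su(2)))$ densely, translation by $z\in\Tee$ multiplies the $k$-th Fourier coefficient by $z^k$ and so acts completely isometrically and continuously, and by Proposition \ref{prop:sutworest} the dual $\pfal_{\su(2),d^\alp}(\Tee)^*\subset\Cee^\Zee\cong\trig(\Tee)^\dagger$ is invariant under coordinatewise complex conjugation, i.e.\ closed under the involution. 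Since $\Tee$ is a connected abelian Lie group whose only one-parameter subgroup is $\theta\mapsto e^{i\theta}$, Proposition \ref{prop:derchar} then shows that, up to translation and scalar, the sole candidate for a bounded point derivation is the Lie derivative $D(f)=\left.\frac{d}{d\theta}\right|_{\theta=0}f(e^{i\theta})$.

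Next I would identify $D$ as the element $d=(ik)_{k\in\Zee}$ of $\Cee^\Zee\cong\trig(\Tee)^\dagger$ (the factor $i$ being immaterial for norms), as in the $\su(2)$ calculation inside the proof of Theorem \ref{theo:sutwo}, so that boundedness of $D$ on $\pfal_{\su(2),d^\alp}(\Tee)$ is equivalent to $d\in\pfal_{\su(2),d^\alp}(\Tee)^*$. Substituting into the norm formula of Proposition \ref{prop:sutworest},
\[
\norm{d}_{\pfal_{\su(2),d^\alp}(\Tee)^*}
=\sup_{n=0,1,2,\dots}\frac{\bigl(\sum_{j=0}^n|n-2j|^{p'}\bigr)^{1/p'}}{(n+1)^{\frac{1}{p'}+\alp}},
\]
and then applying the elementary integral estimate (\ref{eq:polysum}) with $s=p'$ --- which gives constants $c,C>0$ with $c(n+1)^{1+\frac{1}{p'}}\leq\bigl(\sum_{j=0}^n|n-2j|^{p'}\bigr)^{1/p'}\leq C(n+1)^{1+\frac{1}{p'}}$ --- the displayed supremum is comparable to $\sup_n(n+1)^{1-\alp}$, which is finite precisely when $\alp\geq1$. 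Combined with the translation-invariance of the norm (which carries a bounded point derivation at any point of $\Tee$ to one at $1$), this yields the stated equivalence.

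I do not anticipate a real obstacle here: the dual description of the restriction algebra (Proposition \ref{prop:sutworest}) and the classification of skew-symmetric point derivations on connected Lie groups (Proposition \ref{prop:derchar}) do all the work. The only slightly delicate point is confirming that $\pfal_{\su(2),d^\alp}(\Tee)$ genuinely fits the framework in which Proposition \ref{prop:derchar} applies, so that one may legitimately restrict attention to the single functional $D$; granting that, the remainder is the same weighted-power bookkeeping already performed in Theorem \ref{theo:sutwo}.
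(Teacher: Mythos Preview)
Your proposal is correct and follows exactly the approach the paper indicates: the paper itself gives no proof beyond the remark that the corollary is a trivial consequence of Proposition \ref{prop:sutworest} together with the computation already carried out in the proof of Theorem \ref{theo:sutwo}. You have simply spelled out those details---reducing via Proposition \ref{prop:derchar} to the single Lie derivative $D$, plugging $d=(ik)_{k\in\Zee}$ into the dual norm formula of Proposition \ref{prop:sutworest}, and invoking the estimate (\ref{eq:polysum})---which is precisely what the paper intends.
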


Thus far, we have observed only situations in which  $\fal^2_{R^q}(G)$ exhibits the same amenability
behaviour for all $1\leq q\leq\infty$.  However, the next result will distinguish these from one another.

\begin{proposition}\label{prop:complicateddualtoo}
For $t\in\Cee^\Zee$, we have 
\[
\norm{t}_{\fal_{R^q,\su(2),d^\alp,\Del}^2(\Tee)^*}
=\sup_{n=0,1,2,\dots}(n+1)^{-1-2\alp}\left(\sum_{k=0}^n |t_{-n+2k}|^{r(q)'}\right)^{1/r(q)'}.
\]
\end{proposition}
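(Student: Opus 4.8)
The plan is to retrace the proof of Proposition \ref{prop:complicateddual}, the key difference being that with the Hilbertian structure $R^{q'}$ the completely bounded operator space that enters is \emph{Banach-isometrically} an honest Schatten class, so the outcome is an exact identity rather than merely an upper bound.

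First I would record the $\fal^2_{R^q}$-analogue of Proposition \ref{prop:sutworest}. Specializing Remark \ref{rem:directprod} to $G=\su(2)$ and $H=\Tee$, and using that $\pi_n|_\Tee=\mathrm{diag}(\chi_n,\chi_{n-2},\dots,\chi_{-n})$ splits into $n+1$ distinct characters, each of dimension one and multiplicity one, the inner summand $\ell^2\text{-}\bigoplus_{j=0}^n\smat^2_{1,R^{q'}}$ is --- by homogeneity and subquadraticity of $R^{q'}$, as set up in Section \ref{ssec:hilbert} --- completely isometric to $\row^{q'}_{n+1}$. This gives a completely isometric embedding
\[
t\mapsto((t_{n-2j})_{j=0}^n)_{n=0}^\infty:\fal^2_{R^q,\su(2),d^\alp}(\Tee)^*\longrightarrow
\ell^\infty\text{-}\bigoplus_{n=0}^\infty(n+1)^{-\frac12-\alp}\row^{q'}_{n+1}=:\fN .
\]

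Next, exactly as in the proof of Proposition \ref{prop:complicateddual}: $\fal^2_{R^q,\su(2),d^\alp}(\Tee)$ is a complete quotient of the predual $\fN_*$, hence $\fal^2_{R^q,\su(2),d^\alp,\Del}(\Tee)$ is a complete quotient of $\fN_*\hat{\otimes}\fN_*$ via $\Gam$, so $\Gam^*$ embeds $\fal^2_{R^q,\su(2),d^\alp,\Del}(\Tee)^*$ completely isometrically into $(\fN_*\hat{\otimes}\fN_*)^*\cong\ell^\infty\text{-}\bigoplus_{m,n}[(m+1)(n+1)]^{-\frac12-\alp}\row^{q'}_{m+1}\check{\otimes}\row^{q'}_{n+1}$. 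The evaluation of $\Gam^*$ is word for word the one there (it uses only (\ref{eq:gamadj}) and $\lam(s)=(s^k)_{k\in\Zee}$): $\Gam^*(t)=(t_kE_k\otimes E_{-k})_{k\in\Zee}$, which under the embedding above becomes $\bigl(\sum_{k\in\Zee}t_k\eps_{m-k}\otimes\eps_{n+k}\bigr)_{m,n=0}^\infty$, with $\eps$ as in (\ref{eq:pfalsudelembed}).

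The heart of the matter is to compute the $(m,n)$-summand's norm exactly. Here I would use $\row^{q'}_{m+1}\check{\otimes}\row^{q'}_{n+1}\cong\fC\fB(\col^{q'}_{m+1},\row^{q'}_{n+1})=\fC\fB(\row^q_{m+1},\row^{q'}_{n+1})$ together with the identification $\fC\fB(\row^q_d,\row^{q'}_d)\cong\smat_d^{2q/|q-2|}$ invoked in the proof of Theorem \ref{theo:twofaldel} (from \cite[Lem.\ 5.9]{xu}); its rectangular version, $\fC\fB(\row^q_a,\row^{q'}_b)\cong\smat^{r(q)'}$ of $b\times a$ matrices with $r(q)'=\tfrac{2q}{|q-2|}$, follows from the square case by padding with zeros inside $\row^{q'}_{\max(a,b)}$, and is a Banach isometry. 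Now $\sum_k t_k\eps_{m-k}\otimes\eps_{n+k}$, read as an $(n+1)\times(m+1)$ matrix, has its nonzero entries $t_k$ in pairwise distinct rows and pairwise distinct columns, so its nonzero singular values are precisely $\{|t_k|:|k|\le\min(m,n),\ k\equiv m\pmod 2\}$ and its $\smat^{r(q)'}$-norm is the $\ell^{r(q)'}$-norm of that list; multiplying by $[(m+1)(n+1)]^{-\frac12-\alp}$ and taking the supremum over $m,n$ with $m+n$ even, one checks (the index set depends only on $\min(m,n)$ and the common parity, the prefactor strictly decreases in each variable) that the supremum is attained along $m=n$, and relabelling $k=-n+2l$, $l=0,\dots,n$, gives the stated formula. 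Because the embedding is a \emph{complete} isometry and every identification used is isometric, equality holds, rather than the mere inequality of Proposition \ref{prop:complicateddual} (which had to pass through the lossy bound of Lemma \ref{lem:Tnormest}); the only points needing care are the two Banach-isometric identifications just used --- that $\ell^2$-summing trivial $R^{q'}$-lines reproduces $\row^{q'}_{n+1}$, and that $\fC\fB(\row^q_a,\row^{q'}_b)$ is isometrically a Schatten $r(q)'$-class even when $a\ne b$ --- both of which are routine consequences of homogeneity, so I anticipate no real obstacle beyond bookkeeping.
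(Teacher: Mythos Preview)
Your proposal is correct and follows essentially the same route as the paper's proof: both set up the completely isometric embedding of $\fal^2_{R^q,\su(2),d^\alp}(\Tee)^*$ into $\ell^\infty\text{-}\bigoplus_{n\geq 0}(n+1)^{-\frac12-\alp}\row^{q'}_{n+1}$, compute $\Gam^*(t)$ exactly as in (\ref{eq:pfalsudelembed}), identify $\row^{q'}_{m+1}\check{\otimes}\row^{q'}_{n+1}$ isometrically with the rectangular Schatten class $\smat^{r(q)'}_{m+1,n+1}$ via (\ref{eq:cbintcol}), read off the norm of the anti-diagonal matrix as an $\ell^{r(q)'}$-norm of the relevant $t_k$'s, and reduce the supremum to the diagonal $m=n$. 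Your write-up is simply more explicit than the paper's about why this yields an equality rather than the inequality of Proposition \ref{prop:complicateddual} --- namely, that the lossy step through Lemma \ref{lem:Tnormest} is replaced by the exact isometry $\row^{q'}\check{\otimes}\row^{q'}\cong\smat^{r(q)'}$.
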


\begin{proof}
Let $\fM^{\alp,2}_{q'}=\ell^\infty\text{-}\bigoplus_{n=0}^\infty(n+1)^{-\frac{1}{2}-\alp}\row^{p'}_{n+1}$,
which completely isometrically hosts $\fal_{R,\su(2),d^\alp,\Del}^2(\Tee)^*$, by an
obvious modification of  Proposition 
\ref{prop:sutworest}.  Then, as in the proof of Proposition \ref{prop:complicateddual}, we are
interested in testing the norm of $\Gam^*(t)$ in
\[
\ell^\infty\text{-}\bigoplus_{m,n=0}^\infty[(m+1)(n+1)]^{-\frac{1}{2}-\alp}
\row^{p'}_{m+1}\check{\otimes}\row^{p'}_{n+1}\cong
\fM^{\alp,2}_{q'}\bar{\otimes}\fM^{\alp,2}_{q'}.
\]
Appealing to the row version of (\ref{eq:cbintcol}) we see that
$\row^{p'}_{m+1}\check{\otimes}\row^{p'}_{n+1}\cong\smat^{r'}_{m+1,n+1}$ where $r'$ is chosen 
as above.
If $2\,|\,m+n$, then the anti-diagonal operator $\sum_{k\in\Zee}t_k\eps_{m-k}\otimes\eps_{n+k}$, with
notation as in the proof of Proposition \ref{prop:complicateddual}, admits norm
\[
\left(\sum_{k=0}^{\min\{m,n\}}|t_{-\min\{m,n\}+2k}|^{r(q)'}\right)^{1/r(q)'}.
\]
The supremum over all $m$ and $n$ is approximated by values where $m=n$.
\end{proof}

\begin{theorem}\label{theo:restpfalrwa}
The algebra $\fal_{R^q,\su(2),d^\alp}^2(\Tee)$ is operator weakly amenable if and only if 
$1\leq \min\{q,q'\}<\frac{2}{4\alp+1}$.  
\end{theorem}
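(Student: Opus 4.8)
The plan is to follow the proof of Theorem~\ref{theo:restpfalwa} line for line, substituting the operator space structure $R^q$ for the interpolated one and invoking Proposition~\ref{prop:complicateddualtoo} in place of Proposition~\ref{prop:complicateddual}. First, by Proposition~\ref{prop:wachar} the algebra $\fal_{R^q,\su(2),d^\alp}^2(\Tee)$ is operator weakly amenable exactly when $\fal_{R^q,\su(2),d^\alp,\Del}^2(\Tee)$ admits no bounded point derivation at $e$. Since $\Tee$ is a connected abelian Lie group, all of whose one-parameter subgroups are reparametrisations of $\theta\mapsto e^{i\theta}$, Proposition~\ref{prop:derchar} cuts this down to the single functional $D(f)=\left.\frac{d}{d\theta}f(e^{i\theta})\right|_{\theta=0}$, which under the identification $\trig(\Tee)^\dagger\cong\Cee^\Zee$ corresponds to $d=(k)_{k\in\Zee}$. (One checks, exactly as for $\pfal_{\su(2),d^\alp,\Del}(\Tee)$, that $\fal_{R^q,\su(2),d^\alp,\Del}^2(\Tee)^*$ is closed under the involution $(t_k)\mapsto(\overline{t_{-k}})$ on $\Cee^\Zee$, which is apparent from the symmetry of the norm in Proposition~\ref{prop:complicateddualtoo}.) Thus the algebra is operator weakly amenable if and only if $\norm{d}_{\fal_{R^q,\su(2),d^\alp,\Del}^2(\Tee)^*}=\infty$.

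Next I would evaluate this norm. Proposition~\ref{prop:complicateddualtoo} gives
\[
\norm{d}_{\fal_{R^q,\su(2),d^\alp,\Del}^2(\Tee)^*}
=\sup_{n=0,1,2,\dots}(n+1)^{-1-2\alp}\left(\sum_{k=0}^n|n-2k|^{r(q)'}\right)^{1/r(q)'},
\]
where $r(q)'=\frac{2q}{|q-2|}$, so that $\frac{1}{r(q)'}=\bigl|\frac12-\frac1q\bigr|\in[0,\tfrac12]$. For $q\neq 2$ one has $r(q)'\ge 1$, so the elementary estimate (\ref{eq:polysum}) applies with exponent $s=r(q)'$ and yields
\[
\left(\sum_{k=0}^n|n-2k|^{r(q)'}\right)^{1/r(q)'}\asymp (n+1)^{1+1/r(q)'};
\]
in the boundary case $q=2$ one has $r(q)'=\infty$, $\frac{1}{r(q)'}=0$, and the left-hand side degenerates to $\max_{0\le k\le n}|n-2k|=n\asymp(n+1)$, so the same estimate holds. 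Substituting, the supremum is finite precisely when the powers of $(n+1)$ cease to grow, i.e.\ when $-1-2\alp+1+\frac{1}{r(q)'}\le 0$, equivalently $\bigl|\frac12-\frac1q\bigr|\le 2\alp$. Hence $D$ is unbounded --- and the algebra operator weakly amenable --- exactly when $\bigl|\frac12-\frac1q\bigr|>2\alp$.

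The last step is to repackage this inequality. Because $\frac{1}{r(q)'}=\bigl|\frac12-\frac1q\bigr|$ is unchanged under $q\leftrightarrow q'$, I may assume $q=\min\{q,q'\}\le 2$, whereupon the inequality reads $\frac1q-\frac12>2\alp$, i.e.\ $\frac1q>\frac{4\alp+1}{2}$, i.e.\ $q<\frac{2}{4\alp+1}$. Since $\frac{2}{4\alp+1}\le 2$ for every $\alp\ge 0$, this is the same as $1\le\min\{q,q'\}<\frac{2}{4\alp+1}$, which is the stated criterion; combined with the reduction of the first paragraph this finishes the proof.

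There is no genuine obstacle here, since Proposition~\ref{prop:complicateddualtoo} supplies the exact dual norm: the only points calling for a little care are the degenerate case $q=2$ (where $r(q)'=\infty$ and the $\ell^{r(q)'}$-norm collapses to a maximum), the verification that the Lie derivative $D$ really is the only candidate point derivation, and the elementary observation that the resulting condition is symmetric in $q$ and $q'$ so that it may be phrased via $\min\{q,q'\}$.
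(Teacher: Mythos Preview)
Your proof is correct and follows essentially the same approach as the paper's: reduce via Propositions~\ref{prop:wachar} and~\ref{prop:derchar} to testing the single Lie derivative $D$, plug $d=(k)_{k\in\Zee}$ into the exact norm formula of Proposition~\ref{prop:complicateddualtoo}, use the estimate~(\ref{eq:polysum}), and solve the resulting inequality $\frac{1}{r(q)'}\le 2\alp$. You are in fact slightly more careful than the paper in handling the degenerate case $q=2$ and in noting the involution-closure needed for Proposition~\ref{prop:derchar}.
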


\begin{remark}
Notice that the Banach algebra $\fal_{\su(2),d^\alp}^2(\Tee)$ is never weakly amenable 
for any $\alp\geq 0$.  Indeed, this would imply that $\fal^2_{R^2,\su(2)}(\Tee)$ is operator
weakly amenable, violating Theorem \ref{theo:restpfalwa}.
Alternatively, by Remark \ref{rem:placeofresttoT}, this would imply that
$\fal(\Tee,w^{1/2})$ is weakly amenable, violating a result of \cite{groenbaek}.
\end{remark}

\begin{proof}
In Proposition \ref{prop:complicateddualtoo} we gained an exact computation.  Hence
if we examine the derivation $D$ from the proof of Theorem \ref{theo:restpfalwa}, and its associated
sequence $d=(k)_{k\in\Zee}$, we find
\[
\norm{d}_{\fal_{R,\su(2),d^\alp,\Del}^2(\Tee)^*}
=\sup_{n=0,1,2,\dots}(n+1)^{-1-2\alp}\left(\sum_{k=0}^n (-n+2k)^{r(q)'}\right)^{1/r(q)'}.
\]
The estimate (\ref{eq:polysum}) shows that $\left(\sum_{k=0}^n (-n+2k)^{r'}\right)^{1/r'}$
grows as $(n+1)^{1+\frac{1}{r'}}$.
Hence $\norm{d}_{\fal_{R,\su(2),d^\alp,\Del}^2(\Tee)^*}<\infty$ if and only if $\frac{1}{r(q)'}-2\alp\leq 0$.
Since $r(q)=r(q')$, let us suppose that $1\leq q\leq 2$.
Then we have $\frac{1}{r(q)'}=\frac{2-q}{2q}\leq 2\alp$ if and only if $q\geq \frac{2}{4\alp+1}$.
This gives the desired result.
\end{proof}

We close this section by addressing operator amenability.  We recall, in passing, that
it is well-known that $\fal_{\su(2)}(\Tee)=\fal(\Tee)$ is (operator) amenable.

\begin{theorem}\label{theo:pfalsuteenoa}
If $p>1$, then $\pfal_{\su(2)}(\Tee)$ is not operator amenable.
The algebra $\fal^2_{R^q,\su(2)}(\Tee)$ is never operator amenable for $1\leq q\leq\infty$.
\end{theorem}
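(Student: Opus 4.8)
The plan is to follow the proof of Theorem \ref{theo:tallnoa}, via Proposition \ref{prop:johnson}. Write $\fA(\Tee)$ for either of $\pfal_{\su(2)}(\Tee)$, $\fal^2_{R^q,\su(2)}(\Tee)$, and set $\fA_\Del(\Tee)=\Gam(\fA(\Tee)\hat{\otimes}\fA(\Tee))$. Being a restriction quotient of $\pfal(\su(2))$ (resp.\ $\fal^2_{R^q}(\su(2))$), $\fA(\Tee)$ is a unital completely contractive Banach function algebra on $\Tee$ with dense $\trig(\Tee)$ and isometric continuous translations, and, being squeezed between Beurling algebras on $\Tee$ by Remark \ref{rem:placeofresttoT} (and its evident $\fal^2_{R^q}$ analogue), it is regular with Gelfand spectrum $\Tee$, as is its tensor square on $\Tee\times\Tee$. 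So Proposition \ref{prop:johnson} applies: $\fA(\Tee)$ is operator amenable if and only if $\ideal_{\fA_\Del}(e)$ has a bounded approximate identity, and I will show it has none. The two facts I need about $\fA_\Del(\Tee)$ are that it too is regular with spectrum $\Tee$, and that it is a dual Banach space whose predual contains every evaluation functional $\lam(s)$, $s\in\Tee$.

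Take first $\fA=\pfal_{\su(2)}(\Tee)$. If $p\geq 2$ then $\pfal_{\su(2)}(\Tee)$ is not operator weakly amenable (Theorem \ref{theo:restpfalwa}, $\alp=0$), hence not operator amenable, so assume $1<p<2$. Since $d_\pi\equiv 1$ on $\Tee$, the case $p=1$, $G=\Tee$ of Theorem \ref{theo:pfaldel} reads $\Gam(\fal(\Tee,\ome)\hat{\otimes}\fal(\Tee,\ome))=\fal(\Tee,\Omega)$ with $\Omega(\pi)=\ome(\pi)\ome(\bar{\pi})$. Taking $\ome=w^{1/p'}$ and pushing the completely contractive inclusion $\fal(\Tee,w^{1/p'})\subseteq\pfal_{\su(2)}(\Tee)$ of Remark \ref{rem:placeofresttoT} through $\Gam\circ(\cdot\,\hat{\otimes}\,\cdot)$ yields a contractive inclusion $\fal(\Tee,w^{2/p'})\subseteq\pfal_{\su(2),\Del}(\Tee)$. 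Conversely, on $\Tee$ the estimate of Proposition \ref{prop:complicateddual} becomes $\norm{t}_{\pfal_{\su(2),\Del}(\Tee)^*}\leq\sup_{n\geq 0}(n+1)^{-2/p'}\max_{0\leq k\leq n}|t_{-n+2k}|$, and since the index $m=-n+2k$ runs over $|m|\leq n$, the right side equals $\sup_{m\in\Zee}|t_m|(1+|m|)^{-2/p'}=\norm{t}_{\fal(\Tee,w^{2/p'})^*}$. Chaining these two inclusions forces $\pfal_{\su(2),\Del}(\Tee)=\fal(\Tee,w^{2/p'})=\ell^1(\Zee,(1+|\cdot|)^{2/p'})$ isometrically. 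This Beurling algebra has polynomial weight, so it is regular with spectrum $\Tee$ and equals the dual space $\bigl(c_0(\Zee,(1+|\cdot|)^{-2/p'})\bigr)^*$; moreover $2/p'>0$, so each $\lam(s)=(s^n)_{n\in\Zee}$ lies in the predual, since $(1+|n|)^{-2/p'}\to 0$.

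Take next $\fA=\fal^2_{R^q,\su(2)}(\Tee)$. Proposition \ref{prop:complicateddualtoo} ($\alp=0$) gives the \emph{exact} dual norm $\norm{t}_{\fA_\Del(\Tee)^*}=\sup_{n\geq 0}(n+1)^{-1}\bigl(\sum_{k=0}^n|t_{-n+2k}|^{r(q)'}\bigr)^{1/r(q)'}$, a weighted supremum over $n$-blocks, where $r(q)'\geq 2$. Thus $\fA_\Del(\Tee)$ --- the completion of $\trig(\Tee)$ in the predual norm --- is a quotient of the dual space $\ell^1\text{-}\bigoplus_{n\geq 0}(n+1)\,\ell^{r(q)}_{n+1}$ by the weak$^*$-closed kernel of the block-reassembly map $(\sigma^{(n)})_n\mapsto\bigl(\textstyle\sum_n\sigma^{(n)}_{(n+m)/2}\bigr)_{m\in\Zee}$, hence is a dual Banach space; it lies between the Beurling algebras $\fal(\Tee,w^{1})$ and $\fal(\Tee,w^{1-1/r(q)'})$ on $\Tee$, hence is regular with spectrum $\Tee$; and a functional on it is weak$^*$ continuous exactly when, block by block, $(n+1)^{-1}$ times its $\ell^{r(q)'}$-norm tends to $0$ --- which for $\lam(s)$ gives $(n+1)^{1/r(q)'-1}\to 0$, as $1/r(q)'<1$. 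So $\lam(s)$ is weak$^*$ continuous on $\fal^2_{R^q,\su(2),\Del}(\Tee)$ for every $1\leq q\leq\infty$.

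Finally, suppose $(e_\iota)$ is a bounded approximate identity for $\ideal_{\fA_\Del}(e)$; by Banach--Alaoglu it has a weak$^*$ cluster point $u_0\in\fA_\Del(\Tee)$. For $s\neq e$ in $\Tee$, regularity furnishes $u\in\ideal_{\fA_\Del}(e)$ with $u(s)=1$, whence $e_\iota(s)=(e_\iota u)(s)\to u(s)=1$, and weak$^*$ continuity of $\lam(s)$ gives $\langle u_0,\lam(s)\rangle=1$; also $\langle u_0,\lam(e)\rangle=\lim_\iota e_\iota(e)=0$. So the continuous function $u_0\in\fA_\Del(\Tee)\subseteq\fC(\Tee)$ coincides with $1_{\Tee\setminus\{e\}}$, which is discontinuous at $e$ since $e$ is not isolated in $\Tee$ --- contradiction. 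By Proposition \ref{prop:johnson}, $\pfal_{\su(2)}(\Tee)$ is not operator amenable for $p>1$, and $\fal^2_{R^q,\su(2)}(\Tee)$ is not operator amenable for any $1\leq q\leq\infty$. The hard part, I expect, will be the dual-space identifications in the middle paragraphs: for $\pfal$ one leans on the fortunate collapse to a Beurling algebra when $1<p<2$, whereas for $\fal^2_{R^q}$ one must extract a workable predual from the exact formula of Proposition \ref{prop:complicateddualtoo}, the delicate point being the overlapping diagonal blocks (each index $m$ sitting in the $n$-block for every $n\geq|m|$ of the right parity), together with the routine but careful checks that the quotient norm inherited from $\Gam$ matches and that the $\Gam$-pushforward inclusions among Beurling algebras run in the directions claimed.
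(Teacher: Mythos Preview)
Your argument is correct and, in the $\pfal$ case, takes a genuinely different---and in one respect sharper---route than the paper. The paper works uniformly: it embeds $\pfal_{\su(2),\Del}(\Tee)^*$ into $\fM^{p'}\bar{\otimes}\fM^{p'}$, uses the estimate of Lemma~\ref{lem:Tnormest} to show each $\Gam^*(\lam(s))$ lands in the $c_0$-part $\fC_0$, and concludes that $\pfal_{\su(2),\Del}(\Tee)$ is the dual of the closed span $\fD_0$ of $\Gam^*(\lam(\Tee))$ inside $\fC_0$. Your route instead splits on $p$. For $p\geq 2$ you bypass the dual-space machinery entirely, invoking Theorem~\ref{theo:restpfalwa} to kill operator weak amenability; this is a clean shortcut the paper does not take. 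For $1<p<2$ you obtain the \emph{isometric} identification $\pfal_{\su(2),\Del}(\Tee)=\fal(\Tee,w^{2/p'})$ by sandwiching: the lower bound via $\Gam$ applied to the completely contractive inclusion of Corollary~\ref{cor:sutworest}, the upper bound via the inequality of Proposition~\ref{prop:complicateddual}, and then verifying that $\sup_n (n+1)^{-2/p'}\max_{k}|t_{-n+2k}|=\sup_m(1+|m|)^{-2/p'}|t_m|$. This explicit Beurling realisation is more informative than what the paper records, and it makes the predual and the membership of each $\lam(s)$ transparent.

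For $\fal^2_{R^q,\su(2)}(\Tee)$ the two approaches are closer in spirit: both exploit that each $\Gam^*(\lam(s))$ has $n$-block norm decaying like $(n+1)^{1/r(q)'-1}$. The paper does this inside $\fM^2_{q'}\bar{\otimes}\fM^2_{q'}$, while you work intrinsically with the exact dual norm of Proposition~\ref{prop:complicateddualtoo} to realise $\fA_\Del(\Tee)$ as a quotient of $\ell^1\text{-}\bigoplus_{n\geq 0}(n+1)\ell^{r(q)}_{n+1}$ by a weak*-closed kernel. Your sketch here is correct but compressed; the one point worth spelling out is that weak*-closedness of the kernel follows because each coordinate functional ``evaluate at $m$'' is implemented by the element of the $c_0$-direct sum with entry $(n+1)^{-1}$ in the appropriate slot of each $n$-block with $n\geq|m|$, $n\equiv m$, and this element indeed lies in $c_0$ since $(n+1)^{-1}\to 0$. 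With that, the identification $K^\perp\cap c_0=K_\perp$ gives weak* continuity of each $\lam(s)$ exactly as you indicate.
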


\begin{proof}
Let us begin with $\pfal_{\su(2)}(\Tee)$.
In the notation above, we let $\fM^{p'}=\fM^{0,p'}$ and consider the predual 
$\fL^p=\fM^{p'}_*$ which respects the dual pairing (\ref{eq:dualpairing}).
Let $\fD=\Gam^*(\pfal_{\su(2),\Del}(\Tee)^*)\subset\fM^{p'}\bar{\otimes}\fM^{p'}$.
We combine the observations  (\ref{eq:pfalsudelembed}), the fact that 
$\ell_{m+1}^{p'}\check{\otimes}\ell_{n+1}^{p'}\cong\fC\fB(\ell_{m+1}^p,\ell_{n+1}^{p'})$ and Lemma
\ref{lem:Tnormest} to see that in the $m,n$th component of $\fM^{p'}\bar{\otimes}\fM^{p'}$,
each $\Gam^*(\lam(s))$ ($s\in \Tee$) has norm bounded by
\[
[(m+1)(n+1)]^{-\frac{1}{p'}+\bigl( \frac{1}{2p'}-\frac{1}{2p}\bigr)}=[(m+1)(n+1)]^{-1/2}
\]
for $p>2$ and by $[(m+1)(n+1)]^{-\frac{1}{p'}}$ for $1<p<2$.
In other words $\Gam^*(\lam(s))\in\fD_0=\fD\cap\fC_0$ where
\[
\fC_0= c_0\text{-}\bigoplus_{m,n=0}^\infty
[(m+1)(n+1)]^{-1/p'}\ell^{p'}_{m+1}\check{\otimes}\ell^{p'}_{n+1}.
\]
Observe that ${\fC_0}^*\cong\fL^p\hat{\otimes}\fL^p$.

Let $\fK=\fD_\perp$, the pre-annihilator of $\fD$ in $\fL^p\hat{\otimes}\fL^p$.
Since $\Gam^*(\lam(G))$ is a weak* spanning set for $\fD$, we have that
$\fK=\bigcap_{s\in\Tee} \ker\Gam^*(\lam(s))$.  But then $\fK=\fD_0^\perp$ where
$\fD_0$ is the closed subspace generated by $\Gam^*(\lam(G))$ in $\fC_0$.
Collecting all of these facts together, we see that $\pfal_{\su(2),\Del}(\Tee)\cong
\fL^p\hat{\otimes}\fL^p/\fK$ is the dual of $\fD_0$. 

Now by Proposition \ref{prop:johnson}, if $\pfal_{\su(2)}(\Tee)$ were operator
amenable, then $\ideal_{\pfal_{\su(2),\Del}}(1)$ would admit a bounded approximate
identity.  However, since each evaluation functional $\Gam^*(\lam(s))$ is weak*-continuous,
this bounded would admit the indicator function of $\Tee\setminus\{1\}$ as a weak*-cluster
point, which is absurd.

Now we consider $\fal^2_{R^q,\su(2)}(\Tee)$.  Let $
\fM^2_{q'}=\ell^\infty\text{-}\bigoplus_{n=0}^\infty(n+1)^{-1/2}\row^{q'}_{n+1}$
and $\fL^2_q=(\fM^2_{q'})_*$ be the predual which respects the dual pairing (\ref{eq:dualpairing}). 
In $\fM^2_{q'}\bar{\otimes}\fM^2_{q'}$ the $(m,n)$-th component of
$\Gam^*(\lam(s))$ ($s\in \Tee$) is an  anti-diagonal matrix, thanks to (\ref{eq:pfalsudelembed}), 
with entries bounded in modulus by $1$.  Hence it  has norm bounded by
\[
[(m+1)(n+1)]^{-1/2}[\min\{m,n\}+1]^{1/r(q)'}\leq [(m+1)(n+1)]^{-\frac{1}{2}+\frac{1}{2r(q)'}}
\]
since we can identify that component with a weighted version of
$\smat^{r(q)'}_{m+1,n+1}$, by virtue of  (\ref{eq:cbintcol}).  However,
we always have that $\frac{1}{2r(q)'}=\frac{|q-2|}{4q}\leq \frac{1}{4}<\frac{1}{2}$. 

Just as above, the subspace $\fK=\bigcap_{s\in\Tee}\ker\Gam^*(\lam(s))\subset
\fL^2_q\hat{\otimes}\fL^2_q$ serves to allow $\fal^2_{R^q,\su(2)}(\Tee)$ to be viewed
as the dual of a space containing each $\Gam^*(\lam(s))$.  We conclude, as above.
\end{proof}


\section{Arens regularity and representability as an operator algebra}

We say that a (commutative) Banach algebra $\fA$ is {\it Arens regular} if for
any $\Phi,\Psi$ in $\fA^{**}$, and any bounded nets $(u_i),(v_j)$ from $\fA$
for which $\Phi=\lim_iu_i$ and $\Psi=\lim_jv_j$, we have that both of the weak* iterated limits
$\lim_i \lim_j u_iv_j$ and $\lim_j\lim_iu_iv_j$ exist in $\fA^{**}$ and coincide.  Arens regularity
passes to closed subalgebras and isomorphic quotients of $\fA$.  Arens regularity is a sufficient condition
to see that $\fA$ is isomorphic to a closed subalgebra of bounded operators on a reflexive Banach
space (see \cite{young}).  It is a consequence of the main result of \cite{daws} that
if $\fA$ is isomorphic to a closed subalgebra of bounded operator on a superreflexive Banach
space, then $\fA$ is Arens regular.

We say $\fA$ is {\it representable as an operator algebra}, if it is isomorphic to a closed subalgebra
of operators on a Hilbert space.  This property implies Arens regularity. 
The main result of \cite{blecher} tells us that $\fA$ is 
representable as an operator algebra provided that it admits an operator space
structure with respect to which the multiplication
on $\fA\otimes\fA$ extends to a completely bounded map on the Haagerup tensor product
$\fA\otimes^h\fA$.  If $\fA$ is already equipped with an operator space structure, we will
say it is {\it completely representable as an operator algebra} provided there is a complete 
isomorphism between $\fA$ and an operator algebra; equivalently, if multiplication
factors through $\fA\otimes^h\fA$.

Let us remark that for the weights $w^\alp(n)=(1+|n|)^\alp$ on $\Zee\cong\what{\Tee}$, we have that 
$\fal(\Tee,w^\alp)$ is Arens regular exactly when $\alp>0$ thanks to \cite{crawy,young1},
and is representable as a Q-algebra (a certain type of operator algebra) exactly when $\alp>1/2$,
thanks to \cite{varopoulos}.  As a maximal operator space, $\fal(\Tee,w^\alp)$
is completely representable an operator algebra exactly when
$\alp>1/2$, thanks to \cite{ghandeharilss}.  A study of Arens regularity of algebras
$\fal(G,\ome)$ is conducted in \cite{lees}, and of when $\fal(G,\ome)$ is completely
representable an operator algebra is conducted in \cite{ghandeharilss}.

\subsection{Arens regularity}  \label{ssec:arens}
Let us begin by showing that for an infinite $G$, the algebras $\pfal(G)$ are never
Arens regular.  We require a supporting result which is of independent interest and was observed in
\cite[Cor.\ 2.3]{forrestss1} in the case $p=2$.

\begin{proposition}\label{prop:zpfal}
Let $\cent\pfal(G)=\{u\in\pfal(G):u(tst^{-1})=u(s)\text{ for all }s,t\text{ in }G\}$.
Then $\cent\pfal(G)=\cent\fal(G)$ completely isometrically for each $p\geq 1$.
\end{proposition}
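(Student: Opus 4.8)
The plan is to realize $\cent\pfal(G)$ as the range of a single completely contractive idempotent on $\pfal(G)$ which is, up to harmless rescaling, independent of $p$, and to compute that range explicitly.

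First I would record, via Schur's lemma, that $u\in\trig(G)$ is invariant under conjugation if and only if each $\hat u(\pi)$ commutes with $\pi(G)$, i.e.\ $\hat u(\pi)=c_\pi I_{d_\pi}$ for a scalar $c_\pi$. Let $\Phi$ be conjugation-averaging, $\Phi u(s)=\int_G u(tst^{-1})\,dt$. A short computation with the Schur orthogonality relations (precisely $\int_G\pi(t)^*A\pi(t)\,dt=d_\pi^{-1}(\trace A)I_{d_\pi}$) gives $\widehat{\Phi u}(\pi)=d_\pi^{-1}(\trace\hat u(\pi))I_{d_\pi}$, so on the $\pi$-th Fourier block $\Phi$ acts as the trace-preserving conditional expectation $\Phi_{d_\pi}\colon A\mapsto d_\pi^{-1}(\trace A)I_{d_\pi}$ onto $\Cee I_{d_\pi}$. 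One checks that $\Phi(\pfal(G))$ is exactly $\cent\pfal(G)$ (the range of the bounded idempotent $\Phi$ is its fixed-point set, which consists precisely of the conjugation-invariant functions), and in particular $\cent\trig(G)$ is dense in $\cent\pfal(G)$.

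Next I would check that $\Phi$ is completely contractive for every $p$. The map $\Phi_d$ is unital completely positive on $\smat^\infty_d$, and is self-dual for the trace pairing ($\trace(\Phi_d(A)B)=d^{-1}\trace A\,\trace B=\trace(A\,\Phi_d(B))$), hence also completely contractive on $\smat^1_d=(\smat^\infty_d)^*$; by interpolation $\smat^p_d=[\smat^\infty_d,\smat^1_d]_{1/p}$ it is completely contractive on $\smat^p_d$ for all $1\le p\le\infty$. Forming the $\ell^1$-direct sum over $\what{G}$ (the weights $d_\pi^{1+1/p'}$ commute with $\Phi$), $\Phi$ becomes a completely contractive idempotent on $\pfal(G)$. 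Then I would use that a one-dimensional subspace of any operator space carries, completely isometrically, the unique one-dimensional operator space structure of $\Cee$ rescaled by the norm of a vector (an easy consequence of Ruan's axioms together with complete contractivity of a norming functional); since $\norm{I_d}_{\smat^p_d}=d^{1/p}$, the inclusion $\Cee I_d\hookrightarrow\smat^p_d$ is completely isometrically the scalars scaled by $d^{1/p}$. Because $\Cee I_d$ is completely contractively complemented in $\smat^p_d$ by $\Phi_d$, taking $\ell^1$-direct sums (using that $\oplus^1$ of completely isometric inclusions is completely isometric, seen by dualizing to $\oplus^\infty$ of complete metric surjections) yields, completely isometrically,
\[
\cent\pfal(G)=\ell^1\text{-}\bigoplus_{\pi\in\what{G}}d_\pi^{1+\frac{1}{p'}}\bigl(\Cee I_{d_\pi}\bigr)
=\ell^1\text{-}\bigoplus_{\pi\in\what{G}}d_\pi^{1+\frac{1}{p'}+\frac{1}{p}}\,\Cee
=\ell^1\text{-}\bigoplus_{\pi\in\what{G}}d_\pi^{2}\,\Cee ,
\]
since $1+\tfrac{1}{p'}+\tfrac{1}{p}=2$. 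The right-hand side is independent of $p$; specializing to $p=1$ identifies it with $\cent\fal(G)$, which gives the theorem. (Alternatively one can run the same bookkeeping through \cite[2.7.6]{pisier1}: $\cent\,\cdot\,$ is the range of the fixed completely contractive projection $\Phi$ across the scale $\pfal(G)=[\fal^\infty(G),\fal(G)]_{1/p}$, so $\cent\pfal(G)=[\cent\fal^\infty(G),\cent\fal(G)]_{1/p}$ completely isometrically, and both endpoints equal $\ell^1\text{-}\bigoplus_\pi d_\pi^2\,\Cee$.)

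The substance is entirely at the operator-space level: confirming that $\Cee I_d$ sits inside $\smat^p_d$ as the expected rescaled copy of $\Cee$ and that this is compatible with the $\ell^1$-direct sum (equivalently, that $\Phi$ is completely contractive on $\pfal(G)$, so the center is completely complemented and interpolation/functoriality apply). The plain isometric statement — that the $\pfal$-norm of a central function equals $\sum_{\pi}d_\pi^2|c_\pi|$ for every $p$ — is then just the arithmetic identity $1+\tfrac{1}{p'}+\tfrac{1}{p}=2$. The case $p=2$ was handled in \cite[Cor.\ 2.3]{forrestss1}, and I expect the argument there to be of this shape.
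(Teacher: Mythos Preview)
Your proof is correct and follows essentially the same approach as the paper: both rest on the observation that a central function has $\hat u(\pi)$ scalar (Schur's lemma), the computation $\norm{\alp I_d}_{\smat^p_d}=d^{1/p}|\alp|$, and the arithmetic $1+\tfrac{1}{p'}+\tfrac{1}{p}=2$, yielding $\cent\pfal(G)\cong\ell^1(\what{G})$ independently of $p$. Your version is considerably more explicit about why the identification is a \emph{complete} isometry (via the completely contractive projection $\Phi$ and the fact that one-dimensional operator subspaces are rescaled copies of $\Cee$), whereas the paper simply asserts this; but the underlying argument is the same.
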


\begin{proof}
We note that for each $u$ in $\cent\pfal(G)$ we have $\hat{u}(\pi)=\frac{1}{d_\pi}
\left(\int_G u(s)\wbar{\chi_\pi(s)}\,ds\right)I_{d_\pi}$.  Moreover, for each scalar matrix, 
$\norm{\alp I_d}_{\smat^p_d}=d^{1/p}|\alp|$.  Hence each space $\cent\pfal(G)$ is
completely isometrically isomorphic to $\ell^1(\what{G})$.
\end{proof}

\begin{theorem}
The algebra $\pfal(G)$ is Arens regular only if $G$ is finite.
\end{theorem}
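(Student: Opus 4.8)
The plan is to use Proposition \ref{prop:zpfal} to replace $\pfal(G)$ by a single, completely explicit, commutative Banach algebra. Indeed, $\cent\pfal(G)$ is a closed subalgebra of $\pfal(G)$ and, by that proposition, it coincides isometrically with $\cent\fal(G)$. Since Arens regularity passes to closed subalgebras, it suffices to show that $\cent\fal(G)$ is not Arens regular whenever $G$ is infinite.

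Next I would identify $\cent\fal(G)$ as a hypergroup algebra. As in the proof of Proposition \ref{prop:zpfal}, the normalized characters $e_\pi=d_\pi^{-1}\chi_\pi$, with $\chi_\pi=\trace\circ\pi$, are linearly independent norm-one elements of $\cent\fal(G)$ whose span is dense, so that $\cent\fal(G)\cong\ell^1(\what{G})$ isometrically via $(b_\pi)_\pi\mapsto\sum_\pi b_\pi e_\pi$. From $\chi_\pi\chi_{\pi'}=\sum_{\sig\subset\pi\otimes\pi'}m(\sig,\pi\otimes\pi')\chi_\sig$ and the dimension identity $\sum_{\sig\subset\pi\otimes\pi'}m(\sig,\pi\otimes\pi')d_\sig=d_\pi d_{\pi'}$ one gets
\[
e_\pi\cdot e_{\pi'}=\sum_{\sig\subset\pi\otimes\pi'}\frac{m(\sig,\pi\otimes\pi')\,d_\sig}{d_\pi d_{\pi'}}\,e_\sig ,
\]
a combination of the $e_\sig$ with non-negative coefficients summing to $1$; thus $\cent\fal(G)$ is precisely the $\ell^1$-convolution algebra of the infinite discrete commutative hypergroup $\what{G}$ equipped with its fusion rules, with $\cent\fal(G)^*\cong\ell^\infty(\what{G})$ and canonical basis the evaluation functionals $\delta_\pi$.

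It then remains to prove that $\ell^1(\what{G})$ with this product is not Arens regular when $\what{G}$ is infinite, and this is the crux. The cleanest route is to invoke the hypergroup analogue of Young's theorem --- the $\ell^1$-algebra of an infinite discrete (commutative) hypergroup is never Arens regular --- which in the special case in which $G$ has an infinite abelian quotient $G/N$ is just Young's theorem applied to the group algebra $\fal(G/N)=\cent\fal(G/N)\subseteq\cent\fal(G)$. Absent a direct reference, one argues via the Grothendieck double-limit criterion: choose distinct $\pi_1,\pi_2,\dots$ in $\what{G}$ and a functional $\phi\in\ell^\infty(\what{G})$ oscillating suitably along the top constituents of the tensor products $\pi_m\otimes\wbar{\pi_n}$, so that the iterated weak$^*$ limits $\lim_m\lim_n\langle\phi,e_{\pi_m}e_{\wbar{\pi_n}}\rangle$ and $\lim_n\lim_m\langle\phi,e_{\pi_m}e_{\wbar{\pi_n}}\rangle$ exist but differ, contradicting Arens regularity. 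For $G=\su(2)$ the Clebsch--Gordan rule $\pi_m\otimes\pi_n=\bigoplus_k\pi_k$ ($k$ running from $|m-n|$ to $m+n$ in steps of $2$) makes this a transparent imitation of the classical failure in $\ell^1(\Zee)$; the work for general infinite $G$ is to make such a choice of $\phi$ uniformly, which I expect is best organized by passing to a singly generated sub-hypergroup of $\what{G}$ (or to an infinite abelian quotient when one is available). Producing this bad pair of iterated limits in full generality is the main obstacle; the reduction to $\cent\fal(G)$ and its identification as $\ell^1(\what{G})$ are routine.
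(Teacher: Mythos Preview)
Your reduction to $\cent\fal(G)\cong\ell^1(\what{G})$ is correct and is exactly how the paper begins. The divergence is at the crux, which you yourself flag as ``the main obstacle'': you do not actually prove that the hypergroup algebra $\ell^1(\what{G})$ fails Arens regularity for general infinite compact $G$. Invoking a ``hypergroup analogue of Young's theorem'' without reference is not a proof, and your double-limit sketch is only carried out for $\su(2)$; for arbitrary $G$ the fusion rules need not resemble an abelian group closely enough to make the construction of the oscillating $\phi$ transparent, and you give no mechanism for producing it.

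The paper sidesteps this combinatorial difficulty entirely. After passing to $\cent\fal(G)$, it works instead with the maximal ideal $\ideal_{\cent\fal}(e)$ and uses three soft facts: (a) $\cent\fal(G)\cong\ell^1(\what{G})$ has the Schur property, hence is weakly sequentially complete, and so is the codimension-one ideal; (b) $\ideal_\fal(e)$ has a bounded approximate identity (Forrest; Forrest--Kaniuth--Lau--Spronk), and pushing it down by the conjugation-averaging projection $P$ gives one for $\ideal_{\cent\fal}(e)$; (c) by a theorem of \"{U}lger, an Arens regular, weakly sequentially complete Banach algebra with bounded approximate identity is reflexive. Reflexivity then forces the b.a.i.\ to have a weak cluster point, which by regularity on the spectrum must be $1_{G\setminus\{e\}}$, so $G$ is finite. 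This argument never touches the fusion rules of $\what{G}$ and works uniformly; it is worth learning as a model for how to avoid hard combinatorics by exploiting Banach-space geometry (Schur property, reflexivity) together with approximate identity machinery.
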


\begin{proof}
If $\pfal(G)$ is Arens regular, then so too must be the subalgebra $\cent\fal(G)$ and also
its ideal $\ideal_{\cent\fal}(e)=\{u\in\cent\fal(G):u(e)=0\}$.  It is a well-known consequence
of the Schur property for $\cent\fal(G)\cong\ell^1(\what{G})$ that it is weakly sequentially complete,
hence so too is the co-dimension one space $\ideal_{\cent\fal}(e)$.  By \cite[Prop.\ 3.7]{forrest}
or \cite[Thm.\ 1.5]{forrestkls}, $\ideal_\fal(e)$ admits a bounded approximate identity $(u_\alp)$.
Let $P:\fal(G)\to\cent\fal(G)$ be given by $Pu(s)=\int_G u(tst^{-1})\,dt$, which is a surjective contraction
with expectation property $P(uv)=P(u)v$ for $u$ in $\fal(G)$ and $v$ in $\cent\fal(G)$.  Then
$(Pu_\alp)$ is a bounded approximate identity for $\ideal_{\cent\fal}(e)$, as is straightforward to check.
Being weakly sequentially complete and admitting a bounded approximate identity, \cite[Cor.\ 2.4]{ulger}
tells us that $\ideal_{\cent\fal}(e)$ is a reflexive Banach space.  Hence $(Pu_\alp)$ admits a weak
cluster point,
which, by regularity of $\cent\fal(G)$ on its spectrum (the space of conjugacy classes of $G$ see remarks 
in \cite{alaghmandans}, for example), is necessarily the indicator function of $G\setminus\{e\}$.
This implies that $G$ is discrete, hence finite.
\end{proof}

Let us note a condition which implies Arens regularity.

\begin{theorem}\label{theo:arenreg}
Suppose the weight $\ome$ on $\what{G}$ satisfies
\[
\lim_{\pi\to\infty}\limsup_{\pi'\to\infty}\frac{\max_{\sig\subset\pi\otimes\pi'}\ome(\sig)}{\ome(\pi)\ome(\pi')}
=0\quad\text{ and }
\lim_{\pi'\to\infty}\limsup_{\pi\to\infty}\frac{\max_{\sig\subset\pi\otimes\pi'}\ome(\sig)}{\ome(\pi)\ome(\pi')}
=0.
\]
Then $\pfal(G,\ome)$ is Arens regular for any $p\geq 1$.
\end{theorem}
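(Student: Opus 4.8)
The plan is to invoke the well-known criterion for Arens regularity in terms of weak compactness: a commutative Banach algebra $\fA$ is Arens regular if and only if, for every $f$ in $\fA^*$, the map $\fA \to \fA^*$, $u \mapsto u\cdot f$ is weakly compact. Since $\pfal(G,\ome)$ is a completely contractive Banach algebra whose multiplication is dual to the coproduct $M$ (cf.\ the proof of Theorem \ref{theo:ccBa} and Corollary \ref{cor:ccBaw}), I would recast this as a statement about the ``$\ell^\infty$-type'' structure. Using the isometric identifications (\ref{eq:wpfal}) and (\ref{eq:pfaldual}), namely $\pfal(G,\ome) \cong \ell^1\text{-}\bigoplus_\pi \ome(\pi)d_\pi^{1+1/p'}\smat^p_{d_\pi}$ with dual $\ell^\infty\text{-}\bigoplus_\pi \ome(\pi)^{-1}d_\pi^{-1/p'}\smat^{p'}_{d_\pi}$, each $f = (T_\pi)_\pi$ in the dual acts on $u = (A_\pi)_\pi$ by a convolution-type formula governed by (\ref{eq:coprod}): the $\sig$-block of $u \cdot f$ involves a sum over pairs $\pi \otimes \pi' \supseteq \sig$ of terms controlled in norm by $\ome(\sig)^{-1}\ome(\pi)\norm{A_\pi}_p \cdot (\text{contribution of }T_{\pi'})$, and symmetrically.

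First I would reduce to showing that each operator $u \mapsto u\cdot f$ is a norm-limit of finite-rank operators (which forces weak compactness, in fact compactness). Fix $f$ in $\pfal(G,\ome)^*$ and $\eps > 0$. The idea is to split $f = f_1 + f_2$ where $f_1$ is supported on a finite set $F \subset \what G$ of representations and $\norm{f_2}$ is small; then $u \mapsto u \cdot f_1$ has the property that, for $\sig$ outside a controlled finite set, the $\sig$-block of $u\cdot f_1$ is small uniformly over the unit ball of $\pfal(G,\ome)$—this is exactly where the two hypotheses enter. Indeed, the block of $u\cdot f_1$ at $\sig$ gets a nonzero contribution only from pairs $(\pi,\pi')$ with $\sig \subset \pi\otimes\pi'$ and one of $\pi,\pi'$ in $F$; say $\pi' \in F$, so $\pi'$ ranges over a finite set and the norm of that contribution is at most (a constant times) $\sup_{\pi' \in F}\ \sup_{\pi}\ \frac{\max_{\sig\subset\pi\otimes\pi'}\ome(\sig)}{\ome(\pi)\ome(\pi')}\norm{u}$ for $\pi$ large. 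The hypothesis $\lim_{\pi\to\infty}\limsup_{\pi'\to\infty}\frac{\max_{\sig\subset\pi\otimes\pi'}\ome(\sig)}{\ome(\pi)\ome(\pi')} = 0$ (together with its symmetric counterpart, for the case $\pi\in F$), combined with the fact that $\sig \subset \pi\otimes\pi'$ with $\pi'$ bounded forces $\pi$ bounded as $\sig\to\infty$, shows this tail is uniformly small; hence $u\mapsto u\cdot f_1$ is, up to $\eps$, valued in a finite-dimensional (so finitely many blocks) subspace, i.e.\ approximable by finite-rank maps. Then $u\mapsto u\cdot f_2$ has small norm, and letting $\eps\to 0$ shows $u\mapsto u\cdot f$ is compact, hence weakly compact.

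The place where I expect the real work to be is making the block-estimate for $u \cdot f_1$ rigorous: one must control $\norm{U^*_{\sig,\pi\otimes\pi'} (A_\pi \otimes T_{\pi'}) U_{\sig,\pi\otimes\pi'}}_{\smat^{p'}_{d_\sig}}$ (or the analogous object in the product) by the individual Schatten norms of $A_\pi$ and $T_{\pi'}$ times a dimensional factor, and then see that when summed/supremized over the admissible $\sig$ the dimensional factors cancel against the weights $d_\pi^{1+1/p'}, d_{\pi'}^{1+1/p'}$ to leave precisely the quotient $\frac{\ome(\sig)}{\ome(\pi)\ome(\pi')}$ (with no residual dimension blow-up)—this is essentially the same Hölder-and-intertwiner bookkeeping as in the proof of Theorem \ref{theo:ccBa}, and the key point is that the argument there already shows the relevant multilinear map is completely contractive after normalizing by the weights, so the only surviving obstruction to smallness is the weight ratio. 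Once that estimate is in place, the two displayed limit conditions in the statement are exactly what is needed to conclude that the ``off-diagonal'' part of multiplication by any fixed functional is negligible at infinity, which delivers weak compactness and hence Arens regularity. The secondary subtlety is handling $\pi'\in F$ versus $\pi\in F$ symmetrically, which is why both iterated-limit hypotheses are assumed.
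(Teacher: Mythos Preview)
Your strategy---proving Arens regularity via weak compactness (indeed compactness) of the module maps $u\mapsto u\cdot f$---is a legitimate route in principle, but the implementation has a genuine gap. The dual $\pfal(G,\ome)^*$ is an $\ell^\infty$-direct sum of matrix blocks, and in such a space a general element does \emph{not} have a small tail: given $f=(T_\pi)_{\pi\in\what{G}}$ with $\sup_\pi \ome(\pi)^{-1}d_\pi^{-1/p'}\norm{T_\pi}_{p'}=1$, there is no reason for $\sup_{\pi\notin F}\ome(\pi)^{-1}d_\pi^{-1/p'}\norm{T_\pi}_{p'}$ to be small for any finite $F$. So the split $f=f_1+f_2$ with $f_1$ finitely supported and $\norm{f_2}$ small is simply unavailable, and the argument collapses at that step. (The asymmetry is precisely that $u$ lives in an $\ell^1$-sum, where tails \emph{are} small, while $f$ lives in an $\ell^\infty$-sum.) There is also some confusion in your block bookkeeping: the $\pi'$-component of $u\cdot f_1$ receives contributions from pairs $(\pi,\sig)$ with $\sig\in F$ and $\sig\subset\pi\otimes\pi'$, not from pairs ``with one of $\pi,\pi'$ in $F$'' as you write; the constraint ties $\pi$ to $\pi'$ through the fixed $\sig$, and you do not get the claimed uniform smallness in $\pi'$ without further work.

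The paper proceeds differently, following \cite[Thm.\ 3.16]{lees}: one exploits the bidual decomposition
\[
\pfal(G,\ome)^{**}\cong\pfal(G,\ome)\oplus\Bigl(c_0\text{-}\bigoplus_{\pi\in\what{G}}\ome(\pi)^{-1}d_\pi^{-1/p'}\smat^{p'}_{d_\pi}\Bigr)^\perp
\]
and computes the two Arens products directly on this decomposition, with the weight hypothesis used to control products involving the annihilator summand. If you wish to remain with a functional-analytic criterion, a cleaner path is Grothendieck's double-limit test: for bounded sequences $(u_n),(v_m)$ in $\pfal(G,\ome)$ and $f\in\pfal(G,\ome)^*$, show the iterated limits of $f(u_nv_m)$ coincide. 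Here you may legitimately truncate $u_n$ and $v_m$ (since they live in the $\ell^1$-sum) and then invoke the two iterated-limit hypotheses on $\ome$ to control the cross terms; this is essentially the argument of \cite{lees} recast.
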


\begin{proof}
This is an easy modification of the proof of \cite[Thm.\ 3.16]{lees}.
Indeed it is easy to verify that
\[
\pfal(G,\ome)^{**}\cong\pfal(G,\ome)\bigoplus\left(c_0\text{-}\bigoplus_{\pi\in\what{G}}
\frac{d^{-1/p'}}{\ome(\pi)}\smat^{p'}_{d_\pi}\right)^\perp.
\]
All other aspects of the proof are similarly straightforward to modify.
\end{proof}

A function $\tau:\what{G}\to\Ree^{\geq 0}$ is called {\it subadditive} if 
$\tau(\sig)\leq\tau(\pi)+\tau(\pi')$ whenever $\sig\subset\pi\otimes\pi'$.  Let for $\alp>0$,
$\ome_\tau^\alp(\pi)=(1+\tau(\pi))^\alp$.  It is easy to verify $\ome_\tau$ is a weight.

\begin{corollary}\label{cor:subadweight}
Let $\tau:\what{G}\to\Ree^{\geq 0}$ be subadditive and satisfy $\lim_{\pi\to\infty}\tau(\pi)=\infty$.
Then $\pfal(G,\ome_\tau^\alp)$ is Arens regular for any $p\geq 1$ and $\alp>0$.
\end{corollary}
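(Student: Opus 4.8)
The plan is to deduce Corollary \ref{cor:subadweight} directly from Theorem \ref{theo:arenreg} by verifying that the weight $\ome_\tau^\alp(\pi)=(1+\tau(\pi))^\alp$ satisfies the two asymptotic vanishing conditions in the hypothesis of that theorem. So the whole task reduces to a quantitative estimate on ratios of the form
\[
\frac{\max_{\sig\subset\pi\otimes\pi'}(1+\tau(\sig))^\alp}{(1+\tau(\pi))^\alp(1+\tau(\pi'))^\alp}.
\]

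First I would use subadditivity: if $\sig\subset\pi\otimes\pi'$ then $\tau(\sig)\leq\tau(\pi)+\tau(\pi')$, hence $1+\tau(\sig)\leq 1+\tau(\pi)+\tau(\pi')\leq(1+\tau(\pi))(1+\tau(\pi'))$. Raising to the power $\alp>0$ gives $\max_{\sig\subset\pi\otimes\pi'}(1+\tau(\sig))^\alp\leq(1+\tau(\pi))^\alp(1+\tau(\pi'))^\alp$, but this only bounds the ratio by $1$, which is not enough. The sharper bound I need is $1+\tau(\pi)+\tau(\pi')\leq(1+\tau(\pi))(1+\tau(\pi'))$ with room to spare: dividing through, $\frac{1+\tau(\pi)+\tau(\pi')}{(1+\tau(\pi))(1+\tau(\pi'))}=\frac{1}{1+\tau(\pi')}+\frac{\tau(\pi')}{(1+\tau(\pi))(1+\tau(\pi'))}\leq\frac{1}{1+\tau(\pi')}+\frac{1}{1+\tau(\pi)}$. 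Therefore
\[
\frac{\max_{\sig\subset\pi\otimes\pi'}(1+\tau(\sig))^\alp}{(1+\tau(\pi))^\alp(1+\tau(\pi'))^\alp}\leq\left(\frac{1}{1+\tau(\pi)}+\frac{1}{1+\tau(\pi')}\right)^\alp.
\]

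Now I invoke the hypothesis $\lim_{\pi\to\infty}\tau(\pi)=\infty$. For the first limit in Theorem \ref{theo:arenreg}: fix $\pi$; then $\limsup_{\pi'\to\infty}\left(\frac{1}{1+\tau(\pi)}+\frac{1}{1+\tau(\pi')}\right)^\alp=\left(\frac{1}{1+\tau(\pi)}\right)^\alp$ since $\frac{1}{1+\tau(\pi')}\to 0$; and then letting $\pi\to\infty$ this tends to $0$. The second limit is symmetric in the roles of $\pi$ and $\pi'$. Hence both conditions of Theorem \ref{theo:arenreg} hold for $\ome_\tau^\alp$, and the conclusion that $\pfal(G,\ome_\tau^\alp)$ is Arens regular for every $p\geq 1$ and $\alp>0$ follows immediately.

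There is essentially no obstacle here; the only point requiring a moment's care is making sure the elementary inequality is exploited in the strengthened form $\tfrac{1}{1+\tau(\pi)}+\tfrac{1}{1+\tau(\pi')}$ rather than the trivial bound by $1$, and that one then reads off the iterated limits in the correct order matching the two displayed conditions in Theorem \ref{theo:arenreg}. One should also remark, as the statement preceding the corollary already notes, that $\ome_\tau$ is genuinely a weight (submultiplicativity follows from $1+\tau(\sig)\leq(1+\tau(\pi))(1+\tau(\pi'))$ as computed above, and positive powers of weights are weights), so that the corollary is not vacuous.
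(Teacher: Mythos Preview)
Your proof is correct and follows essentially the same approach as the paper: bound the ratio in Theorem \ref{theo:arenreg} by an expression of the form $f(\tau(\pi))+f(\tau(\pi'))$ (up to constants or an outer power) using subadditivity, then read off the iterated limits. The only cosmetic difference is that the paper uses the cruder estimate $(1+\tau(\pi)+\tau(\pi'))^\alp\leq (1+2\tau(\pi))^\alp+(1+2\tau(\pi'))^\alp$ to land on $\frac{2^\alp}{\ome_\tau^\alp(\pi)}+\frac{2^\alp}{\ome_\tau^\alp(\pi')}$, whereas you keep the $\alp$th power outside and obtain the slightly tighter bound $\bigl(\frac{1}{1+\tau(\pi)}+\frac{1}{1+\tau(\pi')}\bigr)^\alp$; either version suffices.
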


\begin{proof}
Note that of $\sig\subset\pi\otimes\pi'$ then 
\[
\ome_\tau^\alp(\sig)\leq (1+\tau(\pi)+\tau(\pi'))^\alp\leq(1+2\tau(\pi))^\alp+(1+2\tau(\pi'))^\alp
\leq 2^\alp [\ome_\tau^\alp(\pi)+\ome_\tau^\alp(\pi')].
\]
Hence
\[
\frac{\max_{\sig\subset\pi\otimes\pi'}\ome_\tau^\alp(\sig)}{\ome_\tau^\alp(\pi)\ome_\tau^\alp(\pi')}
\leq \frac{2^\alp}{\ome_\tau^\alp(\pi)}+ \frac{2^\alp}{\ome_\tau^\alp(\pi')}.
\]
Our assumption on $\tau$ assures that either iterated limit of Theorem \ref{theo:arenreg} is zero.
\end{proof}

\begin{example}{\bf (i)}
Let $\tau:\what{G}\to\Ree^{>0}$ be given by $\tau(\pi)=\log d_\pi$, which is subadditive.
The weight $\ome_\tau^\alp$, given here, is thus the weight $\ome_{w^\alp}$ of
Example \ref{ex:thetaweight}.  If $G$ is tall, then $\lim_{\pi\to\infty}\tau(\pi)=\infty$.

{\bf (ii)} For an infinite Lie group $G$, consider the polynomial weights 
$\ome_S^\alp(\pi)=(1+\tau_S(\pi))^\alp$, introduced in Section \ref{ssec:spectrum}.
Since each $S^{\otimes n}$ is finite, $\lim_{\pi\to\infty}\tau_S(\pi)=\infty$.
\end{example}

For the special unitary groups, a special feature of the dual allows us to deal with dimension
weights.

\begin{corollary}\label{cor:pfalsuar}
The algebra $\pfal(\su(n),d^\alp)$ is Arens regular for any $\alp>0$.
\end{corollary}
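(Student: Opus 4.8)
The plan is to verify the hypothesis of Theorem~\ref{theo:arenreg} for the dimension weight $\ome=d^\alp$ on $\what{\su(n)}$. Since $d_{\bar\pi}=d_\pi$, this weight is symmetric, so the two iterated limits in Theorem~\ref{theo:arenreg} coincide; and since $t\mapsto t^\alp$ is continuous and increasing on $[0,\infty)$ with $0^\alp=0$, it is enough to prove
\[
\lim_{\pi\to\infty}\ \limsup_{\pi'\to\infty}\ \max_{\sig\subset\pi\otimes\pi'}\frac{d_\sig}{d_\pi d_{\pi'}}=0 .
\]
First I would fix $\pi$, with highest weight $\lam$, and recall that $\su(n)$ is tall, so $d_\pi\to\infty$ genuinely means $\pi$ leaves every finite subset of $\what{\su(n)}$. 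If $\sig\subset\pi\otimes\pi'$ and $\mu$ is the highest weight of $\pi'$, then the highest weight $\nu$ of $\sig$ is of the form $\mu+w$ for some weight $w$ of $\pi$, so by the Weyl dimension formula
\[
\frac{d_\sig}{d_{\pi'}}=\prod_{\beta>0}\Bigl(1+\frac{\langle w,\beta^\vee\rangle}{\langle\mu+\rho,\beta^\vee\rangle}\Bigr),
\]
the product running over the positive roots of $A_{n-1}$. Since $\langle\mu+\rho,\beta^\vee\rangle\ge 1$ and $\langle w,\beta^\vee\rangle\le\langle\lam,\theta^\vee\rangle$ ($\theta$ the highest root), each factor is bounded in terms of $\lam$ alone, so, writing $K(\pi)=\limsup_{\pi'\to\infty}\max_{\sig\subset\pi\otimes\pi'}d_\sig/d_{\pi'}$, the inner $\limsup$ above equals $K(\pi)/d_\pi$ and is finite for every $\pi$.

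The remaining and main point --- the ``special feature of the dual'' of $\su(n)$ --- is that $K(\pi)/d_\pi\to 0$ as $\pi\to\infty$. Here I would argue as follows. As $\pi'$ runs off to infinity along a face of the Weyl chamber, only the factors of the displayed product indexed by positive roots $\beta$ that remain bounded on that face survive in the limit; for $A_{n-1}$ these form the positive root system of a proper Levi-type subgroup $\su(\ell)\times\su(n-\ell)$, and moreover the Littlewood--Richardson rule forces the weights $w$ of $\pi$ that actually contribute a component $\sig$ to be correspondingly constrained (once $\mu$ lies on a wall the multiplicities annihilate the naive candidate components). Carrying out this bookkeeping with the Weyl dimension formula for the relevant subgroup, one finds that $K(\pi)$ is bounded by a polynomial in $\lam$ of strictly smaller degree than $d_\pi$; concretely one gets a bound $K(\pi)\le C_n\,d_\pi/w(\pi)$ for an integer-valued $w(\pi)\to\infty$ (one may take $w(\pi)$ comparable to $(\lam_1+n-1)/(n-1)$, with $\lam=(\lam_1\ge\dots\ge\lam_{n-1}\ge 0)$ in partition coordinates), which tends to $\infty$ exactly because $d_\pi\to\infty$ forces $\lam_1\to\infty$. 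Hence $\limsup_{\pi'\to\infty}\max_{\sig}d_\sig/(d_\pi d_{\pi'})=K(\pi)/d_\pi\to 0$, Theorem~\ref{theo:arenreg} applies, and $\pfal(\su(n),d^\alp)$ is Arens regular for every $\alp>0$.

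The hard part will be the degree count for $K(\pi)$, performed uniformly over all $\pi$ and over all faces of the chamber; the delicate case is the one in which $\lam$ itself lies on a wall, since then $d_\pi$ also drops in degree and one must check it still outgrows $K(\pi)$. (For $\su(2)$ the difficulty evaporates: there $d$ is already subadditive --- every component of $\pi_a\otimes\pi_b$ has dimension at most $(a+1)+(b+1)-1$ --- so $d^\alp$ is equivalent to the weight $(1+\tau)^\alp$ for the subadditive function $\tau=d$, which satisfies $\tau(\pi)\to\infty$, and Arens regularity of $\pfal(\su(2),d^\alp)$ is then immediate from Corollary~\ref{cor:subadweight}.)
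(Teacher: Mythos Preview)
Your overall strategy is the same as the paper's: verify the hypothesis of Theorem~\ref{theo:arenreg} for the dimension weight.  Your reduction to $\alp=1$ via monotonicity is fine, and your $\su(2)$ remark is correct.  The difference is in how the key estimate is obtained.

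The paper does not attempt a direct computation.  It simply cites \cite[Cor.~1.2]{collinsls}, which gives (in partition coordinates $\lam=(\lam_1\ge\dots\ge\lam_{n-1}\ge 0)$, $\mu$ likewise) the uniform bound
\[
\frac{d_{\pi_\nu}}{d_{\pi_\lam}d_{\pi_\mu}}\le C\Bigl(\frac{1}{\lam_1+1}+\frac{1}{\mu_1+1}\Bigr)
\quad\text{for }\pi_\nu\subset\pi_\lam\otimes\pi_\mu,
\]
from which both iterated limits vanish immediately (and, as the paper notes parenthetically, $\lam_1=\tau_S(\pi_\lam)$ for the natural generating set, so this has the same shape as Corollary~\ref{cor:subadweight}).

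Your proposal is to \emph{reprove} exactly this inequality from scratch, via the Weyl dimension formula and a face-by-face Littlewood--Richardson analysis.  That is the right target --- your claimed bound $K(\pi)\le C_n d_\pi/(\lam_1+1)$ is precisely what the Collins--Lee--\'Sniady estimate yields in the inner limit --- but the ``bookkeeping'' you allude to is the entire substance of that paper and is not short.  In particular, your sentence ``Carrying out this bookkeeping \dots\ one finds that $K(\pi)$ is bounded by a polynomial in $\lam$ of strictly smaller degree than $d_\pi$'' is an assertion, not an argument: the face analysis must be done uniformly over \emph{all} faces of the chamber and over \emph{all} $\pi$, and your own caveat about $\lam$ lying on a wall shows you recognise this is where the work is.  As written, this is a genuine gap.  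Either fill in the degree count carefully (which will be long), or cite \cite{collinsls} and be done in two lines, as the paper does.
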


\begin{proof}
We parameterize $\what{\su}(n)$ by dominant weights:  each $\pi=\pi_\lam$ where
$\lam=(\lam_1,\dots,\lam_{n-1})$ with $\lam_1\geq\dots\geq\lam_{n-1}\geq 0$ in $\Zee$.
Then \cite[Cor.\ 1.2]{collinsls}, in whose notation $\lam_n=\mu_n=0$,  shows that 
\[
\frac{d_{\pi_\nu}^\alp}{d_{\pi_\lam}^\alp d_{\pi_\mu}^\alp}
\leq C^\alp\left(\frac{1}{\lam_1+1}+\frac{1}{\mu_1+1}\right)^\alp
\leq (2C)^\alp\left(\frac{1}{(\lam_1+1)^\alp}+\frac{1}{(\mu_1+1)^\alp}\right)
\]
for $\pi_\nu\subset\pi_\lam\otimes\pi_\mu$, where the constant $C$ depends only on $n$.
Either iterated limit of Theorem \ref{theo:arenreg} is zero.  (We remark that
the choice of generators corresponding to dominant weights $\{(1,0,\dots,0),(1,1,0,\dots,0),\dots,
(1,\dots,1,0)\}$ is such that $\tau_S(\pi_\lam)=\lam_1$ for $\lam\not=(0,\dots,0)$.  
See \cite[(3.8)]{ghandeharilss}.  Hence the present result may also be viewed as following from Corollary
\ref{cor:subadweight}.)
\end{proof}

We say a connected group $G$ is {\it semisimple} if the commutator subgroup
$[G,G]$ equals $G$.  For non-semisimple connected groups, we never have Arens regularity
with dimension weights.  This generalizes \cite[Thm.\ 4.8]{ghandeharilss}.

\begin{theorem}\label{theo:nonarens}
Let $G$ be a non-semisimple connected group.  Then $\pfal(G,d^\alp)$ is 
never Arens regular.
\end{theorem}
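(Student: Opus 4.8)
The plan is to reduce the non-semisimple connected case to the two building blocks already handled in the excerpt: restriction to a torus direct factor (Theorem \ref{theo:directprod}) and the torus computation of Corollary \ref{cor:subadweight} running in reverse. Since $G$ is connected but not semisimple, the Levi--Mal'cev structure theorem (\cite[Thm.\ 9.24]{hofmannm}), exactly as invoked in the proof of Theorem \ref{theo:connlie}, writes $G\cong(T\times S)/D$ with $T$ a nontrivial connected abelian group (a torus, since $G$ is compact), $S$ a product of simple compact Lie groups, and $D$ central with $T\cap D=\{e\}=D\cap S$. In particular $G$ has $(T\times K)/Z$ form with $H=T$ abelian and nontrivial. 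Further quotienting $T$ by a closed subgroup if necessary, I may assume $T=\Tee$, the circle.

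First I would apply Theorem \ref{theo:directprod}: since $H=\Tee$ is abelian, we get $\pfal_G(\Tee, d^\alp)=\pfal(\Tee,d^\alp)$, completely isomorphically (the hypothesis ``$K$ abelian'' is not what we have, but ``$H$ abelian'' forces $d_\sig=1$ for all $\sig\in\what H$, so $\sup_{\tau}d_\tau^{-\alp}$ is irrelevant — actually the cleaner route is that on $\what{\Tee}$ the weight $d^\alp$ is trivial, so $\pfal_G(\Tee,d^\alp)=\pfal_G(\Tee)=\pfal(\Tee)=\fal(\Tee)$ completely isomorphically, using $\pfal(\Tee)=\fal(\Tee)$ from Proposition \ref{prop:openabel}). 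Thus $R_\Tee:\pfal(G,d^\alp)\to\fal(\Tee)$ is a continuous surjective algebra homomorphism onto $\fal(\Tee)=L^1(\Zee)$-type algebra. Since Arens regularity passes to isomorphic quotients, it suffices to observe that $\fal(\Tee)=A(\Tee)$ is \emph{not} Arens regular — this is classical (the Fourier algebra of a non-discrete abelian group is never Arens regular), but I should instead produce a cleaner self-contained obstruction rather than cite an external fact.

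The main obstacle, then, is that $d^\alp$ restricted to $\what{\Tee}$ is trivial, so we only land in $\fal(\Tee)$, which \emph{is} borderline and whose non-Arens-regularity I cannot read off from the earlier results of this excerpt (Corollary \ref{cor:subadweight} gives regularity only for $\alp>0$ on the torus weight side). To handle this I would argue directly with the Fourier series structure on $\Tee$: the center $\cent\fal(\Tee)=\fal(\Tee)$ itself is, via Fourier coefficients, $\ell^1(\Zee)$, and $\ell^1(\Zee)$ with convolution is not Arens regular — this is elementary (e.g. the standard two-sequence iterated-limit argument: take $(e_{n_i})$ and $(e_{-m_j})$ with $n_i, m_j$ chosen sparsely so the iterated weak* limits of $e_{n_i}\ast e_{-m_j}=e_{n_i-m_j}$ disagree). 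Concretely: $\ell^1(\Zee)$ contains an isometric copy of $c_0$ in its bidual built from the translates, exhibiting non-Arens-regularity by Young's criterion \cite{young1}. Alternatively, and perhaps most economically, I note $\fal(\Tee)\cong A(\Tee)$ and invoke \cite{young1,crawy}, which the excerpt already cites as saying $\fal(\Tee,w^\alp)$ is Arens regular \emph{exactly when} $\alp>0$ — in particular for $\alp=0$ (the unweighted case, $w^0\equiv 1$) it is \emph{not} Arens regular.

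Putting it together: $\pfal(G,d^\alp)$ has a continuous algebra quotient isomorphic to $\fal(\Tee)$, which is not Arens regular, and Arens regularity passes to isomorphic quotients, so $\pfal(G,d^\alp)$ is not Arens regular. I would write the proof as: (1) invoke Levi--Mal'cev to get the nontrivial torus factor and reduce to $\Tee$; (2) apply Theorem \ref{theo:directprod} plus $\pfal(\Tee)=\fal(\Tee)$ to exhibit $\fal(\Tee)$ as a quotient; (3) cite \cite{crawy,young1} (with $\alp=0$) for the non-Arens-regularity of $\fal(\Tee)$ and conclude by hereditarity.

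\begin{proof}
Since $G$ is connected and not semisimple, the Levi--Mal'cev theorem (\cite[Thm.\ 9.24]{hofmannm}), exactly as in the proof of Theorem \ref{theo:connlie}, presents $G\cong(T\times S)/D$, where $T$ is a nontrivial connected compact abelian group, $S$ is a (possibly empty) product of simple compact Lie groups, and $D$ is a closed central subgroup of $T\times S$ with $T\cap D=\{e\}=D\cap S$.  As in that proof we may write $G=(H\times K)/Z$ with $H=T/(T\cap D)$, a nontrivial torus, $K=S/(D\cap S)$, and $Z$ a central subgroup with $H\cap Z=\{e\}=Z\cap K$.  Quotienting $H$ by a suitable closed subgroup, we may assume $H=\Tee$.

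By Theorem \ref{theo:directprod}, $\pfal_G(\Tee)=\pfal(\Tee)$ completely isometrically; since $\Tee$ is abelian, $\pfal(\Tee)=\fal(\Tee)$ by Proposition \ref{prop:openabel}, and the dimension weight $d^\alp$ is trivial on $\what{\Tee}$, so $\pfal_{G,d^\alp}(\Tee)=\pfal_G(\Tee)=\fal(\Tee)$.  Hence the restriction map $R_\Tee:\pfal(G,d^\alp)\to\fal(\Tee)$ is a continuous surjective algebra homomorphism; that is, $\fal(\Tee)$ is an isomorphic quotient of $\pfal(G,d^\alp)$.

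Now $\fal(\Tee)\cong\fal(\Tee,w^0)$ with $w^0\equiv 1$, and by \cite{crawy,young1} the algebra $\fal(\Tee,w^\alp)$ is Arens regular precisely when $\alp>0$; in particular $\fal(\Tee)$ is not Arens regular.  Since Arens regularity passes to isomorphic quotients, $\pfal(G,d^\alp)$ is not Arens regular.
\end{proof}
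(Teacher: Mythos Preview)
Your argument is essentially the paper's \emph{alternate} proof, which is stated there only for Lie $G$. For general connected $G$ the paper takes a different route: setting $N=DS/D$, one has $G/N\cong T/(T\cap D)$, an infinite connected abelian group (since $T\cap D$ is totally disconnected and $T$ is nontrivial connected), and by Lemma \ref{lem:quot} the algebra $\pfal(G/N,d^\alp_N)=\fal(G/N)\cong\ell^1(\what{G/N})$ embeds as a closed \emph{subalgebra} of $\pfal(G,d^\alp)$. Since $\ell^1$ of an infinite discrete group is not Arens regular (\cite{young1}) and Arens regularity passes to subalgebras, the result follows.

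Your restriction approach has a genuine gap beyond the Lie case. The step ``$d^\alp$ is trivial on $\what{\Tee}$, so $\pfal_{G,d^\alp}(\Tee)=\pfal_G(\Tee)$'' conflates the weight on $\what{H}$ with the effect of the weight on $\what{G}$ through restriction. The proof of Theorem \ref{theo:directprod} actually gives $\pfal_{G,d^\alp}(H)\cong\fal(H,\ome^\alp)$ with $\ome^\alp(\sig)=d^\alp_K|_{p_K(Z)}(\chi_\sig)$, and this extra weight may be unbounded when $Z$ is infinite. The identification you need, $\pfal_{G,d^\alp}(H)\cong\pfal(H,d^\alp)$, is established in that theorem only when $Z$ is trivial, $K$ is abelian, or $Z$ is finite; for a general connected $G$ the Levi--Mal'cev $D$ is merely totally disconnected, so none of these need hold. (Two smaller issues: ``a torus, since $G$ is compact'' is false --- a connected compact abelian group can be, e.g., the Pontryagin dual of $\mathbb{Q}$, which contains no circle subgroup --- and the reduction ``we may assume $H=\Tee$'' is both unjustified and unnecessary: a quotient of $H$ is not a subgroup of $G$, while in the Lie case you can simply restrict to $T$ itself and cite \cite{young1} for $\ell^1(\what{T})$.)
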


\begin{proof}
Thanks to \cite[9.19 \& 9.24]{hofmannm}, $G=(T\times S)/D$ where $T$ is a non-trivial
connected abelian group, and $S=\prod_{i\in I}S_i$ is a product of simple connected Lie groups,
and $D$ is totally disconnected.
Then $N=DS/D$ is a closed normal subgroup of $G$ and the third isomorphism theorem
tells us that $G/N=((T\times S)/D)(DS/D)\cong (T\times S)/SD\cong T/T\cap D$.
Since $T\cap D$ is totally disconnected, we see that $G/N$ is an infinite abelian group.
Thus by Lemma \ref{lem:quot}, $\pfal(G/N)=\pfal(G/N,d^\alp)$ is isomorphic to a closed
subalgebra of $\pfal(G,d^\alp)$.  Proposition \ref{prop:openabel} shows that $\pfal(G/N)=\fal(G/N)
\cong\ell^1(\what{G/N})$, which is never Arens regular by \cite{young1}.

Let us observe that we have an interesting alternate proof if 
$G$ is Lie.  Indeed, in the Levi-Mal'cev decomposition, $G=(T\times S)/D$ above,
we may assume again that $T$ is a non-trivial abelian connected Lie group,
$S$ is a semisimple Lie group, and $D=T\cap S$, so $T\cap D=\{e\}=D\cap S$
and $D$ is finite.  See  \cite[Thm.\ 6.15]{hofmannm}.  But then Theorem \ref{theo:directprod}
shows that $\pfal(T,d^\alp)=\fal(T)$ is an isomorphic quotient of $\pfal(G,d^\alp)$.
As above, this quotient algebra cannot be Arens regular.
\end{proof}

\subsection{Representability as an operator algebra}\label{ssec:opalg}
Let us look at some situations in which $\pfal(G,\ome)$ is representable,
in fact completely representable, as an operator algebra.
We will focus mainly on the polynomial weights $\ome_S^\alp$, defined in Section \ref{ssec:spectrum}.
We do not know if our estimates are sharp in the ranges of $p$ and $\alp$.

We will first develop an analogue of the Littlewood multipliers of \cite{ghandeharilss}.
We let $(d_\sig)_{\sig\in\Sig}$ be an indexed collection of positive integers.  
We then define
\[
\fD^p=\ell^\infty\text{-}\bigoplus_{\sig\in\Sig}d_\sig^{-\frac{1}{p'}}\smat^{p'}_{d_\sig},\quad
\fH^p=\ell^2\text{-}\bigoplus_{\sig\in\Sig}d_\sig^{\frac{1}{2}-\frac{1}{p'}}\smat^2_{d_\sig}
\quad\text{and}\quad
\fA^p=\ell_1\text{-}\bigoplus_{\sig\in\Sig}d_\sig^{1+\frac{1}{p'}}\smat^p_{d_\sig}.
\]
We observe that $\fA^*=\fD$ with respect to the dual pairing
\[
\langle (A_\sig)_{\sig\in\Sig},(D_\sig)_{\sig\in\Sig}\rangle=\sum_{\sig\in\Sig}d_\sig\trace(A_\sig D_\sig).
\]
Furthermore, with respect to this dual pairing we have linear dual
\[
\fH^{p*}=\ell^2\text{-}\bigoplus_{\sig\in\Sig}d_\sig^{\frac{1}{2}+\frac{1}{p'}}\smat^2_{d_\sig}.
\]
This being an $\ell^2$-direct sum, we have norm $\norm{X}_{\fH^{p*}}=
\left(\sum_{\sig\in\Sig}d_\sig^{1+\frac{2}{p'}}\norm{X_\sig}^2_{\smat^2_{d_\sig}}\right)^{1/2}$.
Of course, each of $\fH^p$ and $\fH^{p*}$ are also Hilbert spaces with the map
$U(H_\sig)_{\sig\in\Sig}=(d_\sig^{-1}H_\sig)_{\sig\in\Sig}$ serving as a unitary between them.
Since $U:\fH^p_C\to\fH^{p*}_C$ is a complete isometry, we still obtain operator duality
$(\fH_C)^*\cong\fH^{p*}_R$, and the same with row and column structures interchanged.

The spaces $\fD^p$ and $\fA^p$ will have their usual operator spaces structures.  However,
we write $\fD^2_C$, $\fA^2_C$ when the component spaces $\smat^2_{d_\sig}$ have
column structure; likewise for rows.
For any $A=(A_\sig)_{\sig\in\Sig}$ and $B=(B_\sig)_{\sig\in\Sig}$ in $\prod_{\sig\in\Sig}\mat_n$,
we let $AB=(A_\sig B_\sig)_{\sig\in\Sig}$.

\begin{proposition}\label{prop:somemaps}
{\bf (i)}  The formal identities $\fH^p_C\hookrightarrow\fD^p$
and $\fH^p_R\hookrightarrow\fD^p$ are normal complete contractions.

{\bf (ii)}  Given $X$ in $\fH^{p*}$, the maps $D\mapsto DX$ and $D\mapsto XD$ from $\fD^p$ into 
$\fH^p_C$, or equivalently into $\fH^p_R$, are normal and completely bounded.

{\bf (ii')} Given $X$ in $\fH^{2*}$, the maps $D\mapsto DX$ and $D\mapsto XD$ from $\fD^2_C$ 
or from $\fD^2_R$ into either of $\fH^p_C$ or $\fH^p_R$, are normal and completely bounded.
\end{proposition}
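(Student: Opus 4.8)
The plan is to prove (ii') componentwise over the summands $\smat^2_{d_\sigma}$ and reassemble using the $\ell^\infty$- and $\ell^2$-direct sum structures, in the spirit of (i) and (ii). Fix $X=(X_\sigma)_\sigma\in\fH^{2*}$, so $\sum_\sigma d_\sigma^2\norm{X_\sigma}_{\smat^2_{d_\sigma}}^2=\norm{X}_{\fH^{2*}}^2<\infty$. The underlying maps $D\mapsto DX$ and $D\mapsto XD$ are diagonal: on the $\sigma$-th summand they are right, resp.\ left, multiplication $R_{X_\sigma},L_{X_\sigma}$ by the fixed matrix $X_\sigma$ on $\smat^2_{d_\sigma}$. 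Since these are fixed finite-rank operators and the direct sums are weak*-closed, normality is automatic, so only complete boundedness needs proof.

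For the structure-preserving cases $\fD^2_C\to\fH^p_C$ and $\fD^2_R\to\fH^p_R$, the first step is that $R_{X_\sigma}$ has operator norm $\norm{X_\sigma}_{\smat^\infty_{d_\sigma}}\le\norm{X_\sigma}_{\smat^2_{d_\sigma}}$ as a Hilbert-space operator; by homogeneity of the column (resp.\ row) structure, $\fC\fB(\smat^2_{d_\sigma,C},\smat^2_{d_\sigma,C})=\fB(\smat^2_{d_\sigma})$ isometrically, so $\cbnorm{R_{X_\sigma}\colon\smat^2_{d_\sigma,C}\to\smat^2_{d_\sigma,C}}\le\norm{X_\sigma}_{\smat^2_{d_\sigma}}$. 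Then I would take $[D^{(ij)}]\in\mat_n(\fD^2_C)$ of norm $\le1$; since $\mat_n$ commutes with $\ell^\infty$-direct sums this forces $\norm{[D_\sigma^{(ij)}]}_{\mat_n(\smat^2_{d_\sigma,C})}\le d_\sigma^{1/2}$ for every $\sigma$. Writing the $\mat_n(\fH^p_C)$-norm of the image via the column formula $\norm{[\xi^{(ij)}]}^2_{\mat_n(\col_H)}=\norm{[\sum_i\langle\xi^{(ik)},\xi^{(ij)}\rangle]_{jk}}_{\mat_n}$, testing against a unit vector $\xi\in\Cee^n$, and bounding $\norm{FX_\sigma}_{\smat^2_{d_\sigma}}\le\norm{F}_{\smat^2_{d_\sigma}}\norm{X_\sigma}_{\smat^\infty_{d_\sigma}}$, the estimate collapses to a sum $\sum_\sigma d_\sigma^{1-2/p'}\norm{X_\sigma}^2_{\smat^2_{d_\sigma}}\,c_\sigma$, where each $c_\sigma$ is precisely the $\mat_n(\col)$-type quantity attached to $[D_\sigma^{(ij)}]$ — which is exactly the quantity bounded by $d_\sigma$ by the source norm estimate. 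The crucial point is that the column structure of the source produces exactly the index contraction that the source norm controls. Since $d_\sigma^{1-2/p'}\cdot d_\sigma=d_\sigma^{-2/p'}\cdot d_\sigma^2\le d_\sigma^2$, the total is at most $\norm{X}_{\fH^{2*}}^2$, uniformly in $n$ and $p$; the left-multiplication maps and the row-to-row case follow by the transposed argument.

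The residual cases are the crossed ones, $\fD^2_C\to\fH^p_R$ and $\fD^2_R\to\fH^p_C$, and this is where I expect the real difficulty. Here homogeneity is not enough, and one should instead invoke $\fC\fB(\smat^2_{d,C},\smat^2_{d,R})\cong\smat^2_{d^2}$ (equivalently $\cbnorm{\id\colon\smat^2_{d,C}\to\smat^2_{d,R}}=d$); since $R_{X_\sigma}=\id_{d_\sigma}\otimes X_\sigma^T$ has Hilbert--Schmidt norm $d_\sigma^{1/2}\norm{X_\sigma}_{\smat^2_{d_\sigma}}$ as an operator on $\smat^2_{d_\sigma}$, one obtains $\cbnorm{R_{X_\sigma}\colon\smat^2_{d_\sigma,C}\to\smat^2_{d_\sigma,R}}\le d_\sigma^{1/2}\norm{X_\sigma}_{\smat^2_{d_\sigma}}$. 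Feeding this into the direct-sum estimate produces an extra factor $d_\sigma^{1/2}$, and the main obstacle is to absorb it against the $\ell^2$-summability of $(d_\sigma\norm{X_\sigma}_{\smat^2_{d_\sigma}})_\sigma$: for $p\ge2$ this is painless, and one may even reduce to $p=2$ via the contractive inclusions $\fH^2_C\hookrightarrow\fH^p_C$ and $\fH^2_R\hookrightarrow\fH^p_R$; but for $1\le p<2$ the crude multiplier bound is too weak and one must run a sharper matrix computation (or interpolate in $p$ and treat the $p=1$ endpoint by hand). Tracking the powers of $d_\sigma$ against the summability of $X$ in this crossed regime is the delicate point of the proposition.
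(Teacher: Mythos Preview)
Your direct componentwise approach for (ii') differs from the paper's, which dualizes throughout. The paper shows that the pre-adjoint $H\mapsto XH\colon\fH^{2*}_R\to\fA^2_E$ is completely bounded by extending matrix multiplication to a complete contraction $\fH^{2*}_C\hat\otimes\fH^{2*}_R\to\fA^2_E$; this factors through block-diagonal compression, the multiplication $\smat^2_{d,C}\hat\otimes\smat^2_{d,R}\to\smat^1_d$, and finally the inclusion $d^{1/2}\smat^1_d\hookrightarrow\smat^2_{d,E}$. Because one may choose $E\in\{C,R\}$ freely at this last step, the structure-preserving and crossed cases are handled uniformly, with no case split. Your route trades this uniformity for a more elementary argument, at the cost of separate treatments via homogeneity and via the Hilbert--Schmidt identification $\fC\fB(\col,\row)\cong\smat^2$.

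Your concern about the crossed case when $p<2$ is misplaced: the target $\fH^p$ in (ii') should be read as $\fH^2$ --- the paper's own proof only establishes $p=2$, and only $p=2$ feeds into the Littlewood-multiplier theorem that follows. With $p=2$ your crossed estimate reassembles exactly to $\sum_\sigma d_\sigma^2\norm{X_\sigma}^2_{\smat^2}=\norm{X}^2_{\fH^{2*}}$, so there is no obstacle and no need for the sharper computation or interpolation you anticipate.

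You have not addressed (i) or (ii). The paper proves these by the same duality scheme: for (i) via the componentwise inclusion $d^{1/2}\smat^p_d\hookrightarrow\smat^2_{d,C}$ (obtained by interpolating $d^{1/2}\col_d\hookrightarrow\row_d$), and for (ii) via the multiplication factorization $\fH^{p*}_C\hat\otimes\fH^{p*}_R\to\fA^p$ through $\smat^1_d$ and then $d^{1/p'}\smat^1_d\hookrightarrow\smat^p_d$. If you intend your componentwise method to cover these as well, you should carry that out explicitly rather than gesture at it.
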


\begin{proof}
(i) It suffices the show that $\fA^p\hookrightarrow \fH^{p*}_C$ completely contractively,
and then the desired inclusion is the adjoint map.  The roles of rows and columns may be
interchanged naturally in all of our manipulations.
We first recall that the identity $d^{1/2}\col_d\hookrightarrow \row_d$ is a complete contraction.
Thus by interpolation we find that $d^{\frac{1}{2p}}\col_d^p\hookrightarrow \col_d$ is a complete
contraction. Hence we get a complete contraction
\[
d^{1/2}\smat^p_d=d^{1/2}\col_d^p\otimes^h\row_d^p
=d^{\frac{1}{2p}}\col_d^p\otimes^hd^{\frac{1}{2p'}}\col_d^{p'}
\hookrightarrow \col_d\otimes^h\col_d=\smat^2_{d,C}.
\]
Thus we obtain a complete contraction from each summand $d_\sig^{1+\frac{1}{p'}}\smat^p_{d_\sig}$
of $\fA^p$ into the summand $d_\sig^{\frac{1}{2}+\frac{1}{p'}}\smat^p_{d_\sig}\smat^2_{d_\sig,C}$
of $\fH^{p*}$.  By the universal property of direct sums, we are done.

(ii) Given $X$ in $\fH^{p*}$, the map $D\mapsto DX:\fD^p\to\fH^p_C$ will be the adjoint of the map
$H\mapsto XH:\fH^{p*}_R\to\fA^p$, once we establish that the latter is well-defined.  To see
that this latter map is well-defined, even completely bounded, is sufficient that
$X\otimes H\mapsto XH$ extends to a completely bounded map from $\fH^{p*}_C\hat{\otimes}\fH^{p*}_R$
to $\fA^p$.  Let us first observe two facts.  First, matrix multiplication
$\smat^2_{d,C}\hat{\otimes}\smat^2_{d,R}\to\smat^1_d$ is completely contractive.
Indeed, this is akin to applying trace to the middle factor of $\col_d\hat{\otimes}\smat^1_d
\hat{\otimes}\row_d=\col_d\hat{\otimes}\col_d\hat{\otimes}\row_d\hat{\otimes}\row_d$.
Second, we see by interpolation that $d^{\frac{1}{2p'}}\col_p\hookrightarrow\col_d^p$ is a complete
contraction.  Hence, similarly as in (i), above, we see that $d^{1/p'}\smat^1_d\hookrightarrow
\smat^p_d$ is completely contractive.  Now let us proceed to our multiplication computation.  
Multiplication may be realized as factoring through the following complete contractions:
\begin{align*}
\fH^{p*}_C\hat{\otimes}\fH^{p*}_R=\smat^1(\fH^{p*})
&\to \ell^1\text{-}\bigoplus_{\sig\in\Sig}\smat^1(d^{\frac{1}{2}+\frac{1}{p'}}\smat^2_{d_\sig})
=\ell^1\text{-}\bigoplus_{\sig\in\Sig}d^{1+\frac{2}{p'}}\smat^2_{d_\sig,C}\hat{\otimes}\smat^2_{d_\sig,R} \\
&\to \ell^1\text{-}\bigoplus_{\sig\in\Sig}d^{1+\frac{2}{p'}}\smat^1_{d_\sig}
\to \ell^1\text{-}\bigoplus_{\sig\in\Sig}d^{1+\frac{1}{p'}}\smat^p_{d_\sig}=\fA^p
\end{align*}
where the map in the first line is block-diagonal compression, and the two maps in the second
line are discussed above.

The commutativity of the projective tensor product, and general symmetry of row versus column operations
allows us to switch the order of the computations above rather liberally.

(ii')  With considerations so far, it is straightforward to see, for example,
 that $d^{1/2}\smat^1_d\hookrightarrow\smat^2_{d,C}$ is completely contractive.
We require this fact exactly at the last step of the multiplication computation, above.
\end{proof}

Our special Littlewood type multipliers are the content of the next theorem.
The extended (or weak*) Haagerup tensor product $\otimes^{eh}$ is defined in \cite{blechers,effrosr1}.
It is desirable for us specifically because of the completely isometric duality formula
$(\fV\otimes^h\fW)^*\cong\fV^*\otimes^{eh}\fW^*$ for any operator spaces $\fV$ and $\fW$.

\begin{theorem}\label{theo:littlewood}
Fix $T$ in $\fH^{p*}$.  Then each of the maps given by $A\otimes B\mapsto TA\otimes B$
$A\otimes B\mapsto A\otimes TB$ extends uniquely to normal linear maps
from $\fD^p\bar{\otimes}\fD^p$ to $\fD^p\otimes^{eh}\fD^p$.  In the case $p=2$,
each of these maps extends uniquely to normal linear maps from
$\fD^2_E\bar{\otimes}\fD^2_E$ to $\fD^2_E\otimes^{eh}\fD^2_E$, where $E=C$ or $R$.
\end{theorem}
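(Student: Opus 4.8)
The plan is to factor each of the two maps through the Hilbertian spaces $\fH^p$ of Proposition \ref{prop:somemaps}, turning a normal spatial tensor product into a weak$^*$ Haagerup tensor product by exploiting the placement of a column (resp.\ row) Hilbert operator space. Write $\Phi_T$ for the prospective map $A\otimes B\mapsto TA\otimes B$. Recall that $\fD^p\bar{\otimes}\fD^p=(\fA^p\hat{\otimes}\fA^p)^*$ and, by the duality $(\fV\otimes^h\fW)^*\cong\fV^*\otimes^{eh}\fW^*$, that $\fD^p\otimes^{eh}\fD^p=(\fA^p\otimes^h\fA^p)^*$; thus both are dual operator spaces and ``normal'' means weak$^*$-continuous. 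I would realise $\Phi_T$ as the composition
\[
\fD^p\bar{\otimes}\fD^p\xrightarrow{\mu_T\,\bar{\otimes}\,\id}\fH^p_C\bar{\otimes}\fD^p
=\fH^p_C\otimes^{eh}\fD^p\xrightarrow{\iota\,\otimes^{eh}\,\id}\fD^p\otimes^{eh}\fD^p,
\]
where $\mu_T(A)=TA$ is the normal completely bounded map from $\fD^p$ to $\fH^p_C$ supplied by the $D\mapsto XD$ half of Proposition \ref{prop:somemaps}(ii) (with $X=T$), $\iota\colon\fH^p_C\hookrightarrow\fD^p$ is the normal complete contraction of Proposition \ref{prop:somemaps}(i), and tensoring a normal completely bounded map of dual operator spaces with an identity again yields a normal completely bounded map (for $\bar{\otimes}$ via preadjoints, for $\otimes^{eh}$ via the duality formula). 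On elementary tensors this composite is exactly $A\otimes B\mapsto TA\otimes B$.

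The map $A\otimes B\mapsto A\otimes TB$ is treated symmetrically, using the $D\mapsto DX$ half of Proposition \ref{prop:somemaps}(ii) to land in the \emph{row} space $\fH^p_R$, so that the identification needed is $\fD^p\bar{\otimes}\fH^p_R=\fD^p\otimes^{eh}\fH^p_R$, with the row space on the right. Uniqueness of the extension is automatic: the algebraic tensor product $\fD^p\otimes\fD^p$ is weak$^*$-dense in each of $\fD^p\bar{\otimes}\fD^p$ and $\fD^p\otimes^{eh}\fD^p$ (its preannihilator is $\{0\}$ by non-degeneracy of the $\fD^p$--$\fA^p$ pairing), and a weak$^*$-continuous map is determined on a weak$^*$-dense set. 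For $p=2$ the argument is identical, with Proposition \ref{prop:somemaps}(ii$'$) replacing Proposition \ref{prop:somemaps}(ii): for $E=C$ or $R$, $\mu_T$ maps $\fD^2_E$ normally and completely boundedly into $\fH^2_C$ when acting on the first tensorand and into $\fH^2_R$ when acting on the second, and one checks directly --- from $\norm{\id}_{\fC\fB(\col_d,\row_d)}=\norm{\id}_{\fC\fB(\row_d,\col_d)}=d^{1/2}$, the weight $d^{-1/2}$ present in $\fD^2_E$, and the contractivity of the inclusion of an $\ell^2$-direct sum into the corresponding $\ell^\infty$-direct sum --- that $\fH^2_C\hookrightarrow\fD^2_E$ and $\fH^2_R\hookrightarrow\fD^2_E$ are normal complete contractions, so the same three-step composition goes through.

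The crux --- and the step I expect to be the main obstacle --- is the middle equality $\fH^p_C\bar{\otimes}\fD^p=\fH^p_C\otimes^{eh}\fD^p$ (resp.\ $\fD^p\bar{\otimes}\fH^p_R=\fD^p\otimes^{eh}\fH^p_R$): when a column Hilbert operator space occupies the left tensorand, or a row Hilbert operator space the right tensorand, the weak$^*$ Haagerup tensor product against an arbitrary dual operator space agrees completely isometrically with the normal spatial tensor product. This is a structural feature of the extended Haagerup tensor product (\cite{blechers,effrosr1}); one must take care, because the corresponding statement is \emph{false} with a row space on the left or a column space on the right, so it is essential that $\mu_T$ be inserted on the side, and with the Hilbert-space structure ($C$ or $R$), that places the Hilbertian factor in the favourable position. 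The remaining ingredients --- functoriality of $\otimes^{eh}$ and of $\bar{\otimes}$ under normal completely bounded maps, and tracking which elementary-tensor formula each composite induces --- are routine.
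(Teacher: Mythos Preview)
Your proposal is correct and follows essentially the same three-step factorization as the paper: apply $\mu_T\bar{\otimes}\id$ via Proposition~\ref{prop:somemaps}(ii), invoke the identification $\fH^p_C\bar{\otimes}\fD^p=\fH^p_C\otimes^{eh}\fD^p$, then apply $\iota\otimes^{eh}\id$ via Proposition~\ref{prop:somemaps}(i). The only cosmetic difference is that the paper justifies the middle equality by passing to preduals (noting that $\fH^{p*}_R\hat{\otimes}\fA^p=\fH^{p*}_R\otimes^h\fA^p$ since a row space on the left collapses operator projective to Haagerup), whereas you state the equivalent fact directly at the dual level; also, for $A\otimes TB$ you want the $D\mapsto XD$ map (not $D\mapsto DX$) targeted into $\fH^p_R$, but both variants are available in (ii) so this is merely a labeling slip.
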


\begin{proof}
Let us first observe that $\fH^p_C\bar{\otimes}\fD^p=\fH^p_C\otimes^{eh}\fD^p$, completely isometrically.
Indeed, each space is the dual of $\fH^{p*}_R\hat{\otimes}\fA^p=\fH^{p*}_R\otimes^h\fA^p$.
Then Proposition \ref{prop:somemaps} (ii) shows that $A\otimes B\mapsto TA\otimes B$
extends uniquely to a normal linear map from $\fD^p\bar{\otimes}\fD^p$ into $\fH^p_C\bar{\otimes}\fD^p
=\fH^p_C\otimes^{eh}\fD^p$.  Then Proposition  \ref{prop:somemaps} (i) shows that the formal identity
on elementary tensors, extends uniquely to a normal complete contraction
$\fH^p_C\otimes^{eh}\fD^p\longrightarrow\fD^p\otimes^{eh}\fD^p$.  The composition of these maps
yields the desired result.  The right handed case case works similarly.

The proof in the case $p=2$ is identical.  Here we use (ii') in place of (ii), from the prior proposition;
and the complete contractivity of the map $\fA^2_E\hookrightarrow\fH^{2*}_E$, $E=R$ or $C$, 
is obvious.  At this particular step, however, we may use only rows or columns.
\end{proof}

Thus, for  elements $T$ of $\fH^{p*}$, we may think of $T\otimes I$ and $I\otimes T$
in $\prod_{\sig,\sig'\in\Sig\times\Sig}\mat_{d_\sig}
\otimes \mat_{d_{\sig'}}$ as elements which multiply $\fD^p\bar{\otimes}\fD^p$
into $\fD^p\otimes^{eh}\fD^p$.

With our new Littlewood type multipliers in hand, we are almost in position to determine some occasions
for which $\pfal(G,\ome)$ is an operator algebra.  We will work only with connected
Lie groups and polynomial weights, as defined at the end of the previous section.

As in \cite[5.6.5]{wallach}, a connected Lie group $G$ admits a system of fundamental weights
$\Lam_1,\dots,\Lam_{s(G)}$ which are proportional to the simple roots of $G$, and
$\lam_1,\dots,\lam_{z(G)}$ which are related to the characters of the connected component
of the centre of $G$.  Each $\pi$ in $\what{G}$ corresponds to some $\Lam_\pi
=\sum_{j=1}^{s(G)}m_j\Lam_j+\sum_{i=1}^{z(G)}n_i\lam_i$ where each $m_j$ is a non-negative
integer and each $n_i$ is an integer.  In this notation we let
\[
\norm{\pi}_p=\left(\sum_{j=1}^{s(G)}m_j^p+\sum_{i=1}^{z(G)}|n_i|^p\right)^{1/p}.
\]
Clearly $\norm{\pi}_\infty\leq\norm{\pi}_2\leq\norm{\pi}_1\leq(s(G)+z(G))\norm{\pi}_\infty$.
The following estimate is a refinement of \cite[5.6.7]{wallach}.  For $r>2$, it will allow
us better estimates by using more refined data about $G$

\begin{lemma}\label{lem:wallach}
We have for $G$ as above and any positive real number $r$, that 
\[
\sum_{\pi\in\what{G}}\frac{d_\pi^r}{(1+\norm{\pi}_1)^{2\alp}}\quad\text{converges if}\quad
\alp>\frac{r}{4}d(G)-\frac{r-2}{4}(s(G)+z(G))
\]
where $d(G)$ is the dimension of $G$.
\end{lemma}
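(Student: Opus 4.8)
The plan is to reduce the convergence question to a lattice-point count governed by Weyl's dimension formula.  Let $T$ be a maximal torus of $G$, so that $\dim T=\operatorname{rank}G=s(G)+z(G)$; if $2N$ denotes the number of roots of $G$ then $\dim G=2N+\operatorname{rank}G$, i.e.\ $N=\tfrac12\bigl(d(G)-s(G)-z(G)\bigr)$.  In the parametrization $\Lam_\pi=\sum_{j=1}^{s(G)}m_j\Lam_j+\sum_{i=1}^{z(G)}n_i\lam_i$ ($m_j\in\Zee^{\geq0}$, $n_i\in\Zee$), the first point I would establish is that $d_\pi$ does not see the central coordinates $n_i$: since each $\lam_i$ pairs trivially with every coroot, Weyl's formula gives
\[
d_\pi=\prod_{\beta>0}\frac{\langle\Lam_\pi+\rho,\beta^\vee\rangle}{\langle\rho,\beta^\vee\rangle}
=\prod_{\beta>0}\Bigl(1+\sum_{j=1}^{s(G)}c_{\beta,j}\,m_j\Bigr),
\qquad c_{\beta,j}=\frac{\langle\Lam_j,\beta^\vee\rangle}{\langle\rho,\beta^\vee\rangle}\geq0,
\]
a polynomial of degree $N$ in the $m_j$ with nonnegative coefficients; in particular $d_\pi\leq C_G(1+\norm{\pi}_1)^N$, because $\sum_j m_j\leq\norm{\pi}_1$.

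Next I would dominate the given series by the sum over the full cone $(\Zee^{\geq0})^{s(G)}\times\Zee^{z(G)}$, into which $\what{G}$ embeds.  Writing $\norm{\pi}_1=a+b$ with $a=\sum_j m_j$ and $b=\sum_i|n_i|$, and using the crude counts $\#\{m:\sum_j m_j=a\}\leq C(1+a)^{s(G)-1}$ and $\#\{n:\sum_i|n_i|=b\}\leq C(1+b)^{z(G)-1}$, the series is at most
\[
C\sum_{a\geq0}(1+a)^{rN+s(G)-1}\sum_{b\geq0}\frac{(1+b)^{z(G)-1}}{(1+a+b)^{2\alp}}.
\]
The hypothesis on $\alp$ is equivalent to $2\alp>rN+s(G)+z(G)$, which (as $rN+s(G)\geq0$) forces $2\alp>z(G)$; splitting the inner sum at $b=a$ then gives a head bounded by a multiple of $(1+a)^{z(G)-2\alp}$ and a tail $\sum_{b>a}(1+b)^{z(G)-1-2\alp}$ of the same order, so the inner sum is $O\bigl((1+a)^{z(G)-2\alp}\bigr)$ uniformly in $a$.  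The remaining series $\sum_{a\geq0}(1+a)^{rN+s(G)+z(G)-1-2\alp}$ converges iff $2\alp>rN+s(G)+z(G)$, and substituting $N=\tfrac12(d(G)-s(G)-z(G))$ rewrites this as $\alp>\tfrac r4 d(G)-\tfrac{r-2}{4}(s(G)+z(G))$, as claimed.

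The proof has no serious obstacle; the one point requiring care is the separation of the $s(G)+z(G)$ toral directions, which contribute to the lattice-point count but not at all to $d_\pi$ --- this is exactly the source of the correction term $-\tfrac{r-2}{4}(s(G)+z(G))$ relative to the naive exponent $\tfrac r4 d(G)$.  As consistency checks I would note that for abelian $G$ one has $N=0$, $s(G)=0$, $z(G)=d(G)$, and the statement collapses to the convergence of $\sum_{n\in\Zee^{d(G)}}(1+\norm{n}_1)^{-2\alp}$ for $2\alp>d(G)$; and for $r=2$ it reduces to the classical fact that $\sum_{\pi\in\what{G}}d_\pi^2(1+\norm{\pi}_1)^{-2\alp}$ converges precisely for $2\alp>\dim G$.
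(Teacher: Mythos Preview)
Your argument is correct and shares the same skeleton as the paper's proof: both use the Weyl dimension estimate $d_\pi\leq C(1+\norm{\pi})^N$ with $N=\tfrac12\bigl(d(G)-s(G)-z(G)\bigr)$ and then reduce to a lattice-point count in $s(G)+z(G)$ directions.  The difference is purely in how the lattice sum is organized.  The paper slices $\what{G}$ into $\ell^\infty$-shells $\what{G}_j=\{\pi:\norm{\pi}_\infty=j\}$, bounds $|\what{G}_j|$ by $K(1+j)^{s(G)+z(G)-1}$, and sums a single series in $j$.  You instead separate $\norm{\pi}_1=a+b$ into its semisimple and central contributions, exploit explicitly that $d_\pi$ depends only on $a$, and handle a double sum in $(a,b)$.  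Your route is a bit longer but has the virtue of making the mechanism behind the correction term $-\tfrac{r-2}{4}(s(G)+z(G))$ completely transparent: the $z(G)$ central directions inflate the lattice count without inflating $d_\pi$.  The paper's shell argument is shorter and avoids the split at $b=a$, at the cost of leaving that mechanism implicit.  Either way the convergence threshold comes out identical.
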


\begin{proof}
We simply adapt the proof of \cite[5.6.7]{wallach}.  There, it is first established that
for some constant $C$ we have $d_\pi\leq C\norm{\pi}_2^{\frac{1}{2}(d(G)-(s(G)+z(G)))}$,
where of course we can replace $\norm{\cdot}_2$ with $\norm{\cdot}_1$ at the cost of a new constant.

Now we let $\what{G}_j=\{\pi\in\what{G}:\norm{\pi}_\infty=j\}$, for which the cardinality
is estimated by 
\[
|\what{G}_j|\leq (s(G)+2z(G))(2j+1)^{s(G)+z(G)-1}\leq K(j+1)^{s(G)+z(G)-1}  
\]
for some constant $K$.  Moreover, for
$\pi$ in $\what{G}_j$ we use the estimate of the prior paragraph to see that
\[
\frac{d_\pi^r}{(1+\norm{\pi}_1)^{2\alp}}\leq\frac{C}{(1+j)^{2\alp-\frac{r}{2}d(G)+\frac{r}{2}(s(G)+z(G))}}.
\]
Thus we see that 
\begin{align*}
\sum_{\pi\in\what{G}}&\frac{d_\pi^r}{(1+\norm{\pi}_1)^{2\alp}}
= \sum_{j=0}^\infty \sum_{\pi\in\what{G}_j}\frac{d_\pi^r}{(1+\norm{\pi}_1)^{2\alp}} \\
&\leq \sum_{j=0}^\infty K(j+1)^{s(G)+z(G)-1}\frac{C}{(1+j)^{2\alp-\frac{r}{2}d(G)+\frac{r}{2}(s(G)+z(G))}}
\end{align*}
which converges provided $2\alp>\frac{r}{2}d(G)-\bigl(\frac{r}{2}-1\bigr)(s(G)+z(G))$.
\end{proof}

Let us recall how to relate the quantity $\norm{\cdot}_1$ to our polynomial weight, introduced in Section
\ref{ssec:spectrum}.   As observed in \cite[p.\ 483 \& Thm.\ 5.4]{ludwigst}, there are constants $c,C$
for which 
\begin{equation}\label{eq:polweight}
c\tau_S(\pi)\leq\norm{\pi}_1\leq C\tau_S(\pi).
\end{equation}  
The following generalizes \cite[Thm.\ 4.5]{ghandeharilss}, which deals only with the case $p=1$.

\begin{theorem}\label{theo:opalg}
Let $G$ be a connected Lie group  with polynomial weight $\ome^\alp_S$.
If 
\[
\alp>\bigl(\frac{1}{2}+\frac{1}{2p'}\bigr)d(G)-\frac{1}{2p'}(s(G)+z(G)) 
\]
then $\pfal(G,\ome^\alp_S)$ is completely representable as an operator algebra.
\end{theorem}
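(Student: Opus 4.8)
The plan is to invoke the criterion of \cite{blecher}: since $\pfal(G,\ome_S^\alp)$ is already equipped with its operator space structure, it is completely representable as an operator algebra as soon as pointwise multiplication extends to a completely bounded map on the Haagerup tensor product $\pfal(G,\ome_S^\alp)\otimes^h\pfal(G,\ome_S^\alp)$. Dualizing, and using the completely isometric formula $(\fV\otimes^h\fW)^*\cong\fV^*\otimes^{eh}\fW^*$, this is equivalent to showing that the coproduct $M$ of (\ref{eq:coprod}) restricts to a completely bounded map
\[
M:\pfal(G,\ome_S^\alp)^*\longrightarrow\pfal(G,\ome_S^\alp)^*\otimes^{eh}\pfal(G,\ome_S^\alp)^*,
\]
so the whole problem reduces to a single complete boundedness estimate for $M$, to be obtained by adapting the method of \cite[\S 4]{ghandeharilss} to the case $p\not=1$.

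The engine is the family of Littlewood-type multipliers of Theorem \ref{theo:littlewood}, applied with $\Sig=\what{G}$ and the dimensions $(d_\sig)_{\sig\in\what{G}}$. The decisive point is that the diagonal element $T=\bigl(\ome_S^\alp(\sig)^{-1}I_{d_\sig}\bigr)_{\sig\in\what{G}}$ belongs to $\fH^{p*}$ \emph{precisely} under the stated hypothesis on $\alp$: indeed
\[
\norm{T}_{\fH^{p*}}^2=\sum_{\sig\in\what{G}}d_\sig^{1+\frac{2}{p'}}\norm{\ome_S^\alp(\sig)^{-1}I_{d_\sig}}_{\smat^2_{d_\sig}}^2
=\sum_{\sig\in\what{G}}\frac{d_\sig^{2+\frac{2}{p'}}}{(1+\tau_S(\sig))^{2\alp}},
\]
which by (\ref{eq:polweight}) is comparable to $\sum_{\sig\in\what{G}}d_\sig^{2+2/p'}(1+\norm{\sig}_1)^{-2\alp}$, and this series converges, by Lemma \ref{lem:wallach} with $r=2+\frac{2}{p'}$, exactly when $\alp>\bigl(\frac12+\frac1{2p'}\bigr)d(G)-\frac1{2p'}(s(G)+z(G))$. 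Hence, under our hypothesis, Theorem \ref{theo:littlewood} provides normal maps $A\otimes B\mapsto TA\otimes B$ and $A\otimes B\mapsto A\otimes TB$ from $\fD^p\bar{\otimes}\fD^p$ into $\fD^p\otimes^{eh}\fD^p$.

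It then remains to realize $M$ as a composition of a completely contractive ``core'' map into the normal spatial tensor product with (a product of) these multipliers. On the block indexed by $(\pi,\pi')$ the coproduct sends $W=(W_\sig)$ to $\bigoplus_{\sig\subset\pi\otimes\pi'}U^*_{\sig,\pi\otimes\pi'}W_\sig U_{\sig,\pi\otimes\pi'}$; because $M$ restricted to $\fal(G)^*$ is a normal $*$-homomorphism into $\fal(G)^*\bar{\otimes}\fal(G)^*$, this expression is completely contractively controlled in the normal spatial tensor product once the factors $\ome_S^\alp(\sig)^{-1}$ are extracted, using the weight inequality $\ome_S^\alp(\sig)\leq\ome_S^\alp(\pi)\ome_S^\alp(\pi')$ for $\sig\subset\pi\otimes\pi'$ together with the complete contractions $d^{1/2}\smat^p_d\hookrightarrow\smat^2_{d,C}$ of the previous subsection to move between Schatten indices. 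The extracted factors assemble into the multiplier $T$, Theorem \ref{theo:littlewood} promotes $\bar{\otimes}$ to $\otimes^{eh}$, and after re-identifying the target with $\pfal(G,\ome_S^\alp)^*\otimes^{eh}\pfal(G,\ome_S^\alp)^*$ via the scalar rescalings of the summands by powers of $\ome_S^\alp$, we conclude that $M$ is completely bounded; \cite{blecher} finishes.

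The main obstacle is exactly this last factorization: one must organize the bookkeeping so that the weight is spent in the right proportions --- enough to make the core part a complete contraction into $\fD^p\bar{\otimes}\fD^p$, and enough left over for the Littlewood multiplier --- while matching, on each $(\pi,\pi')$-block, the Clebsch--Gordan decomposition $\bigoplus_{\sig\subset\pi\otimes\pi'}U^*_\sig(\cdot)U_\sig$ against the operator-space tensor structure $\smat^{p'}_{d_\pi}\otimes^{eh}\smat^{p'}_{d_{\pi'}}$, via distributivity of the (extended) Haagerup tensor product over direct sums. It is the careful tracking of the extra $d$-factors thrown off by the multiplier that makes $\norm{T}_{\fH^{p*}}<\infty$ the exact requirement, and it is here --- through Lemma \ref{lem:wallach} --- that the finer invariants $s(G)$ and $z(G)$ enter, in contrast with the case $p=1$ of \cite[Thm.\ 4.5]{ghandeharilss} where only $d(G)$ occurred.
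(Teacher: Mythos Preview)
Your overall strategy is correct and matches the paper's: reduce to showing the coproduct lands completely boundedly in the extended Haagerup tensor product of the duals, use the Littlewood-type multipliers of Theorem \ref{theo:littlewood}, and control the norm $\norm{T}_{\fH^{p*}}$ by Lemma \ref{lem:wallach} with $r=2+\frac{2}{p'}$. Your computation of $\norm{T}_{\fH^{p*}}$ and the identification of the threshold on $\alp$ are exactly right.

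There is, however, a genuine gap in the factorization step. The object that must be shown to multiply $\fD^p\bar{\otimes}\fD^p$ into $\fD^p\otimes^{eh}\fD^p$ is $M(W)(W^{-1}\otimes W^{-1})$, whose $(\pi,\pi')$-block carries on the $\sig$-subblock the scalar $\ome_S^\alp(\sig)\,\ome_S^\alp(\pi)^{-1}\ome_S^\alp(\pi')^{-1}$. This depends on $\sig$, $\pi$ and $\pi'$ simultaneously, so it is \emph{not} of the form $T\otimes I$ or $I\otimes T$ that Theorem \ref{theo:littlewood} handles. The weight inequality you cite, $\ome_S^\alp(\sig)\leq\ome_S^\alp(\pi)\ome_S^\alp(\pi')$, only tells you these scalars are bounded by $1$; it does not produce the required one-legged structure. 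What is missing is the \emph{subadditivity} of $\tau_S$: from $\tau_S(\sig)\leq\tau_S(\pi)+\tau_S(\pi')$ one gets
\[
\frac{\ome_S^\alp(\sig)}{\ome_S^\alp(\pi)\,\ome_S^\alp(\pi')}
\leq 2^\alp\Bigl(\frac{1}{\ome_S^\alp(\pi)}+\frac{1}{\ome_S^\alp(\pi')}\Bigr),
\]
and this allows one to write $M(W)(W^{-1}\otimes W^{-1})=S\bigl(T\otimes I+I\otimes T\bigr)$ with $S$ a bounded block-diagonal (Schur-type) multiplier of $\fD^p\bar{\otimes}\fD^p$. Only after this decomposition does Theorem \ref{theo:littlewood} apply, to each summand separately. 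Your sentence ``the extracted factors assemble into the multiplier $T$'' elides exactly this point: the extracted factors depend on $\sig\subset\pi\otimes\pi'$, not on a single tensor leg, and it is the subadditivity estimate --- not mere submultiplicativity --- that reduces to the one-legged case the Littlewood machinery can handle.
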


\begin{proof}
We will summarize those details of the proof of \cite[Thm.\ 4.5]{ghandeharilss} which need
checking.  We recall that with the coproduct $M$, and $W=(\ome(\pi)I_{d_\pi})_{\pi\in\what{G}}$,
we have that $T=M(W)(W^{-1}\otimes W^{-1})$ in $\pfal(G)^*\bar{\otimes}\pfal(G)^*$ satisfies
\[
T_{\pi,\pi'}=\left(
\left(\frac{1+\tau_S(\sig)}{(1+\tau_S(\pi))(1+\tau_S(\pi'))}\right)^\alp I_{d_\sig}
\right)_{\sig\subset\pi\otimes\pi'}
\]
in the sense of the notation (\ref{eq:pfaltpdual}).  Further, just as in the proof of Corollary 
\ref{cor:subadweight},  we gain an estimate
\[
\left(\frac{1+\tau_S(\sig)}{(1+\tau_S(\pi))(1+\tau_S(\pi'))}\right)^\alp
\leq 2^\alp\left(\frac{1}{(1+\tau_S(\pi))^\alp}+\frac{1}{(1+\tau_S(\pi'))^\alp}\right).
\]
We then let $T_1$ and $T_2$ be given by
\[
T_1=\left(\frac{1}{(1+\tau_S(\pi))^\alp}I_{d_\pi}\right)_{\pi\in\what{G}}
\quad\text{and}\quad
T_2=\left(\frac{1}{(1+\tau_S(\pi))^\alp} I_{d_{\pi'}}\right)_{\pi'\in\what{G}}
\]
Then in $\prod_{\pi,\pi'\in\what{G}\times\what{G}}\mat_{d_\pi}\otimes\mat_{d_{\pi'}}$ we can write
\[
T=S(T_1\otimes I+I\otimes T_2)
\]
where $S$ is diagonal on each block $\mat_{d_\pi}\otimes\mat_{d_{\pi'}}$ with scalars
bounded by $2^\alp$, i.e.\ $S\in\ell^\infty\text{-}\bigoplus_{\pi,\pi'\in\what{G}\times\what{G}}
\smat^\infty_{d_\pi}\check{\otimes}\smat^\infty_{d_{\pi'}}$.
It is easy to check that such $S$ acts as a multiplier of $\pfal(G)^*\bar{\otimes}\pfal(G)^*$.

Letting  $\fH^{p*}$ denote the multiplier space from Theorem \ref{theo:littlewood},
 it now suffices to find conditions for which $\norm{T_j}_{\fH^{p*}}<\infty$ for $j=1,2$.
Now we have for  $j=1,2$ that
\[
\norm{T_j}_{\fH^{p*}}^2\leq (1+C)^{2\alp}\sum_{\pi\in\what{G}}d_\pi^{1+\frac{2}{p'}}
\frac{d_\pi}{(1+\norm{\pi}_1)^{2\alp}}.
\]
thanks to (\ref{eq:polweight}).  We then appeal to Lemma \ref{lem:wallach} with the value
$r=2+\frac{2}{p'}$ to find values $\alp$ for which the last series converges.

We have seen that $M(W)(W^{-1}\otimes W^{-1})(\pfal(G)^*\bar{\otimes}\pfal(G)^*)
\subset \pfal(G)^*\otimes^{eh}\pfal(G)^*$.  By duality, this is sufficient
to see that multiplication on $\pfal(G,\ome^\alp_S)$ factors completely boundedly through the
Haagerup tensor product.  Then we appeal to the main result of \cite{blecher}.
\end{proof}

\begin{corollary}
Let $G$ be a connected Lie group.  If $\alp>\frac{3}{4}d(G)-\frac{1}{4}(s(G)+z(G))$,
then for any $1\leq q\leq\infty$ we have that
$\fal^2_{R^q}(G,\ome_S^\alp)$ is completely representable as an operator algebra.
\end{corollary}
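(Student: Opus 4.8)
The plan is to obtain this from the $p=2$ case of Theorem \ref{theo:opalg}, first at the endpoints $q=1$ and $q=\infty$ and then for all $1\le q\le\infty$ by complex interpolation. Since $\row^\infty_d=\row_d$ and $\row^1_d=[\row_d,\col_d]_1=\col_d$, the spaces $\smat^2_{d_\pi}$ occurring in $\fal^2_{R^q}(G,\ome^\alp_S)$ carry, for these two values of $q$, the row and the column operator space structures respectively; accordingly $(\fal^2_{R^q}(G,\ome^\alp_S))^*$ is a space of type $\fD^2_E$ with $E=R$ or $C$, and one may run the proof of Theorem \ref{theo:opalg} verbatim using the $p=2$, $E=R$ (resp.\ $E=C$) clause of Theorem \ref{theo:littlewood} in place of its general-$p$ clause. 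The relevant multiplier again decomposes as $S(T_1\otimes I+I\otimes T_2)$ with $S$ a bounded multiplier of the pertinent normal spatial tensor product, so it remains only to place $T_1,T_2$ in the multiplier space $\fH^{2*}$. By (\ref{eq:polweight}) this reduces to the convergence of $\sum_{\pi\in\what{G}}d_\pi^{1+\frac{2}{p'}}\cdot d_\pi(1+\norm{\pi}_1)^{-2\alp}$ with $p=2$, i.e.\ of $\sum_{\pi\in\what{G}}d_\pi^{3}(1+\norm{\pi}_1)^{-2\alp}$, and Lemma \ref{lem:wallach} with $r=3$ gives precisely this whenever $\alp>\frac{3}{4}d(G)-\frac{1}{4}(s(G)+z(G))$. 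An appeal to the main result of \cite{blecher} then shows that $\fal^2_{R^q}(G,\ome^\alp_S)$ is completely representable as an operator algebra for $q\in\{1,\infty\}$; more precisely, after renorming, the pointwise product extends to a complete contraction $\fal^2_{R^q}(G,\ome^\alp_S)\otimes^h\fal^2_{R^q}(G,\ome^\alp_S)\to\fal^2_{R^q}(G,\ome^\alp_S)$ for $q=1,\infty$.

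For general $q$ the plan is to interpolate between these two endpoints. Stability of the Haagerup tensor product under complex interpolation gives $\smat^2_{d,R^q}=\row^q_d\otimes^h\row^q_d=[\row_d\otimes^h\row_d,\col_d\otimes^h\col_d]_{1/q}=[\smat^2_{d,R},\smat^2_{d,C}]_{1/q}$, and, since complex interpolation is compatible with weighted $\ell^1$-direct sums (as already exploited in (\ref{eq:interpolatedwpfal})), we obtain, with the same weight $\ome^\alp_S$ at both ends,
\[
\fal^2_{R^q}(G,\ome^\alp_S)=[\fal^2_{R^\infty}(G,\ome^\alp_S),\fal^2_{R^1}(G,\ome^\alp_S)]_{1/q}
\]
completely isometrically. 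A further application of $\otimes^h$-stability identifies $\fal^2_{R^q}(G,\ome^\alp_S)\otimes^h\fal^2_{R^q}(G,\ome^\alp_S)$ with the complex interpolant of the two endpoint Haagerup squares. The pointwise product is defined on the common dense subspace $\trig(G)\otimes\trig(G)$ and, by the previous paragraph, is completely bounded at both endpoints; hence by complex interpolation of completely bounded linear maps (\cite{pisier}) it is completely bounded from $\fal^2_{R^q}(G,\ome^\alp_S)\otimes^h\fal^2_{R^q}(G,\ome^\alp_S)$ into $\fal^2_{R^q}(G,\ome^\alp_S)$, and a last appeal to \cite{blecher} finishes the proof for every $1\le q\le\infty$.

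The two points that need slight care, both routine within the framework already built in the paper, are: the identification of $\fal^2_{R^q}(G,\ome^\alp_S)$ as the complex interpolant of $\fal^2_{R^\infty}$ and $\fal^2_{R^1}$ against the common weight, which amounts only to the definition of the $R^q$-structure as the $\otimes^h$-interpolant of $R$ and $C$ together with interpolation-stability of $\ell^1$-direct sums; and the observation that \cite{blecher} requires only complete boundedness of multiplication through $\otimes^h$, not complete contractivity, so that the differing isomorphism constants at $q=1$ and $q=\infty$ cause no trouble. Beyond these, no genuine obstacle arises, since the quantitative estimate is literally the $p=2$ instance of Theorem \ref{theo:opalg}.
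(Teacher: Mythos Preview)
Your argument is correct and follows essentially the same route as the paper: establish the endpoint cases $q=1,\infty$ by rerunning the proof of Theorem~\ref{theo:opalg} with the $E=R,C$ clause of Theorem~\ref{theo:littlewood}, identify $\fal^2_{R^q}(G,\ome^\alp_S)=[\fal^2_R(G,\ome^\alp_S),\fal^2_C(G,\ome^\alp_S)]_{1/q}$, and then interpolate. The only cosmetic difference is in the final step: the paper invokes \cite[2.3.7]{blecherl}, which packages the fact that an interpolant of two operator algebras is again an operator algebra, whereas you unwind this by interpolating the multiplication map through the Haagerup tensor product and then applying \cite{blecher}; these amount to the same thing.
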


\begin{proof}
For $q=1,\infty$, the proof of Theorem \ref{theo:opalg} can be used
to establish that for $\alp$ as above, $\fal^2_E(G,\ome^\alp_S)$ is an operator
algebra for $E=R=R^\infty$ or $C=R^1$.  We use a similar formula to (\ref{eq:interpolatedwpfal}) 
to obtain that
\[
\fal^2_{R^q}(G,\ome^\alp_S)=\bigl[\fal^2_R(G,\ome^\alp_S),\fal^2_C(G,\ome^\alp_S)\bigr]_{1/q}.
\]
By \cite[2.3.7]{blecherl}, this interpolated algebra
is completely isomorphic to an operator algebra.
\end{proof}

Let us close by returning to dimension weights, although only for special unitary groups.

\begin{corollary}\label{cor:pfalsuopalg}
If $\alp>\bigl(\frac{1}{2}+\frac{1}{2p'}\bigr)(n^2-1)-\frac{1}{2p'}(n-1)$, then $\pfal(\su(n),d^\alp)$
is completely representable as an operator algebra.
\end{corollary}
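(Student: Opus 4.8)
The plan is to re-run the proof of Theorem \ref{theo:opalg} verbatim, with the dimension weight $d^\alp$ in place of the polynomial weight $\ome^\alp_S$, the key point being that the dimension weight on $\su(n)$, although it grows far faster than any polynomial weight, still has a small enough submultiplicativity defect --- quantified by \cite[Cor.\ 1.2]{collinsls}, already exploited in Corollary \ref{cor:pfalsuar} --- for the Littlewood-multiplier argument of Theorem \ref{theo:littlewood} to go through. First I would record that $G=\su(n)$ is a connected semisimple Lie group with $d(G)=n^2-1$, $s(G)=n-1$ and $z(G)=0$; parameterizing $\what{\su}(n)$ by dominant weights $\lam=(\lam_1\geq\dots\geq\lam_{n-1}\geq 0)$ and choosing the generating set $S=\{(1,0,\dots,0),(1,1,0,\dots,0),\dots,(1,\dots,1,0)\}$, one has $\tau_S(\pi_\lam)=\lam_1$ for $\lam\neq 0$ (as recorded in the parenthetical remark of the proof of Corollary \ref{cor:pfalsuar}, or \cite[(3.8)]{ghandeharilss}).

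Next I would form the multiplier object exactly as in the proof of Theorem \ref{theo:opalg}, now with $W=(d_\pi^\alp I_{d_\pi})_{\pi\in\what{G}}$. Then $T=M(W)(W^{-1}\otimes W^{-1})$ in $\pfal(\su(n))^*\bar{\otimes}\pfal(\su(n))^*$ satisfies, in the sense of the notation (\ref{eq:pfaltpdual}),
\[
T_{\pi_\lam,\pi_\mu}=\left(\left(\frac{d_{\pi_\nu}}{d_{\pi_\lam}d_{\pi_\mu}}\right)^\alp I_{d_{\pi_\nu}}\right)_{\pi_\nu\subset\pi_\lam\otimes\pi_\mu}.
\]
Applying \cite[Cor.\ 1.2]{collinsls} exactly as in the proof of Corollary \ref{cor:pfalsuar}, there is a constant $C$ depending only on $n$ with $(d_{\pi_\nu}/(d_{\pi_\lam}d_{\pi_\mu}))^\alp\leq(2C)^\alp\bigl((1+\tau_S(\pi_\lam))^{-\alp}+(1+\tau_S(\pi_\mu))^{-\alp}\bigr)$ whenever $\pi_\nu\subset\pi_\lam\otimes\pi_\mu$. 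Hence, setting $T_1=\bigl((1+\tau_S(\pi))^{-\alp}I_{d_\pi}\bigr)_{\pi}$ and $T_2$ the analogous element in the second leg, one may factor $T=S(T_1\otimes I+I\otimes T_2)$, where $S$ lies in $\ell^\infty\text{-}\bigoplus_{\pi,\pi'}\smat^\infty_{d_\pi}\check{\otimes}\smat^\infty_{d_{\pi'}}$ and so acts as a multiplier of $\pfal(\su(n))^*\bar{\otimes}\pfal(\su(n))^*$.

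Finally I would check that $T_1,T_2$ lie in the multiplier space $\fH^{p*}$ of Theorem \ref{theo:littlewood}. Since $\norm{(1+\tau_S(\pi))^{-\alp}I_{d_\pi}}_{\smat^2_{d_\pi}}^2=d_\pi(1+\tau_S(\pi))^{-2\alp}$, the defining norm of $\fH^{p*}$ gives $\norm{T_j}_{\fH^{p*}}^2=\sum_{\pi\in\what{G}}d_\pi^{2+\frac{2}{p'}}(1+\tau_S(\pi))^{-2\alp}$, which by (\ref{eq:polweight}) is comparable to $\sum_{\pi}d_\pi^{2+\frac{2}{p'}}(1+\norm{\pi}_1)^{-2\alp}$. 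By Lemma \ref{lem:wallach} with $r=2+\frac{2}{p'}$ this converges precisely when $\alp>\bigl(\frac{1}{2}+\frac{1}{2p'}\bigr)(n^2-1)-\frac{1}{2p'}(n-1)$, which is the stated hypothesis. As in the closing lines of the proof of Theorem \ref{theo:opalg}, it then follows that $M(W)(W^{-1}\otimes W^{-1})$ carries $\pfal(\su(n))^*\bar{\otimes}\pfal(\su(n))^*$ into $\pfal(\su(n))^*\otimes^{eh}\pfal(\su(n))^*$, hence dually that multiplication on $\pfal(\su(n),d^\alp)$ factors completely boundedly through the Haagerup tensor product, and \cite{blecher} concludes. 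I anticipate no genuine obstacle: the only steps demanding care are confirming that \cite[Cor.\ 1.2]{collinsls} is available in precisely this multiplicative form for all $\pi_\nu\subset\pi_\lam\otimes\pi_\mu$ --- it is, having already served Corollary \ref{cor:pfalsuar} --- and tracking the exponent $r=2+\frac{2}{p'}$ (rather than $r=2$, as in the $p=1$ case of \cite{ghandeharilss}) through Lemma \ref{lem:wallach} so that the threshold comes out as asserted.
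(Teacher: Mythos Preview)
Your proposal is correct and follows essentially the same approach as the paper: use the Collins--Lee--\'Sniady estimate from Corollary~\ref{cor:pfalsuar} to control the submultiplicativity defect of the dimension weight, then re-run the Littlewood-multiplier argument of Theorem~\ref{theo:opalg} verbatim. The paper's own proof is in fact a two-line pointer to exactly this strategy; you have simply unpacked the details (and your use of (\ref{eq:polweight}) to pass from $\tau_S$ to $\norm{\cdot}_1$ is slightly more careful than strictly necessary, since for $\su(n)$ one actually has $\lam_1=\norm{\pi_\lam}_1$).
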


\begin{proof}
We use notation and comments in the proof of Corollary \ref{cor:pfalsuar}.
We have that $\lam_1=\norm{\pi_\lam}_1$ in this particular case.  The main
estimate of that proof hence carries over, whence so too does the proof of
Theorem \ref{theo:opalg} and the last corollary.
\end{proof}

We remark that $\fal(\su(n),d^\alp)$ is known not to be completely isomorphic to an operator algebra
when $\alp\leq 1/2$ (\cite[4.11]{ghandeharilss}).  With the difficulty of the restriction result to
tori, even for $n=2$, we have no such result for $p\not=1$, at present.

\section{Summary of results}

\subsection{The tables.} We summarize our results in tables, below.  We require labels for our first table.

\smallskip
\begin{tabular}{ll}
A = amenable & WA = weakly amenable \\
OA = operator amenable & OWA = operator weakly amenable \\
\multicolumn{2}{l}{PD = admits bounded point derivation} \\ 
v.a. = virtually abelian & c.n.a. = connected non-abelian
\end{tabular}

\smallskip\noindent
In each table, known ranges of the validity of that property
will be given.  If a quantitative range is, moreover, known to us to be sharp we shall indicate so
with an (s).  All results below can be found in the present article, in \cite{lees}, or references therein.

\smallskip
\begin{tabular}{|l|c|c|c|c|c|}
\hline
& A & WA & OA & OWA & PD \\ \hline 
$\pfal(G,d^\alp)$ & always & always & always  & always & never \\
$G$ v.a. & & & & & \\ \hline
$\pfal(G,d^\alp)$ & $G$ v.a. (s)& always & $p=1$ \& & always  & never \\
$G_e$ abelian &  & 		& $\alp = 0^{(\ast)}$ & & \\ \hline
$\pfal(\su(2),d^\alp)$ & never & never & $p=1$ \& & $p<\frac{4}{3+2\alp}$ (s) & 
$\alp\geq 1$ (s) \\ 
& & & $\alp=0$ (s) & & \\ \hline
$\pfal(G,d^\alp)$ & never & never & $p=1$ \& & no if  & 
$\alp\geq 1$  \\
$G$ c.n.a.  & & & $\alp=0$ (s) & $p\geq\frac{4}{3+2\alp}$& \\ \hline
$\fal^2_{R^q}(G,d^\alp)$ & never & never & never & never & $\alp\geq 1$ \\
$G$ c.n.a. & & & & & \\ \hline
$\pfal_{\su(2),d^\alp}(\Tee)$ & $p=1$ \& &
$p<\frac{2}{1+2\alp}$ (s)  & $p=1$ \& & $p<\frac{2}{1+2\alp}$ (s) & $\alp\geq 1$ (s) \\ 
&  $\alp=0$ (s) &  &$\alp=0$ (s)  &  &\\ \hline
$\fal^2_{R^q,\su(2),d^\alp}(\Tee)$ & never & never & never & $q<\frac{2}{1+4\alp}$ or & $\alp\geq 1$ (s) \\
& & & &  $q'<\frac{2}{1+4\alp}$ (s) & \\
\hline
\end{tabular}

\noindent ($*$) Sharp for tall groups and those groups with direct product 
factors which are infinite products of non-abelian
finite groups.

\medskip
\begin{tabular}{|l|c|c|}
\hline
& Arens regular & completely representable  \\ 
& & as an operator algebra \\ \hline
$\pfal(G)$ & never & never \\ \hline
$\pfal(G,d^\alp)$, connected & never & never \\
non-semisimple  & & \\ \hline
$\pfal(G,\ome^\alp_S)$, infinite & $\alp>0$ (s) & 
$\alp>\bigl(\frac{1}{2}+\frac{1}{2p'}\bigr)d(G)-\frac{1}{2p'}(s(G)+z(G))$ \\
connected Lie & & \\ \hline
$\fal^2_{R^q}(G,\ome^\alp_S)$, infinite & $\alp>0$ (s) & 
$\alp>\frac{3}{4}d(G)-\frac{1}{4}(s(G)+z(G))$ \\
connected Lie & & \\ \hline
$\pfal(\su(n),d^\alp)$ & $\alp>0$ (s) & 
$\alp>\bigl(\frac{1}{2}+\frac{1}{2p'}\bigr)(n^2-1)-\frac{1}{2p'}(n-1)$ \\ \hline
\end{tabular}

\subsection{Questions.}
The following questions were partially addressed, and arose naturally in the course of this
investigation.

{\bf (a)}  If $G$ is a disconnected Lie group and $\ome_S^\alp$ a polynomial weight, 
is $\fal_{G,\ome_S^\alp}(G_e)$ always regular?  [If this is shown to be true, we can conclude that
$\fal(G,\ome_S^\alp)$ is always regular, and then extend this to $\pfal(G,\ome_S^\alp)$.]

{\bf (b)}  If $\fal^2(G)\cong\fal^2(H)$ isometrically, must we have $G\cong H$, topologically?

{\bf (c)}  Under what general conditions is $\pfal(G)$ operator amenable for $p>1$?

{\bf (d)}  Under what general conditions is $\pfal(G,d^\alp)$ operator weakly amenable?

{\bf (e)}  Given a connected non-abelian Lie group $G$, does $p<\frac{4}{3+2\alp}$
imply that  $\pfal(G,d^\alp)$ is operator weakly amenable?  How about the case of
 $G$ being semi-simple?

{\bf (f)} If $G$ is a tall infinite group, and $H$ is any closed subgroup which meets every
conjugacy class of $G$, is $\pfal_G(H)$ is ever operator amenable for any $p>1$.

{\bf (g)}  Is there a good general description of $\pfal_{G,d^\alp}(G_e)$?

{\bf (h)}  What are the sharp bounds for complete representability as an operator algebra
of $\fal^p(G,\ome^\alp_S)$ for a connected Lie group $G$?  How about for $G=\su(n)$?  
How about sharp bounds for representability as an operator algebra?

Addresses:
\linebreak
 {\sc 
Department of Mathematical Sciences, Seoul National University,
San56-1 Shinrim-dong Kwanak-gu, Seoul 151-747, Republic of Korea \\
Department of Mathematics and Statistics, University of Saskatchewan,
Room 142 McLean Hall, 106 Wiggins Road
Saskatoon, SK, S7N 5E6  \\
Department of Pure Mathematics, University of Waterloo,
Waterloo, ON, N2L 3G1, Canada.}

\medskip
Email-adresses:
\linebreak
{\tt hunheelee@snu.ac.kr}
\linebreak {\tt samei@math.usask.ca}
\linebreak {\tt nspronk@uwaterloo.ca}

\end{document}